\documentclass[11pt]{article}

\pdfoutput=1

\RequirePackage{my_packages}
\RequirePackage{my_theorems}
\RequirePackage{my_macros}
\RequirePackage{my_tikz}

\usepackage[backend = biber,        
            language = english ,
            style = numeric-comp ,   
            sorting = nyt,
            sortcites = true,
            firstinits = true,
            isbn = false,
            url = false,
            doi = false,
            maxnames = 6,
            backref=true
            ]{biblatex}

\DeclareNameAlias{sortname}{last-first}
\renewbibmacro{in:}{%
  \ifentrytype{article}{}{%
  \printtext{\bibstring{in}\intitlepunct}}}
\addbibresource{references.bib}  


\AtEveryBibitem{
 \clearlist{address}
 \clearfield{date}
 \clearfield{eprint}
 \clearfield{isbn}
 \clearfield{issn}
 \clearlist{location}
 \clearfield{month}
 \clearfield{series}
 \clearlist{language}
 \clearfield{note}
 
}

\setlength\bibitemsep{0.5\baselineskip}
\DeclareNameAlias{labelname}{given-family}


\title{Stallings automata
}
\author{Jordi Delgado and Enric Ventura}
\date{\vspace{-7pt}
    Departament de Matemàtiques\\
    Universitat Politècnica de Catalunya\\[17pt]
    \today
}

\newcommand{\Addresses}{{
  \bigskip
  \footnotesize

  Jordi Delgado\\\nopagebreak
  \textsc{Departament de Matemàtiques\\Universitat Politècnica de Catalunya, Spain}\\\nopagebreak
  \url{jorge.delgado@upc.edu}

  \medskip

  Enric Ventura\\\nopagebreak
  \textsc{Departament de Matemàtiques\\Universitat Politècnica de Catalunya, Spain}\\\nopagebreak
  \url{enric.ventura@upc.edu }

}}


\begin{document}

\maketitle

\begin{abstract}
\noindent 
This chapter provides a self-contained reference for Stallings' automata theory, which allows to interpret every subgroup of the free group as a neat geometric object: its 
\emph{Stallings automaton}.
In contrast to the original topological approach ---rooted in the seminal works of Serre, Stallings, and others--- we adopt a discrete perspective, where geometric objects are abstract combinatorial structures,
and subsequently reveal the connections with the underlying topological notions. This allows us to highlight both sides of the theory: the algebraic part (closely linked to the topological point of view), and the computational part (more naturally described in terms of automata and language theory).

Once the bijection between subgroups and automata is established, we focus on some classical applications of this theory. From the algebraic point of view, we deduce the Nielsen--Schreier theorem,
the Schreier index formula, Howson's property for free groups (along with classical bounds for the rank of the intersection of subgroups), Marshall Hall theorem, and properties such as Hopfianity and residual finiteness of free groups.

On the other hand, the computability of the Stallings automaton for finitely generated subgroups, based on the now classical technique of \emph{Stallings foldings}, yields a plethora of algorithmic results. We prove the computability of the \emph{subgroup membership problem}, the \emph{finite index problem}, the \emph{subgroup conjugacy problem}, the \emph{coset intersection problem}; as well as the computability of
intersections of finitely generated subgroups, and the set of algebraic extensions of a finitely generated subgroup, among other results. 
\end{abstract}



\vspace{10pt}
\noindent
\textbf{Keywords}: free group, subgroups, automata, foldings, algorithmic problem.

\bigskip
\noindent
\textbf{MSC 2020}: 20E05, 20F05, 20F10.

\vfill
\hfill
Version 1.0
\newpage
\setcounter{tocdepth}{1}
\tableofcontents

\newpage

This \monograph\ aims to provide a self-contained, comprehensive, and reasonably detailed presentation of the theory of Stallings' automata and some of its main applications.

Although the approach is mostly combinatorial and language-theoretical, aligning with previous surveys such as \cite{kapovich_stallings_2002}, \cite{bartholdi_rational_2021} and \cite{delgado_list_2022}, our intention is to uncover the  connections with the more topological perspective in \cite{serre_trees_1980} and~\cite{stallings_topology_1983}. In some sense, this text is intended to complement and extend the mentioned works providing some extra details, and including full formal proofs for almost all the presented results. 

\section{Introduction}
The classical \emph{theory of Stallings' automata} provides a neat geometric representation of the subgroups of the free group and constitutes the modern --- and probably the most natural and fruitful --- approach to their systematic study. Moreover, if the involved subgroups are finitely generated, then this description is finitary and very well suited for algorithmic treatment.

The original result (hinted by the work of Serre in \parencite{serre_arbres_1977}, and precisely stated in the seminal paper~\parencite{stallings_topology_1983} by Stallings) interprets the subgroups of the free group~$\Free[A]$ (with basis $A$ of cardinality $n$) as covering spaces of the bouquet of $n$ circles. Despite this (mainly topological) original viewpoint, when restricted to finitely generated subgroups, the obtained bijection is easily computable, and it soon became clear that the entire approach admitted an appealing restatement in terms of automata.

More precisely, the involved automata are directed graphs with a distinguished base vertex, and with the arcs (directed edges) labeled by the elements in the chosen basis $A$. The underlying idea is that the labels of the closed paths at the basepoint of such an object constitute a subgroup of $\Free[A]$ (the subgroup $H\leqslant \Free[A]$ recognized by the automaton). Stallings' theory establishes precise conditions (determinism and coreness) on these automata in order to make this representation of subgroups bijective. The resulting automaton associated to a subgroup $H\leqslant \Free[A]$, called the Stallings' automaton of $H$ and denoted by $\stallings(H)$, turns out to be a part of the Schreier automaton $\schreier(H)$, and is finite if and only if the subgroup $H$ is finitely generated. In this case, in order to compute $\stallings(H)$ from a finite set of generators $S$ of $H$, we start from the flower automaton (each of its petals reading one of the generators in $S$) and then keep making identifications (called \emph{Stallings' foldings}) between incident arcs with the same label until reaching a unique final deterministic object, which is precisely $\stallings(H)$. As we will see, many classic results on free groups --- such as the Nielsen--Schreier theorem, the Schreier index formula, Howson's property, Marshall Hall Theorem, residual finiteness, etc. --- neatly emerge from this graphical interpretation.

Moreover, the correspondence between finitely generated subgroups and finite Stallings' automata being fully algorithmic (and quite fast) constitutes a very powerful tool for attacking algorithmic problems on subgroups of free groups: we shall see that the subgroup membership problem (\MP), the finite index problem (\FIP), the computation of bases for intersections, and checking the normality and malnormality of a given finitely generated subgroup (among many other algorithmic problems) can be successfully solved using Stallings' machinery.

\medskip

In \Cref{sec: free groups} we offer a brief introduction to free groups and establish the notation in use throughout the rest of the document. In \Cref{sec: Digraphs and automata} we introduce the main facts and notation concerning graphs, digraphs, and automata. In \Cref{sec: Stallings automata} we define Stallings' automata and prove the bijection between them and subgroups of the free group, which is the central tool in the theory. As a first fundamental application, in \Cref{sec: MP} we solve the membership problem for free groups, including the explicit writing of the candidate element in terms of the given generators for the subgroup. Other classical applications follow in the subsequent sections. In \Cref{sec: conj & cosets} we discuss basic problems related to conjugacy of subgroups, normality, and malnormality, as well as finite index subgroups and cosets. In \Cref{sec: intersections} we study intersections of subgroups and we obtain some classical bounds for its rank. In \Cref{sec: extensions}, we study extensions of free groups and give a modern proof of a classical result by Takahasi. Finally, in \Cref{sec: further}, we summarize (without proofs) several other applications and extensions of the theory. This is just a selection of some of the main applications of Stallings' techniques to the study of the lattice of subgroups of the free group. Since Stallings' seminal work~\parencite{stallings_topology_1983}, many other applications have appeared (and keep appearing) in the literature (see~\cite{delgado_list_2022}), and more are likely to arise in the future.

\subsection{Notation and conventions}

Most of the notation and terminology used throughout this \monograph\ are standard; however, we clarify certain conventions below.

The set of natural numbers, denoted by $\NN$, is assumed to contain zero, and its (countable) cardinal is sometimes denoted by $\infty$ (instead of $\aleph_0$). For $r,s\in \NN \cup \set{\infty}$, we write $[r,s] = \set{n \in \NN \cup \set{\infty} \st r \leq n \leq s}$.
The cardinal of a set $S$ is denoted by $\card S$, whereas the notation $|\cdot|$ is reserved to denote length (in different contexts). Latin letters $m,n,r,\ldots$ usually denote finite cardinals, whereas Greek letters $\kappa,\mu,\ldots$ denote arbitrary cardinals.

We denote by $\Free[A]$ a generic free group (with basis $A$), while 
$\Free[\kappa]$ (\resp $\Fn$) are used to emphasize its rank $\kappa$ (\resp finite rank $n$). The first lowercase letters of the Latin alphabet ($a,b,c,\ldots$) are commonly used to denote symbols in our formal alphabets, whereas the latter ones ($u,v,w,\ldots$) usually denote formal words or elements in the free group. 

Functions are assumed to act on the right. For example, we denote by $(x)\varphi$ ---or simply by $x \varphi$--- the image of the element $x$ by a homomorphism $\varphi$, and we denote by  $\varphi \psi$ the composition \smash{$A\xto{\varphi\phantom{\psi}\hspace{-8pt}} B \xto{\psi} C$}. Accordingly, we write, for example, $g^{h} = h^{-1} gh$ and $[g,h] = g^{-1} h^{-1} g h$. We sometimes write $A \into B$ (\resp $A \onto B$) to express that a function $A \xto{} B$ is injective (\resp surjective),
and we denote by~$(C)\varphi\preim$ the full preimage of $C \subseteq B$ by the map $\varphi\colon A \xto{} B$.


We write $H \leqslant G$ (\resp $H \normaleq G$), to denote that $H$ is a subgroup (\resp normal subgroup) of $G$; and we write $H \leqslant\fg G$, $H \leqslant\fin G$, $H \leqslant\ff G$, $H \leqslant\alg G$ to express that the subgroup $H$ is finitely generated, of finite index, a free factor of, and algebraic in $G$, respectively. 

For the different variants of graphs appearing in the paper, the following convention applies: uppercase Greek letters ($\Gamma, \Delta, \ldots$) denote unlabeled undirected graphs, the arrow accentuated versions ($\Dgri,\Dgrii,\ldots$) denote unlabeled directed graphs, and the boldface versions $(\Ati,\Atii, \ldots)$ denote labeled digraphs (including automata). 



\section{Free groups}\label{sec: free groups}

We assume that the reader is familiar with the construction and basic properties of free monoids and free groups. For later reference and to fix the notation, we provide a brief summary of the main definitions and constructions, without detailed proofs.

Throughout the paper, we denote by $A=\set{a_i}_{i\in I}$ a set called \defin{alphabet}, whose elements are called  \defin[letter]{letters}. Depending on the context, this set may be finite or not.
A \defin{word} or \defin{string} over $A$ is a finite sequence of elements in $A$. If $w=(a_{i_1},\ldots ,a_{i_l})\in A^l$ then we simply write $w=a_{i_1} \cdots a_{i_l}$, and  we say that $w$ has \defin[word length]{length} $l$, denoted by~$|w|=l$. In particular, the (unique) word of length $0$ is called the \defin{empty word} and it is denoted by $\emptyword$. We denote by $A^{\!*}$ the \defin{free monoid} on $A$, consisting of all the words on $A$ (including the empty word, working as the neutral element) endowed with the concatenation operation. Note that, for $u,v\in A^{\!*}$, $|uv|=|u|+|v|$ and that, as soon as $|A|\geq 2$, the monoid $A^*$ is not commutative. 

\begin{rem}
If the alphabet $\Alfi$ is finite and nonempty, then the free monoid $\Alfi^{\!*}$ has countable cardinal $\aleph_0$; otherwise, $\Alfi$ and $\Alfi^{\!*}$ have the same cardinality. In particular, a nontrivial free monoid is always infinite.
\end{rem}


We say that a nonempty word $u\in A^*$ is a \defin[factor of a word]{factor} (\resp \defin{proper factor}) of a word $w\in A^*$ if $w=vuv'$ for some words $v,v'\in A^*$ (\resp at least one of them being non-trivial); specifically, if $v=1$ (\resp  $v'=1$), $u$ is called a \defin{prefix} (\resp a \defin{suffix}) of~$w$. 


If $A$ is an alphabet, the subsets of $A^{\!*}$ are called \defin[language]{languages} (over $A$), or $A$-languages. A language over a finite alphabet $A$ is said to be \defin[rational language]{rational} if it can be  obtained from the singletons in $A$ using finitely many times the operators of union, product ($ST=\{st \mid s\in S,\, t\in T\}$) and star ($S^*=\langle S\rangle$, \ie the submonoid generated by $S$). A direct consequence of a fundamental theorem of Kleene (see for example \parencite{sakarovitch_elements_2009}) is that the set of rational $A$-languages is closed under finite intersections and taking complements.

In the context of groups (and free groups in particular), a particular type of alphabet plays a prominent role. Given an alphabet $A$, we  denote by~$A^{-1}$ the \defin{set of formal inverses} of~$A$. Formally, $A^{-1}$ can be defined as a new set $A'$ equipotent and disjoint with $A$, together with a bijection $ A \xto{} A'$. Then, for every $a\in A$, we write $a^{-1}$ the image of~$a$, and we call it the \defin{formal inverse} of $a$. So, $A^{-1}=\{ a^{-1} \st a \in A \}$ and $A\cap A^{-1}= \varnothing$. Then, the set $A^{\pm} =A\sqcup A^{-1}$, called the \defin{involutive closure} of $A$, is equipped with an involution $^{-1}$ (by defining $(a^{-1})^{-1} = a$), which can be extended to $(A^{\pm})^*$ in the natural way: 
the \defin{inverse of a word} $w =a_{i_1} \cdots a_{i_l}$ is $w^{-1} = a_{i_l}^{-1}\cdots a_{i_1}^{-1}$.
An alphabet is called \defin[involutive alphabet]{involutive} if it is the involutive closure of some other alphabet.

In such an involutive ambient $(A^{\pm})^{*}$, a word consisting of two mutually inverse letters (\ie a word of the form $a a^{-1}$, for some $a\in A^{\pm}$) is called a \defin{cancellation}. A word in $(A^{\pm})^{*}$ (which we just refer to as a \emph{word on $A$}) is said to be \defin[freely reduced word]{(freely) reduced} if it contains no cancellation (\ie it has no factor of the form $a a^{-1}$, where $a \in A^{\pm}$). It is well known that the word obtained from $w\in (A^{\pm})^{*}$ by successively removing cancellations (in any possible order) is unique (apply \Cref{prop: unicitat cami reduit} below to an automaton $\Ati$ with a single vertex); we call it the \defin{free reduction} of $w$, and we denote it by $\red{w}$. Similarly, we write $\red{S} = \set{\red{w} \st w \in S}$, for any subset $S\subseteq (A^{\pm})^{*}$, and we denote by $\Red_A$ the \defin{set of reduced words on $A$},
 \begin{equation*}
\Red_A \,=\, \red{(A^{\pm})^{*}} \,=\, (A^{\pm})^{*} \setmin \textstyle{\bigcup_{a \in A^{\pm}} (A^{\pm})^{*} a a^{-1} (A^{\pm})^{*}, }
 \end{equation*}
which is clearly a rational language.

In a similar vein, a word $w \in (A^{\pm})^*$ is said to be \defin[cyclically reduced word]{cyclically reduced} if all of its cyclic permutations are reduced (that is, if $w^2$ is reduced). The \defin{cyclic reduction} of a word $w \in \Free[A]$
is obtained after successively removing from $\red{w}$ the first and last letters if they are inverses of each other. 

For an arbitrary alphabet $A$, the \defin[${\Free[A]}$]{free group on $A$}, denoted by $\Free[A]$, can be defined as the quotient 
 \begin{equation} \label{eq: free group def}
\Free[A] \,=\, (A^{\pm})^* / \mathcal{C} \,,
 \end{equation}
where $\mathcal{C}$ is the congruence modulo cancellation (generated by the pairs $\set{(a a^{-1}, \emptyword) \st a \in A^{\pm}}$, with the product inherited from concatenation in $(A^{\pm})^*$). It is easy to see this resulting monoid is, in fact, a group. As indicated above, every equivalence class contains a unique reduced word, easily computable from any given representative; this allows the alternative definition of $\Free[A]$ as $\Red_A$ together with the operation given by $u\cdot v=\red{uv}$. In particular, a word $w\in (A^\pm)^*$ represents the trivial element in $\Free[A]$ if and only if $\red{w}=\emptyword$; this immediately solves the word problem for free groups. Since letters and inverse letters (viewed as words of length 1) are clearly reduced, we deduce that the natural map $\iota_A\colon A\xto{} \Free[A]$ is injective.

We say that a group is \emph{free on $A$} (or \defin[free basis]{free with basis $A$}) if it is isomorphic to $\Free[A]$; and we say that a group is \defin[free group]{free} if it is isomorphic to $\Free[A]$, for some set $A$. We further recall that Benois' Theorem (see~\parencite{benois_parties_1969}) allows us to understand rational subsets of $\Free[A]$ as reductions of rational $A^{\pm}$-languages.

\begin{defn}\label{def: rank}
The \defin[rank of a group]{rank} of a group $G$, denoted by $\rk(G)$, is the smallest cardinality of a generating set for $G$.
\end{defn}

Furthermore, it is not difficult to see that two free groups $\Free[A]$ and $\Free[B]$ are isomorphic if and only if $\card A=\card B$; see~\Cref{cor: mateix cardinal} below. That is, the cardinal of $A$ is an algebraic invariant of $\Free[A]$; and we will see that this cardinal is precisely the rank of $\Free[A]$. Accordingly we will often denote by $\Free[\kappa]$ a (the) generic free group of rank $\kappa$.

\begin{rem}
Although most of them can be overcome by assuming the axiom of choice, in order to avoid set-theoretical subtleties, throughout this text we will assume that ${\card A=\kappa \leq \aleph_0}$. That is, we will consider free groups $\Free[\kappa] \isom \Free[A]$ of at most countable rank.
\end{rem}

\begin{rem}
Note that $\Free[\hspace{1pt}0]$ is the trivial group, and $\Free[\hspace{1pt}1]$ is isomorphic to the group~$\ZZ$ of integers. It is easy to see that these are the only abelian (and somewhat unrepresentative) free groups; see \eg \cite[Ch.\,1]{johnson_presentations_1997}.
\end{rem}


The universal property below turns out to be fundamental as it characterizes (group) freeness, and can consequently be taken as an alternative (categorical) definition for a free group (see \eg \parencite{bogopolski_introduction_2008}).

\begin{thm} \label{thm: free cat}
A group $F$ is free on $A \subseteq F$ if and only if every function from $A$ to an arbitrary group~$G$ extends to a unique  homomorphism $F\xto{} G$. \qed
\end{thm}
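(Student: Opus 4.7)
The plan is to establish the two implications separately. The forward implication (freeness implies the universal property) follows from the explicit realization $\Free[A] = (A^{\pm})^{*}/\mathcal{C}$ from \eqref{eq: free group def}, by constructing the extension on representatives. The converse is a standard categorical argument: from the universal property I can compare $F$ with $\Free[A]$ via mutually inverse homomorphisms, so the two groups are isomorphic via a map fixing $A$.

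For the forward direction, I may assume $F = \Free[A]$ (identifying $A$ with $A\iota_A \subseteq F$ via the injection recalled just after \eqref{eq: free group def}). Given a function $f\colon A \to G$, I first extend $f$ to the involutive closure $A^{\pm}$ by prescribing $(a^{-1})f := ((a)f)^{-1}$ for each $a \in A$, and then extend to a monoid homomorphism $\tilde f\colon (A^{\pm})^{*} \to G$ sending $b_1 \cdots b_l$ to $(b_1)f \cdots (b_l)f$. Each generating pair $(aa^{-1}, \emptyword)$ of the cancellation congruence $\mathcal{C}$ is then sent by $\tilde f$ to $(1_G, 1_G)$, so $\tilde f$ factors through the quotient, yielding a homomorphism $\bar f\colon F \to G$ extending $f$. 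Uniqueness is automatic: since $A$ generates $F$ as a group, any two homomorphisms coinciding on $A$ must coincide on $F$.

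For the converse, suppose $F$ together with $A \subseteq F$ satisfies the stated universal property. By the forward direction, $\Free[A]$ enjoys the same property with respect to $A\iota_A$. Applying the universal property of $\Free[A]$ to the inclusion $A \hookrightarrow F$ produces a homomorphism $\psi\colon \Free[A] \to F$ with $(a\iota_A)\psi = a$ for all $a \in A$; and applying the universal property of $F$ to $\iota_A\colon A \to \Free[A]$ produces $\varphi\colon F \to \Free[A]$ with $(a)\varphi = a\iota_A$. Both $\varphi\psi\colon F \to F$ and $\mathrm{id}_F$ extend the inclusion $A \hookrightarrow F$, so uniqueness forces $\varphi\psi = \mathrm{id}_F$; the symmetric computation on $\iota_A$ gives $\psi\varphi = \mathrm{id}_{\Free[A]}$. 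Hence $\varphi$ is an isomorphism $F \to \Free[A]$ restricting to $\iota_A$ on $A$, which means $F$ is free on $A$.

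The only step requiring genuine verification is the compatibility of $\tilde f$ with $\mathcal{C}$ in the forward direction, and the dual prescription $(a^{-1})f = ((a)f)^{-1}$ reduces this to a one-line check on the generating pairs. The converse is then purely formal, expressing the familiar principle that objects characterized by a universal property are unique up to canonical isomorphism.
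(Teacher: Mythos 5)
Your proof is correct, but note that the paper itself gives no argument for this statement: it is recorded with a terminal \qed\ and a pointer to the literature (e.g.\ Bogopolski's book), the universal property being offered as an alternative, categorical definition of freeness. So there is no in-paper proof to compare with; what you supply is the standard argument, and both halves are sound. In the forward direction, extending $f$ to $A^{\pm}$ by $(a^{-1})f=((a)f)^{-1}$ and checking that the monoid homomorphism $(A^{\pm})^{*}\to G$ kills the generating pairs $(aa^{-1},\emptyword)$ of $\mathcal{C}$ is exactly the right verification, and uniqueness follows since $A$ generates $\Free[A]$; the only point worth making explicit is that a monoid homomorphism between groups is automatically a group homomorphism, and that ``$F$ free on $A\subseteq F$'' must be read as ``$F\isom\Free[A]$ via an isomorphism restricting to $\iota_A$ on $A$'' (which is how the paper intends it, as its remark after the theorem shows) --- your phrase ``identifying $A$ with $A\iota_A$'' handles this, but silently. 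The converse is the usual uniqueness-up-to-canonical-isomorphism argument, and your compositions $\varphi\psi=\mathrm{id}_F$, $\psi\varphi=\mathrm{id}_{\Free[A]}$ are consistent with the paper's right-action convention. In short: correct, complete modulo these two one-line remarks, and it fills a gap the paper deliberately leaves to the references.
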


\begin{figure}[H]
\centering
\begin{tikzcd}
A \arrow[d, "\iota_A"', hook] \arrow[r, "\varphi"] & G \\ F\arrow[ru, "\exists !\,\, \widetilde{\varphi}"', dashed] &  
\end{tikzcd}
\caption{Categorical definition of a free group $F$}
\end{figure}

We note that the extendability in \Cref{thm: free cat} corresponds to the set $A$ being \defin[freely independent set]{freely independent} in~$F$ (\ie a basis of $\gen{A} \leqslant F$), and the uniqueness of the extension corresponds to $A$ generating the entire $F$. We say that a subgroup $H\leqslant \Free[A]$ is a \defin{free factor of $\Free[A]$}, and we write $H\leqff \Free[A]$, when it is generated by a subset of some basis of $\Free[A]$; that is, if and only if $H =K*L$, where $K,L\leqslant \Free[A]$. A \defin{proper free factor} is a free factor different from the full ambient group.

\medskip

The family of free groups is, of course, of great algebraic relevance, as it, in some sense, encapsulates all the information about every possible group. This is made precise through the following classic result, elementary to prove yet of fundamental importance in Group Theory.

\begin{thm} \label{thm: G = F/N}
Every group $G$ (of rank $\kappa$) is isomorphic to a quotient of a free group (of rank $\kappa$). That is, for every group $G$ there exists a cardinal $\kappa$ and a normal subgroup $N \normaleq \Free[\kappa]$ such that $G \isom \Free[\kappa]/N$. \qed
\end{thm}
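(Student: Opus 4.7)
The plan is to obtain the desired quotient presentation by invoking the universal property of free groups (\Cref{thm: free cat}) applied to a minimal generating set of $G$.

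First, I would pick a generating set $A \subseteq G$ of smallest cardinality, so that $\card A = \rk(G) = \kappa$ by \Cref{def: rank}. Next, consider the free group $\Free[A]$ (whose rank is $\kappa$, as we will see later in the paper, but which in any case can be written as $\Free[\kappa]$ up to the canonical isomorphism induced by any bijection $A \to A'$). Applying \Cref{thm: free cat} to the inclusion $\varphi \colon A \hookrightarrow G$, we obtain a unique homomorphism $\widetilde{\varphi} \colon \Free[A] \to G$ that extends $\varphi$ through $\iota_A$.

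Then I would observe that $\widetilde{\varphi}$ is surjective: its image is a subgroup of $G$ containing $A\widetilde{\varphi} = A$, hence it contains $\gen{A} = G$. Setting $N = \ker \widetilde{\varphi} \normaleq \Free[A]$ and applying the first isomorphism theorem yields
\begin{equation*}
G \,\isom\, \Free[A]/N \,\isom\, \Free[\kappa]/N,
\end{equation*}
which is exactly the claimed presentation.

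There is essentially no obstacle here beyond unpacking definitions; the only mildly delicate point is to make sure a minimal generating set actually exists (which is fine under our standing assumption $\kappa \leq \aleph_0$, as one can pick a generating set and thin it down, or simply take $A = G$ when no smaller one is desired), and to identify $\Free[A]$ with $\Free[\kappa]$ via the canonical isomorphism induced by a bijection of bases. Both are routine given the machinery already set up.
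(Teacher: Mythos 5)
Your proof is correct and is precisely the standard argument the paper has in mind: extend the inclusion of a generating set $A\subseteq G$ of cardinality $\rk(G)$ to a homomorphism $\Free[A]\to G$ via \Cref{thm: free cat}, note surjectivity, and apply the first isomorphism theorem with $N=\ker\widetilde{\varphi}$. The paper omits the proof as elementary (hence the \qed in the statement), so there is nothing to compare beyond noting that your handling of the minor points (existence of a minimal generating set, identifying $\Free[A]$ with $\Free[\kappa]$) is fine.
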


We recall that a subgroup $H \leqslant G$ is said to be \defin[normal subgroup]{normal} in $G$, denoted by $H \normaleq G$, if $g^{-1} H g = H$ for every $g \in G$. For a subset $S \subseteq G$, the \defin{normal closure} of $S$ in~$G$, denoted by $\ncl{S}_G$ (or just $\ncl{S}$ if the ambient group is clear from the context) is the smallest normal subgroup of $G$ containing $S$; then we say that the elements in $S$ \defin[generate as a normal subgroup]{generate $\ncl{S}$ as a normal subgroup}. Finally, the \defin{normal rank} of a normal subgroup $N \normaleq G$, denoted by~$\nrk(N)$ is the smallest cardinality of a set $S$ such that $\ncl{S} = N$.

If, in \Cref{thm: G = F/N} we fix a basis for $\Free[\kappa]$ and a generating set for $N$ \emph{as a normal subgroup}, we reach the concept of presentation, also a cornerstone in Combinatorial, Geometric and Algorihmic Group Theory.

\begin{defn} \label{def: presentation}
Let $G$ be a group. A \defin[group presentation]{presentation} for $G$ is a pair $(A,R)$ such that $A$ is a set, $R \subseteq \Free[A]$, and $G\isom \Free[A] / \ncl{R}$. Then, 
we write $G = \pres{A}{R}$.
\end{defn}

From \Cref{thm: G = F/N}, it is clear that every group admits a (in fact infinitely many) presentation $G = \pres{A}{R}$, where $A$ is a generating set for the group $G$, and $R$ is a complete set of \defin[relations of a group]{relations}
for $G$. Accordingly, $A$ and $R$ are called the sets of \defin[generators of a group]{generators} and \defin[relators of a presentation]{relators} of the presentation $\pres{A}{R}$. Also, we say that $G$ is 
\defin[finitely generated group]{finitely generated} (f.g.)
if it admits a presentation with a finite number of generators; and
\defin[finitely presented group]{finitely presented} (f.p.) 
if it  admits a presentation  with a finite number of generators and relators.

\begin{rem}
    If $G = \pres{A}{R}$ and the context does not induce confusion, we usually abuse language and denote elements in $G$ by words in $(A^{\pm})^*$ representing them. That is, if $w \in (A^{\pm})^*$, we sometimes denote the element $w\ncl{R} \in G$ just by $w\in G$. In the same vein, if $S \subseteq (A^{\pm})^*$, we usually write $\gen{S}$ (or $\gen{S}_G$) to denote the subgroup of $G$ generated by the elements in $G$ represented by the words in $S$. 
\end{rem}

\begin{rem}
A group is free if and only if it admits a presentation with no relators. Concretely $\Free[A]$ admits the presentation $\pres{A}{-}$, which is called the \defin{standard presentation for $\Free[A]$}.
\end{rem}

\medskip

\section{Digraphs and automata}\label{sec: Digraphs and automata}

In this section, we present the basic graphical object  that is used throughout the text.

\begin{defn} \label{def: digraph}
A \defin{directed multigraph} (a \defin{digraph}, for short) is a tuple $\Dgri = (\Verts,\Edgs,\init,\term)$, where $\Verts =\Verts \Dgri$ is a nonempty set called the set of \defin[vertex]{vertices} of $\Dgri$, $\Edgs =\Edgs \Dgri$ is a set disjoint from $\Verts$ called the set of \defin[directed edge]{directed edges} or \defin[arc]{arcs} of $\Dgri$, and $\init$ and $\term$ are functions $\Edgs \xto{} \Verts$ called \defin[initial vertex]{initial} and \defin[terminal vertex]{terminal} \defin{incidence functions} of $\Dgri$, respectively. We say that $\Dgri$ is \defin[finite digraph]{finite} (\resp \defin[countable digraph]{countable}) if the cardinal $\card{(\Verts \Dgri \sqcup \Edgs \Dgri)}$ is so.
\end{defn}


If $\edgi \in \Edgs$ is an arc of $\Dgri$, then $\edgi \init$ is called the \defin[origin of an arc]{origin}  or \defin{initial vertex} of $\edgi$, and~$\edgi \term$ is called the \defin[end of an arc]{end} or \defin{terminal vertex} of~$\edgi$. Then, we also say that $\edgi$ is an arc from $\edgi \init$ to $\edgi \term$, and we write $\edgi \equiv \edgi\init \xarc{\ \,} \edgi \term$. Two arcs $\edgi,\edgii \in \Edgs$ are said to be \defin[reverse arcs]{reverse} of each other if $\edgi \init =\edgii \term$ and $\edgi \term = \edgii \init$.


\begin{rem}
Note that no restrictions apply to the incidence functions in \Cref{def: digraph}; in particular, we allow both the possibility of arcs having the same origin and end (called \defin[loop]{loops}), and of different arcs sharing the same respective origins and ends (called \defin{parallel arcs}).
\end{rem}

\begin{defn} \label{def: walk}
A \defin[walk in a digraph]{walk} in a digraph $\Dgri = (\Verts,\Edgs,\init,\term)$ is a finite alternating sequence of the form
\begin{equation} \label{eq: walk}
\walki =\verti_0 \edgi_1 \verti_1 \cdots \edgi_{l} \verti_{l} \, ,
\end{equation}
where $l\in \NN$ and, for every meaningful $i\in [0,l]$, $\verti_i \in \Verts \Dgri$, and $\edgi_i \in \Edgs\Dgri$ are such that $\edgi_i \init= \verti_{i-1}$ and $\edgi_i \term = \verti_{i}$. The number of occurrences (with possible repetitions) of arcs in $\walki =\verti_0 \edgi_1 \verti_1 \cdots \edgi_{l} \verti_{l}$, namely $l$, is called the \defin[walk length]{length} of $\walki$, denoted by $|\walki |=l$. 
\end{defn}

Note that walks of length $0$, called \defin[trivial walk]{trivial walks}, can be identified with vertices in $\Dgri$, whereas nontrivial walks are
characterized by the complete subsequence of arcs they contain. Accordingly, we write trivial walks simply as vertices, and, when $l\geqslant 1$, we sometimes abbreviate the walk \eqref{eq: walk} as $\edgi_1 \cdots \edgi_{l}$. 

The vertices $\verti_0$ and $\verti_l$ are the \defin[walk endpoints]{endpoints} of $\walki =\verti_0 \edgi_1 \verti_1 \cdots \edgi_{l} \verti_{l}$, called the \defin[origin of a walk]{origin} and the \defin[end of a walk]{end} of $\walki$, denoted by $\verti_0 = \walki \init$ and $\verti_l =\walki \term$ respectively; and we say that $\walki$ is a walk \defin[walk from $\verti_0$ to~$\verti_l$]{from $\verti_0$ to~$\verti_l$} (a $(\verti_0,\verti_l)$-walk), which is denoted by $\walki \equiv \verti_0 \xwalk{\ \,} \verti_{l}$.

The \defin{set of walks} in $\Dgri$ is denoted by \index{$\Walks(\Dgri)$}, and if $\Verti,\Vertii \subseteq \Verts \Dgri$, we denote by $\Walks_{\Verti,\Vertii}(\Dgri)$ (or simply by $\Walks_{\Verti \Vertii}$) the set of walks in~$\Dgri$ from a vertex in $\Verti$ to a vertex in $\Vertii$, \ie
 \begin{equation} \label{eq: Walks(P,Q)}
\Walks_{\Verti \Vertii}(\Dgri)
\,=\,
\Set{\walki \in \Walks(\Dgri) \st \walki \equiv \verti \xwalk{\ \,} \vertii \text{\  for } \verti \in \Verti \text{ and } \vertii\in \Vertii}.
 \end{equation}
If $\verti,\vertii \in \Verts \Dgri$ then we abbreviate
$\Walks_{\verti \vertii}(\Dgri) = \Walks_{\set{\verti}\set{\vertii}}(\Dgri)$. A walk whose initial and terminal vertices are equal (say, to $\verti$) is called a \defin{closed walk} or a \defin{$\verti$-walk}. The \defin{set of $\mathsf{p}$-walks} in $\vv{{\Gamma }}$ 
is (also) denoted by $\Omega _{\mathsf{p}}(\vv{{\Gamma }})$. 

If there exists a walk from $\verti$ to $\vertii$ (\ie if $\Walks_{\verti \vertii} \neq \varnothing$), then we write $\verti \xwalk{\ \,} \vertii$. 
A digraph $\Dgri$ is called \defin[strongly connected digraph]{strongly connected} if for every $\verti, \vertii \in \Verts \Dgri$, $\verti \xwalk{\ \,} \vertii$.

\begin{defn}
If $\walki$ and $\walkii$ are walks in a digraph $\Dgri$ such that $\walki \term = \walkii \init$, then the \defin[product of walks]{product} of $\walki$ and $\walkii$, denoted by $\walki \walkii$, is the walk (of length $|\walki|+|\walkii|$) obtained after concatenating $\walki$ and $\walkii$ in the natural way. It is clear that concatenation of walks is associative and, hence, $\Walks(\Dgri)$ is a semigroupoid (\wrt concatenation). Moreover, for every vertex $\verti \in \Verts \Dgri$, $\Walks_{\verti}(\Dgri)$ is a monoid with neutral element the trivial walk $\verti$.
\end{defn}

\subsection{Labeled digraphs}

\begin{defn} \label{defn: A-digraph}
Let $\Alfi$ be an alphabet. An \defin[labeled digraph]{$A$-labeled digraph} (an \defin{$\Alfi$-digraph}, for short) is
a tuple $\Ati = (\Dgri,\lab)$, were $\smash{\Dgri = (\Verts,\Edgs, \init, \term)}$ is a digraph (called the \defin{underlying digraph} of $\Ati$), and $\lab\colon \Edgs \xto{} \Alfi$.
We denote labeled digraphs by boldface capital Greek letters ($\Ati, \Atii,\Atiii, \ldots$).
\end{defn}

Note that if $\Ati =(\Dgri,\lab)$ is an $A$-digraph, then the arc-labelling $\lab$ extends naturally to a homomorphism (of semigroupoids) $\lab^* \colon \Walks(\Ati) \xto{} A^*$; 
we usually abuse language and write $\lab^* =\lab$ to lighten notation.
If $ (\walki)\lab =\wordi$, then we say that the walk $\walki$ \emph{reads} (or \emph{spells}) the word $\wordi$, and that the word~$\wordi$ \defin[label of a walk]{labels} the walk $\walki$. Furthermore, we write $\walki \equiv \verti \xwalk{\wordi} \vertii$ to express that $\walki$ is a walk from $\verti$ to $\vertii$ with label $(\walki)\lab = \wordi$, and $\verti \xwalk{\wordi} \vertii$ to express that the word $\wordi$ is readable as the label of some walk from $\verti$ to~$\vertii$.

\begin{defn}
Let $\Ati = (\Dgri,\lab)$ be an $A$-labeled digraph and let $\Verti,\Vertii \subseteq \Verts \Ati$. Then, 
 the set of labels of walks from $\Verti$ to $\Vertii$ in $\Ati$ is called the
\defin{language recognized by~$\Ati$ from $\Verti$ to $\Vertii$}, denoted by $\Lang_{\Verti \Vertii}(\Ati) = (\Walks_{\Verti \Vertii}(\Ati))\lab \subseteq A^*$. 

We extend to languages the notational conventions already introduced for walks; for example, $\Lang_{\verti}(\Ati) = \set{\wordi \in A^*\st \verti \xwalk{\wordi} \verti}\subseteq A^*$ is the \defin{language recognized by $\Ati$ at vertex $\verti$}.
\end{defn}
 
It is clear that,  
for every $\verti \in \Verts \Ati$, the walk labeling restricts to a surjective (not necessarily injective) homomorphism of monoids
 \begin{equation}
\begin{array}{rcl}
\lab \colon \Walks_{\verti}(\Ati) & \onto & \Lang_{\verti}(\Ati) \leqslant A^{*}. \\
\walki & \mapsto & (\walki)\lab
\end{array}
 \end{equation}




An arc with label $a \in A$ is called an \defin{$a$-arc}. We write $\edgi \equiv \verti \xarc{a} \vertii$ to express that $\edgi$ is an arc from $\verti$ to $\vertii$ with label $a \in A$. We denote by $\Edgs_{a}(\Ati)$ the set of $a$-arcs in~$\Ati$. Clearly, $\Edgs \Ati = \bigsqcup_{a\in A} \Edgs_a \Ati$.

The \defin{$a$-outdegree} (\resp \defin{$a$-indegree}) of a vertex~$\verti \in \Verts \Ati$, denoted by $\deg_a^+(\verti)$ (resp., by~$\deg_a^-(\verti)$) is the number of $a$-arcs with origin (\resp end) the vertex $\verti$; whereas the \defin[total $a$-degree]{(total) $a$-degree} of $\verti$ is $\deg^{\pm}_{a}(\verti)=\deg_{a}^{+}(\verti)+\deg_{a}^{-}(\verti)$. We define the \defin{indegree}, \defin{outdegree}, and \defin{total degree} of $\verti$ as 
$\deg^{-}(\verti) = \sum_{a \in \Alfi} \deg_{a}^{-}(\verti)$,
$\deg^{+}(\verti) = \sum_{a \in \Alfi} \deg_{a}^{+}(\verti)$,  
and $\deg^{\pm}(\verti) = 
\sum_{a \in \Alfi} \deg^{\pm}_{a}(\verti) =
\deg^{+}(\verti) + \deg^{-}(\verti) 
$.

A vertex $\verti$ in an $\Alfi$-digraph $\Ati$ is said to be \defin[saturated vertex]{saturated} if for every letter $a\in \Alfi$ there is at least one $a$-arc with origin $\verti$, \ie if $\deg_{a}^{+}(p) \geq 1$. Otherwise (\ie when $\deg_{a}^{+}(p)=0$), we say that $\verti$ is unsaturated (or \defin[deficient vertex]{$a$-deficient}, if we want to allude the missing label). The \defin{$a$-deficit} of $\Ati$, denoted by $\dfc_{a}(\Ati)$, is the number (cardinal) of $a$-deficient vertices in $\Ati$. 
An $\Alfi$-digraph is said to be \defin[saturated $\Alfi$-digraph]{saturated}\footnote{The term `complete' is standard in automata theory; we use the term `saturated' instead in order to avoid confusion with the notions of `complete graph' and `complete digraph'.} if all its vertices are so (that is if $\dfc_{a}(\Ati) = 0$, for all $a \in A$) and \defin[unsaturated $A$-digraph]{unsaturated} otherwise.

An $A$-digraph $\Ati$ is said to be \defin[deterministic vertex]{deterministic at a vertex} $\verti \in \Verts \Ati$ if no two arcs with the same label depart from $\verti$ (\ie if $\deg_{a}^{+}(\verti) \leq 1$ for every $a \in A$). An $A$-digraph is said to be \defin[deterministic $\Alfi$-digraph]{deterministic} if it is deterministic at every vertex; that is, if for every vertex $\verti \in \Verts \Ati$, and every pair of arcs $\edgi,\edgi'$ leaving $\verti$, $(\edgi)\lab = (\edgi') \lab$ implies $\edgi =\edgi'$; see~\Cref{fig: nondet}.

\begin{figure}[H]
\centering
  \begin{tikzpicture}[shorten >=1pt, node distance=.5cm and 1.75cm, on grid,auto,>=stealth']
   \node[state,semithick, fill=gray!20, inner sep=2pt, minimum size = 10pt] (1) {$\scriptstyle{\verti}$};
   \node[state] (2) [above right = of 1] {};
   \node[state] (3) [below right = of 1] {};

   \path[->]
        (1) edge[]
            node[pos=0.45,above] {$\alfi$}
            (2)
            edge[]
            node[pos=0.45,below] {$\alfi$}
            (3);
\end{tikzpicture}
\hspace{10pt}
\begin{tikzpicture}[shorten >=1pt, node distance=.5cm and 1.5cm, on grid,auto,>=stealth']
   \node[state,semithick, fill=gray!20, inner sep=2pt, minimum size = 10pt] (1) {$\scriptstyle{\verti}$};
   \node[state] (2) [right = 1.5 of 1] {};

   \path[->]
        (1) edge[loop above,min distance=12mm,in= 90-35,out=90+35]
            node[] {\scriptsize{$a$}}
            (1)
            edge
            node[pos=0.5,below] {$\alfi$}
            (2);
\end{tikzpicture} 
\hspace{15pt}
\begin{tikzpicture}[shorten >=1pt, node distance=.5cm and 2cm, on grid,auto,>=stealth']
   \node[state,semithick, fill=gray!20, inner sep=2pt, minimum size = 10pt] (1) {$\scriptstyle{\verti}$};
   \node[state] (2) [right = of 1] {};

   \path[->]
        (1) edge[bend right]
            node[pos=0.5,below] {$\alfi$}
            (2)
        (1) edge[bend left]
            node[pos=0.5,above] {$\alfi$}
            (2);
   \end{tikzpicture}
\hspace{15pt}
\begin{tikzpicture}[shorten >=1pt, node distance=.5cm and 1.5cm,
 on grid,auto,>=stealth']
   \node[state,semithick, fill=gray!20, inner sep=2pt, minimum size = 10pt] (1) {$\scriptstyle{\verti}$};

   \path[->]
        (1) edge[loop right,min distance=12mm,in=35,out=-35]
            node[] {\scriptsize{$a$}}
            (1)
             edge[loop left,min distance=12mm,in=180-35,out=180+35]
            node[] {\scriptsize{$a$}}
            (1);
\end{tikzpicture}   
\caption{Nondeterministic situations at vertex $\verti$}
\label{fig: nondet}
\end{figure}

\begin{rem} \label{rem: unique walk}
It is clear by induction that, if $\Ati$ is deterministic, then for every vertex $\verti$ in $\Ati$ and every word $\wordi\in \Alfi^*$ there is at most one walk in $\Ati$ reading $\wordi$ from $\verti$; we denote by $\verti \wordi$ its final vertex in case such a walk exists; otherwise, $\verti \wordi$ is undefined. In particular, if $\Ati$ is deterministic, then $\lab \colon \Walks_{\verti}(\Ati) \xto{} \Lang_{\verti}(\Ati)$ is an isomorphism of monoids.
\end{rem}

When two particular sets of vertices ---working as initial and terminal vertices by default--- are distinguished in a labeled digraph, it is usually called an automaton. In this context vertices are called \emph{states}\index{state of an automaton} and arcs are called \emph{transitions}\index{transition of an automaton}. We will focus on automata having a single vertex working both as initial and terminal vertex. 

\begin{defn}
A \defin[pointed $A$-automaton]{(pointed) $A$-automaton} is a pair $\Ati_{\!\verti} =(\Ati,\verti)$, where $\Ati$ is a labeled $A$-digraph and $\verti \in \Verts \Ati$ is a distinguished vertex, called the \defin[basepoint of an automaton]{basepoint} of $\Ati$, and usually denoted by $\bp$. The \defin{language recognized} by $\Ati_{\!\bp}$ is~$\Lang(\Ati_{\!\bp})=\Lang_{\bp}(\Ati)$. If the basepoint is clear from the context, we simply write~${\Ati_{\!\bp} =\Ati}$.
\end{defn}

If not stated otherwise, all the automata appearing in the rest of the \monograph\ are assumed to be pointed.

\subsection{Involutive labeled digraphs}

Recall that for a given alphabet $A$, the involutive closure of $A$ is the set $A^{\pm}$ consisting of the elements in $A$ and their formal inverses, \ie $A^{\pm} = A \sqcup A^{-1}$.


\begin{defn} \label{def: inv A-digraph}
An \defin{involutive $A$-digraph} is an $A^{\pm}$-digraph $\Ati =(\Verts,\Edgs,\init,\term,\lab)$ together with a map ${}^{-1} \colon \Edgs \xto{} \Edgs$ such that for every $\edgi \in \Edgs$,
 \begin{enumerate*}[ind]
\item $(\edgi^{-1})^{-1} = \edgi$, 
\item $\edgi^{-1}$ is reversed to $\edgi$, and 
\item\label{item: inv labs} $(\edgi^{-1})\lab =((\edgi)\lab)^{-1}$.
\end{enumerate*}
\end{defn}
Note that condition \ref{item: inv labs} implies that $\edgi^{-1} \neq \edgi$ for every arc $\edgi \in \Edgs$. Hence, the map ${}^{-1} \colon \Edgs \xto{} \Edgs$ is an involution without fixed points, which we call \defin{inversion of arcs} in $\Ati$. Accordingly, $\edgi^{-1}$ is called the \defin{inverse arc} of $\edgi$, for every $\edgi \in \Edgs$.

In summary,  in an involutive $A$-digraph $\Ati$, for every arc $\edgi \equiv \verti \xarc{a\,} \vertii$ (where $a\in A^{\pm}$) there exists a unique arc \smash{$\edgi^{-1} \equiv \verti \xcra{\ a^{-1}\!\!} \vertii$} (different from~$\edgi$) which we call the \emph{inverse} of $\edgi$, such that $(\edgi^{-1})^{-1} = \edgi$. More compactly, an involutive $A$-digraph
  is an $A^{\pm}$-digraph where
  arcs appear by mutually reversed pairs with mutually inverse labels:
  \vspace{3pt}
\begin{figure}[H]
\centering
\begin{tikzpicture}[shorten >=3pt, node distance=.3cm and 2cm, on grid,auto,>=stealth']
   \node[state] (0) {};
   \node[state] (1) [right = of 0] {};
   \path[->]
        ([yshift=0.4ex]0.east) edge[]
            node[pos=0.5,above] {$a$}
            ([yshift=0.3ex]1)
   ([yshift=-0.4ex]1.west) edge[dashed]
            node[pos=0.5,below] {$a^{-1}$}
            ([yshift=-0.3ex]0);
\end{tikzpicture}
\caption{Two inverse arcs in an involutive $A$-digraph}
\label{fig: involutive a-arc}
\end{figure}

\begin{rem}
If $\Ati$ is an involutive $A$-digraph then, for every $a \in A^{\pm}$ and every vertex $\verti \in \Verts \Ati$, $\deg_{a}^{+}(\verti) = \deg_{a^{-1}}^{-}(\verti)$.
\end{rem}

An arc in an involutive $A$-digraph $\Ati$ is called \defin[positive arc]{positive} (\resp \defin[negative arc]{negative}) if it is labeled by a letter in $A$ (\resp $A^{-1}$).  
We denote by~$\Edgs^{+} (\Ati)$ the set of positive arcs in~$\Ati$. The \defin[positive part of an automaton]{positive part} of $\Ati$, denoted by $\Ati^{+}$, is the $A$-digraph obtained after removing all the negative arcs from $\Ati$. It is clear that involutive digraphs are fully characterized by their positive part. Hence, we usually represent involutive $\Alfi$-digraphs
through their positive part,
with the convention that their (positive) arcs read the inverse label when crossed backwards. That is, if $\Ati$ is involutive, then $\Edgs \Ati =\bigsqcup_{\edgi \in \Edgs^+(\Ati)} \set{\edgi, \edgi^{-1}}$. We denote the corresponding equivalence relation by $\sim$.

\begin{exm}
The involutive $A$-digraph with one vertex and one (positive) $a$-loop for each letter $a \in A$ is called the \defin[bouquet]{$A$-bouquet}, denoted by $\bouquet_A$ or $\bouquet_n$, where $n = \card A$.
\end{exm}

\begin{figure}[H]
    \centering
    \begin{tikzpicture}[shorten >=1pt, node distance=.5cm and 1.5cm, on grid,auto,>=stealth']
   \node[state,accepting] (1) {};

   \path[->]
        (1) edge[blue,loop right,min distance=12mm,in=330+35,out=330-35]
            node[right] {\scriptsize{$a$}}
            (1)
            edge[red,loop right,min distance=12mm,in=90+35,out=90-35]
            node[above] {\scriptsize{$b$}}
            (1)
            edge[black,loop right,min distance=12mm,in=210+35,out=210-35]
            node[left] {\scriptsize{$c$}}
            (1);
\end{tikzpicture} 
\vspace{-15pt}
\caption{(The positive part of) $\bouquet_{3}$, a threefold bouquet}
\label{fig: bouquet}
\end{figure}
\vspace{-5pt}

\begin{defn}
Let $\Ati =(\Verts,\Edgs, \init,\term,\lab)$ be an involutive digraph. The \defin{underlying (undirected) graph} of~$\Ati$
is the undirected graph obtained after removing the labeling, and identifying every pair of mutually inverse arcs in $\Ati$ into a single edge with the same endpoints. Formally, 
it
is the undirected graph with vertex set $
\Verts \Ati$, edge set $
\Edgs\Ati /{\sim}$, and incidence map
$\nu\colon \Edgs/{\sim} \xto{} V \cup \binom{V}{2}$, $\set{\edgi,\edgi^{-1}} \mapsto \set{\edgi \init,\edgi \term}$. Note that~$\card (\Edgs\Ati /{\sim}) = \frac{1}{2} \card\Edgs{\Ati} = \card \Edgs^{+} \Ati$.
\end{defn}

\begin{rem} \label{rem: undirected graphs}
Note that every undirected graph is the underlying graph of some involutive automaton. That is, `\emph{unlabeled} involutive digraph' can be seen as an alternative definition for `undirected graph' according to this interpretation.
\end{rem}

Involutive digraphs inherit terminology from its underlying graph; for example, we say that an involutive (sub-)digraph is \defin[connected automaton]{connected}, \defin[vertex-transitive automaton]{vertex-transitive}, \defin[regular automaton]{($k$-)regular}, a \emph{path}, a \emph{cycle}, a \emph{tree}, or a \emph{spanning tree}\index{spanning tree (involutive digraph)} if its underlying graph is so. Similarly, the \defin[undirected degree]{(undirected) degree} of a vertex $\verti$ in an involutive digraph $\Ati$, denoted by $\deg_{\Ati}(\verti)$ (or just by $\deg(\verti)$ if $\Ati$ is clear) is defined to be its degree in the underlying graph; that is,~$\deg(\verti) = \frac{1}{2} \deg^{\pm}(\verti) = \deg^{+}(\verti) = \deg^{-}(\verti)$.

Finally, we define the \defin[rank of a graph]{rank} of a connected involutive automaton $\Ati$, denoted by~$\rk(\Ati)$, to be the rank of its underlying graph (\ie the number of edges outside any spanning tree). In particular, if $\Ati$ is finite, then $\rk (\Ati) = 1-\card \Verts\Ati + \card \Edgs^+\Ati$.

For the rest of the section we shall assume that $\Ati 
$ is an involutive $A$-digraph.

\begin{defn}\label{def: inverse walk}
Let $\walki =\verti_0 \edgi_1 \verti_1 \cdots \verti_{l-1} \edgi_l \verti_l$ be a walk in $\Ati$. Then, the \defin[inverse of a walk]{inverse} of $\walki$ is the walk $\walki^{-1} =\verti_l \edgi_l^{-1} \verti_{l-1} \cdots \verti_1 \edgi_1^{-1}\verti_0$. It is clear that $|\walki| =|\walki^{-1}|$ and $(\walki^{-1})\lab =((\walki) \lab)^{-1}$.
\end{defn}

\begin{defn}\label{def: reduced walk}
An \defin{elementary backtracking} in $\Ati$ is a walk consisting of two successive arcs inverse of each other; that is, a walk of the form $\edgi \edgi^{-1} $ where $\edgi \in \Edgs \Ati$. A walk is said to be \defin[reduced walk]{reduced} if it does not contain any backtracking. We use the notation $\walki \equiv \verti \xrwalk{\ \,} \vertii$ to denote that $\walki$ is a reduced walk from $\verti$ to $\vertii$.
A closed walk is said to be \defin[cyclically reduced walk]{cyclically reduced} if it is trivial or it is reduced and $\edgi_l \neq \edgi_0^{-1}$.
\end{defn}

\begin{rem}
Note that every backtracking in $\bm{{\Gamma }}$ reads a cancellation in
$(A^{\pm})^{*}$. Hence, if the label $(\omega )\ell $ is a reduced word
then the walk $\omega $ is also reduced. However, the converse is not necessarily true; in a nondeterministic environment, consecutive arcs that are not inverses of each other may still have inverse labels.
\end{rem}

\begin{defn}
Two walks $\walki,\walkii$ in $\Ati$ are said to be \defin[equivalent walks]{equivalent (modulo backtracking)}, denoted by $\walki \equiv \walkii$, if one can be obtained from the other after a finite sequence of backtracking insertions and removals. It is immediate to check that $\equiv$ is an equivalence relation in $\Walks(\Ati)$ preserving the endpoints.
\end{defn}

\begin{lem}\label{prop: unicitat cami reduit}
Each equivalence class $[\walki ]\in \Walks(\Ati)/\equiv$ contains one and only one reduced walk, denoted by~$\red{\walki}$, and called the \defin[reduction of a walk]{reduction} of the walks in $[\walki]$.
\end{lem}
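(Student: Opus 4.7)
The plan is to establish existence and uniqueness via a confluent terminating rewriting system. Define a one-step reduction $\walki \to \walkii$ on $\Walks(\Ati)$ by declaring that $\walkii$ is obtained from $\walki$ by removing a single backtracking $\edgi \edgi^{-1}$. Then $\equiv$ coincides with the equivalence relation generated by $\to$, and reduced walks are precisely the $\to$-normal forms; so it suffices to show that every $\equiv$-class contains a unique normal form.

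For existence, each one-step reduction $\walki \to \walkii$ strictly decreases length by $2$, so any chain $\walki \to \walki_1 \to \walki_2 \to \cdots$ terminates after at most $\frac{1}{2}|\walki|$ steps at a walk with no backtrackings. This terminal walk is reduced and $\equiv$-equivalent to $\walki$.

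For uniqueness I would invoke Newman's Lemma: termination (just established) together with local confluence of $\to$ guarantees a unique normal form in each $\equiv$-class. Local confluence amounts to the following: whenever $\walki$ admits two distinct one-step reductions $\walki \to \walki'$ and $\walki \to \walki''$, say by removing backtrackings at positions $i \leq j$, the walks $\walki'$ and $\walki''$ have a common further reduct. If $j \geq i+2$, the two backtrackings are positionally disjoint and can be removed independently, so performing the remaining removal in each branch produces a single walk $\walki^*$ with $\walki' \to \walki^*$ and $\walki'' \to \walki^*$. If $j = i+1$, the involutive identities in \Cref{def: inv A-digraph} give $\edgi_{i+2} = \edgi_{i+1}^{-1} = (\edgi_i^{-1})^{-1} = \edgi_i$, so the segment $\edgi_i \edgi_{i+1} \edgi_{i+2}$ has the form $\edgi \edgi^{-1} \edgi$; a direct inspection of the resulting vertex–arc sequences shows that both one-step removals delete the same two arcs (and the intermediate vertex) and leave the same walk, so in fact $\walki' = \walki''$.

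The main obstacle is the overlap case $j = i+1$, which requires careful bookkeeping of vertices and arcs and is the point where the involutive hypothesis, in particular $(\edgi^{-1})^{-1} = \edgi$, is essential; the disjoint case and the termination argument are routine. Once local confluence is in hand, Newman's Lemma packages everything cleanly, yielding a unique reduced representative $\red{\walki}$ in each class. As the text foreshadows, applying this lemma to the bouquet with a single vertex recovers the uniqueness of free reduction in $(A^{\pm})^{*}$.
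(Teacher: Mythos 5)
Your proposal is correct and rests on exactly the same key step as the paper's proof: the case analysis of two backtracking removals, which commute when positionally disjoint and yield the very same walk when they overlap (since there $\edgi_{i+2}=\edgi_i$). The only difference is packaging — you delegate the passage from local confluence plus termination to unique normal forms to Newman's Lemma, whereas the paper carries out that step by hand via a conversion sequence of minimal total length, eliminating a peak $\walki_{j-1}\leftsquigarrow\walki_j\rightsquigarrow\walki_{j+1}$ — so the two arguments are essentially the same.
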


\begin{proof}
By successively applying backtracking removals to any representative, it is clear that every equivalence class contains a reduced walk. To see uniqueness, suppose that $\walki$ and $\walki'$ are two reduced, equivalent walks, and let us show $\walki=\walki'$. Let $\walki =\walki_0 \leftrightsquigarrow
\walki_1 \leftrightsquigarrow \cdots \leftrightsquigarrow \walki_{n-1} \leftrightsquigarrow \walki_n =\walki'$ be a finite sequence of backtracking insertions and removals minimizing the total sum of lengths $N=\sum_{i=0}^n |\walki_i |$. 

Suppose that $n\geq 2$. Since $|\walki_0 |<|\walki_1 |$ and $|\walki_{n-1} |>|\walki_n |$, there must exist $j\in [1,n-1]$ such that $|\walki_{j-1} |<|\walki_j |>|\walki_{j+1} |$. Let us focus at the backtracking removals $\walki_{j-1} \leftsquigarrow \walki_j \rightsquigarrow \walki_{j+1}$ and at the occurrences of arcs being canceled along them. If there is one (or two) in common, then $\walki_{j-1} =\walki_{j+1}$ contradicting the minimality of $N$. Hence, these two pairs of occurrences are disjoint, \ie $\walki_j =\walkii \edgi\edgi^{-1}\walkii' \edgii\edgii^{-1}\walkii''$, for certain walks $\walkii,\walkii',\walkii''\in \Walks(\Ati)$ and certain arcs $\edgi,\edgii\in \Edgs\Ati$; now, replacing $\walki_{j-1} \leftsquigarrow \walki_j \rightsquigarrow \walki_{j+1}$ to either $\walki_{j-1} =\walkii \walkii' \edgii\edgii^{-1}\walkii'' \rightsquigarrow \walkii \walkii' \walkii'' \leftsquigarrow \walkii \edgi \edgi^{-1}\walkii'\walkii'' =\walki_{j+1}$ or $\walki_{j-1}=\walkii \edgi\edgi^{-1}\walkii'\walkii'' \rightsquigarrow \walkii\walkii'\walkii''\leftsquigarrow \walkii\walkii' \edgii\edgii^{-1}\walkii'' =\walki_{j+1}$ in the sequence above, we would reduce the value of $N$, contradicting again its minimality.

Hence, $n\leq 1$. But the case $n=1$ is not possible because both $\walki$ and $\walki'$ are reduced; therefore, $n=0$ and $\walki=\walki'$ as we wanted to see.  
\end{proof}




We denote by $\red{\Walks}(\Ati)$  the \defin{set of reduced walks}  in $\Ati$, and we extend the notation to $\red{\Walks}_{\Verti\Vertii}(\Ati)$, $\red{\Walks}_{\verti\vertii}(\Ati)$, $\red{\Walks}_{\verti}(\Ati)$, etc., in the natural way. It is clear that equivalence of walks is compatible with concatenation; this defines a groupoid in $\Walks(\Ati)/{\equiv}$, called the \defin{fundamental groupoid} of $\Ati$. The uniqueness of reduced walks provides a natural isomorphism between $\Walks(\Ati)/{\equiv}$ and $\red{\Walks}(\Ati)$ with the operation of `concatenation and reduction'.  
Restricting the fundamental groupoid to walks closed at a certain vertex, we recover the classical notion of fundamental group.

\begin{defn}
Let $\Ati$ be an involutive digraph, and let $\verti \in \Verts \Ati$. The \defin{fundamental group} of $\Ati$ at $\verti$ is $\pi_{\verti}(\Ati) =\red{\Walks}_{\verti}(\Ati) \isom \Walks_{\verti}(\Ati)/{\equiv}$.
\end{defn}

If $\walkii \equiv \verti \xrwalk{\ } \vertii$ is a reduced walk in $\Ati$, then the map $\pi_{\verti}(\Ati) \xto{} \pi_{\vertii}(\Ati), \ \walki \mapsto \walkii^{-1} \walki \walkii$ is clearly a group isomorphism. So, for a connected $\Ati$, the isomorphism class of $\pi_{\verti}(\Ati)$ does not depend on the vertex $\verti$.


It is also clear by construction that the fundamental group $\pi_{\verti}(\Ati)$ only involves (walks in) the connected component of $\Ati$ containing $\verti$. More precisely, only vertices visited by reduced $\verti$-walks contribute to the fundamental group $\pi_{\verti}(\Ati)$; this motivates the following definition. 

\begin{defn}
An involutive digraph $\Ati$ is said to be \defin[coreness \wrt a vertex]{core \wrt a vertex~$\verti$} (or \mbox{\defin[$\verti$-coreness]{$\verti$-core}}) if every vertex ---and so, every arc--- in $\Ati$ appears in some reduced \mbox{$\verti$-walk}. The (pointed) \defin[pointed core of a digraph]{$\verti$-core} of $\Ati$, denoted by $\core_{\verti}(\Ati)$, is the maximal $\verti$-core subdigraph of~$\Ati$ (as a pointed automaton with basepoint $\verti$). Similarly, $\Ati$ is said to be a \defin[strict coreness]{strict core} if every vertex in $\Ati$ appears in some nontrivial cyclically reduced walk in $\Ati$. The \defin{strict core} of $\Ati$, denoted by $\core^{*}(\Ati)$, is the maximal strict core subdigraph of~$\Ati$. Hence $\Ati$ is a $\verti$-core (\resp a strict core) if and only if $\Ati =\core_{\verti}(\Ati)$ (\resp $\Ati = \core^{*}(\Ati)$). Note that $\core_{\verti}(\Ati)$ is a pointed automaton whereas $\core^{*}(\Ati)$ is just an involutive digraph with no distinguished vertices (and independent from the eventual basepoint in~$\Ati$).
\end{defn}

If $\Ati$ can be obtained by identifying a vertex $\verti$ of some tree $T$ with a vertex of some involutive digraph $\Atii$ disjoint with $T$, then we say that $T$ is a \defin{hanging tree} of $\Ati$, and  $\Atii$ is obtained after removing the hanging tree $T$ from $\Ati$. Note that for every $\verti \in \Verts(\Ati)$, $\core^*(\Ati) \subseteq \core_{\verti}(\Ati)$; if $\Ati$ is a forest then $\core^*(\Ati)$ is empty, and otherwise, 
$\core^{*}(\Ati)$ (\resp $\core_{\verti}(\Ati)$) is obtained after removing from $\Ati$ all the hanging trees (\resp all the nontrivial hanging trees not containing~$\verti$). It follows that the $\verti$-core of $\Ati$ consists of the strict core $\core^{*}(\Ati)$ with a (possibly trivial) hanging tree ending at $\verti$; moreover, if $\Ati$ is connected then
the fundamental groups of $\Ati$, $\core_{\verti}(\Ati)$, and $\core^{*}(\Ati)$ (if nonempty) at any vertex are isomorphic.

\begin{rem}
If $\Ati$ is finite, then $\Ati$ is a strict core (\resp a $\verti$-core) if and only if it has no vertices of degree one (\resp no vertices of degree one except maybe $\verti$); in this case, $\core^{*}(\Ati)$ (\resp $\core_{\verti}(\Ati)$) can be obtained from~$\Ati$ by successively eliminating the vertices of degree one (\resp the vertices of degree one different from~$\verti$). In particular, the strict core (\resp the $\verti$-core) of a finite digraph is always computable. 
\end{rem}

If $\Ati$ is a pointed automaton and the basepoint is clear from the context, we will just write $\core(\Ati) =\core_{\bp}(\Ati)$ and call it the \defin[core of an automaton]{core} of $\Ati$; in this case, we say that $\Ati$ is \defin[coreness of an automaton]{core} instead of $\bp$-core.

The crucial theorem below states that a precise algebraic description of the fundamental group $\pi_{\verti}(\Ati)$ can be obtained in terms of any spanning tree of (the $\verti$-core of) the connected component of $\Ati$ containing $\verti$. It is easy to see that if $T$ is a spanning tree of $\Ati$ then, for every pair of vertices $\verti,\vertii \in \Verts \Ati$, there is a unique reduced walk $\verti \xrwalk{\ }\vertii$ using only arcs in $T$; we denote it by $\verti \xrwalk{\scriptscriptstyle{T}} \vertii$ or by $T[\verti,\vertii]$. Then, for every arc $\edgi \in \Edgs\Ati \setmin \Edgs T$, we call $\walki_{\edgi}=\verti \xrwalk{\scriptscriptstyle{T}} \! \bullet \! \xarc{\,\edgi\ } \!\bullet\! \xrwalk{\scriptscriptstyle{T}} \verti$ the \defin{$T$-petal} associated to $\edgi$. Note that $\walki_{\edgi^{-1}}=\walki_{\edgi}^{-1}$.

\begin{thm}\label{thm: B_T}
Let $\Ati$ be an involutive connected digraph,\footnote{Note that this is nothing more than a standard undirected graph, according to the interpretation in \Cref{rem: undirected graphs}.} let $\verti \in \Verts\Ati$, and let $T$ be a spanning tree of $\Ati$. Then, $\mathcal{B}_T=\{\omega_{\edgi} \st \edgi\in \Edgs^+\Ati \setmin \Edgs T\}$ is a free basis for $\pi_{\verti}(\Ati)$. In particular, the fundamental group $\pi_{\verti}(\Ati)$ is a free group of rank $\rk(\pi_{\verti}(\Ati)) = \rk(\Ati)$.
\end{thm}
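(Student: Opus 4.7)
My plan is to verify separately that $\mathcal{B}_T$ generates $\pi_{\verti}(\Ati)$ and that it is freely independent; the rank formula will then follow at once from $\card{\mathcal{B}_T} = \card{(\Edgs^{+}\Ati \setmin \Edgs T)}$, which matches $\rk(\Ati)$ by definition.

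For generation, I would take an arbitrary reduced closed walk $\walki = \edgi_1 \edgi_2 \cdots \edgi_l$ at $\verti$, with $\edgi_i \equiv u_{i-1} \xarc{\ \,} u_i$ and $u_0 = u_l = \verti$, and insert the trivially canceling segments $T[u_i, \verti]\, T[\verti, u_i]$ between consecutive arcs. This rewrites $\walki$ (modulo backtracking) as
\begin{equation*}
\walki \,\equiv\, \prod_{i=1}^{l}\bigl(T[\verti, u_{i-1}]\,\edgi_i\,T[u_i, \verti]\bigr).
\end{equation*}
Each factor is a closed walk at $\verti$. If $\edgi_i \in \Edgs T$, the whole factor lies inside $T$ and therefore reduces to the trivial walk by \Cref{prop: unicitat cami reduit}; otherwise, it is exactly the petal $\walki_{\edgi_i} = \walki_{\edgi}^{\pm 1}$ for some $\edgi \in \Edgs^{+}\Ati \setmin \Edgs T$ (using $\walki_{\edgi^{-1}} = \walki_{\edgi}^{-1}$). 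Hence $\walki$ is a product of elements of $\mathcal{B}_T \cup \mathcal{B}_T^{-1}$.

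For freeness, I would consider a formally reduced, nonempty product $\walki_{f_1}\cdots \walki_{f_k}$, where $f_i \in \Edgs\Ati \setmin \Edgs T$ (allowing both signs) and $f_{i+1} \neq f_i^{-1}$. With $f_i \equiv u_i \xarc{\ \,} v_i$, each intermediate piece $T[v_i, \verti]\,T[\verti, u_{i+1}]$ is a walk in $T$ and reduces there to $T[v_i, u_{i+1}]$, so
\begin{equation*}
\walki_{f_1}\cdots \walki_{f_k} \,\equiv\, T[\verti, u_1]\,f_1\,T[v_1, u_2]\,f_2\,\cdots\,f_k\,T[v_k, \verti].
\end{equation*}
I would then argue that this expression is already reduced: each $T[\cdot, \cdot]$ is reduced by construction; no backtracking can occur between a tree segment and an adjacent $f_i$ (since $f_i \notin \Edgs T$ while $\Edgs T$ is closed under arc inversion); and a cancellation between $f_i$ and $f_{i+1}$ (which would require $v_i = u_{i+1}$ and $f_{i+1} = f_i^{-1}$) is ruled out by hypothesis. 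By uniqueness of reduction (\Cref{prop: unicitat cami reduit}), this walk represents the product, and still contains the $k \geq 1$ non-tree arcs $f_1, \ldots, f_k$, hence it is nontrivial in $\pi_{\verti}(\Ati)$.

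The hard part is precisely this last step --- the careful case analysis showing that the displayed expression is already reduced. It hinges on the implicit convention that a spanning subgraph of an involutive digraph is closed under arc inversion, so that $\Edgs T = (\Edgs T)^{-1}$; once this is granted, the uniqueness of reduced walks provided by \Cref{prop: unicitat cami reduit} delivers freeness (and thereby the entire theorem) immediately.
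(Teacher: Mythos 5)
Your proof is correct and follows essentially the same route as the paper: generation by inserting tree-geodesic returns to $\verti$ (tree-arc factors collapse, non-tree factors become petals), and freeness by expanding a reduced product of petals and checking the resulting walk is nonempty and reduced, invoking \Cref{prop: unicitat cami reduit}. The only cosmetic difference is that you insert retractions at every arc while the paper highlights only the visits to non-tree arcs; the substance of both steps is identical.
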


\begin{proof}
Let $\walki\in \pi_{\verti}(\Ati)$ be a reduced walk. Highlighting the visits of $\walki$ to the arcs in $\Edgs\Ati \setmin \Edgs T$, we can write  
\vspace{5pt}
 \begin{equation*}
\walki \equiv \verti \xrwalk{T} \!\!\bullet\!\!  \xarc{\raisebox{0.6ex}{$\scriptstyle{\edgi_1^{\varepsilon_1}}$}} \!\!\bullet\!\! \xrwalk{T}  \!\!\bullet\!\!  \xarc{\raisebox{0.6ex}{$\scriptstyle{\edgi_2^{\varepsilon_2}}$}} \!\!\bullet\!\! \xrwalk{T} \!\!\bullet \ \cdots \ \bullet\!\! \xrwalk{T} \!\!\bullet\!\! \xarc{\raisebox{0.6ex}{$\scriptstyle{\edgi_l^{\varepsilon_l}}$}}\!\!\bullet\!\! \xrwalk{T} \verti,
\vspace{5pt}
 \end{equation*}
where $l\geq 0$, $\edgi_1,\ldots ,\edgi_l \in \Edgs^+\Ati \setmin \Edgs T$ (with possible repetitions), $\varepsilon_j =\pm 1$, and where the tree subwalks $\bullet\!\xrwalk{T} \!\bullet$ may be trivial. Observe that, up to reduction, $\walki$ coincides with  
\vspace{5pt}
 \begin{equation*} \label{eq: cami ampliat}
\verti \! \xrwalk{T} \!\! \bullet \!\! \xarc{\raisebox{0.6ex}{$\scriptstyle{\edgi_1^{\varepsilon_1}}$}} \!\!\bullet \!\! \xrwalk{T} \verti \xrwalk{T}\!\! \bullet \!\! \xarc{\raisebox{0.6ex}{$\scriptstyle{\edgi_2^{\varepsilon_2}}$}} \!\!\bullet\!\! \xrwalk{T} \verti \ \cdots \ \verti \xrwalk{T} \!\!\bullet\!\! \xarc{\raisebox{0.6ex}{$\scriptstyle{\edgi_l^{\varepsilon_l}}$}}\!\!\bullet\!\! \xrwalk{T} \verti
\vspace{5pt}
 \end{equation*}
and so, $\walki \equiv \walki_{\edgi_1}^{\varepsilon_1} \walki_{\edgi_2}^{\varepsilon_2} \cdots \walki_{\edgi_l}^{\varepsilon_l}$; this shows that $\mathcal{B}_T$ generates $\pi_{\verti}(\Ati)$. 

To see that $\mathcal{B}_T$ is free, consider a non-trivial product $\walki =\walki_{\edgi_1}^{\varepsilon_1} \walki_{\edgi_2}^{\varepsilon_2} \! \cdots \walki_{\edgi_l}^{\varepsilon_l}$, where $l\geq 1$, $\edgi_1,\ldots ,\edgi_l \in \Edgs^+\Ati \setmin \Edgs T$, and $\varepsilon_1,\ldots ,\varepsilon_l =\pm 1$; assuming it is reduced, \ie $\edgi_i=\edgi_{i+1}$ implies $\varepsilon_i=\varepsilon_{i+1}$, we have to see that $\walki\neq 1$. In fact, 
\vspace{5pt}
 \begin{align*}
\walki & =\walki_{\edgi_1}^{\varepsilon_1} \walki_{\edgi_2}^{\varepsilon_2} \! \cdots \walki_{\edgi_l} ^{\varepsilon_l} \notag \\[3pt]
& =\verti \! \xrwalk{T} \!\! \bullet \!\! \xarc{\raisebox{0.5ex}{$\scriptstyle{\edgi_1^{\varepsilon_1}}$}} \!\!\bullet \!\! \xrwalk{T} \verti \xrwalk{T}\!\! \bullet \!\! \xarc{\raisebox{0.5ex}{$\scriptstyle{\edgi_2^{\varepsilon_2}}$}} \!\!\bullet\!\! \xrwalk{T} \verti \ \cdots \ \verti \xrwalk{T} \!\!\bullet\!\! \xarc{\raisebox{0.5ex}{$\scriptstyle{\edgi_l^{\varepsilon_l}}$}}\!\!\bullet\!\! \xrwalk{T} \verti \notag \\[3pt] & =\verti \xrwalk{T} \!\!\bullet\!\! \xarc{\raisebox{0.5ex}{$\scriptstyle{\edgi_1^{\varepsilon_1}}$}} \!\!\bullet\!\! \xrwalk{T} \!\!\bullet\!\! \xarc{\raisebox{0.5ex}{$\scriptstyle{\edgi_2^{\varepsilon_2}}$}} \!\!\bullet\!\! \xrwalk{T} \!\!\bullet \ \cdots \ \bullet\!\! \xrwalk{T}  \!\!\bullet\!\! \xarc{\raisebox{0.5ex}{$\scriptstyle{\edgi_l^{\varepsilon_l}}$}}\!\!\bullet\!\! \xrwalk{T} \verti
  \end{align*}
  
\vspace{5pt}
\noindent
is non-trivial because this last walk is nonempty ($l\geq 1$) and reduced: the arcs $\edgi_i^{\varepsilon_i}$ lie outside~$T$ and, by hypothesis, any consecutive two (\ie when the corresponding subwalk $\bullet\! \xrwalk{\scriptscriptstyle{T}} \!\bullet$ is trivial) do not present backtracking. So, $\mathcal{B}_T$ is a free basis for $\pi_{\verti}(\Ati)$. In particular, $\pi_{\verti}(\Ati)$ is a free group of rank $\card \mathcal{B}_T =\rk(\Ati)$. 
\end{proof}

\begin{cor}\label{cor: incl => ff}
Let $\Ati$ and $\Atii$ be involutive digraphs. If $\Ati$ is a subdigraph of~$\Atii$ and $\verti \in \Verts \Ati$, then $\pi_{\verti}(\Ati)$ is a free factor of $\pi_{\verti}(\Atii)$.
\end{cor}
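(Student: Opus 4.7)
The strategy is to apply \Cref{thm: B_T} to both digraphs using compatible spanning trees. Since the fundamental group at a vertex only depends on the connected component containing that vertex, I first restrict to the connected components $\Ati_{0} \subseteq \Atii_{0}$ of $\verti$ in $\Ati$ and $\Atii$ respectively; this reduction is harmless because closed walks at $\verti$ in $\Ati$ (resp.~$\Atii$) are precisely those in $\Ati_{0}$ (resp.~$\Atii_{0}$).

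Next I would pick a spanning tree $T$ of $\Ati_{0}$ and extend it to a spanning tree $T'$ of $\Atii_{0}$ by a standard forest-to-tree completion. The key point in the extension is to choose $T'$ so that $\Edgs T' \cap \Edgs \Ati_{0} = \Edgs T$, which is possible because $T$ is already spanning \emph{within} $\Ati_{0}$, so any edges needed to extend $T$ to a spanning tree of $\Atii_{0}$ can be chosen outside $\Edgs \Ati_{0}$. With this choice, for any two vertices $p, q \in \Verts \Ati_{0}$, the unique reduced $T$-walk $T[p,q]$ and the unique reduced $T'$-walk $T'[p,q]$ coincide, since $T \subseteq T'$ and $T$ already connects $p$ to $q$ inside $\Atii_{0}$.

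Now by \Cref{thm: B_T}, the set $\mathcal{B}_{T} = \set{\walki_{\edgi}^{T} \st \edgi \in \Edgs^{+}\Ati_{0} \setminus \Edgs T}$ is a free basis of $\pi_{\verti}(\Ati_{0}) = \pi_{\verti}(\Ati)$ and $\mathcal{B}_{T'} = \set{\walki_{\edgi}^{T'} \st \edgi \in \Edgs^{+}\Atii_{0} \setminus \Edgs T'}$ is a free basis of $\pi_{\verti}(\Atii_{0}) = \pi_{\verti}(\Atii)$. For each $\edgi \in \Edgs^{+}\Ati_{0} \setminus \Edgs T$, our choice of $T'$ ensures $\edgi \in \Edgs^{+}\Atii_{0} \setminus \Edgs T'$, and the observation about tree walks gives $\walki_{\edgi}^{T} = \walki_{\edgi}^{T'}$. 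Hence $\mathcal{B}_{T} \subseteq \mathcal{B}_{T'}$, so $\pi_{\verti}(\Ati)$ is generated by a subset of a basis of $\pi_{\verti}(\Atii)$ and is therefore a free factor.

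The only subtle point is the compatible choice of spanning trees; beyond that, the argument is a direct bookkeeping application of \Cref{thm: B_T}.
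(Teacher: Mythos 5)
Your argument is correct and is exactly the (omitted) proof the paper intends: the corollary is stated as an immediate consequence of \Cref{thm: B_T}, obtained by extending a spanning tree of the component of $\verti$ in $\Ati$ to one of the component in $\Atii$, so that $\mathcal{B}_T\subseteq\mathcal{B}_{T'}$ and the smaller fundamental group is generated by a subset of a basis of the larger. Your compatibility condition $\Edgs T'\cap\Edgs\Ati_0=\Edgs T$ is in fact automatic (any further edge of $\Ati_0$ in $T'$ would close a cycle with $T$), so the only "subtle point" you flag takes care of itself.
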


\begin{proof}
Take a maximal tree $T$ of $\Ati$ and extend it to a maximal tree $T'$ of $\Atii$. Clearly, the basis $\mathcal{B}_{T'}$ for $\Atii$ extends the basis $\mathcal{B}_T$ for $\Ati$ (see~\Cref{thm: B_T}) and so, $\pi_{\verti}(\Ati)$ is a free factor of $\pi_{\verti}(\Atii)$.
\end{proof}

\begin{rem}
The converse of \Cref{cor: incl => ff} is far from true, 
given that the number of involutive subdigraphs of a finite involutive digraph is finite, whereas the number of free factors of a non-cyclic free group is not.
\end{rem}

\begin{defn}\label{def: recognized}
The \defin{reduced label} of a walk $\walki \in \Walks(\Ati)$ is $(\walki )\rlab =\red{(\walki) \lab} \in \Free[A]$. We write $\red{\Lang}_{\Verti \Vertii} (\Ati) = \red{\Lang_{\Verti \Vertii} (\Ati)}$, $\red{\Lang}_{\verti \vertii} (\Ati) = \red{\Lang_{\verti \vertii} (\Ati)}$, $\red{\Lang}_{\verti} (\Ati) = \red{\Lang_{\verti} (\Ati)}$, etc.

As seen below, for any vertex $\verti \in \Verts\Ati$, the set $\red{\Lang}_{\verti}(\Ati)$ of reduced labels of $\verti$-walks in $\Ati$ is a subgroup of $\Free[A]$, which we call the \defin{subgroup recognized by $\Ati$ at~$\verti$}, and we denote by $\gen{\Ati}_{\verti}$; that is,
 \begin{equation}
\gen{\Ati}_{\verti} \,=\, \red{\Lang}_{\verti}(\Ati) \,=\, \set{(\walki )\rlab \st \walki \in \Walks_{\verti}(\Ati) }\,\leqslant\, \Free[A].
 \end{equation}
If $\Ati$ is an involutive pointed $A$-automaton with basepoint $\bp$, then we say that $\gen{\Ati} = \gen{\Ati}_{\bp}$ is the \defin[recognized subgroup]{subgroup recognized} by $\Ati$.
\end{defn}

Below, we see that certain sets of reduced labels admit a neat algebraic description, as it is straightforward to check.

\begin{lem} \label{lem: recognized}
Let $\Ati$ be a connected involutive $A$-digraph, and let $\verti,\vertii \in \Verts \Ati$. Then,
 \begin{enumerate}[ind]
\item the set $\gen{\Ati}_{\verti} =\red{\Lang}_{\verti}(\Ati)$ is a subgroup of $\Free[A]$;
\item \label{item: coset recognized} the set $\red{\Lang}_{\verti \vertii}(\Ati)$ of reduced labels of $(\verti,\vertii)$-walks in $\Ati$ is a right-coset of $\gen{\Ati}_{\verti}$ and a left-coset of $\gen{\Ati}_{\vertii}$; more precisely, $\red{\Lang}_{\verti \vertii}(\Ati) \,=\, \gen{\Ati}_{\verti} \, u_{\verti \vertii} \,=\, u_{\verti \vertii} \, \gen{\Ati}_{\vertii}$, for any $u_{\verti \vertii} \in \red{\Lang}_{\verti \vertii}(\Ati)$;
\item the subgroup $\gen{\Ati}_{\vertii}$ recognized by $\Ati$ at $\vertii$ is a conjugate of the subgroup $\gen{\Ati}_{\verti}$ recognized by $\Ati$ at $\verti$; more precisely: $\gen{\Ati}_{\vertii} \,=\, u_{\verti \vertii}^{-1} \, \gen{\Ati}_{\verti} \, u_{\verti \vertii}$, for any $u_{\verti \vertii} \in \red{\Lang}_{\verti \vertii}(\Ati)$. \qed
\end{enumerate}
\end{lem}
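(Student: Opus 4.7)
My plan is to verify the three parts sequentially, with (iii) being a formal consequence of (ii). Throughout, the key fact I will use repeatedly is that the walk-labelling map $\Walks(\Ati) \to (A^{\pm})^\ast$ is a semigroupoid homomorphism (as noted after Definition~\ref{defn: A-digraph}); composing it with the canonical quotient $(A^{\pm})^\ast \to \Free[A]$ of equation~\eqref{eq: free group def} yields that $(\walki\walkii)\rlab = (\walki)\rlab \cdot (\walkii)\rlab$ in $\Free[A]$ whenever the concatenation $\walki\walkii$ is defined. This is the only piece of real content; everything else is bookkeeping.

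For (i), I check the three subgroup axioms for $\gen{\Ati}_\verti = \red{\Lang}_\verti(\Ati)$. The trivial walk at $\verti$ witnesses $1 \in \gen{\Ati}_\verti$. Given $u = (\walki)\rlab$ and $v = (\walkii)\rlab$ with $\walki,\walkii \in \Walks_\verti(\Ati)$, the product walk $\walki\walkii$ is again closed at $\verti$ and has reduced label $uv$, so $uv \in \gen{\Ati}_\verti$. Given $u = (\walki)\rlab \in \gen{\Ati}_\verti$, the inverse walk $\walki^{-1}$ of Definition~\ref{def: inverse walk} is also closed at $\verti$ and, since $(\walki^{-1})\lab = ((\walki)\lab)^{-1}$, its reduced label is $u^{-1}$, so $u^{-1} \in \gen{\Ati}_\verti$.

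For (ii), fix $u_{\verti\vertii} = (\walki_0)\rlab$ with $\walki_0 \in \Walks_{\verti\vertii}(\Ati)$ and prove both inclusions. The containment $\gen{\Ati}_\verti \, u_{\verti\vertii} \subseteq \red{\Lang}_{\verti\vertii}(\Ati)$ follows by concatenating any $\verti$-walk with $\walki_0$ on the right; conversely, for any $w = (\walkii)\rlab$ with $\walkii \in \Walks_{\verti\vertii}(\Ati)$, the walk $\walkii\walki_0^{-1}$ is a $\verti$-walk whose reduced label $w \, u_{\verti\vertii}^{-1}$ lies in $\gen{\Ati}_\verti$, giving $w \in \gen{\Ati}_\verti \, u_{\verti\vertii}$. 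The symmetric argument (prepending $\walki_0$ or $\walki_0^{-1}$ on the left instead) yields $\red{\Lang}_{\verti\vertii}(\Ati) = u_{\verti\vertii} \gen{\Ati}_\vertii$. Finally, (iii) is an immediate algebraic consequence of (ii): equating the two coset descriptions gives $\gen{\Ati}_\verti \, u_{\verti\vertii} = u_{\verti\vertii} \gen{\Ati}_\vertii$, and multiplying both sides on the left by $u_{\verti\vertii}^{-1}$ yields $\gen{\Ati}_\vertii = u_{\verti\vertii}^{-1} \gen{\Ati}_\verti \, u_{\verti\vertii}$. If $\red{\Lang}_{\verti\vertii}(\Ati) = \varnothing$ (i.e.\ $\verti$ and $\vertii$ lie in different connected components) all three coset/conjugacy statements are vacuous. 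The whole argument is simply the translation between the groupoid structure on walks modulo backtracking and the group structure of $\Free[A]$, so I do not expect any real obstacle.
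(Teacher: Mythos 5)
Your proof is correct and is exactly the straightforward verification the paper has in mind (the lemma is stated with its proof omitted as ``straightforward to check''): concatenation/inversion of walks plus the fact that reduced labelling is multiplicative into $\Free[A]$, with (iii) deduced formally from (ii). Your remark on the vacuous case $\red{\Lang}_{\verti\vertii}(\Ati)=\varnothing$ is a sensible touch; nothing is missing.
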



The relation between the fundamental group $\pi_\verti(\Ati)$ and the recognized subgroup $\gen{\Ati}_{\verti}$ of an involutive $A$-digraph $\Ati$ at vertex $\verti$ is encapsulated in the following natural group epimorphism:
 \begin{equation} \label{eq: pi onto gen}
\begin{array}{rcl}
\red{\lab}\colon \pi_{\verti}(\Ati) & \onto & \gen{\Ati}_{\verti} \leqslant \Free[A] \\ \walki & \mapsto & \red{(\walki) \lab}.
\end{array}
 \end{equation}

An immediate consequence of the homomorphism \eqref{eq: pi onto gen} is that the subgroup recognized by an automaton does not change after taking the core.

\begin{cor} \label{cor: <core> = <ati>}
For any involutive $A$-digraph $\Ati$ and any vertex $\verti \in \Verts \Ati$, we have that $\gen{\core_{\verti}(\Ati)}_{\verti} = \gen{\Ati}_{\verti}$. \qed
\end{cor}

Observe that, by \Cref{rem: unique walk}, this homomorphism is injective whenever $\Ati$ is deterministic. A description of the kernel of $\red{\lab}$ in the general situation will be obtained in the next section with the help of Stallings' automata (see \Cref{cor: ker rlab}).

\begin{lem}\label{lem: deterministic gamma}
If $\Ati$ is deterministic then $\red{\lab} \colon \pi(\Ati) \xto{} \gen{\Ati}$ is an isomorphism of groups; in particular, $\gen{\Ati}$ is free and $\rk (\gen{\Ati}) =\rk(\pi(\Ati)) =\rk(\Ati)$. \qed
\end{lem}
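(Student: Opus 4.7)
The plan is to exploit the fact that $\red{\lab}$ is already known to be a surjective group homomorphism by~\eqref{eq: pi onto gen}; so the only task is injectivity. Since $\Ati$ is deterministic, \Cref{rem: unique walk} lets me identify $\pi(\Ati) \isom \red{\Walks}_{\bp}(\Ati)$, so an element of $\ker(\red{\lab})$ is represented by a unique reduced $\bp$-walk $\walki = \edgi_1 \cdots \edgi_l$ whose label $(\walki)\lab = a_1 \cdots a_l \in (A^\pm)^*$ freely reduces to the empty word.

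The core observation I would prove, and which drives everything, is the following: \emph{in a deterministic involutive $A$-digraph, the label of every reduced walk is already a reduced word in $(A^\pm)^*$.} Suppose otherwise, so $a_i a_{i+1}$ is a cancellation for some $i$, i.e.\ $a_{i+1} = a_i^{-1}$. At the vertex $\verti_i = \edgi_i \term = \edgi_{i+1}\init$, the involutive inverse $\edgi_i^{-1}$ leaves $\verti_i$ with label $a_i^{-1}$, and so does $\edgi_{i+1}$. Determinism at $\verti_i$ forces $\edgi_{i+1} = \edgi_i^{-1}$, so $\walki$ contains the backtracking $\edgi_i \edgi_i^{-1}$, contradicting that $\walki$ is reduced. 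Granted this, if $\red{(\walki)\lab} = \emptyword$ then already $(\walki)\lab = \emptyword$, which forces $l = 0$, so $\walki$ is the trivial $\bp$-walk. Hence $\ker(\red{\lab})$ is trivial and $\red{\lab}$ is an isomorphism.

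The ``in particular'' clause is immediate from what has already been proved: \Cref{thm: B_T} gives that $\pi(\Ati)$ is a free group of rank $\rk(\Ati)$, and freeness and rank transfer across the isomorphism $\red{\lab}$ to yield $\gen{\Ati}$ free with $\rk(\gen{\Ati}) = \rk(\pi(\Ati)) = \rk(\Ati)$.

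The only real content is the boxed observation above; there is no significant obstacle beyond noticing that determinism together with the involutive structure rigidifies walks enough to make cancellations in the label visible as backtrackings in the walk. Everything else is either cited from earlier in the text (the homomorphism~\eqref{eq: pi onto gen}, \Cref{rem: unique walk}, \Cref{thm: B_T}) or formal bookkeeping.
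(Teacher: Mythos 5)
Your proposal is correct and follows essentially the route the paper intends: the paper dispatches the lemma as an immediate consequence of \Cref{rem: unique walk} (determinism makes the labeling of walks injective), and your key observation --- that in a deterministic involutive digraph a reduced walk reads an already reduced word, since a cancellation in the label would force a backtracking --- is exactly the converse direction alluded to in the paper's remark after \Cref{def: reduced walk}, here made explicit to show the kernel of $\red{\lab}$ is trivial. The ``in particular'' clause via \Cref{thm: B_T} matches the paper as well.
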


Finite, deterministic, involutive automata exhibit certain rigidity that will be later exploited algebraically.

\begin{lem} \label{lem: = defc}
If an involutive $A$-digraph $\Ati$ is deterministic and finite then, for every $a \in A$, $\dfc_{a}(\Ati) = \dfc_{a^{-1}}(\Ati)$.
\end{lem}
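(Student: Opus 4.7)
The plan is to count $a$-arcs in two ways and exploit the involutive pairing between $a$-arcs and $a^{-1}$-arcs.

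First I would observe that, by the determinism hypothesis, for every letter $b \in A^{\pm}$ the out-$b$-degree of each vertex is either $0$ or $1$. Hence, summing $\deg_{b}^{+}$ over vertices, the total number of $b$-arcs equals the number of vertices that are \emph{not} $b$-deficient; since $\Ati$ is finite, this gives the identity
\begin{equation*}
    \card{\Edgs_b \Ati} \,=\, \card{\Verts \Ati} - \dfc_b(\Ati).
\end{equation*}
Applying this with $b = a$ and with $b = a^{-1}$ will yield two expressions that we can compare once we know $\card{\Edgs_a \Ati} = \card{\Edgs_{a^{-1}} \Ati}$.

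Next I would invoke the involutive structure: the map ${}^{-1} \colon \Edgs \Ati \to \Edgs \Ati$ restricts to a bijection $\Edgs_a \Ati \to \Edgs_{a^{-1}} \Ati$ (by condition \ref{item: inv labs} of \Cref{def: inv A-digraph}, together with the fact that $^{-1}$ is a fixed-point free involution). Therefore $\card{\Edgs_a \Ati} = \card{\Edgs_{a^{-1}} \Ati}$, and combining with the previous display gives
\begin{equation*}
    \card{\Verts \Ati} - \dfc_a(\Ati) \,=\, \card{\Verts \Ati} - \dfc_{a^{-1}}(\Ati).
\end{equation*}
Since $\card{\Verts \Ati}$ is finite, we may cancel it and conclude $\dfc_a(\Ati) = \dfc_{a^{-1}}(\Ati)$.

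There is no real obstacle here; the only subtlety is that finiteness is genuinely used when subtracting $\card{\Verts \Ati}$ from both sides (the corresponding statement would fail for infinite $\Ati$, where $\card{\Verts \Ati} - \dfc_a(\Ati)$ is not meaningful as cardinal subtraction). Both determinism (so that out-degrees are at most $1$ and thus $\card{\Edgs_b \Ati}$ equals the count of non-deficient vertices) and the involutive pairing of arcs are essential, and each is used exactly once.
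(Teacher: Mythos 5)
Your proof is correct and follows essentially the same counting argument as the paper: determinism gives $\dfc_b(\Ati)=\card\Verts\Ati-\card\Edgs_b\Ati$, and the involution pairs $a$-arcs with $a^{-1}$-arcs (the paper phrases this pairing vertexwise as $\deg_a^+(\verti)=\deg_{a^{-1}}^-(\verti)$ rather than as a global bijection $\Edgs_a\Ati\to\Edgs_{a^{-1}}\Ati$, but it is the same idea). Your remark on where finiteness enters matches the paper's use of it as well.
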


\begin{proof}
Note that if $\Ati$ is involutive and deterministic then, for every vertex $\verti \in \Verts \Ati$, $0\leq \deg_{a}^+(\verti) = \deg_{a^{-1}}^-(\verti) \leq 1$. Since $\Ati$ (and hence $\Verts \Ati$) is finite,
 \[
\dfc_{a}(\Ati)
\,=\,
\card\Verts \Ati - \sum_{\verti \in \Verts{\Ati}}
\deg_{a}^+(\verti)
\,=\,
\card\Verts \Ati - \sum_{\verti \in \Verts{\Ati}}
\deg_{a^{-1}}^-(\verti)
\,=\,
\dfc_{a^{-1}}(\Ati). \tag*{\qedhere}
 \]
\end{proof}

Note that the finiteness condition is essential in~\Cref{lem: = defc}, as the following example shows.
\begin{figure}[H] 
  \centering
  \begin{tikzpicture}[shorten >=1pt, node distance=1.2 and 1.2, on grid,auto,>=stealth']
   \node[state] (0) {};
   \node[state] (1) [right = of 0]{};
   \node[state] (2) [right = of 1]{};
   \node[state] (3) [right = of 2]{};
   \node[state] (4) [right = of 3]{};
   \node[] (5) [right = 0.8 of 4]{};
   \node[] (dots) [right = 0.2 of 5]{$\cdots$};

   \path[->]
        (0) edge[loop above,red,min distance=10mm,in=55,out=125]
            node[left = 0.1] {\scriptsize{$b$}}
            (0)
            edge[blue]
            node[below] {\scriptsize{$a$}}
            (1);

    \path[->]
        (1) edge[loop above,red,min distance=10mm,in=55,out=125]
            node[left = 0.1] {}
            (1)
            edge[blue]
            (2);

    \path[->]
        (2) edge[loop above,red,min distance=10mm,in=55,out=125]
            (2)
            edge[blue]
            (3);

    \path[->]
        (3) edge[loop above,red,min distance=10mm,in=55,out=125]
            (3)
            edge[blue]
            (4);
    \path[->]
        (4) edge[loop above,red,min distance=10mm,in=55,out=125]
            (4)
            edge[blue]
            (5);
            
\end{tikzpicture}
\caption{An infinite deterministic automaton $\Ati$ with $\defc[\alfi]{\Ati}=0$ but $\defc[\alfi^{\text{-}1}]{\Ati}=1$}
\label{fig: Infinte automaton with different deficit}
\end{figure}



We conclude this section by introducing the concept that will allow us to bijectively link $A$-automata to subgroups of $\Free[A]$.

\begin{defn}
A pointed involutive $A$-digraph $\Ati$ is said to be \defin[reduced automaton]{reduced} if it is deterministic and core. We denote by $\red{\Atts}_{\hspace{1pt}\bp}(A)$ the set of reduced pointed involutive $A$-automata. 
\end{defn}

\subsection{Homomorphisms of digraphs}

\begin{defn} \label{def: hom digraphs}
Let $\Ati =(\Dgri,\lab, {}^{-1})$ and $\Ati' =(\Dgri',\lab',{}^{-1})$ be two involutive \mbox{$A$-digraphs}. A \defin[homomorphism of involutive $A$-digraphs]{homomorphism (of involutive $A$-digraphs)} between $\Ati$ and $\Ati'$, denoted by ${\varphi \colon \Ati \xto{} \Ati'}$, is a pair of maps $\varphi =(\varphi_{_\Verts}\colon \Verts\Ati \xto{} \Verts\Ati',\varphi_{_\Edgs}\colon \Edgs\Ati \xto{} \Edgs\Ati')$ such that, for every $\edgi \in \Edgs \Ati$:
 \begin{enumerate*}[ind]
\item $\edgi\varphi_{_\Edgs} \init =\edgi \init \varphi_{_\Verts}$ and $\edgi\varphi_{_\Edgs} \term =\edgi\term \varphi_{_\Verts}$,
\item $(\edgi \varphi_{_\Edgs})\lab' =(\edgi)\lab$, and
\item $(\edgi^{-1}) \varphi_{_\Edgs}=(\edgi\varphi_{_\Edgs})^{-1}$.
 \end{enumerate*}
A \defin[homomorphism of pointed $A$-automata]{homomorphism of (pointed) $A$-automata} ${\varphi \colon \Ati_{\!\bp} \xto{} \Ati'_{\bp'}}$ is a homomorphism of the underlying $A$-digraphs preserving the basepoints, $\bp \varphi_{_\Verts} = \bp'$.
\end{defn}


\begin{rem}
Note that if $\Ati'$ is deterministic, then any homomorphism of $A$-digraphs $\varphi \colon \Ati \xto{} \Ati'$ is characterized by its restriction $\varphi_{_\Verts} \colon \Verts \Ati \xto{} \Verts\Ati'$, which we will identify with the homomorphism $\varphi =(\varphi_{_\Verts}
, \varphi_{_\Edgs})$ itself.   
\end{rem}

A homomorphism $\varphi\colon \Ati \xto{} \Ati'$ of $A$-digraphs extends naturally to the level of walks, $\varphi \colon \Walks(\Ati) \xto{} \Walks(\Ati')$, $\walki =\verti_0 \edgi_1 \verti_1 \cdots \edgi_l \verti_l \mapsto \walki \varphi =(\verti_0 \varphi )(\edgi_1 \varphi )(\verti_1\varphi )\cdots (\edgi_l\varphi )(\verti_l\varphi)$; and to the level of fundamental groupoids, $\varphi^{*} \colon \red{\Walks}(\Ati) \xto{} \red{\Walks}(\Ati')$, $\walki \mapsto \red{\walki \varphi}$. Of course, $\varphi^{*}$ is a groupoid homomorphism commuting with the labeling maps, $(\walki\varphi^{*})\red{\lab'} =(\walki)\rlab$, $\forall \walki \in \pi(\Ati)$. Moreover, restricting to closed walks at a certain vertex $\verti \in \Verts \Ati$, we obtain a group homomorphism $\varphi_{\verti} \colon \pi^*_{\verti}(\Ati) \xto{} \pi_{\verti \varphi}(\Ati')$ between the corresponding fundamental groups. 
 

Observe that if $\varphi\colon \Ati \xto{} \Ati'$ is a homomorphism of pointed involutive $A$-automata then $\gen{\Ati}\leqslant \gen{\Ati'}$ and the following diagram (of group homomorphisms), where $\iota$ denotes the natural inclusion, commutes.
\vspace{5pt}
\begin{equation} \label{eq: label comm}
\begin{tikzcd}[column sep=20pt,ampersand replacement=\&]
\pi(\Ati) \arrow[r, "\varphi^{*}"] \arrow[d, "\red{\lab}"', ->>] \& \pi(\Ati') \arrow[d, "\red{\lab'}", ->>] \\ \gen{\Ati} \arrow[r, hook,"\iota"] \& \gen{\Ati'}
\end{tikzcd}
 \end{equation}
Note, however, that the inclusion $\gen{\Ati} \leqslant \gen{\Ati'}$ does not guarantee the existence of a homomorphism $\varphi\colon \Ati \xto{} \Ati'$. This converse implication will be true under certain extra conditions for the involved automata. 

\begin{prop} \label{prop: main prop}
Let $\Ati$ and $\Ati'$ be reduced pointed $A$-automata. Then, $\gen{\Ati} \leqslant \gen{\Ati'}$ if and only if there exists an $A$-homomorphism $\Ati \xto{} \Ati'$ and, in this case, the homomorphism is unique. 
\end{prop}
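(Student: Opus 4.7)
The implication $(\Leftarrow)$ is immediate from the commutative square~\eqref{eq: label comm}: any homomorphism $\varphi\colon \Ati\to\Ati'$ of pointed $\Alfi$-automata sends $\bp$-walks to $\bp'$-walks with the same label, hence $\gen{\Ati}\leqslant \gen{\Ati'}$. For the converse, assume $\gen{\Ati}\leqslant \gen{\Ati'}$ and define $\varphi$ vertex by vertex: since $\Ati$ is $\bp$-core, for each $\verti\in\Verts\Ati$ there is a reduced walk $\walki_\verti\colon \bp\xrwalk{\ }\verti$, whose label $u_\verti$ is a reduced word (in a deterministic involutive digraph, reduced walks carry reduced labels). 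Set $\verti\varphi$ to be the endpoint in $\Ati'$ of reading $u_\verti$ from $\bp'$.

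The key technical step is to check that $u_\verti$ is readable from $\bp'$ in $\Ati'$. For this, apply $\bp$-core a second time to fix a reduced closed $\bp$-walk $\alpha\beta$ passing through $\verti$, and pick $\walkii_\verti\in\set{\beta,\alpha^{-1}}$ so that $\walki_\verti\walkii_\verti$ remains reduced: one of the two choices always works, since this amounts to requiring that the first arc of $\walkii_\verti$ not be the inverse of the last arc of $\walki_\verti$, which is automatic for $\beta$ when the last arc of $\walki_\verti$ matches that of $\alpha$, and for $\alpha^{-1}$ otherwise. The concatenated walk $\walki_\verti\walkii_\verti$ then has reduced label $u_\verti v_\verti\in\gen{\Ati}\leqslant \gen{\Ati'}$, which therefore labels a reduced closed walk at $\bp'$ in $\Ati'$; in particular, the prefix $u_\verti$ is readable.

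Independence of the choice of $\walki_\verti$ follows from a maximal-common-suffix decomposition: given two labels $u,u'\in \red{\Lang}_{\bp\verti}(\Ati)$, write $u=u_0 s$ and $u'=u_0' s$ with $s$ the maximal common suffix. By maximality no cancellation occurs at the junction of $u_0$ and $(u_0')^{-1}$, so the word $u_0(u_0')^{-1}=u(u')^{-1}$ is already reduced; since it lies in $\gen{\Ati'}$, reading it from $\bp'$ in $\Ati'$ is a reduced closed walk, so determinism forces both $u_0$ and $u_0'$ to drive $\bp'$ to a common vertex $\tau$, and reading $s$ from $\tau$ yields a uniquely determined endpoint. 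To promote $\varphi$ to an $\Alfi$-digraph homomorphism, each arc $\edgi\colon\verti_1\xarc{a}\verti_2$ is handled via a reduced walk $\bp\xrwalk{u_1}\verti_1$: either $u_1 a$ is reduced (so the readability step applied to the extended walk directly provides the required $a$-arc from $\verti_1\varphi$ to $\verti_2\varphi$), or $u_1=u_1' a^{-1}$, in which case $u_1'$ labels a reduced walk to $\verti_2$ and the readability of $u_1$ in $\Ati'$ produces an $a^{-1}$-arc from $\verti_2\varphi$ to $\verti_1\varphi$ whose inverse (by involutivity of $\Ati'$) is the desired $a$-arc. Uniqueness of $\varphi$ is forced by $\bp\varphi=\bp'$ together with determinism of $\Ati'$. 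The principal obstacle in this strategy is precisely the readability of $u_\verti$: the naive identity $\red{u_\verti v_\verti}\in\gen{\Ati'}$ alone yields only readability of the fully reduced product, not of its prefix, which is why the $\bp$-core hypothesis must be leveraged to produce a complementary walk $\walkii_\verti$ whose label $v_\verti$ concatenates with $u_\verti$ without cancellation.
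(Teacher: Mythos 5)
Your proof is correct and follows essentially the same strategy as the paper: define $\varphi$ on vertices by reading labels of reduced $\bp$-walks in $\Ati'$ (using coreness to complete them to closed walks whose reduced labels lie in $\gen{\Ati}\leqslant\gen{\Ati'}$, and determinism on both sides to transfer readability), then extend to arcs, with uniqueness forced by determinism. Your local variations --- completing a walk by choosing between $\beta$ and $\alpha^{-1}$, checking well-definedness via the maximal common suffix of the two labels, and the case split on whether $u_1a$ is reduced for the arc condition --- are all sound and amount to minor reworkings of the verifications the paper carries out with its backtracking-at-$\verti$ decomposition and its reduced closed walk through the given arc.
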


\begin{proof}
Uniqueness is an immediate consequence of the connectedness of $\Ati$ and the determinism of $\Ati'$: in fact, suppose that $\varphi_1,\varphi_2 \colon \Ati \xto{} \Ati'$ are $A$-homomorphisms of reduced pointed $A$-automata (in particular, $\bp\varphi_1=\bp\varphi_2=\bp'$); given a vertex $\verti \in \Verts \Ati$, choose a walk $\walki \equiv \bp \xwalk{\ } \verti$ in $\Ati$ and consider its images by $\varphi_1$ and $\varphi_2$: these will be two walks in $\Ati'$ with the same origin, namely $\bp'$, and the same label, namely $(\walki)\lab$, so, by the determinism of $\Ati'$, they must also have the same terminal vertices, $\verti \varphi_1 =\verti \varphi_2$ (see \Cref{rem: unique walk}); this shows that $\varphi_1=\varphi_2$.

The implication to the left has already been observed above. For the converse, suppose that $\gen{\Ati} \leqslant \gen{\Ati'}$, and let us construct an $A$-homomorphism $\varphi \colon \Ati \xto{} \Ati'$. Necessarily, we take $\bp \varphi =\bp'$. For every vertex $\verti \neq \bp$, consider a reduced $\bp$-walk in $\Gamma$ passing through $\verti$, say $\walki \colon \bp \xrwalk{u} \verti \xrwalk{v} \bp$  (such a walk does always exist because $\Ati$ is a core). Moreover, since $\Ati$ is deterministic, we have that $u$ and $v$ are both reduced words, and without cancellation in the product $uv$. In particular, $uv\in \gen{\Ati}$ and, by hypothesis, $uv\in \gen{\Ati'}$. This means that $uv$ is the reduced label of a $\bp'$-walk in $\Ati'$ and, since $\Ati'$ is deterministic, $uv$ is also the label of a (unique) reduced $\bp'$-walk $\walki'$ in $\Ati'$, which can then be decomposed as 
 \begin{equation*}
\walki' \colon \bp' \xrwalk{u} \verti' \xrwalk{v} \bp' \,.
 \end{equation*}
Define $\verti\varphi =\verti'$. To see this is well-defined, let $\bp \xrwalk{w} \verti \xrwalk{z} \bp$ be another reduced $\bp$-walk in $\Ati$ passing through $\verti$, consider $\bp' \xrwalk{w} \verti'' \xrwalk{z} \bp'$ the corresponding reduced $\bp'$-walk in $\Gamma'$, and let us see that $\verti'=\verti''$. Observe that $\bp \xrwalk{_{\scriptstyle{u}}} \verti \xrwalk{_{\scriptstyle{z}}} \bp$ is a \mbox{$\bp$-walk} in $\Gamma$ \emph{with possible backtracking at $\verti$}; denoting $\vertii$ the vertex where this backtracking ends, we have 
\begin{figure}[H]
\centering
\begin{tikzpicture}[shorten >=1pt, node distance=1cm and 2cm, on grid, decoration={snake, segment length=2mm, amplitude=0.5mm,post length=1.5mm},>=stealth']
  \node[state,accepting] (0) {};
  \node[] (q) [above right = 0.75 and 1.75 of 0] {$\vertii$};
  \node[] (p) [below right = 0.75 and 1.75 of 0] {$\verti$};
  \node[state,accepting] (0') [below right = 0.75 and 1.75 of q] {};
  \node[] (c) [right = 0.25 of 0'] {,};
   
    \path[->]
        (0) edge[snake it,bend left, out = 20,in= 170]
            node[pos=0.5,above = 0.1] {$u_1$}
            (q);
            
    \path[->]
        (q) edge[snake it,bend left, out = 10,in= 160]
            node[pos=0.5,above = 0.1] {$z_2$}
            (0');
            
    \path[->]
        (0) edge[snake it,bend left, out = -20,in= 190]
            node[pos=0.5,below = 0.1] {$w$}
            (p);
            
    \path[->]
        (p) edge[snake it,bend left, out = -10,in= 200]
            node[pos=0.5,below = 0.1] {$v$}
            (0');
            
    \path[->]
        (q) edge[snake it]
            node[pos=0.5,left = 0.1] {$u_2$}
            node[pos=0.5,right = 0.1] {$z_1^{-1}$}
            (p);
\end{tikzpicture}
\end{figure}
\noindent where $u=u_1 u_2$, $z=z_1 z_2$, $u_2=z_1^{-1}$, and the $\bp$-walk $\bp \xrwalk{u_1\,} \vertii \xrwalk{z_2\,} \bp$ is reduced. Hence, $uv=u_1 u_2 v$, $wz=w z_1 z_2$ and $u_1 z_2$ must be readable (without cancellation) as the labels of reduced $\bp'$-walks at $\Ati'$: 
 \vspace{3pt}
\begin{figure}[H]
\centering
  \begin{tikzpicture}[shorten >=1pt, node distance=1cm and 2cm, on grid, decoration={snake, segment length=2mm, amplitude=0.5mm,post length=1.5mm},>=stealth']

  \node[state,accepting,outer sep=5pt] (0) {};
  \node[] () [above right = 0.1 and 0.1 of 0] {$'$};
  \node[] (q') [above right = 0.75 and 1.5 of 0] {$\vertii'$};
  \node[] (p') [right = 1.5 of q'] {$\verti'$};
  \node[] (q''') [right = 3 of 0] {$\vertii'''$};
  \node[] (p'') [below right = 0.75 and 3 of 0] {$\verti''$};
  \node[] (q'') [right = 1.5 of p''] {$\vertii''$};
  \node[state,accepting] (0') [right =  6 of 0]{};
  \node[] () [above right = 0.1 and 0.1 of 0'] {$'$};
  \node[] (c) [right = 0.25 of 0'] {.};

  \path[->]
        (0) edge[snake it,bend left, out = 20,in= 170]
            node[pos=0.5,above=0.1] {$u_1$}
            (q');
                   
    \path[->]
        (0) edge[snake it, bend left, out = -20,in= 190]
            node[pos=0.5,below = 0.1] {$w$}
            (p'');
            
    \path[->]
        (p') edge[snake it, bend left, out = 10,in= 150]
            node[pos=0.5,above = 0.1] {$v$}
            (0');
            
    \path[->]
        (q') edge[snake it]
            node[pos=0.52,above = 0.1] {$u_2$}
            (p');
            
    \path[->]
        (p'') edge[snake it]
            node[pos=0.52,below = 0.1] {$z_1$}
            (q'');
            
    \path[->]
        (0) edge[snake it]
            node[pos=0.5,above = 0.05] {$u_1$}
            (q''');
            
    \path[->]
        (q''') edge[snake it]
            node[pos=0.5,above = 0.05] {$z_2$}
            (0');
            
    \path[->]
        (q'') edge[snake it,bend left, out = -10,in= 200]
            node[pos=0.5,below=0.1] {$z_2$}
            (0');
\end{tikzpicture}
\end{figure}
Finally, by the determinism of $\Ati'$, it is clear that $\vertii'=\vertii'''=\vertii''$ and so $\verti'= \verti''$ (using $u_2 =z_1 ^{-1}$), as we wanted to see. Thus, $\varphi\colon \Verts\Ati \xto{} \Verts\Ati'$ is well-defined. 

To finish the proof, let us see that $\varphi\colon \Ati \xto{} \Ati'$ is a morphism of $A$-automata. In fact, since $\Ati$ is core, given an arc $\edgi \equiv \verti \xarc{a\,} \vertii$ a $\Ati$, we know there exists a reduced $\bp$-walk in $\Ati$ using the arc $\edgi$:
 \begin{equation*}
\bp \xrwalk{u} \verti \xarc{\,a\ } \vertii \xrwalk{v} \bp \,.
 \end{equation*}
Hence, $uav \in \gen{\Ati} \leqslant \gen{\Ati'}$. Now, by definition of $\varphi$, we have in $\Ati'$ the reduced $\bp'$-walk 
 \begin{equation*}
\bp' \xrwalk{u} \verti \varphi \xarc{\,a\ } \vertii\varphi \xrwalk{v} \bp' \,.
 \end{equation*}
 
 \vspace{5pt}
 \noindent
In particular, the arc $\verti \varphi \xarc{a\, } \vertii \varphi$ exists in $\Ati'$ and so, $\varphi$ is an $A$-homomorphism, as we wanted to prove. 
\end{proof}

The following corollaries are immediate, and they express a property that will be crucial later. 

\begin{cor}
If  $\Ati$ is a reduced $A$-automaton, then the unique homo\-mor\-phism of $A$-automata $\Ati \xto{} \Ati$ is the identity. \qed
\end{cor}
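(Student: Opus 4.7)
The plan is to derive this as a direct consequence of \Cref{prop: main prop}, applied with $\Ati' = \Ati$. Since trivially $\gen{\Ati} \leqslant \gen{\Ati}$ and $\Ati$ is reduced (hence both sides of the proposition are reduced pointed $A$-automata), the proposition yields the existence of an $A$-homomorphism $\Ati \to \Ati$, together with its uniqueness.

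To conclude, I would exhibit one such homomorphism: the identity map $\mathrm{id}_{\Ati} = (\mathrm{id}_{\Verts \Ati}, \mathrm{id}_{\Edgs \Ati})$ clearly satisfies all the conditions of \Cref{def: hom digraphs} (it preserves incidence, labels, inversion, and the basepoint). Since \Cref{prop: main prop} guarantees uniqueness of the $A$-homomorphism $\Ati \to \Ati$, any such homomorphism must coincide with $\mathrm{id}_{\Ati}$.

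There is no real obstacle here; the whole content of the corollary is already contained in \Cref{prop: main prop}. The only thing to check is that the hypotheses of the proposition apply, which is immediate because $\Ati$ being reduced means it is both deterministic and core, which is exactly what the proof of \Cref{prop: main prop} required of both automata.
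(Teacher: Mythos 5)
Your argument is exactly the paper's: the corollary is stated as an immediate consequence of \Cref{prop: main prop} (take $\Ati'=\Ati$, note the identity is an $A$-homomorphism, and invoke uniqueness), which is precisely what you do. Correct and complete.
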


\begin{cor} \label{cor: gen iff isom}
If two reduced $A$-automata have the same recognized subgroup then they are isomorphic. That is, for reduced $A$-automata $\Ati$ and $\Ati'$, we have
 \[
\pushQED{\qed} \gen{\Ati} = \gen{\Ati'} \,\Leftrightarrow\, \Ati \isom \Ati' \,. \qedhere \popQED
 \]
\end{cor}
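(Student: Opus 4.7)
The plan is to derive both directions from Proposition \ref{prop: main prop} together with the preceding corollary (that the only $A$-homomorphism of a reduced $A$-automaton to itself is the identity).

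For the implication $\gen{\Ati} = \gen{\Ati'} \Rightarrow \Ati \isom \Ati'$, I would argue as follows. Since $\gen{\Ati} \leqslant \gen{\Ati'}$, \Cref{prop: main prop} supplies a unique $A$-homomorphism $\varphi \colon \Ati \to \Ati'$; symmetrically, from $\gen{\Ati'} \leqslant \gen{\Ati}$ we obtain a unique $A$-homomorphism $\psi \colon \Ati' \to \Ati$. Now the compositions $\varphi\psi \colon \Ati \to \Ati$ and $\psi\varphi \colon \Ati' \to \Ati'$ are again homomorphisms of reduced pointed $A$-automata. By the preceding corollary, the only such self-homomorphism is the identity, so $\varphi\psi = \id_{\Ati}$ and $\psi\varphi = \id_{\Ati'}$. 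Hence $\varphi$ is an isomorphism of $A$-automata with inverse $\psi$, and in particular $\Ati \isom \Ati'$.

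The converse implication is essentially an observation about the commutative diagram \eqref{eq: label comm}. If $\varphi \colon \Ati \to \Ati'$ is an isomorphism of pointed involutive $A$-automata, then the observation immediately before \Cref{prop: main prop} gives $\gen{\Ati} \leqslant \gen{\Ati'}$; applying the same observation to $\varphi^{-1}$ yields $\gen{\Ati'} \leqslant \gen{\Ati}$, whence $\gen{\Ati} = \gen{\Ati'}$.

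The heart of the argument is really just bootstrapping uniqueness into invertibility, so no step is technically deep; the only thing to be careful with is verifying that $\varphi\psi$ and $\psi\varphi$ genuinely fall into the scope of the self-map uniqueness corollary (i.e.\ that they are honest basepoint-preserving $A$-homomorphisms, which is immediate from \Cref{def: hom digraphs}). The rest is a formal categorical manipulation, and \Cref{prop: main prop} has already done the real work.
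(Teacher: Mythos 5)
Your proof is correct and is exactly the argument the paper intends: the paper leaves the corollary as an immediate consequence of \Cref{prop: main prop} and the uniqueness-of-self-maps corollary, and your bootstrapping of uniqueness into invertibility (with the converse via the observation preceding \Cref{prop: main prop}) is the standard way to fill it in. No gaps.
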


\section{Stallings' automata}\label{sec: Stallings automata}

The notion of recognized subgroup
(see \Cref{def: recognized}.) provides a natural connection between geometric objects and subgroups of the free group. If we denote by $\Atts_{\bp}(A)$ the set of (isomorphism classes of) pointed involutive $A$-automata, and by $\Sgps(\Free[A])$ the family of subgroups of the free group $\Free[A]$, we have the following well defined map:

 \begin{equation} \label{eq: Ati ->> <Ati>}
\begin{array}{rcl}
\Atts_{\bp}(A)& \onto & \Sgps(\Free[A]). \\ \Ati & \mapsto & \gen{\Ati}
\end{array}
 \end{equation}
It is easy to see that this map is onto, that is, every $H\leqslant \Free[A]$ is the subgroup recognized by some pointed involutive $A$-automaton. In fact, let $S=\set{w_i}_{i\in I} \subseteq \Free[A]$ be a set of reduced words generating $H$ and, for every $w_i =a_{i,\scriptscriptstyle{1}}a_{i,\scriptscriptstyle{2}}\cdots a_{i,\scriptscriptstyle{l_i}}\in S$ ($a_{i,j} \in A^{\pm}$), consider the involutive closure of the directed cycle reading~$w_i$:
 \vspace{5pt}
\begin{figure}[H]
\centering
\begin{tikzpicture}[shorten >=1pt, node distance=0.2 and 1.5, on grid,auto,>=stealth']
    \node[state, accepting] (0) {};
    \node[state] (a) [above right =  of 0] {};
    \node[state] (1) [above right =  of a] {};
    \node[state] (b) [right =  of 1] {};
    \node[state] (d) [below right =  of 0] {};
    \node[state] (4) [below right =  of d] {};
    \node[state] (c) [right =  of 4] {};

    \path[->]
        (0) edge[]
            node[above] {$a_{i,\scriptscriptstyle{1}}$}
            (a);

     \path[->]
        (a) edge[]
            node[above left] {$a_{i,\scriptscriptstyle{2}}$}
            (1);

     \path[->]
        (1) edge[]
            node[above] {$a_{i,\scriptscriptstyle{3}}$}
            (b);

     \path[->,dashed]
        (b) edge[bend left,out=90,in=90,min distance=10mm]
            (c);

    \path[->]
        (c) edge[]
            node[midway,below] {$a_{i,\scriptscriptstyle{l_i}-2}$}
            (4);

    \path[->]
        (4) edge[]
            node[below] {$a_{i,\scriptscriptstyle{l_i}-1}$}
            (d);

    \path[->]
        (d) edge[]
            node[below] {$a_{i,\scriptscriptstyle{l_i}}$}
            (0);
\end{tikzpicture}
\vspace{-5pt}
\caption{The petal $\flower(w_i)$}
\label{fig: petal}
 \end{figure}
\noindent named the \defin[petal automaton]{petal} associated to $w_i$, and denoted $\flower(w_i)$; see~\Cref{fig: petal}. Then, define the so-called \defin{flower automaton} $\flower(S)$ as the (pointed involutive) $A$-automaton obtained after identifying the basepoints of the petals corresponding to the elements from $S$; see~\Cref{fig: flower}.
 \vspace{-10pt}
\begin{figure}[H]
\centering
\begin{tikzpicture}[shorten >=1pt, node distance=2cm and 2cm, on grid,auto,>=stealth',
decoration={snake, segment length=2mm, amplitude=0.5mm,post length=1.5mm}]
  \node[state,accepting] (1) {};
  \path[->,thick]
        (1) edge[loop,out=160,in=200,looseness=8,min distance=25mm,snake it]
            node[left=0.2] {$w_1$}
            (1);
            (1);
  \path[->,thick]
        (1) edge[loop,out=140,in=100,looseness=8,min distance=25mm,snake it]
            node[left=0.15] {$w_2$}
            (1);
  \path[->,thick]
        (1) edge[loop,out=20,in=-20,looseness=8,min distance=25mm,snake it]
            node[right=0.2] {$w_p$}
            (1);
\foreach \n [count=\count from 0] in {1,...,3}{
      \node[dot] (1\n) at ($(1)+(45+\count*15:0.75cm)$) {};}
\end{tikzpicture}
\vspace{-15pt}
\caption{The flower automaton $\flower(w_1,w_2,\ldots,w_p)$}
\label{fig: flower}
\end{figure}
It is clear that the subgroup recognized by $\flower(S)$ is, precisely, $\gen{\flower(S)}=\gen{S}=H\leqslant \Free[A]$; hence, the map~\eqref{eq: Ati ->> <Ati>} is onto. Observe, though, that this map is very far from being injective: for example, different families of generators for $H$ provide very different flower automata, all of them mapping to $H$ through~\eqref{eq: Ati ->> <Ati>}. 

Our goal is to restrict the map~\eqref{eq: Ati ->> <Ati>} to a bijection. It is easy to see that both non-determinism  and hanging trees not containing the basepoint are sources of redundancy to this respect. The main result of Stallings in~\cite{stallings_topology_1983} is that these are indeed the only sources of redundancy. \Cref{cor: gen iff isom} tells us that there is at most one (isomorphic class of) reduced automaton recognizing any given subgroup $H\leqslant \Free[A]$, \ie the restriction of the map~\eqref{eq: Ati ->> <Ati>} to the set of reduced $A$-automata is injective. In order to see that this restriction is also surjective, we will show a witness for every subgroup $H\leqslant \Free[A]$ (and explicitly construct it in the finitely generated case). 

Let $\Ati =(\Verts,\Edgs, \init,\term,\lab, \bp,^{-1})$ be a pointed involutive $A$-automaton; our goal is to construct a reduced automaton $\red{\Ati}$ such that $\gen{\Ati}=\gen{\red{\Ati}}$ (which we know it will be unique modulo isomorphism). Consider the equivalence relation $\sim$ in $\Verts$ given by 
 \begin{equation}
\verti \sim \vertii \ \Leftrightarrow \ \exists\ \text{$(\verti, \vertii)$-walk $\walki$ in $\Ati$ with $(\walki)\rlab =1$},
 \end{equation}
and write $\pi=\pi_{\Verts}\colon \Verts \xto{} \Verts /{\sim}$. Then, $\Ati_{\!1}= \Ati/{\sim}= (\Verts/{\sim}, \Edgs, \init \pi,\term \pi,\bp \pi,^{-1})$ is a pointed involutive automaton which is \defin[vertex-deterministic automaton]{vertex-deterministic} (meaning that if two arcs $\edgi, \edgii\in \Edgs$ satisfy $\edgi\init\pi =\edgii\init\pi$ and $(\edgi)\lab =(\edgii)\lab$, then $\edgi\term\pi =\edgii\term\pi$): indeed, if $\edgi\init \pi =\edgii\init \pi$ there is a $(\edgi\init, \edgii\init)$-walk $\walki$ in $\Ati$ with $(\walki)\rlab =1$, and then $\edgi^{-1}\walki \edgii$ is a $(\edgi\term, \edgii\term)$-walk also satisfying $(\edgi^{-1}\walki \edgii)\rlab=1$; hence, $\edgi\term\pi=\edgii\term\pi$. Therefore, the pointed involutive $A$-automaton $\Ati_{\!2}$ obtained from $\Ati_{\!1}$ by identifying each set of parallel arcs with a common label into a single arc (keeping the label) is deterministic. It is clear that $\red{\Ati} =\core(\Ati_{\!2})$ is a (the) reduced $A$-automaton such that $\gen{\overline{\Ati}} =\gen{\Ati_2}=\gen{\Ati_1} =\gen{\Ati}$. This argument completes the proof that the desired restriction of \eqref{eq: Ati ->> <Ati>} is a bijection.

\begin{defn}
For any $\Ati \in \Atts_{\bp}(A)$, its \defin[reduction of an automaton]{reduction}, denoted by $\red{\Ati}$, is the (unique modulo isomorphism) reduced automaton recognizing $\gen{\Ati}$. Similarly, for any ${H\leqslant \Free[A]}$, the \defin[Stallings' automaton]{Stallings' automaton of $H$ \wrt $A$}, denoted by $\stallings(H,A)$, is the unique (modulo isomorphism) reduced $A$-automaton whose recognized subgroup is $H$. Note that $\rk(\red{\Ati}) =\rank(\gen{\Ati})=\rk(\stallings(\gen{\Ati},A))$.  
\end{defn}


\begin{thm}[\citenr{stallings_topology_1983}]\label{thm: Stallings bijection}
The map
  \begin{equation} \label{eq: Stallings bijection1}
\begin{array}{rcl}
\red{\Atts}_{\hspace{1pt}\bp}(A) & \xto{} & \Sgps(\Free[A]) \\ \Ati & \mapsto & \gen{\Ati}
\end{array}
 \end{equation}
is a bijection whose inverse is $\stallings(H,A) \mapsfrom H$. \qed
\end{thm}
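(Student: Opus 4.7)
The plan is to assemble the bijection from the ingredients already established in the preceding discussion, splitting the argument into injectivity and surjectivity. Both halves are essentially complete in the text leading up to the statement; the proof mainly needs to verify that the reduction procedure actually produces a reduced automaton recognizing the same subgroup, and to cite the uniqueness statement.

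For injectivity, I would simply invoke \Cref{cor: gen iff isom}: if two reduced $A$-automata $\Ati, \Ati' \in \red{\Atts}_{\hspace{1pt}\bp}(A)$ satisfy $\gen{\Ati} = \gen{\Ati'}$, then $\Ati \isom \Ati'$, so they are equal as elements of $\red{\Atts}_{\hspace{1pt}\bp}(A)$ (viewed modulo isomorphism). This is the core conceptual content, and it rests ultimately on \Cref{prop: main prop}.

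For surjectivity, given $H \leqslant \Free[A]$, first pick any set $S \subseteq \Free[A]$ of reduced words generating $H$ (\eg $S=H$ itself if needed). Form the flower automaton $\flower(S)$ as in \Cref{fig: flower}; by construction every reduced label of a $\bp$-walk is a product of elements of $S^{\pm}$, and every element of $H$ is readable, so $\gen{\flower(S)} = H$. Then apply the two-step reduction: first quotient by the equivalence $\verti \sim \vertii \Leftrightarrow \exists\,(\verti,\vertii)$-walk with trivial reduced label (yielding a vertex-deterministic $\Ati_1$), then identify parallel arcs sharing a label (yielding a deterministic $\Ati_2$), and finally take $\red{\Ati} = \core(\Ati_2)$. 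The key verification is that each step preserves the recognized subgroup: the quotient by $\sim$ clearly does not add new reduced labels (the added walks all have trivial reduced label) nor destroy existing ones; identifying parallel arcs with equal label does the same; and $\gen{\core(\Ati_2)} = \gen{\Ati_2}$ is exactly \Cref{cor: <core> = <ati>}. The resulting $\red{\Ati}$ is deterministic and core, hence reduced, and recognizes $H$.

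The two halves together show that $\Ati \mapsto \gen{\Ati}$ is a bijection; the inverse assignment $H \mapsto \stallings(H,A)$ is then well-defined and is inverse to \eqref{eq: Stallings bijection1} by construction. I do not anticipate a real obstacle: the only slightly delicate point is checking that the equivalence relation $\sim$ is well-defined and that its quotient genuinely yields vertex-determinism, but this is already handled in the text preceding the theorem via the argument $\edgi^{-1}\walki\edgii$ has trivial reduced label whenever $\walki$ does. Everything else is bookkeeping.
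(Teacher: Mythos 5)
Your proposal is correct and follows essentially the same route as the paper, which proves this theorem in the discussion immediately preceding it: injectivity via \Cref{cor: gen iff isom} (resting on \Cref{prop: main prop}), and surjectivity by building the flower automaton $\flower(S)$ with $\gen{\flower(S)}=H$ and then reducing it (quotient by $\sim$, identification of parallel equally-labeled arcs, and $\core$), with \Cref{cor: <core> = <ati>} guaranteeing the recognized subgroup is preserved. Your brief justification that the quotient steps neither add nor destroy reduced labels matches what the paper asserts as clear, so there is no gap.
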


As an immediate consequence, the fundamental result below follows now in a very transparent way. With different techniques, it was first proved by \citeauthor{nielsen_om_1921} for the finitely generated case, and a few years later by \citeauthor{schreier_untergruppen_1927} with full generality. 

\begin{thm}[Nielsen \cite{nielsen_om_1921}, Schreier\cite{schreier_untergruppen_1927}] \label{thm: Nielsen-Schreier}
Every subgroup $H$ of a free group is free of rank $\rk(\stallings(H,A))$.  
\end{thm}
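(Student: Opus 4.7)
The proof should be essentially a one-line consequence of the machinery already developed, so my plan is simply to string together the pieces.

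The plan is to invoke the Stallings bijection (\Cref{thm: Stallings bijection}) to attach to the given subgroup $H \leqslant \Free[A]$ the reduced pointed $A$-automaton $\stallings(H,A)$, which satisfies $\gen{\stallings(H,A)} = H$ by construction. Since reduced automata are by definition deterministic and core, I can then apply \Cref{lem: deterministic gamma} to conclude that the reduced-labelling epimorphism $\red{\lab} \colon \pi(\stallings(H,A)) \onto \gen{\stallings(H,A)} = H$ is actually an isomorphism of groups.

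At this point, \Cref{thm: B_T} (applied to the connected involutive digraph underlying $\stallings(H,A)$) supplies the freeness of $\pi(\stallings(H,A))$, together with the identification of its rank with $\rk(\stallings(H,A))$. Composing the two isomorphisms gives $H \isom \pi(\stallings(H,A))$ free of rank $\rk(\stallings(H,A))$, which is exactly the claim.

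There is no real obstacle — all the substantive work is already done in the preceding lemmas. The only point I would be careful about is the case $H = \set{1}$, for which $\stallings(H,A)$ is just a single basepoint with no arcs, so the rank is $0$ and $H$ is free of rank $0$ trivially; and the fact that we have assumed $\card A \leq \aleph_0$, which means the above argument covers all subgroups of at-most-countably-generated free groups without any set-theoretical caveats.
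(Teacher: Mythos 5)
Your proposal is correct and follows exactly the paper's own argument: apply the Stallings bijection (\Cref{thm: Stallings bijection}) to get $H=\gen{\stallings(H,A)}$, then \Cref{lem: deterministic gamma} to identify this with $\pi(\stallings(H,A))$, and finally \Cref{thm: B_T} for freeness and the rank computation. The extra remarks about the trivial subgroup and countability are harmless but not needed.
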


\begin{proof}
By \Cref{thm: Stallings bijection}, every subgroup $H\leqslant \Free[A]$ is of the form $H=\gen{\stallings(H,A)}$; this is isomorphic to $\pi(\stallings(H,A))$ by \Cref{lem: deterministic gamma}; and this last group is free of rank $\rk(\stallings(H,A))$, by \Cref{thm: B_T}.  
\end{proof}

In fact, bijection \eqref{eq: Stallings bijection1} allows not only to guarantee the freeness of subgroups of $\Free[A]$, but also to exactly describe the (isomorphic classes of the) subgroups of a given free group of countable rank.

\begin{cor} \label{cor: ranks of subgroups}
Let $\Free[\kappa]$ be a free group of rank $\kappa \in [2,\aleph_0]$. Then, for every cardinal $\mu \in [0,\aleph_0]$ there exists a subgroup $H\leqslant \Free[\kappa]$ such that $H\isom \Free[\mu]$. \qed
\end{cor}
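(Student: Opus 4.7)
The plan is to exploit the Stallings bijection (\Cref{thm: Stallings bijection}) together with \Cref{thm: Nielsen-Schreier}: for each target rank $\mu \in [0,\aleph_0]$ I will exhibit a reduced pointed $A$-automaton of rank $\mu$, and the subgroup it recognizes will be free of rank $\mu$ by \Cref{thm: Nielsen-Schreier}.

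First I would reduce to the case $\kappa=2$. Since $\kappa \geq 2$, any basis of $\Free[\kappa]$ contains two distinct letters $a,b$, and $\gen{a,b} \isom \Free[\hspace{1pt}2] \leqslant \Free[\kappa]$; so it is enough to construct, inside $\Free[\hspace{1pt}2]$, a subgroup isomorphic to $\Free[\mu]$ for every $\mu \in [0,\aleph_0]$. I will work with the alphabet $A = \set{a,b}$.

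For $\mu = 0$ take the reduced $A$-automaton $\Ati_{0}$ consisting of a single vertex (the basepoint) and no arcs; it is trivially deterministic and core, has rank $1 - 1 + 0 = 0$, and $\gen{\Ati_{0}}=\set{1}$. For $\mu = n \in [1, \aleph_0)$ take $\Ati_{n}$ to have vertices $\verti_{0},\ldots,\verti_{n-1}$ (with $\verti_{0}$ as basepoint), positive $a$-arcs $\verti_{0} \xarc{a} \verti_{1} \xarc{a} \cdots \xarc{a} \verti_{n-1}$, and one positive $b$-loop at each $\verti_{i}$. For $\mu = \aleph_{0}$ take $\Ati_{\aleph_{0}}$ defined analogously with vertex set $\set{\verti_{i}}_{i \in \NN}$. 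In each case the automaton is deterministic (at every vertex there is at most one outgoing arc per label in $A^{\pm}$), and it is core because the vertex $\verti_{i}$ lies on the reduced basepoint walk $\verti_{0} \xrwalk{a^{i}\,} \verti_{i} \xrwalk{b\,} \verti_{i} \xrwalk{a^{-i}} \verti_{0}$. Thus $\Ati_{\mu} \in \red{\Atts}_{\hspace{1pt}\bp}(A)$.

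It remains to compute $\rk(\Ati_{\mu})$. A spanning tree of $\Ati_{\mu}$ is provided by the $a$-path, so the edges outside it are exactly the $b$-loops, one per vertex. For finite $n$ this yields rank $1-n+(2n-1) = n$; in the infinite case the cardinal of the $b$-loops is $\aleph_{0}$, so $\rk(\Ati_{\aleph_{0}}) = \aleph_{0}$. Applying \Cref{thm: Nielsen-Schreier} to $H_{\mu} \deff \gen{\Ati_{\mu}}\leqslant \Free[A]$ gives $H_{\mu} \isom \Free[\mu]$, completing the proof.

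There is no real obstacle here; the only mild subtlety is the case $\mu = \aleph_{0}$, where one must check that the inductively built infinite automaton is still reduced and that its rank (defined as the cardinal of a complement of any spanning tree) equals $\aleph_{0}$, but both points are immediate from the explicit spanning tree exhibited above.
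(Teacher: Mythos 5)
Your proof is correct and follows the same route the paper intends for this corollary: produce a reduced $A$-automaton of each rank $\mu\in[0,\aleph_0]$ and invoke the Stallings bijection together with \Cref{thm: Nielsen-Schreier} (or \Cref{lem: deterministic gamma}) to get a free subgroup of that rank, reducing first to $\Free[\hspace{1pt}2]\leqslant\Free[\kappa]$. The paper leaves this as an immediate consequence of \Cref{thm: Stallings bijection}; you have simply made the witnessing automata (path of $a$-arcs with a $b$-loop at each vertex) explicit, and your determinism, coreness, and rank computations, including the $\mu=\aleph_0$ case, are all correct.
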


Below, we present two classical examples of infinite-rank subgroups of $\Free[2] = \Free_{\set{a,b}}$ through their corresponding Stallings' automata.

\begin{exm}
The normal closure of $b$, denoted by $\ncl{b} \leqslant \Free[2]$, is a subgroup of infinite rank, as it follows from its (infinite rank) Stallings' automaton $\stallings(\ncl{b})$ depicted in \Cref{fig: Stallings <<b>>}. The only spanning tree available (in blue) provides the basis 
 $\set{a^kba^{-k}\st k\in \ZZ}$ for $\ncl{b}$.
\begin{figure}[H] 
\centering
  \begin{tikzpicture}[shorten >=1pt, node distance=1.2 and 1.2, on grid,auto,>=stealth']
   \node[state,accepting] (0) {};
   \node[state] (1) [right = of 0]{};
   \node[state] (2) [right = of 1]{};
   \node[state] (3) [right = of 2]{};
   \node[] (4) [right = 1.5 of 3]{$\cdots$};
   \node[] (c) [right = .5 of 4]{};

   \node[state] (-1) [left = of 0]{};
   \node[state] (-2) [left = of -1]{};
   \node[state] (-3) [left = of -2]{};
   \node[] (-4) [left = 1.5 of -3]{$\cdots$};

   \path[->]
        (0) edge[loop above,red,min distance=10mm,in=55,out=125]
            node[] {\scriptsize{$b$}}
            (0)
            edge[blue,thick]
            node[below] {\scriptsize{$a$}}
            (1);

    \path[->]
        (1) edge[loop above,red,min distance=10mm,in=55,out=125]
            (1)
            edge[blue,thick]
            (2);

    \path[->]
        (2) edge[loop above,red,min distance=10mm,in=55,out=125]
            (2)
            edge[blue,thick]
            (3);

    \path[->]
        (3) edge[loop above,red,min distance=10mm,in=55,out=125]
            (3)
            edge[blue,thick]
            (4);

    \path[->]
        (-1) edge[loop above,red,min distance=10mm,in=55,out=125]
            (-1)
            edge[blue,thick]
            (0);

    \path[->]
        (-2) edge[loop above,red,min distance=10mm,in=55,out=125]
            (-2)
            edge[blue,thick]
            (-1);

    \path[->]
        (-3) edge[loop above,red,min distance=10mm,in=55,out=125]
            (-3)
            edge[blue,thick]
            (-2);

    \path[->]
        (-4) edge[blue,thick]
            (-3);
\end{tikzpicture}
\caption{The (infinite) Stallings' automaton of $\ncl{b} \leqslant \Free_{\set{a,b}}$}
\label{fig: Stallings <<b>>}
\end{figure}
\end{exm}

\begin{exm}
The commutator subgroup $\Comm{\Free[2]}\normaleq  \Free[2]$, is also of infinite rank, as its Stallings' automaton is nothing else than the Cayley graph $\cayley(\ZZ^2,\set{a,b})$, depicted in \Cref{fig: commutator}. We propose the reader to describe a basis for the commutator $\Comm{\Free[2]}$ using his favorite spanning tree.
\end{exm}

\vspace{-5pt}
\begin{figure}[H]
\centering
\begin{tikzpicture}[shorten >=1pt, node distance=1.2cm and 2cm, on grid,auto,auto,>=stealth']

\newcommand{\dx}{0.75}
\newcommand{\dy}{0.7}
\node[] (0)  {};

\foreach \x in {-2,...,2}
\foreach \y in {-2,...,2} 
{
\node[state] (\x!\y) [above right = \y*\dy and \x*\dx of 0] {};
}
\node[state,accepting] (0!0){};

 \foreach \x in {-3,...,3} 
{
\node (\x!3) [above right = 3*\dy and \x*\dx of 0] {};
\node (\x!-3) [above right = -3*\dy and \x*\dx of 0] {};
}

\foreach \y in {-2,...,2} 
{
\node (3!\y) [above right = \y*\dy and 3*\dx of 0] {};
\node (-3!\y) [above right = \y* \dy and -3*\dx of 0] {};
}

\draw[draw=none,red] (0!0) edge node[left,pos=0.45] {$b$} (0!1);
\draw[draw=none,blue] (0!0) edge node[above,pos=0.45] {$a$} (1!0);

\foreach \x [evaluate = \x as \xx using int(\x+1)] in {-3,...,2}
  \foreach \y in {-2,...,2}
{
\ifthenelse{\x=-3 \OR \x=2 \OR \y=-3 \OR \y=3}
{\draw[->,blue,dotted] (\x!\y) edge (\xx!\y);}
{\draw[->,blue] (\x!\y) edge (\xx!\y);}
}

\foreach \y [evaluate = \y as \yy using int(\y+1)] in {-3,...,2}
  \foreach \x in {-2,...,2}
{
\ifthenelse{\x=-3 \OR \x=3 \OR \y=-3 \OR \y=2}
{\draw[->,red,dotted] (\x!\y) edge (\x!\yy);}
{\draw[->,red] (\x!\y) edge (\x!\yy);}
}

\end{tikzpicture}
\vspace{-10pt}
\caption{The Stallings' automaton of the commutator subgroup $\Comm{\Free_{\set{a,b}}}$}
\label{fig: commutator}
\end{figure}

\begin{cor}
 For every $\kappa \in [2,\aleph_0]$, the free group $\Free[\kappa]$ has uncountably many different subgroups. \qed  
\end{cor}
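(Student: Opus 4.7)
The plan is to exhibit $2^{\aleph_0}$ pairwise distinct subgroups of $\Free[2]$ by means of Stallings' bijection, and then transfer them to $\Free[\kappa]$ for general $\kappa \in [2,\aleph_0]$.

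Fix a basis $\set{a,b}$ of $\Free[2]$. For every infinite subset $S \subseteq \NN$ I would define a reduced pointed involutive $\set{a,b}$-automaton $\Ati_S$ whose positive part is the infinite $a$-ray
\[
\verti_0 \xarc{a} \verti_1 \xarc{a} \verti_2 \xarc{a} \cdots,
\]
with basepoint $\bp = \verti_0$, together with a positive $b$-loop attached at vertex $\verti_n$ for each $n\in S$. The automaton $\Ati_S$ is visibly deterministic; it is also core, because $S$ is infinite, so every vertex $\verti_n$ lies on a reduced $\bp$-walk of the form $\bp\xrwalk{a^k b a^{-k}} \bp$ for any $k\in S$ with $k\geq n$. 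Hence $\Ati_S \in \red{\Atts}_{\hspace{1pt}\bp}(\set{a,b})$.

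If $S,T$ are two distinct infinite subsets of $\NN$, pick $n\in S\triangle T$; then one of $\Ati_S,\Ati_T$ carries a $b$-loop at the (unique) vertex at $a$-distance $n$ from $\bp$ while the other does not, so there is no basepoint-preserving isomorphism $\Ati_S\to \Ati_T$. By \Cref{thm: Stallings bijection} (\ie the injectivity of the map $\Ati \mapsto \gen{\Ati}$ on reduced automata), we conclude $\gen{\Ati_S}\neq \gen{\Ati_T}$. Since $\NN$ has $2^{\aleph_0}$ infinite subsets, this produces uncountably many distinct subgroups of $\Free[2]$.

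For arbitrary $\kappa \in [2,\aleph_0]$, pick any injection of $\set{a,b}$ into a basis of $\Free[\kappa]$; by the universal property (\Cref{thm: free cat}) it extends to an embedding $\Free[2]\into \Free[\kappa]$, under which the subgroups $\gen{\Ati_S}$ remain pairwise distinct. The only point in the argument requiring any care is the verification that each $\Ati_S$ is actually reduced (specifically, core), which is exactly where the infinitude of $S$ is used; everything else is a direct invocation of the Stallings correspondence.
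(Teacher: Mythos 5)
Your construction is correct: each $\Ati_S$ is indeed deterministic and (thanks to the infinitude of $S$) core, distinct infinite $S$ give non-isomorphic pointed reduced automata and hence distinct subgroups by the Stallings bijection, and the passage to $\Free[\kappa]$ via two basis elements is legitimate since a pair of basis elements generates a free factor isomorphic to $\Free[2]$. This is essentially the argument the paper leaves implicit (it states the corollary as immediate from the bijection and the preceding example of $\stallings(\ncl{b})$, whose shape — an $a$-line with $b$-loops — is exactly what you exploit), so nothing further is needed.
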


Let us now focus on the computational nature of the bijection~\eqref{eq: Stallings bijection1}, which will be crucial to our purposes. First, observe that, in \Cref{thm: Stallings bijection}, (isomorphism classes of) \emph{finite} reduced $A$-automata correspond, precisely, to \emph{finitely generated} subgroups. Moreover, under finiteness assumptions, the bijection \eqref{eq: Stallings bijection1} is computable.





\begin{prop} \label{prop: B_T comp}
Let $\Ati$ be a reduced $A$-automaton and let $T$ be a spanning tree of~$\Ati$. Then, 
 \begin{equation}
B_T \,=\, (\mathcal{B}_T)\red{\lab} \,=\, \set{(\omega_{\edgi})\red{\lab} \st \edgi\in \Edgs^+\Ati \setmin \Edgs T}
 \end{equation}
is a basis for $\gen{\Ati}$. In particular, $\gen{\Ati}$ is finitely generated if and only if $\Ati$ is finite, and, in this case, a basis for $\gen{\Ati}$, and hence the rank $\rk(\gen{\Ati})=1- \card\Verts\Ati+\card\Edgs^{+}\Ati$, are finite and computable.
\end{prop}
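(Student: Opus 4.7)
The plan is to transport the free basis $\mathcal{B}_T$ of $\pi_{\bp}(\Ati)$ given by \Cref{thm: B_T} through the labeling homomorphism $\red{\lab}\colon \pi_{\bp}(\Ati) \to \gen{\Ati}$. The key observation is that since $\Ati$ is reduced (and hence deterministic), \Cref{lem: deterministic gamma} upgrades this map to an isomorphism of groups; therefore the image $B_T = (\mathcal{B}_T)\red{\lab}$ is itself a free basis of $\gen{\Ati}$, which proves the first assertion (and in particular, $\card B_T = \card \mathcal{B}_T = \rk(\Ati)$).

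For the equivalence, the forward implication is immediate: if $\Ati$ is finite then $\card B_T = \card(\Edgs^{+}\Ati \setmin \Edgs T)$ is finite, so $\gen{\Ati}$ is f.g. I expect the main obstacle to be the converse. Assuming $\gen{\Ati}$ is f.g., the isomorphism yields $\rk(\Ati) = \rk(\gen{\Ati}) < \infty$, so there are only finitely many non-tree positive arcs; let $V_0 \subseteq \Verts \Ati$ be the (finite) set consisting of $\bp$ together with the endpoints of these arcs. Unwinding the description of reduced $\bp$-walks obtained in the proof of \Cref{thm: B_T}, every such walk decomposes as an alternation of $T$-subwalks between vertices of $V_0$ and non-tree arcs; hence it visits only vertices lying on some $T$-path between two vertices of $V_0$. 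Since tree paths are finite and there are only finitely many pairs in $V_0$, the set of vertices reachable by reduced $\bp$-walks is finite, and the core hypothesis then forces this set to be the whole of $\Verts \Ati$. Finiteness of $\Edgs^{+}\Ati$ follows from $\card \Edgs T = \card \Verts \Ati - 1$ and the finite number of non-tree positive arcs.

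For the computability statement in the finite case, I would first compute a spanning tree $T$ by a standard search algorithm; for each positive non-tree arc $\edgi$, the petal $\omega_{\edgi}$ is the concatenation of the unique $T$-path from $\bp$ to $\edgi \init$, the arc $\edgi$, and the unique $T$-path from $\edgi \term$ to $\bp$, and its reduced label $(\omega_{\edgi})\red{\lab}$ is obtained by freely reducing that label. The rank formula $\rk(\gen{\Ati}) = 1 - \card \Verts \Ati + \card \Edgs^{+}\Ati$ is then immediate from $\card \Edgs T = \card \Verts \Ati - 1$.
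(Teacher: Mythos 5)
Your proof is correct and follows essentially the same route as the paper: the basis claim is obtained by transporting $\mathcal{B}_T$ from \Cref{thm: B_T} through the isomorphism $\red{\lab}\colon \pi(\Ati)\to\gen{\Ati}$ of \Cref{lem: deterministic gamma}, and the computability part is handled exactly as in the paper (spanning tree, petals, reduced labels, and the edge count $\card\Edgs T=\card\Verts\Ati-1$). The only difference is that you spell out the converse implication (f.g.\ $\Rightarrow$ $\Ati$ finite) via the finiteness of non-tree arcs, $T$-geodesics between the finitely many distinguished vertices, and coreness — a valid argument for a step the paper leaves implicit.
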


\begin{proof}
The first claim follows immediately from \Cref{lem: deterministic gamma} and \Cref{thm: B_T}. Moreover, if~$\Ati$ is finite it is clear that all the steps are computable: first compute a spanning tree $T$ for $\Ati$ and, for each arc $\edgi\in \Edgs^+\Ati \setmin \Edgs T$ (there are finitely many such arcs), compute the reduced word $w_{\edgi}= (\walki_{\edgi}) \red{\lab}= (\bp \xwalk{\scriptscriptstyle{T}} \! \bullet \! \xarc{\,\edgi\ } \!\bullet\! \xwalk{\scriptscriptstyle{T}} \bp)\rlab \in \Free[A]$ reading the corresponding $\edgi$-petal. Since $\pi(\Ati)\isom \gen{\Ati}$ via $\rlab$, this collection of words $B_T =(\mathcal{B}_T)\red{\lab}=\{ w_{\edgi} \mid \edgi\in \Edgs^+\Ati \setmin \Edgs T\}$ forms a free basis for $\gen{\Ati}\leqslant \Free[A]$.
\end{proof}

\begin{exm}
Consider the following reduced $\{a,b\}$-automaton $\Ati$, with the spanning tree~$T$ given by the horitzontal arcs (represented by boldfaced arrows).
 \begin{figure}[H]
\centering
\begin{tikzpicture}[shorten >=1pt, node distance=1.2cm and 2cm, on grid,auto,auto,>=stealth']
        
\newcommand{\dx}{1.3}
\newcommand{\dy}{1.2}

\node[state,accepting] (a1) [right = \dy-1/3 of 0] {};
\node[state] (a2) [right = \dx of a1] {};
\node[state] (a3) [right = \dx of a2] {};
\node[state] (a4) [right = \dx of a3] {};

\path[->]
(a1) edge[red,loop above,min distance=10mm,in=205,out=155]
node[above = 0.1] {$b$}
(a1)
(a1) edge[blue,thick] node[below] {$a$} (a2)
(a2) edge[blue,thick] (a3)
(a3) edge[blue,bend right=30] (a1)
(a3) edge[red,thick] (a4)
(a4) edge[blue,loop above,min distance=10mm,in=25,out=-25] (a4);
\end{tikzpicture}
 \end{figure}

\vspace{-5pt}
\noindent According to the discussion above, the subgroup $\gen{\Ati}\leqslant \Free[\set{a,b}]$ is free with basis $B_T =\set{\,b, a^3,a^2 b a b^{-1} a^{-2}\,}$ and so, of rank 3.
\end{exm}

Conversely, given a finite pointed $A$-automaton $\Ati$, we will show how to construct its reduction $\red{\Ati}$. In order to do so, we need to introduce an important automata transformation, called elementary Stallings' folding.

\begin{defn}\label{def: foldings}
Let $\Ati$ be an involutive $A$-automaton. An \defin[Stallings' folding]{elementary Stallings' folding} on $\Ati$ is a transformation consisting in identifying two arcs $\edgi_1,\edgi_2 \in \Edgs \Ati$ violating determinism (\ie such that $\edgi_1 \init = \edgi_2 \init$, $(\edgi_1 )\lab=(\edgi_2 )\lab$, but~$\edgi_1\neq \edgi_2$) and their corresponding inverses. If $\edgi_1$ and $\edgi_2$ are not parallel (\ie $\edgi_1 \term \neq \edgi_2 \term$), we say that the folding is \defin[open folding]{open}; otherwise, we say that the folding is \defin[closed folding]{closed}; see~\Cref{fig: foldings}).
\end{defn}
\begin{figure}[H] 
\centering
\begin{tikzpicture}[shorten >=1pt, node distance=1cm and 1.5cm, on grid,>=stealth']
\begin{scope}
   \node[state] (1) {};
   \node[state] (2) [above right = 0.5 and 1 of 1] {};
   \node[] (21) [above right = 0.3 and 0.7 of 2] {};
   \node[] (22) [below right = 0.3 and 0.7 of 2] {};
   \node[state] (3) [below right = 0.5 and 1 of 1] {};
   \node[] (31) [above right = 0.3 and 0.7 of 3] {};
   \node[] (32) [below right = 0.3 and 0.7 of 3] {};
   \node[] (neq) [right = 1 of 1]{\rotatebox[origin=c]{90}{$\neq$}};

   \path[->]
        (1) edge[]
            node[pos=0.52,above left] {$a$}
            (2)
            edge[]
            node[pos=0.5,below left] {$a$}
            (3)
        (2) edge[gray]
            (21)
            edge[gray]
            (22)
        (3) edge[gray!80]
            (31)
            edge[gray!80]
            (32);

    \node[] (i) [right = 1.8 of 1]{};
    \node[] (f) [right = 0.8 of i] {};
    \path[->]
        (i) edge[bend left] (f);

   \node[state] (1') [right= 0.3 of f] {};
   \node[state] (N) [right =  1.2 of 1'] {};
   \node[] (21') [above right = 0.7 and 0.3 of N] {};
   \node[] (22') [above right = 0.3 and 0.7 of N] {};
   \node[] (32') [below right = 0.3 and 0.7 of N] {};
   \node[] (33') [below right = 0.7 and 0.3 of N] {};
   \path[->]
        (1') edge
            node[pos=0.48,above] {$a$}
            (N)
        (N) edge[gray]
            (21')
            edge[gray]
            (22')
            edge[gray!80]
            (32')
            edge[gray!80]
            (33');

\foreach \n [count=\count from 0] in {1,...,3}{
       \node[dot,gray] (2d\n) at ($(2)+(-10+\count*10:0.4cm)$) {};}

\foreach \n [count=\count from 0] in {1,...,3}{
       \node[dot,gray!80] (2d\n) at ($(3)+(-10+\count*10:0.4cm)$) {};}

\foreach \n [count=\count from 0] in {1,...,3}{
       \node[dot,gray] (2d\n) at ($(N)+(34+\count*10:0.4cm)$) {};}

\foreach \n [count=\count from 0] in {1,...,3}{
       \node[dot,gray!80] (2d\n) at ($(N)+(-53+\count*10:0.4cm)$) {};}
  \end{scope}
  
 \begin{scope}[xshift=6cm]
  \node[state] (1)  {};
   \node[state] (2) [right = 1.2 of 1] {};
  \node[] (21) [above right = 0.7 and 0.3 of 2] {};
  \node[] (22) [above right = 0.3 and 0.7 of 2] {};
  \node[] (32) [below right = 0.3 and 0.7 of 2] {};
  \node[] (33) [below right = 0.7 and 0.3 of 2] {};
  \path[->]
        (2) edge[gray]
            (22)
            edge[gray]
            (32);

  \path[->]
        (1) edge[bend left]
            node[pos=0.5,above] {$a$}
            (2);
  \path[->]
        (1) edge[bend right]
            node[pos=0.5,below] {$a$}
            (2);

    \node[] (i) [right = 2 of 1] {};
    \node[] (f) [right = 0.8 of i] {};
    \path[->]
        (i) edge[bend left] (f);

  \node[state] (1') [right= 0.3 of f] {};
  \node[state] (N) [right =  1.2 of 1'] {};
  \node[] (21') [above right = 0.7 and 0.3 of N] {};
  \node[] (22') [above right = 0.3 and 0.7 of N] {};
  \node[] (32') [below right = 0.3 and 0.7 of N] {};
  \node[] (33') [below right = 0.7 and 0.3 of N] {};
  \path[->]
        (1') edge
            node[pos=0.48,above] {$a$}
            (N)
        (N) edge[gray]
            (22')
            edge[gray]
            (32');

\foreach \n [count=\count from 0] in {1,...,3}{
      \node[dot,gray] (2d\n) at ($(2)+(-12+\count*10:0.4cm)$) {};}

\foreach \n [count=\count from 0] in {1,...,3}{
      \node[dot,gray] (2d\n) at ($(N)+(-12+\count*10:0.4cm)$) {};} 
 \end{scope}
\end{tikzpicture}
 \vspace{-5pt}
\caption{An open elementary folding (left) and a closed elementary folding (right)}
\label{fig: foldings}
\end{figure}
Note that, formally, an elementary Stallings' folding $\phi$ is a special type of surjective homomorphism of $A$-automata $\phi \colon \Ati \onto \Ati'$, which we usually denote by \smash{$\Ati \! \xtr{\!\phi\!} \Ati'$}. We write $\Ati \! \xtr{\!} \Ati'$ to express that $\Ati'$ is the automaton obtained after performing some elementary Stallings' folding on $\Ati$. An easy vertex and arc count yields the remark below, which will be important later.

\begin{rem} \label{rem: rank folding}
If $\Ati$ is finite and $\Ati \xtr{} \Ati'$ is an elementary Stallings' folding, then 
 \begin{equation}
\rk(\Ati') \,=\, \bigg\{ \begin{array}{lll} \rk(\Ati) & & \text{si $\Ati \xtr{} \Ati'$ is open,} \\ \rk(\Ati)-1 & & \text{si $\Ati \xtr{} \Ati'$ is closed.}
\end{array}     
 \end{equation}
\end{rem}

It is also clear how elementary Stallings' foldings affect the corresponding fundamental and recognized subgroups.

\begin{lem}\label{lem: grrec}
Let $\Ati \!\xtr{\phi}\! \Ati'$ be an elementary Stallings' folding. Then,
\begin{enumerate*}[ind]
\item $\pi(\Ati)\isom \pi(\Ati')$ if and only if $\phi$ is open; and
\item $\gen{\Ati'}=\gen{\Ati}$.\qed
\end{enumerate*}
\end{lem}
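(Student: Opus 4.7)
My plan is to prove the two items separately, using Remark~\ref{rem: rank folding} together with Theorem~\ref{thm: B_T} for (1), and an explicit walk-lifting argument for (2). Throughout, I will denote by $\bar{\edgi}$ the image in $\Ati'$ of the two folded arcs $\edgi_1,\edgi_2$, set $a=(\edgi_1)\lab=(\edgi_2)\lab$, and write $\verti_0=\edgi_1\init=\edgi_2\init$, $\verti_1=\edgi_1\term$, $\verti_2=\edgi_2\term$.

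For (2), the inclusion $\gen{\Ati}\leqslant \gen{\Ati'}$ is immediate since the folding is a homomorphism of pointed $A$-automata $\phi\colon \Ati \to \Ati'$ and we already observed (just before diagram~\eqref{eq: label comm}) that any such homomorphism forces the recognized subgroups to be nested accordingly. The content lies in the reverse inclusion. Given $u\in \gen{\Ati'}$, pick a $\bp'$-walk $\walki'$ in $\Ati'$ with $(\walki')\rlab = u$, and I will build a $\bp$-walk $\walki$ in $\Ati$ satisfying $(\walki)\rlab=u$ by traversing $\walki'$ arc by arc while maintaining a lift vertex in $\Ati$. Every arc of $\Ati'$ other than $\bar{\edgi}^{\pm 1}$ has a unique preimage in $\Ati$, so its lift is forced; the only potential obstruction arises when traversing $\bar{\edgi}^{\pm 1}$. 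Leaving $\bar{\verti}_0$ along $\bar{\edgi}$ corresponds in $\Ati$ to choosing either $\edgi_1$ or $\edgi_2$ from $\verti_0$; in the \emph{closed} case $\verti_1=\verti_2$, so any continuation lifts immediately. In the \emph{open} case, if a later arc of $\walki'$ emanating from $\bar{\verti}:=\bar{\verti}_1=\bar{\verti}_2$ only has a preimage starting at the ``wrong'' vertex (say we are at $\verti_1$ but need to start at $\verti_2$), insert the auxiliary walk $\edgi_1^{-1}\edgi_2$ to swap sides; this walk reads the cancellation $a^{-1}a$, so it leaves the reduced label unchanged. Symmetric reasoning applies to $\bar{\edgi}^{-1}$. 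The result is a $\bp$-walk $\walki$ in $\Ati$ whose label equals $(\walki')\lab$ with finitely many inserted trivial cancellations, hence $(\walki)\rlab=u$.

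For (1), by Theorem~\ref{thm: B_T}, $\pi(\Ati)$ and $\pi(\Ati')$ are free groups of ranks $\rk(\Ati)$ and $\rk(\Ati')$ respectively. By Remark~\ref{rem: rank folding}, $\rk(\Ati')=\rk(\Ati)$ when $\phi$ is open while $\rk(\Ati')=\rk(\Ati)-1$ when $\phi$ is closed. Since two free groups are isomorphic if and only if they share the same rank, both implications of the equivalence follow at once: an open folding preserves the rank and hence the fundamental group up to isomorphism, whereas a closed folding strictly drops the rank and therefore breaks any possible isomorphism.

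I expect the main obstacle to be writing the lifting in (2) cleanly. A naive arc-by-arc lift of $\walki'$ can get stuck whenever the current lift vertex (say $\verti_1$) carries no preimage of the next arc of $\walki'$ (which in $\Ati$ may only emanate from $\verti_2$); the saving observation is that the involutive structure makes the swap walks $\edgi_1^{-1}\edgi_2$ and $\edgi_2^{-1}\edgi_1$ available precisely between $\verti_1$ and $\verti_2$, and that these swaps carry labels that vanish under free reduction, so inserting them as needed produces a legitimate walk in $\Ati$ with the desired reduced label.
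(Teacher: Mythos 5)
Your item (2) is essentially the paper's own argument: the paper leaves the lemma unproved as ``clear'', but the content is exactly the elevation construction it later sets up (in \Cref{sec: MP} and in the proof of \Cref{prop: unsolfolding}), namely lifting a $\bp'$-walk arc by arc and, at the problematic visits to the identified vertex, inserting the swap $e_1^{-1}e_2$, which reads the cancellation $a^{-1}a$ and so does not alter the reduced label. One small point you should make explicit: if the basepoint of $\Ati$ is itself one of the two identified endpoints, the naive lift of a $\bp'$-walk may terminate at the \emph{other} preimage of $\bp'$; appending one more swap at the end repairs this, by the same reason.

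For item (1) you take a genuinely different route: rank counting (\Cref{rem: rank folding} together with \Cref{thm: B_T} and the fact that free groups of different rank are non-isomorphic) instead of analyzing the induced map, which is what the paper really does in \Cref{prop: unsolfolding} (open $\Rightarrow$ $\phi^{*}$ is an isomorphism; closed $\Rightarrow$ $\ker\phi^{*}=\ncl{\gamma}\neq 1$). Your argument is shorter, but carries two caveats. First, \Cref{rem: rank folding} is stated only for finite $\Ati$, while the lemma has no finiteness hypothesis; and for the ``only if'' direction rank counting genuinely breaks down when $\rk(\Ati)$ is infinite: a closed folding then takes rank $\aleph_0$ to rank $\aleph_0$, and the two fundamental groups \emph{are} abstractly isomorphic, so the equivalence can only be proved this way for finite automata (or must be read as a statement about the canonical homomorphism $\phi^{*}$, which is the formulation the paper actually establishes and uses later, e.g.\ in \Cref{prop: unsolfolding} and \Cref{cor: hopfian grafic}). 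Second, even in the finite case your argument yields only an abstract isomorphism $\pi(\Ati)\simeq\pi(\Ati')$, with no control on $\phi^{*}$; the map-theoretic argument of \Cref{prop: unsolfolding} gives the stronger statement at essentially the same cost, which is why the paper proceeds that way. If you add the finiteness hypothesis (or switch to proving that $\phi^{*}$ is an isomorphism in the open case, via the backtracking analysis), your proof is complete.
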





\begin{defn}
    Let $\Ati$ be an involutive $A$-automaton. A \defin{folding sequence} on~$\Ati$ is a (finite or infinite) sequence of elementary foldings $\Phi =(\phi_i)_{i\geq 0}$ such that $\Ati =\Ati_{\!0}$ and \smash{$\Ati_{\!i-1} \xtr{\! \phi_i \!} \Ati_{\!i}$}, for every $i \neq 0 $. If $\Phi$ is finite, $\Phi =(\phi_i)_{i=1}^{k}$, then we say that $\Ati_{\!k}$ is the \defin[result of a folding sequence]{result} of a \defin[multiple folding]{(multiple) folding} $\Phi$ of length $k$, where we abuse language and let $\Phi = \phi_1 \phi_2 \cdots \phi_k$. A finite folding sequence is \defin[maximal folding sequence]{maximal} if its result $\Ati_{\!k}$ is deterministic (and hence no more elementary foldings are available).

A \defin{reduction process} for $\Ati$ is a maximal finite folding sequence on $\Ati$ followed by a $\core$ operation,
 \begin{figure}[H]
\vspace{-10pt}
 \begin{equation} 
\Ati = \Ati_{\hspace{-1pt}0} \xtr{\!\!\! \phi_1 \!\!}\Ati_{\hspace{-1pt}1} \xtr{\!\!\! \phi_2 \!\!} \cdots \xtr{\!\!\! \phi_m \!\!} \Ati_{\hspace{-1pt}m} \xto{\!\! \core} \!\core(\Ati_{\hspace{-1pt}m}) \isom \red{\Ati}
 \end{equation}
 \vspace{-12pt}
\caption{A reduction process: a maximal folding sequence followed by a $\core$ operation} \label{fig: eq: folding sequence}
\end{figure}
\end{defn}

\begin{prop}\label{prop: reduction is computable}
If $\Ati$ is finite, then the result of any reduction process on~$\Ati$ is~$\red{\Ati}$. Hence, the reduction $\red{\Ati}$ is computable.
\end{prop}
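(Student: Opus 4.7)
The plan is to establish three things in sequence: \textbf{(i)} any reduction process on a finite $\Ati$ terminates in a finite number of steps, \textbf{(ii)} its result is a reduced $A$-automaton, and \textbf{(iii)} it recognizes $\gen{\Ati}$. Once these are in place, Corollary~\ref{cor: gen iff isom} will give uniqueness up to isomorphism, proving the result is precisely $\red{\Ati}$ (regardless of the choices made along the way). Computability will follow transparently since each individual step is mechanical.

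For \emph{termination}, I would observe that every elementary Stallings folding strictly decreases $\card\Edgs\Ati$ (it identifies at least two arcs, and their inverses), while the number of arcs remains finite throughout. So any folding sequence starting from $\Ati$ has length at most $\card\Edgs^{+}\Ati$, which means a maximal folding sequence exists and is reached in finitely many steps. Moreover, detecting a determinism violation at a vertex is decidable (check finitely many pairs of outgoing arcs with coincident labels), so a maximal sequence can be explicitly produced. The final $\core$ operation is also computable by the remark following the definition of core: iteratively delete vertices of degree one (other than the basepoint), a process that terminates because the automaton is finite.

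For the \emph{nature of the result}: by definition, a maximal folding sequence cannot be extended, which means $\Ati_{\!m}$ is deterministic at every vertex. Applying the $\core$ operation produces $\core(\Ati_{\!m})$, which is core by definition and remains deterministic (removing hanging trees cannot create new determinism violations). Thus $\core(\Ati_{\!m})$ is a reduced pointed $A$-automaton. For the \emph{recognized subgroup}: Lemma~\ref{lem: grrec}(ii) ensures that $\gen{\Ati_{\!i-1}} = \gen{\Ati_{\!i}}$ at each elementary folding, so by induction $\gen{\Ati_{\!m}} = \gen{\Ati}$; and Corollary~\ref{cor: <core> = <ati>} gives $\gen{\core(\Ati_{\!m})} = \gen{\Ati_{\!m}}$. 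Chaining, $\gen{\core(\Ati_{\!m})} = \gen{\Ati}$.

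Now both $\core(\Ati_{\!m})$ and $\red{\Ati}$ are reduced $A$-automata recognizing the same subgroup $\gen{\Ati}$, so Corollary~\ref{cor: gen iff isom} yields $\core(\Ati_{\!m}) \isom \red{\Ati}$, which is exactly the claim. The main conceptual obstacle one might worry about is \emph{confluence}: different choices of which determinism violation to fold first could \emph{a priori} lead to different outputs. The beauty of this approach is that confluence is not proved directly — it is bypassed entirely by invoking the uniqueness of the reduced automaton recognizing a given subgroup (Corollary~\ref{cor: gen iff isom}). Every reduction process, regardless of the order of foldings chosen, must end at the same (up to isomorphism) reduced automaton $\red{\Ati}$, simply because there is only one such object.
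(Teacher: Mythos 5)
Your proof is correct and follows essentially the same route as the paper: termination by the strictly decreasing number of arcs, preservation of the recognized subgroup via Lemma~\ref{lem: grrec} and Corollary~\ref{cor: <core> = <ati>}, and uniqueness of the reduced automaton recognizing $\gen{\Ati}$ (you cite Corollary~\ref{cor: gen iff isom}, the paper cites Theorem~\ref{thm: Stallings bijection}, which rests on the same corollary). Your explicit remark that confluence is bypassed by uniqueness rather than proved directly is exactly the paper's point.
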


\begin{proof}
By \Cref{lem: grrec}, the result of any reduction process applied to $\Ati$ is a reduced $A$-automaton recognizing $\gen{\Ati}$ hence, by \Cref{thm: Stallings bijection}, it must coincide with $\red{\Ati}$, independently from the specific reduction process followed. We can keep detecting available foldings (open or closed, in no particular order) and we carry them out until no more elementary foldings are available (\ie until we reach a deterministic automaton). Observe that, a particular elementary folding may create new available ones (and even momentarily increase the number of them available); however, each elementary folding operation decreases the total number of edges in the automaton by exactly one and so, starting with a finite $\Ati$, it is guaranteed that any such sequence of foldings will eventually stop, reaching a deterministic $A$-automaton.
\end{proof}



Observe that, by \Cref{lem: grrec} and \Cref{cor: <core> = <ati>}, all the automata appearing in a reduction process for $\Ati$ recognize the same subgroup $\gen{\Ati}$. Note also that, since~$\red{\Ati}$ is the only reduced automaton recognizing $\gen{\Ati}$, the result of a folding process is independent of the folding sequence \emph{and even of the starting finite automaton}, as long as it recognizes the subgroup $\gen{\Ati}$.

The computability of the inverse map in \eqref{eq: Stallings bijection1} for finitely generated subgroups follows immediately from \Cref{prop: reduction is computable}. Indeed, given a finite set of generators $S$ for a finitely generated subgroup $H\leqslant \Free[A]$, it is enough to construct the flower automaton $\flower(S)$ and apply a folding process to it until reaching $\red{\flower(S)} =\stallings(H,A)$. As noted above, the resulting automaton $\stallings(H,A)$ depends canonically on the subgroup $H$  (it is even independent from the finite set of generators $S$ for $H$ used to start the process). 

To conclude the considerations about the compatibility of the inverse map $H \mapsto \stallings(H,A)$ let us mention that if the words in $S$ are reduced then it is not difficult to see that every automaton in any folding sequence for $\flower(S)$ is already core; hence the final core operation in the computation of $\stallings(H,A)$ can be omitted,

\begin{figure}[H]
\vspace{-5pt}
 \begin{equation} 
\flower(S) =\Ati_{\hspace{-1pt}0} \xtr{\!\!\! \phi_1 \!\!}\Ati_{\hspace{-1pt}1} \xtr{\!\!\! \phi_2 \!\!} \cdots \xtr{\!\!\! \phi_m \!\!} \Ati_{\hspace{-1pt}m} = \stallings(H,A)
 \end{equation}
\vspace{-10pt}
\caption{Computation of $\stallings(H,A)$ from a finite set $S$ of reduced generators for $H$}\label{fig: eq: folding sequence}
\end{figure}
\vspace{-15pt}

\begin{cor}\label{cor: basis are computable}
Given a finite set $S \subseteq \Free[A]$, a basis for (and hence the rank of) the subgroup $\gen{S}$ is computable. \qed
\end{cor}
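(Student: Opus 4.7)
The plan is to chain together the computability results already established in this section, turning the subgroup $\gen{S}$ into a concrete Stallings automaton and then reading off a basis from it.

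First, I would preprocess the input: replace each $w \in S$ by its free reduction $\red{w}$ (which is algorithmic via the word problem, as noted after~\eqref{eq: free group def}), discarding any trivial words. This yields a finite set of reduced words generating the same subgroup. Next, I would build the flower automaton $\flower(S)$ by taking one directed cycle (petal) per element of $S$ labeled letter by letter, glued at a common basepoint, and then closing off under involution. By construction $\gen{\flower(S)} = \gen{S}$, and $\flower(S)$ is a finite involutive pointed $A$-automaton.

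Then I would apply a reduction process to $\flower(S)$, as in \Cref{prop: reduction is computable}: repeatedly scan the (finite) list of arcs at each vertex and, whenever two arcs leaving the same vertex share a label, perform the corresponding elementary Stallings folding (together with the folding of their inverses). Each elementary folding strictly decreases the total number of arcs, so after finitely many steps we reach a deterministic automaton; since the petals of $\flower(S)$ were built from reduced words, no hanging trees appear along the way and the final $\core$ operation is unnecessary. By \Cref{prop: reduction is computable} the result is canonically $\stallings(\gen{S},A)$, a finite reduced $A$-automaton.

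Finally I would extract a basis using \Cref{prop: B_T comp}. Compute a spanning tree $T$ of $\stallings(\gen{S},A)$ (a standard graph algorithm on the underlying graph), list the positive arcs $e \in \Edgs^+ \stallings(\gen{S},A) \setmin \Edgs T$, and, for each such $e$, form the $T$-petal $\omega_e$ and output the reduced word $(\omega_e)\rlab \in \Free[A]$. The resulting finite set $B_T$ is a free basis of $\gen{S}$ by \Cref{prop: B_T comp}, and its cardinality $1 - \card\Verts\stallings(\gen{S},A) + \card\Edgs^+\stallings(\gen{S},A)$ is the rank. Every step—free reduction, construction of $\flower(S)$, folding, spanning tree, and petal readout—is effective and runs on finite data, so a basis (and the rank) of $\gen{S}$ is computable. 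The only subtle point is the justification that the folding process always terminates and that its output is independent of choices, but both are handled by the cited results. \qed
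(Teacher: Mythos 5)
Your proposal is correct and follows essentially the same route as the paper: reduce the generators, build $\flower(S)$, fold it to $\stallings(\gen{S},A)$ via \Cref{prop: reduction is computable}, and read off the basis $B_T$ from a spanning tree using \Cref{prop: B_T comp}. The extra observations you include (free reduction of the input, the core operation being unnecessary when the generators are reduced) are exactly the remarks the paper makes just before stating the corollary, so there is nothing to add.
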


\begin{exm}\label{ex: Stallings}
Let $\Free[2]$ be the free group on $A=\{a,b\}$ and let ${H \leqslant \Free[2]}$ be the subgroup generated by $S=\set{w_1,w_2,w_3}$, where $w_1=a^3$, $w_2=abab^{-1}$, and $w_3=a^{-1}bab^{-1}$. In order to compute a basis (and so, the rank) of~$H$, let us compute first $\stallings(H,A)$. We start building the flower automaton $\Ati_0=\flower(S)$, and then keep applying elementary foldings (in any order) until reaching $\stallings(H,A)$: in \Cref{fig: Stallings sequence} we can see a possible folding sequence 
to compute $\stallings(H,A)$, where, at each step, we have depicted with boldfaced arrows the two arcs getting folded in the following step (observe that the first step correspond, in fact, to three elementary foldings done simultaneously). 
\vspace{7pt}
\begin{figure}[H]
\centering
\begin{tikzpicture}[shorten >=1pt, node distance=1cm and 1cm, on grid,auto,auto,>=stealth']
\begin{scope}[rotate=90]
\newcommand{\rad}{0.85}
\newcommand{\Rad}{1.45}
\node[state,accepting] (bp)  {};

\foreach \x in {0,...,5}
{
\node[state] (\x) at ($(90+\x*360/6:\rad cm)$) {};
}

\node[state] (34) at ($(180+120:\Rad cm)$) {};
\node[state] (50) at ($(180+2400:\Rad cm)$) {};
\node[] (l0) [above = 1.2 of bp] {$\Ati_{\hspace{-1pt}0} \!=\! \flower(S)$};

\draw[->,blue] (bp) edge (1);
\draw[->,blue] (1) edge (2);
\draw[->,blue] (2) edge (bp);

\draw[->,blue] (bp) edge node[above,pos=0.45] {\scriptsize{$a$}}  (0);
\draw[->,red] (0) edge[thick] (50);
\draw[->,blue] (50) edge[thick] (5);
\draw[->,red] (bp) edge[thick] node[right,pos=0.5] {\scriptsize{$b$}} (5);

\draw[->,red] (bp) edge[thick] (4);
\draw[->,blue] (34) edge[thick] (4);
\draw[->,red] (3) edge[thick] (34);
\draw[->,blue] (3) edge (bp);

\node[] (i) [right = 1.25 of bp]{};
    \node[] (f) [right = 0.9 of i] {};
    \path[->]
        (i) edge[bend left] (f);
\end{scope}

\begin{scope}[xshift=3.25cm]         

\newcommand{\rad}{0.85}
\newcommand{\Rad}{1.45}
\node[state,accepting] (0)  {};
\node[state] (1) [right = \rad of 0] {};
\node[state] (2) [right = \rad of 1] {};
\node[state] (3) [right = \rad of 2] {};
\node[state] (5) at ($(180+30:\rad cm)$) {};
\node[state] (6) at ($(180-30:\rad cm)$) {};
\node[] (l2) [above right = 0.85 and \rad/2 of 1] {$\Ati_{\hspace{-1pt}1}$};

\draw[->,blue] (0) edge[thick] (6);
\draw[->,blue] (6) edge (5);
\draw[->,blue] (5) edge (0);
\draw[->,red] (0) edge (1);
\draw[->,blue] (2) edge (1);
\draw[->,red] (3) edge (2);
\draw[->,blue,bend left] (0) edge[thick] (3);
\draw[->,blue,bend left] (3) edge (0);

\node[] (i) [right = 2.9 of 0]{};
    \node[] (f) [right = 0.9 of i] {};
    \path[->]
        (i) edge[bend left] (f);
\end{scope}

\begin{scope}[xshift=7.3cm]         

\newcommand{\rad}{0.85}
\newcommand{\Rad}{1.45}
\node[state,accepting] (0)  {};
\node[state] (1) [right = \rad of 0] {};
\node[state] (2) [right = \rad of 1] {};
\node[state] (3) [right = \rad of 2] {};
\node[state] (4) [below right = \rad and \rad/2 of 1] {};
\node[] (l2) [above right = 0.85 and \rad/2 of 1] {$\Ati_{\hspace{-1pt}2}$};

\draw[->,red] (0) edge (1);
\draw[->,blue] (2) edge (1);
\draw[->,red] (3) edge (2);
\draw[->,blue,bend left] (0) edge (3);
\draw[->,blue,bend left] (3) edge[thick] (0);
\draw[->,blue,bend left] (3) edge (4);
\draw[->,blue,bend left] (4) edge[thick] (0);

\node[] (i) [below right = \rad and \rad/2 of 2]{};
    \node[] (f) [below = 0.9 of i] {};
    \path[->]
        (i) edge[bend left] (f);
\end{scope}

\begin{scope}[xshift=7cm,yshift=-2.3cm]         

\newcommand{\rad}{0.85}
\newcommand{\Rad}{1.45}
\node[state,accepting] (0)  {};
\node[state] (1) [right = \rad of 0] {};
\node[state] (2) [right = \rad of 1] {};
\node[state] (3) [right = \rad of 2] {};
\node[] (l2) [above right = 0.75 and \rad/2 of 1] {$\Ati_{\hspace{-1pt}3}$};

\draw[->,red] (0) edge (1);
\draw[->,blue] (2) edge (1);
\draw[->,red] (3) edge (2);
\draw[->,blue,bend left] (0) edge (3);
\draw[->,blue,bend left] (3) edge[thick] (0);
\draw[->,blue,loop right,min distance=8 * \rad mm,in=30,out=-30] (3) edge[thick] (3);

\node[] (i) [left = 0.5 of 0]{};
    \node[] (f) [left = 0.9 of i] {};
    \path[->]
        (i) edge[bend right] (f);
\end{scope}

\begin{scope}[xshift=3.5cm,yshift=-2.3cm]         

\newcommand{\rad}{0.85}
\newcommand{\Rad}{1.45}
\node[state,accepting] (0)  {};
\node[state] (1) [right = \rad of 0] {};
\node[state] (2) [right = \rad of 1] {};
\node[] (l2) [above = 0.55 of 1] {$\Ati_{\hspace{-1pt}4}$};
\draw[->,blue,loop right,min distance=8 * \rad mm,in=160,out=100] (0) edge[thick] (0);
\draw[->,blue,loop right,min distance=8 * \rad mm,in=-160,out=-100] (0) edge[thick] (0);

\draw[->,red] (0) edge (1);
\draw[->,blue] (2) edge (1);
\draw[->,red,bend left] (0) edge (2);

\node[] (i) [left = 0.8 of 0]{};
    \node[] (f) [left = 0.9 of i] {};
    \path[->]
        (i) edge[bend right] (f);
\end{scope}

\begin{scope}[xshift=-0.3cm,yshift=-2.3cm]         

\newcommand{\rad}{0.85}
\newcommand{\Rad}{1.45}
\node[state,accepting] (0)  {};
\node[state] (1) [right = \rad of 0] {};
\node[state] (2) [right = \rad of 1] {};
\node[] (l2) [above = 0.55 of 1] {$\Ati_{\hspace{-1pt}5}$};
\draw[->,blue,loop right,min distance=8 * \rad mm,in=150,out=210] (0) edge (0);

\draw[->,red] (0) edge[thick] (1);
\draw[->,blue] (2) edge (1);
\draw[->,red,bend left] (0) edge[thick] (2);

\node[] (i) [below = 0.4 of 0]{};
    \node[] (f) [below = 0.9 of i] {};
    \path[->]
        (i) edge[bend right] (f);
\end{scope}

\begin{scope}[xshift=0.3cm,yshift=-4cm]         

\newcommand{\rad}{0.85}
\newcommand{\Rad}{1.45}
\node[state,accepting] (0)  {};
\node[state] (1) [right = \rad of 0] {};
\node[] (l6) [above left= 0.5 and 0.3 of 1] {$\Ati_{\hspace{-1pt}6} \!=\! \stallings(H)$};

\draw[->,blue,loop right,min distance=8 * \rad mm,in=150,out=210] (0) edge node[left,pos=0.5] {\scriptsize{$a$}} (0);
\draw[->,blue,loop right,min distance=8 * \rad mm,in=-30,out=30] (1) edge (1);

\draw[->,red] (0) edge node[above,pos=0.45] {\scriptsize{$b$}} (1);
\end{scope}
\end{tikzpicture}
\vspace{-5pt}
\caption{A folding sequence for the subgroup $H=\gen{a^3,\, abab^{-1},\, a^{-1}bab^{-1}}\leqslant \Free_{\set{a,b}}$} 
\label{fig: Stallings sequence}
\end{figure} 
Finally, taking ${\bp \R{\xarc{ \R{b}\, }} \bullet}$ as a maximal tree for $\stallings(H,A)$, from \Cref{thm: B_T} we have that $\set{a, bab^{-1}}$ is a basis of $H$; in particular, $\rk (H)=2$.
\end{exm}

Including the computational results just obtained in the statement of \Cref{thm: Stallings bijection}, we reach the main result in the present section.

\begin{thm}[\citenr{stallings_topology_1983}]\label{thm: Stallings bijection2}
The map
 \begin{equation}\label{eq: Stallings bijection2}
\begin{array}{rcl} \mathsf{St}\colon \Sgps(\Free[A]) & \xto{} & \red{\Atts}_{\hspace{1pt}\bp}(A) \\ H & \mapsto & \stallings(H,A)
\end{array}
 \end{equation}
is a bijection whose inverse is $\gen{\Ati} \mapsfrom \Ati$. Moreover, a subgroup $H\leqslant \Free[A]$ is finitely generated if and only if $\stallings(H,A)$ is finite; in this case, the above bijection is computable in both directions. \qed
\end{thm}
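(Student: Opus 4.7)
The plan is straightforward, since this theorem is essentially a packaging statement: the bijective part has already been established as Theorem~\ref{thm: Stallings bijection}, so the only remaining content is the equivalence between finite generation of $H$ and finiteness of $\stallings(H,A)$, together with the two-way computability in the finite case.

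For the forward direction of the finiteness equivalence (and the computability of $\mathsf{St}$), I would start from a finite set $S \subseteq \Free[A]$ of reduced words generating $H$, build the flower automaton $\flower(S)$ as described just after Theorem~\ref{thm: Stallings bijection} (it is finite by construction and recognizes $H$), and then apply Proposition~\ref{prop: reduction is computable}: any reduction process on $\flower(S)$ terminates after finitely many steps and yields $\red{\flower(S)} \isom \stallings(H,A)$. Since each elementary folding strictly decreases the number of positive arcs, the procedure is effective, and the output is a finite reduced $A$-automaton; this proves both that $\stallings(H,A)$ is finite and that it is computable from any finite generating set.

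For the converse direction (and the computability of $\mathsf{St}^{-1}$), I would invoke Proposition~\ref{prop: B_T comp}: given a finite description of $\stallings(H,A)$, compute any spanning tree $T$ and extract the finite set of $T$-petal labels $B_T = \set{(\omega_{\edgi})\red{\lab} \st \edgi \in \Edgs^+\stallings(H,A) \setmin \Edgs T}$, which is a basis of $H = \gen{\stallings(H,A)}$. Hence $H$ is finitely generated, with a basis that is produced explicitly and algorithmically from $\stallings(H,A)$.

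I do not foresee a genuine obstacle here: the conceptually delicate ingredients---uniqueness of the reduced automaton recognizing a given subgroup (Corollary~\ref{cor: gen iff isom}), confluence of the folding process, and freeness of $\gen{\Ati}$ via Lemma~\ref{lem: deterministic gamma} and Theorem~\ref{thm: B_T}---have all been dealt with above. The only mild point worth flagging explicitly is that $\stallings(H,A)$ depends only on $H$ and not on the generating set $S$ used to initialise the folding, but this is automatic from Theorem~\ref{thm: Stallings bijection}, since the reduced automaton recognizing $H$ is unique up to isomorphism. The theorem is then obtained by assembling Theorem~\ref{thm: Stallings bijection}, Proposition~\ref{prop: reduction is computable}, and Proposition~\ref{prop: B_T comp}.
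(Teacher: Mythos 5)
Your proposal is correct and follows essentially the same route as the paper, which treats this theorem as an assembly of \Cref{thm: Stallings bijection} (the bijection), the flower-automaton-plus-folding construction via \Cref{prop: reduction is computable} for computing $\stallings(H,A)$ from a finite generating set, and \Cref{prop: B_T comp} for extracting a finite computable basis from a finite reduced automaton. No gaps to flag.
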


We emphasize the naturalness and efficiency of the computation in both directions. Quite remarkably, it was proven in \cite{touikan_fast_2006} that the computation of the Stallings' automaton can be performed in `almost linear' time on the sum of lengths of the input generators.

As can be guessed by the experienced readers, the $A$-automaton $\stallings(H,A)$ is a concise representation of a very classical construction: the Schreier digraph associated to a subgroup $H$ \wrt the ambient set of generators $A$.

\begin{defn}\label{def: Sch}
Let $G$ be a group, $H$ be a subgroup of $G$, and $S\subseteq G$ be a set of generators for $G$. Then, the \defin[Schreier automaton]{(right) Schreier automaton of $H$ with respect to~$S$}, denoted  by $\schreier(H,S)$, is the involutive $S$-automaton with vertex set $H\backslash G$ (the set of right cosets of $G$ modulo $H$), an arc $Hg \xarc{s\,} Hgs$ for every coset $Hg\in H\backslash G$ and every element $s\in S^{\pm}$, and the coset $H$ as a basepoint. In the particular case $H=\Trivial$, we recover the classical notion of Cayley automaton (or graph), $\cayley(G,S)=\schreier_G(\Trivial ,S)$.
\end{defn}

\begin{rem}
Note that, if $H\normaleq G$, then $\schreier_G(H,S)=\cayley(G/H,S)$. 
\end{rem}

\begin{exm}
The Cayley automaton of the free group $\Free[2] =\Free[\{a,b\}]$ \wrt the basis $\{a,b\}$ is depicted in \Cref{fig: Sch(F_2)}. It is clear that $\cayley(\Free[A],A)$ is the infinite $A^{\pm}$-regular involutive automaton tree.
\end{exm}

\begin{figure}[H]
\centering
  \begin{tikzpicture}[shorten >=1pt, node distance=1.2 and 1.2, on grid,auto,>=stealth',transform shape]
  
  \node[state,accepting] (bp) {};

  \begin{scope}
  
  \newcommand{\dx}{1.6}
  \newcommand{\dy}{1.6}
  \newcommand{\dxx}{\dx*0.5}
  \newcommand{\dxxx}{\dx*0.3}
  \newcommand{\dxxxx}{\dx*0.18}
  \newcommand{\dyy}{\dy*0.5}
  \newcommand{\dyyy}{\dy*0.3}
  \newcommand{\dyyyy}{\dy*0.18}

   \node[smallstate] (1) {};
   \node[smallstate] (a) [right = \dx of 1]{};
   \node[smallstate] (aa) [right = \dxx of a]{};
   \node[tinystate] (aaa) [right = \dxxx of aa]{};
   \node[emptystate] (aaaa) [right = \dxxxx of aaa]{};
   
   \node[tinystate] (ab) [above = \dyy of a]{};
   \node[tinystate] (aab) [above = \dyyy of aa]{};
   \node[emptystate] (aabb) [above = \dyyyy of aab]{};
   \node[emptystate] (aaab) [above = \dyyyy of aaa]{};
   \node[emptystate] (aaaab) [above = \dyyyy of aaaa]{};
   
   \node[tinystate] (aB) [below = \dyy of a]{};
   \node[tinystate] (aaB) [below = \dyyy of aa]{};
   \node[emptystate] (aaBB) [below = \dyyyy of aaB]{};
   \node[emptystate] (aaaB) [below = \dyyyy of aaa]{};
   \node[emptystate] (aaaaB) [below = \dyyyy of aaaa]{};
   
   \node[tinystate] (aba) [right = \dxxx of ab]{};
   \node[tinystate] (abA) [left = \dxxx of ab]{};
   \node[emptystate] (aaba) [right = \dxxxx of aab]{};
   \node[emptystate] (aabA) [left = \dxxxx of aab]{};

   \node[tinystate] (aBa) [right = \dxxx of aB]{};
   \node[tinystate] (aBA) [left = \dxxx of aB]{};
   \node[emptystate] (aaBa) [right = \dxxxx of aaB]{};
   \node[emptystate] (aaBA) [left = \dxxxx of aaB]{};

   \node[tinystate] (abb) [above = \dyyy of ab]{};
   \node[tinystate] (aBB) [below = \dyyy of aB]{};
   
   \node[emptystate] (abbA) [left = \dxxxx of abb]{};
   \node[emptystate] (abbb) [above = \dyyyy of abb]{};
   \node[emptystate] (abba) [right = \dxxxx of abb]{};
   
   \node[emptystate] (aBBA) [left = \dxxxx of aBB]{};
   \node[emptystate] (aBBB) [below = \dyyyy of aBB]{};
   \node[emptystate] (aBBa) [right = \dxxxx of aBB]{};
   
   \node[emptystate] (abAA) [left = \dxxxx of abA]{};
   \node[emptystate] (abAb) [above = \dyyyy of abA]{};
   \node[emptystate] (abAB) [below = \dxxxx of abA]{};
   
   \node[emptystate] (aBAA) [left = \dxxxx of aBA]{};
   \node[emptystate] (aBAb) [above = \dyyyy of aBA]{};
   \node[emptystate] (aBAB) [below = \dxxxx of aBA]{};
   
   \node[emptystate] (abaa) [right = \dxxxx of aba]{};
   \node[emptystate] (abab) [above = \dyyyy of aba]{};
   \node[emptystate] (abaB) [below = \dxxxx of aba]{};
   
   \node[emptystate] (aBaa) [right = \dxxxx of aBa]{};
   \node[emptystate] (aBab) [above = \dyyyy of aBa]{};
   \node[emptystate] (aBaB) [below = \dxxxx of aBa]{};
     
   \path[->] (a) edge[red] (ab);
   \path[->] (aa) edge[red] (aab);
   \path[->,densely dotted] (aab) edge[red] (aabb);
   \path[->,densely dotted] (aaa) edge[red] (aaab);
   
   \path[->] (aB) edge[red] (a);
   \path[->] (aaB) edge[red] (aa);
   \path[->,densely dotted] (aaBB) edge[red] (aaB);
   \path[->,densely dotted] (aaaB) edge[red] (aaa);
   
   \path[->] (ab) edge[red] (abb);
   \path[->] (aBB) edge[red] (aB);
   
   \path[->] (1) edge[blue] node[pos = 0.45,above] {\scriptsize{$a$}}(a);
   \path[->] (a) edge[blue] (aa);
   \path[->] (aa) edge[blue] (aaa);
   \path[->,densely dotted] (aaa) edge[blue] (aaaa);
   
   \path[->] (abA) edge[blue] (ab);
   \path[->] (ab) edge[blue] (aba);
   \path[->] (ab) edge[blue] (aba);
   
   \path[->,densely dotted] (aabA) edge[blue] (aab);
   \path[->,densely dotted] (aab) edge[blue] (aaba);
   
   \path[->] (aBA) edge[blue] (aB);
   \path[->] (aB) edge[blue] (aBa);
   \path[->] (aB) edge[blue] (aBa);
   
   \path[->,densely dotted] (aaBA) edge[blue] (aaB);
   \path[->,densely dotted] (aaB) edge[blue] (aaBa);
   
   \path[->,densely dotted] (abb) edge[red] (abbb);
   \path[->,densely dotted] (abbA) edge[blue] (abb);
   \path[->,densely dotted] (abb) edge[blue] (abba);
   
   \path[->,densely dotted] (aBBB) edge[red] (aBB);
   \path[->,densely dotted] (aBBA) edge[blue] (aBB);
   \path[->,densely dotted] (aBB) edge[blue] (aBBa);
   
   \path[->,densely dotted] (abA) edge[red] (abAb);
   \path[->,densely dotted] (abAA) edge[blue] (abA);
   \path[->,densely dotted] (abAB) edge[red] (abA);
   
   \path[->,densely dotted] (aBA) edge[red] (aBAb);
   \path[->,densely dotted] (aBAA) edge[blue] (aBA);
   \path[->,densely dotted] (aBAB) edge[red] (aBA);
   
   \path[->,densely dotted] (aba) edge[red] (abab);
   \path[->,densely dotted] (aba) edge[blue] (abaa);
   \path[->,densely dotted] (abaB) edge[red] (aba);
   
   \path[->,densely dotted] (aBa) edge[red] (aBab);
   \path[->,densely dotted] (aBa) edge[blue] (aBaa);
   \path[->,densely dotted] (aBaB) edge[red] (aBa);
  \end{scope}
  
  \begin{scope}[rotate=90]
  
  \newcommand{\dx}{1.6}
  \newcommand{\dy}{1.6}
  \newcommand{\dxx}{\dx*0.5}
  \newcommand{\dxxx}{\dx*0.3}
  \newcommand{\dxxxx}{\dx*0.18}
  \newcommand{\dyy}{\dy*0.5}
  \newcommand{\dyyy}{\dy*0.3}
  \newcommand{\dyyyy}{\dy*0.18}

   \node[smallstate] (1) {};
   \node[smallstate] (a) [right = \dx of 1]{};
   \node[smallstate] (aa) [right = \dxx of a]{};
   \node[tinystate] (aaa) [right = \dxxx of aa]{};
   \node[emptystate] (aaaa) [right = \dxxxx of aaa]{};
   
   \node[tinystate] (ab) [above = \dyy of a]{};
   \node[tinystate] (aab) [above = \dyyy of aa]{};
   \node[emptystate] (aabb) [above = \dyyyy of aab]{};
   \node[emptystate] (aaab) [above = \dyyyy of aaa]{};
   \node[emptystate] (aaaab) [above = \dyyyy of aaaa]{};
   
   \node[tinystate] (aB) [below = \dyy of a]{};
   \node[tinystate] (aaB) [below = \dyyy of aa]{};
   \node[emptystate] (aaBB) [below = \dyyyy of aaB]{};
   \node[emptystate] (aaaB) [below = \dyyyy of aaa]{};
   \node[emptystate] (aaaaB) [below = \dyyyy of aaaa]{};
   
   \node[tinystate] (aba) [right = \dxxx of ab]{};
   \node[tinystate] (abA) [left = \dxxxx of ab]{};
   \node[emptystate] (aaba) [right = \dxxxx of aab]{};
   \node[emptystate] (aabA) [left = \dxxxx of aab]{};

   \node[tinystate] (aBa) [right = \dxxx of aB]{};
   \node[tinystate] (aBA) [left = \dxxx of aB]{};
   \node[emptystate] (aaBa) [right = \dxxxx of aaB]{};
   \node[emptystate] (aaBA) [left = \dxxxx of aaB]{};

   \node[tinystate] (abb) [above = \dyyy of ab]{};
   \node[tinystate] (aBB) [below = \dyyy of aB]{};
   
   \node[emptystate] (abbA) [left = \dxxxx of abb]{};
   \node[emptystate] (abbb) [above = \dyyyy of abb]{};
   \node[emptystate] (abba) [right = \dxxxx of abb]{};
   
   \node[emptystate] (aBBA) [left = \dxxxx of aBB]{};
   \node[emptystate] (aBBB) [below = \dyyyy of aBB]{};
   \node[emptystate] (aBBa) [right = \dxxxx of aBB]{};
   
   \node[emptystate] (abAA) [left = \dxxxx of abA]{};
   \node[emptystate] (abAb) [above = \dyyyy of abA]{};
   \node[emptystate] (abAB) [below = \dxxxx of abA]{};
   
   \node[emptystate] (aBAA) [left = \dxxxx of aBA]{};
   \node[emptystate] (aBAb) [above = \dyyyy of aBA]{};
   \node[emptystate] (aBAB) [below = \dxxxx of aBA]{};
   
   \node[emptystate] (abaa) [right = \dxxxx of aba]{};
   \node[emptystate] (abab) [above = \dyyyy of aba]{};
   \node[emptystate] (abaB) [below = \dxxxx of aba]{};
   
   \node[emptystate] (aBaa) [right = \dxxxx of aBa]{};
   \node[emptystate] (aBab) [above = \dyyyy of aBa]{};
   \node[emptystate] (aBaB) [below = \dxxxx of aBa]{};
     
   \path[->] (ab) edge[blue] (a);
   \path[->] (aab) edge[blue] (aa);
   \path[->,densely dotted] (aabb) edge[blue] (aab);
   \path[->,densely dotted] (aaab) edge[blue] (aaa);
   
   \path[->] (a) edge[blue] (aB);
   \path[->] (aa) edge[blue] (aaB);
   \path[->,densely dotted] (aaB) edge[blue] (aaBB);
   \path[->,densely dotted] (aaa) edge[blue] (aaaB);
   
   \path[->] (abb) edge[blue] (ab);
   \path[->] (aB) edge[blue] (aBB);
   
   \path[->] (1) edge[red]  node[pos = 0.45,below] {\rotatebox[origin=c]{-90}{\scriptsize{$b$}}} (a);
   \path[->] (a) edge[red] (aa);
   \path[->] (aa) edge[red] (aaa);
   \path[->,densely dotted] (aaa) edge[red] (aaaa);
   
   \path[->] (abA) edge[red] (ab);
   \path[->] (ab) edge[red] (aba);
   \path[->] (ab) edge[red] (aba);
   
   \path[->,densely dotted] (aabA) edge[red] (aab);
   \path[->,densely dotted] (aab) edge[red] (aaba);
   
   \path[->] (aBA) edge[red] (aB);
   \path[->] (aB) edge[red] (aBa);
   \path[->] (aB) edge[red] (aBa);
   
   \path[->,densely dotted] (aaBA) edge[red] (aaB);
   \path[->,densely dotted] (aaB) edge[red] (aaBa);
   
   \path[->,densely dotted] (abbb) edge[blue] (abb);
   \path[->,densely dotted] (abbA) edge[red] (abb);
   \path[->,densely dotted] (abb) edge[red] (abba);
   
   \path[->,densely dotted] (aBB) edge[blue] (aBBB);
   \path[->,densely dotted] (aBBA) edge[red] (aBB);
   \path[->,densely dotted] (aBB) edge[red] (aBBa);
   
   \path[->,densely dotted] (abAb) edge[blue] (abA);
   \path[->,densely dotted] (abAA) edge[red] (abA);
   \path[->,densely dotted] (abA) edge[blue] (abAB);
   
   \path[->,densely dotted] (aBAb) edge[blue] (aBA);
   \path[->,densely dotted] (aBAA) edge[red] (aBA);
   \path[->,densely dotted] (aBA) edge[blue] (aBAB);
   
   \path[->,densely dotted] (abab) edge[blue] (aba);
   \path[->,densely dotted] (aba) edge[red] (abaa);
   \path[->,densely dotted] (aba) edge[blue] (abaB);
   
   \path[->,densely dotted] (aBab) edge[blue] (aBa);
   \path[->,densely dotted] (aBa) edge[red] (aBaa);
   \path[->,densely dotted] (aBa) edge[blue] (aBaB);
  \end{scope}
  
  \begin{scope}[rotate=180]
  \newcommand{\dx}{1.6}
  \newcommand{\dy}{1.6}
  \newcommand{\dxx}{\dx*0.5}
  \newcommand{\dxxx}{\dx*0.3}
  \newcommand{\dxxxx}{\dx*0.18}
  \newcommand{\dyy}{\dy*0.5}
  \newcommand{\dyyy}{\dy*0.3}
  \newcommand{\dyyyy}{\dy*0.18}

  \node[smallstate] (1) {};
  \node[smallstate] (a) [right = \dx of 1]{};
  \node[smallstate] (aa) [right = \dxx of a]{};
  \node[tinystate] (aaa) [right = \dxxx of aa]{};
  \node[emptystate] (aaaa) [right = \dxxxx of aaa]{};
   
  \node[tinystate] (ab) [above = \dyy of a]{};
  \node[tinystate] (aab) [above = \dyyy of aa]{};
  \node[emptystate] (aabb) [above = \dyyyy of aab]{};
  \node[emptystate] (aaab) [above = \dyyyy of aaa]{};
  \node[emptystate] (aaaab) [above = \dyyyy of aaaa]{};
   
  \node[tinystate] (aB) [below = \dyy of a]{};
  \node[tinystate] (aaB) [below = \dyyy of aa]{};
  \node[emptystate] (aaBB) [below = \dyyyy of aaB]{};
  \node[emptystate] (aaaB) [below = \dyyyy of aaa]{};
  \node[emptystate] (aaaaB) [below = \dyyyy of aaaa]{};
   
  \node[tinystate] (aba) [right = \dxxx of ab]{};
  \node[tinystate] (abA) [left = \dxxx of ab]{};
  \node[emptystate] (aaba) [right = \dxxxx of aab]{};
  \node[emptystate] (aabA) [left = \dxxxx of aab]{};

  \node[tinystate] (aBa) [right = \dxxx of aB]{};
  \node[tinystate] (aBA) [left = \dxxx of aB]{};
  \node[emptystate] (aaBa) [right = \dxxxx of aaB]{};
  \node[emptystate] (aaBA) [left = \dxxxx of aaB]{};

  \node[tinystate] (abb) [above = \dyyy of ab]{};
  \node[tinystate] (aBB) [below = \dyyy of aB]{};
   
  \node[emptystate] (abbA) [left = \dxxxx of abb]{};
  \node[emptystate] (abbb) [above = \dyyyy of abb]{};
  \node[emptystate] (abba) [right = \dxxxx of abb]{};
   
  \node[emptystate] (aBBA) [left = \dxxxx of aBB]{};
  \node[emptystate] (aBBB) [below = \dyyyy of aBB]{};
  \node[emptystate] (aBBa) [right = \dxxxx of aBB]{};
   
  \node[emptystate] (abAA) [left = \dxxxx of abA]{};
  \node[emptystate] (abAb) [above = \dyyyy of abA]{};
  \node[emptystate] (abAB) [below = \dxxxx of abA]{};
   
  \node[emptystate] (aBAA) [left = \dxxxx of aBA]{};
  \node[emptystate] (aBAb) [above = \dyyyy of aBA]{};
  \node[emptystate] (aBAB) [below = \dxxxx of aBA]{};
   
  \node[emptystate] (abaa) [right = \dxxxx of aba]{};
  \node[emptystate] (abab) [above = \dyyyy of aba]{};
  \node[emptystate] (abaB) [below = \dxxxx of aba]{};
   
  \node[emptystate] (aBaa) [right = \dxxxx of aBa]{};
  \node[emptystate] (aBab) [above = \dyyyy of aBa]{};
  \node[emptystate] (aBaB) [below = \dxxxx of aBa]{};
 
  \path[->] (ab) edge[red] (a);
  \path[->] (aab) edge[red] (aa);
  \path[->,densely dotted] (aabb) edge[red] (aab);
  \path[->,densely dotted] (aaab) edge[red] (aaa);
   
  \path[->] (a) edge[red] (aB);
  \path[->] (aa) edge[red] (aaB);
  \path[->,densely dotted] (aaB) edge[red] (aaBB);
  \path[->,densely dotted] (aaa) edge[red] (aaaB);
   
  \path[->] (abb) edge[red] (ab);
  \path[->] (aB) edge[red] (aBB);
   
  \path[->] (a) edge[blue] (1);
  \path[->] (aa) edge[blue] (a);
  \path[->] (aaa) edge[blue] (aa);
  \path[->,densely dotted] (aaaa) edge[blue] (aaa);
   
  \path[->] (ab) edge[blue] (abA);
  \path[->] (aba) edge[blue] (ab);
  \path[->] (aba) edge[blue] (ab);
   
  \path[->,densely dotted] (aab) edge[blue] (aabA);
  \path[->,densely dotted] (aaba) edge[blue] (aab);
   
  \path[->] (aB) edge[blue] (aBA);
  \path[->] (aBa) edge[blue] (aB);
  \path[->] (aBa) edge[blue] (aB);
   
  \path[->,densely dotted] (aaB) edge[blue] (aaBA);
  \path[->,densely dotted] (aaBa) edge[blue] (aaB);
   
  \path[->,densely dotted] (abbb) edge[red] (abb);
  \path[->,densely dotted] (abb) edge[blue] (abbA);
  \path[->,densely dotted] (abba) edge[blue] (abb);
   
  \path[->,densely dotted] (aBB) edge[red] (aBBB);
  \path[->,densely dotted] (aBB) edge[blue] (aBBA);
  \path[->,densely dotted] (aBBa) edge[blue] (aBB);
   
  \path[->,densely dotted] (abAb) edge[red] (abA);
  \path[->,densely dotted] (abA) edge[blue] (abAA);
  \path[->,densely dotted] (abA) edge[red] (abAB);
   
  \path[->,densely dotted] (aBAb) edge[red] (aBA);
  \path[->,densely dotted] (aBA) edge[blue] (aBAA);
  \path[->,densely dotted] (aBA) edge[red] (aBAB);
   
  \path[->,densely dotted] (abab) edge[red] (aba);
  \path[->,densely dotted] (abaa) edge[blue] (aba);
  \path[->,densely dotted] (aba) edge[red] (abaB);
   
  \path[->,densely dotted] (aBab) edge[red] (aBa);
  \path[->,densely dotted] (aBaa) edge[blue] (aBa);
  \path[->,densely dotted] (aBa) edge[red] (aBaB);
  \end{scope}
  
  \begin{scope}[rotate=270]
  \newcommand{\dx}{1.6}
  \newcommand{\dy}{1.6}
  \newcommand{\dxx}{\dx*0.5}
  \newcommand{\dxxx}{\dx*0.3}
  \newcommand{\dxxxx}{\dx*0.18}
  \newcommand{\dyy}{\dy*0.5}
  \newcommand{\dyyy}{\dy*0.3}
  \newcommand{\dyyyy}{\dy*0.18}

  \node[smallstate] (1) {};
  \node[smallstate] (a) [right = \dx of 1]{};
  \node[smallstate] (aa) [right = \dxx of a]{};
  \node[tinystate] (aaa) [right = \dxxx of aa]{};
  \node[emptystate] (aaaa) [right = \dxxxx of aaa]{};
   
  \node[tinystate] (ab) [above = \dyy of a]{};
  \node[tinystate] (aab) [above = \dyyy of aa]{};
  \node[emptystate] (aabb) [above = \dyyyy of aab]{};
  \node[emptystate] (aaab) [above = \dyyyy of aaa]{};
  \node[emptystate] (aaaab) [above = \dyyyy of aaaa]{};
   
  \node[tinystate] (aB) [below = \dyy of a]{};
  \node[tinystate] (aaB) [below = \dyyy of aa]{};
  \node[emptystate] (aaBB) [below = \dyyyy of aaB]{};
  \node[emptystate] (aaaB) [below = \dyyyy of aaa]{};
  \node[emptystate] (aaaaB) [below = \dyyyy of aaaa]{};
   
  \node[tinystate] (aba) [right = \dxxx of ab]{};
  \node[tinystate] (abA) [left = \dxxx of ab]{};
  \node[emptystate] (aaba) [right = \dxxxx of aab]{};
  \node[emptystate] (aabA) [left = \dxxxx of aab]{};

  \node[tinystate] (aBa) [right = \dxxx of aB]{};
  \node[tinystate] (aBA) [left = \dxxx of aB]{};
  \node[emptystate] (aaBa) [right = \dxxxx of aaB]{};
  \node[emptystate] (aaBA) [left = \dxxxx of aaB]{};

  \node[tinystate] (abb) [above = \dyyy of ab]{};
  \node[tinystate] (aBB) [below = \dyyy of aB]{};
   
  \node[emptystate] (abbA) [left = \dxxxx of abb]{};
  \node[emptystate] (abbb) [above = \dyyyy of abb]{};
  \node[emptystate] (abba) [right = \dxxxx of abb]{};
   
  \node[emptystate] (aBBA) [left = \dxxxx of aBB]{};
  \node[emptystate] (aBBB) [below = \dyyyy of aBB]{};
  \node[emptystate] (aBBa) [right = \dxxxx of aBB]{};
   
  \node[emptystate] (abAA) [left = \dxxxx of abA]{};
  \node[emptystate] (abAb) [above = \dyyyy of abA]{};
  \node[emptystate] (abAB) [below = \dxxxx of abA]{};
   
  \node[emptystate] (aBAA) [left = \dxxxx of aBA]{};
  \node[emptystate] (aBAb) [above = \dyyyy of aBA]{};
  \node[emptystate] (aBAB) [below = \dxxxx of aBA]{};
   
  \node[emptystate] (abaa) [right = \dxxxx of aba]{};
  \node[emptystate] (abab) [above = \dyyyy of aba]{};
  \node[emptystate] (abaB) [below = \dxxxx of aba]{};
   
  \node[emptystate] (aBaa) [right = \dxxxx of aBa]{};
  \node[emptystate] (aBab) [above = \dyyyy of aBa]{};
  \node[emptystate] (aBaB) [below = \dxxxx of aBa]{};
 
  \path[->] (a) edge[blue] (ab);
  \path[->] (aa) edge[blue] (aab);
  \path[->,densely dotted] (aab) edge[blue] (aabb);
  \path[->,densely dotted] (aaa) edge[blue] (aaab);
   
  \path[->] (aB) edge[blue] (a);
  \path[->] (aaB) edge[blue] (aa);
  \path[->,densely dotted] (aaBB) edge[blue] (aaB);
  \path[->,densely dotted] (aaaB) edge[blue] (aaa);
   
  \path[->] (ab) edge[blue] (abb);
  \path[->] (aBB) edge[blue] (aB);
   
  \path[->] (a) edge[red] (1);
  \path[->] (aa) edge[red] (a);
  \path[->] (aaa) edge[red] (aa);
  \path[->,densely dotted] (aaaa) edge[red] (aaa);
   
  \path[->] (ab) edge[red] (abA);
  \path[->] (aba) edge[red] (ab);
  \path[->] (aba) edge[red] (ab);
   
  \path[->,densely dotted] (aab) edge[red] (aabA);
  \path[->,densely dotted] (aaba) edge[red] (aab);
   
  \path[->] (aB) edge[red] (aBA);
  \path[->] (aBa) edge[red] (aB);
  \path[->] (aBa) edge[red] (aB);
   
  \path[->,densely dotted] (aaB) edge[red] (aaBA);
  \path[->,densely dotted] (aaBa) edge[red] (aaB);
   
  \path[->,densely dotted] (abb) edge[blue] (abbb);
  \path[->,densely dotted] (abb) edge[red] (abbA);
  \path[->,densely dotted] (abba) edge[red] (abb);
   
  \path[->,densely dotted] (aBBB) edge[blue] (aBB);
  \path[->,densely dotted] (aBB) edge[red] (aBBA);
  \path[->,densely dotted] (aBBa) edge[red] (aBB);
   
  \path[->,densely dotted] (abA) edge[blue] (abAb);
  \path[->,densely dotted] (abA) edge[red] (abAA);
  \path[->,densely dotted] (abAB) edge[blue] (abA);
   
  \path[->,densely dotted] (aBA) edge[blue] (aBAb);
  \path[->,densely dotted] (aBA) edge[red] (aBAA);
  \path[->,densely dotted] (aBAB) edge[blue] (aBA);
   
  \path[->,densely dotted] (aba) edge[blue] (abab);
  \path[->,densely dotted] (abaa) edge[red] (aba);
  \path[->,densely dotted] (abaB) edge[blue] (aba);
   
  \path[->,densely dotted] (aBa) edge[blue] (aBab);
  \path[->,densely dotted] (aBaa) edge[red] (aBa);
  \path[->,densely dotted] (aBaB) edge[blue] (aBa);
  \end{scope}
   
\end{tikzpicture}
\caption{The Cayley automaton of $\Free_{\set{a,b}}$}
\label{fig: Sch(F_2)}
\end{figure}

\vspace{-5pt}
\begin{defn}
Let $A$ be an alphabet and let $a\in A$. The \defin{$a$-Cayley branch} of $\Free[A]$ is the connected component of $\cayley(\Free[A],A) \setmin F$ containing $\bp$, where $F$ is the set of arcs incident to $\bp$ except $\bp \xarc{a} \bullet$ and its inverse. 
\end{defn}
\begin{figure}[H] 
\centering
  \begin{tikzpicture}[shorten >=1pt, node distance=1.2 and 1.2, on grid,auto,>=stealth',transform shape]
  
  \node[state,accepting] (bp) {};
  \node[blue] (l) [above left = 0.13 and 0.8 of 1] {\scriptsize{$a$}};

  \begin{scope}[rotate=180]
  \newcommand{\dx}{1.6}
  \newcommand{\dy}{1.6}
  \newcommand{\dxx}{\dx*0.6}
  \newcommand{\dxxx}{\dx*0.3}
  \newcommand{\dxxxx}{\dx*0.18}
  \newcommand{\dyy}{\dy*0.5}
  \newcommand{\dyyy}{\dy*0.3}
  \newcommand{\dyyyy}{\dy*0.18}

  \node[smallstate] (1) {};
  \node[smallstate] (a) [right = \dx of 1]{};
  \node[smallstate] (aa) [right = \dxx of a]{};
  \node[tinystate] (aaa) [right = \dxxx of aa]{};
  \node[emptystate] (aaaa) [right = \dxxxx of aaa]{};
   
  \node[tinystate] (ab) [above = \dyy of a]{};
  \node[tinystate] (aab) [above = \dyyy of aa]{};
  \node[emptystate] (aabb) [above = \dyyyy of aab]{};
  \node[emptystate] (aaab) [above = \dyyyy of aaa]{};
  \node[emptystate] (aaaab) [above = \dyyyy of aaaa]{};
   
  \node[tinystate] (aB) [below = \dyy of a]{};
  \node[tinystate] (aaB) [below = \dyyy of aa]{};
  \node[emptystate] (aaBB) [below = \dyyyy of aaB]{};
  \node[emptystate] (aaaB) [below = \dyyyy of aaa]{};
  \node[emptystate] (aaaaB) [below = \dyyyy of aaaa]{};
   
  \node[tinystate] (aba) [right = \dxxx of ab]{};
  \node[tinystate] (abA) [left = \dxxx of ab]{};
  \node[emptystate] (aaba) [right = \dxxxx of aab]{};
  \node[emptystate] (aabA) [left = \dxxxx of aab]{};

  \node[tinystate] (aBa) [right = \dxxx of aB]{};
  \node[tinystate] (aBA) [left = \dxxx of aB]{};
  \node[emptystate] (aaBa) [right = \dxxxx of aaB]{};
  \node[emptystate] (aaBA) [left = \dxxxx of aaB]{};

  \node[tinystate] (abb) [above = \dyyy of ab]{};
  \node[tinystate] (aBB) [below = \dyyy of aB]{};
   
  \node[emptystate] (abbA) [left = \dxxxx of abb]{};
  \node[emptystate] (abbb) [above = \dyyyy of abb]{};
  \node[emptystate] (abba) [right = \dxxxx of abb]{};
   
  \node[emptystate] (aBBA) [left = \dxxxx of aBB]{};
  \node[emptystate] (aBBB) [below = \dyyyy of aBB]{};
  \node[emptystate] (aBBa) [right = \dxxxx of aBB]{};
   
  \node[emptystate] (abAA) [left = \dxxxx of abA]{};
  \node[emptystate] (abAb) [above = \dyyyy of abA]{};
  \node[emptystate] (abAB) [below = \dxxxx of abA]{};
   
  \node[emptystate] (aBAA) [left = \dxxxx of aBA]{};
  \node[emptystate] (aBAb) [above = \dyyyy of aBA]{};
  \node[emptystate] (aBAB) [below = \dxxxx of aBA]{};
   
  \node[emptystate] (abaa) [right = \dxxxx of aba]{};
  \node[emptystate] (abab) [above = \dyyyy of aba]{};
  \node[emptystate] (abaB) [below = \dxxxx of aba]{};
   
  \node[emptystate] (aBaa) [right = \dxxxx of aBa]{};
  \node[emptystate] (aBab) [above = \dyyyy of aBa]{};
  \node[emptystate] (aBaB) [below = \dxxxx of aBa]{};
 
  \path[->] (ab) edge[red] (a);
  \path[->] (aab) edge[red] (aa);
  \path[->,densely dotted] (aabb) edge[red] (aab);
  \path[->,densely dotted] (aaab) edge[red] (aaa);
   
  \path[->] (a) edge[red] (aB);
  \path[->] (aa) edge[red] (aaB);
  \path[->,densely dotted] (aaB) edge[red] (aaBB);
  \path[->,densely dotted] (aaa) edge[red] (aaaB);
   
  \path[->] (abb) edge[red] (ab);
  \path[->] (aB) edge[red] (aBB);
   
  \path[->] (a) edge[blue] (1);
  \path[->] (aa) edge[blue] (a);
  \path[->] (aaa) edge[blue] (aa);
  \path[->,densely dotted] (aaaa) edge[blue] (aaa);
   
  \path[->] (ab) edge[blue] (abA);
  \path[->] (aba) edge[blue] (ab);
  \path[->] (aba) edge[blue] (ab);
   
  \path[->,densely dotted] (aab) edge[blue] (aabA);
  \path[->,densely dotted] (aaba) edge[blue] (aab);
   
  \path[->] (aB) edge[blue] (aBA);
  \path[->] (aBa) edge[blue] (aB);
  \path[->] (aBa) edge[blue] (aB);
   
  \path[->,densely dotted] (aaB) edge[blue] (aaBA);
  \path[->,densely dotted] (aaBa) edge[blue] (aaB);
   
  \path[->,densely dotted] (abbb) edge[red] (abb);
  \path[->,densely dotted] (abb) edge[blue] (abbA);
  \path[->,densely dotted] (abba) edge[blue] (abb);
   
  \path[->,densely dotted] (aBB) edge[red] (aBBB);
  \path[->,densely dotted] (aBB) edge[blue] (aBBA);
  \path[->,densely dotted] (aBBa) edge[blue] (aBB);
   
  \path[->,densely dotted] (abAb) edge[red] (abA);
  \path[->,densely dotted] (abA) edge[blue] (abAA);
  \path[->,densely dotted] (abA) edge[red] (abAB);
   
  \path[->,densely dotted] (aBAb) edge[red] (aBA);
  \path[->,densely dotted] (aBA) edge[blue] (aBAA);
  \path[->,densely dotted] (aBA) edge[red] (aBAB);
   
  \path[->,densely dotted] (abab) edge[red] (aba);
  \path[->,densely dotted] (abaa) edge[blue] (aba);
  \path[->,densely dotted] (aba) edge[red] (abaB);
   
  \path[->,densely dotted] (aBab) edge[red] (aBa);
  \path[->,densely dotted] (aBaa) edge[blue] (aBa);
  \path[->,densely dotted] (aBa) edge[red] (aBaB);
  \end{scope}
   
\end{tikzpicture}
\caption{The $(a^{-1})$-Cayley branch of $\Free_{\set{a,b}}$}\label{fig: branca}
\end{figure}

Given that our ambient group is always a free group $G=\Free[A]$, we will typically  write $\schreier_{\Free[A]}(H,S)=\schreier(H,S)$, where $S$ is usually taken to be the ambient basis~$A$. We summarize the main properties of Schreier automata, which will allow us to relate them to $\stallings(H,A)$. 

\begin{prop}\label{prop: propietats Sch}
Let $H$ be a subgroup of $\Free[A]$. Then, 
\begin{enumerate} [ind]
\item \label{item: Sch inv,det,sat} $\schreier(H,A)$ is a pointed involutive $A$-automaton which is deterministic and saturated;
\item \label{item: Sch connected} $\schreier(H,A)$ is connected;
\item \label{item: <Sch>=H} $\gen{\schreier(H,A)}=H$;
\item \label{item: Sch no core} $\schreier(H,A)$ is not necessarily core.
\end{enumerate}
\end{prop}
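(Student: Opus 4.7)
The plan is to verify items \ref{item: Sch inv,det,sat}--\ref{item: Sch no core} directly from the definition of the Schreier automaton, as each corresponds to an elementary property of the right coset action of $\Free[A]$ on $H\backslash \Free[A]$.

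For \ref{item: Sch inv,det,sat}, I would first observe that the incidence functions behave correctly: by construction, for every $Hg$ and every $s\in A^{\pm}$ the arc $Hg \xarc{s} Hgs$ exists. This immediately yields saturation ($\deg_s^{+}(Hg)\geq 1$ for every $s\in A^{\pm}$) and determinism ($\deg_s^{+}(Hg)\leq 1$, since the coset $Hgs$ is uniquely determined by $Hg$ and $s$). For involutivity, I would define ${}^{-1}$ on arcs by $(Hg \xarc{s} Hgs)^{-1} := Hgs \xarc{s^{-1}} Hg$; this is an involution without fixed points (since $(s^{-1})^{-1}=s$ and $s\neq s^{-1}$ in $A^{\pm}$), reverses endpoints, and inverts labels, as required by \Cref{def: inv A-digraph}.

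For \ref{item: Sch connected} and \ref{item: <Sch>=H}, the main observation is the straightforward induction which shows that, for every word $w = s_1 s_2 \cdots s_\ell \in (A^{\pm})^*$ and every coset $Hg$, there is a unique walk $Hg \xwalk{w} Hgw$ in $\schreier(H,A)$ (this is just determinism together with saturation iterated). Connectedness then follows because every $Hg$ is reachable from $H$ via any word $w\in (A^{\pm})^*$ representing $g$. For the identification of the recognized subgroup, the inclusion $H \subseteq \gen{\schreier(H,A)}$ is obtained by taking any $h\in H$, writing it as a word $w\in (A^{\pm})^*$, and noting that $H \xwalk{w} Hw = H$ is a closed walk at the basepoint with reduced label $h$; conversely, if $\walki$ is a closed walk at $H$ with label $w$, then $Hw = H$ yields $w \in H$, and hence $\red{w} \in H$ (since $w$ and $\red{w}$ represent the same element of $\Free[A]$).

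For \ref{item: Sch no core} it suffices to exhibit an example. A convenient one is $H = \gen{a^2} \leq \Free_{\set{a,b}}$: the Stallings automaton is a simple $a$-loop of length $2$, whereas $\schreier(H, \set{a,b})$, being saturated, contains infinitely many cosets $Hb^k, Hab^k, \ldots$ attached to the two ``relevant'' vertices $H$ and $Ha$ via hanging trees of $b$-arcs; none of these extra vertices appears in any reduced closed walk at $H$, so $\schreier(H,\set{a,b})$ fails to be core. No step is particularly delicate; the only subtlety worth double-checking is that the map $Hg \mapsto Hgs$ is well defined on cosets (which is clear, since $H(hg)s = Hgs$ for any $h \in H$), ensuring that the incidence and involution maps in \Cref{def: Sch} are consistent.
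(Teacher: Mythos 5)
Your proposal is correct and follows essentially the same route as the paper: items (i)--(iii) are read off directly from the right-coset action (with the walk $H \xwalk{w} Hw$ giving connectedness and the equivalence ``closed $\Leftrightarrow$ $w\in H$'' giving the recognized subgroup), and (iv) is settled by an explicit example. The only differences are cosmetic: you spell out the involution on arcs, which the paper leaves implicit, and you use $\gen{a^2}$ instead of the paper's $\gen{b}$ for (iv) (just note that the hanging parts there are full Cayley branches, not only $b$-arcs, though this does not affect the argument).
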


\begin{proof} Assertion \ref{item: Sch inv,det,sat} follows immediately from \Cref{def: Sch} and the observation that, for every $a\in A^{\pm}$, $Hga=Hg'a$ implies $Hg=Hg'$. 

In $\schreier(H,A)$, given any vertex $Hw$, let $w=a_{i_1}^{\varepsilon_1} a_{i_2}^{\varepsilon_2} \cdots a_{i_l}^{\varepsilon_l}$ and the walk  
\begin{equation} \label{eq: Sch connex}
\walki \colon \bp \! \xarc{\raisebox{0.6ex}{$\scriptstyle{a_{i_1}^{\varepsilon_1}}$}} Ha_{i_1}^{\varepsilon_1} \xarc{\raisebox{0.6ex}{$\scriptstyle{a_{i_2}^{\varepsilon_2}}$}} Ha_{i_1}^{\varepsilon_1}a_{i_2}^{\varepsilon_2}  \xarc{\raisebox{0.6ex}{$\scriptstyle{a_{i_3}^{\varepsilon_3}}$}} \ \cdots \ \xarc{\raisebox{0.6ex}{$\scriptstyle{a_{i_l}^{\varepsilon_l}}$}} Hw
 \end{equation}
connects the basepoint to $Hw$. This shows point~\ref{item: Sch connected}.

Assertion \ref{item: <Sch>=H} follows from the fact that the walk $\walki$ in~\eqref{eq: Sch connex} is closed if and only if $Hw=\bp=H$, that is, if and only if $\walki\rlab =w\in H$.


Finally, to see~\ref{item: Sch no core}, just observe the $\{a,b\}$-automaton $\schreier(\gen{b}, \{a,b\})$ depicted in \Cref{fig: Sch(<b>)}: a single $b$-loop at $\bp$ (coinciding with $\core (\schreier(\gen{b},\{a,b\}))$), and an $a$-Cayley branch and an $a^{-1}$-Cayley branch growing from the corresponding two $a$-arcs available at the basepoint. 
\end{proof}
\vspace{5pt}
\begin{figure}[H]\label{fig: Stallings <<b>>}
\centering
  \begin{tikzpicture}[shorten >=1pt, node distance=1.2 and 1.2, on grid,auto,>=stealth',transform shape]
  
  \node[state,accepting] (bp) {};
  
  \path[->] (bp)
            edge[red,loop right,min distance=10mm,in=90+35,out=90-35]
            node[above] {\scriptsize{$b$}}
            (bp);
  
  \begin{scope}
  
  \newcommand{\dx}{1.6}
  \newcommand{\dy}{1.6}
  \newcommand{\dxx}{\dx*0.55}
  \newcommand{\dxxx}{\dx*0.3}
  \newcommand{\dxxxx}{\dx*0.18}
  \newcommand{\dyy}{\dy*0.55}
  \newcommand{\dyyy}{\dy*0.3}
  \newcommand{\dyyyy}{\dy*0.18}

   \node[smallstate] (1) {};
   \node[smallstate] (a) [right = \dx of 1]{};
   \node[smallstate] (aa) [right = \dxx of a]{};
   \node[tinystate] (aaa) [right = \dxxx of aa]{};
   \node[emptystate] (aaaa) [right = \dxxxx of aaa]{};
   
   \node[tinystate] (ab) [above = \dyy of a]{};
   \node[tinystate] (aab) [above = \dyyy of aa]{};
   \node[emptystate] (aabb) [above = \dyyyy of aab]{};
   \node[emptystate] (aaab) [above = \dyyyy of aaa]{};
   \node[emptystate] (aaaab) [above = \dyyyy of aaaa]{};
   
   \node[tinystate] (aB) [below = \dyy of a]{};
   \node[tinystate] (aaB) [below = \dyyy of aa]{};
   \node[emptystate] (aaBB) [below = \dyyyy of aaB]{};
   \node[emptystate] (aaaB) [below = \dyyyy of aaa]{};
   \node[emptystate] (aaaaB) [below = \dyyyy of aaaa]{};
   
   \node[tinystate] (aba) [right = \dxxx of ab]{};
   \node[tinystate] (abA) [left = \dxxx of ab]{};
   \node[emptystate] (aaba) [right = \dxxxx of aab]{};
   \node[emptystate] (aabA) [left = \dxxxx of aab]{};

   \node[tinystate] (aBa) [right = \dxxx of aB]{};
   \node[tinystate] (aBA) [left = \dxxx of aB]{};
   \node[emptystate] (aaBa) [right = \dxxxx of aaB]{};
   \node[emptystate] (aaBA) [left = \dxxxx of aaB]{};

   \node[tinystate] (abb) [above = \dyyy of ab]{};
   \node[tinystate] (aBB) [below = \dyyy of aB]{};
   
   \node[emptystate] (abbA) [left = \dxxxx of abb]{};
   \node[emptystate] (abbb) [above = \dyyyy of abb]{};
   \node[emptystate] (abba) [right = \dxxxx of abb]{};
   
   \node[emptystate] (aBBA) [left = \dxxxx of aBB]{};
   \node[emptystate] (aBBB) [below = \dyyyy of aBB]{};
   \node[emptystate] (aBBa) [right = \dxxxx of aBB]{};
   
   \node[emptystate] (abAA) [left = \dxxxx of abA]{};
   \node[emptystate] (abAb) [above = \dyyyy of abA]{};
   \node[emptystate] (abAB) [below = \dxxxx of abA]{};
   
   \node[emptystate] (aBAA) [left = \dxxxx of aBA]{};
   \node[emptystate] (aBAb) [above = \dyyyy of aBA]{};
   \node[emptystate] (aBAB) [below = \dxxxx of aBA]{};
   
   \node[emptystate] (abaa) [right = \dxxxx of aba]{};
   \node[emptystate] (abab) [above = \dyyyy of aba]{};
   \node[emptystate] (abaB) [below = \dxxxx of aba]{};
   
   \node[emptystate] (aBaa) [right = \dxxxx of aBa]{};
   \node[emptystate] (aBab) [above = \dyyyy of aBa]{};
   \node[emptystate] (aBaB) [below = \dxxxx of aBa]{};
     
   \path[->] (a) edge[red] node[pos = 0.4,left] {\scriptsize{$b$}} (ab);
   \path[->] (aa) edge[red] (aab);
   \path[->,densely dotted] (aab) edge[red] (aabb);
   \path[->,densely dotted] (aaa) edge[red] (aaab);
   
   \path[->] (aB) edge[red] (a);
   \path[->] (aaB) edge[red] (aa);
   \path[->,densely dotted] (aaBB) edge[red] (aaB);
   \path[->,densely dotted] (aaaB) edge[red] (aaa);
   
   \path[->] (ab) edge[red] (abb);
   \path[->] (aBB) edge[red] (aB);
   
   \path[->] (1) edge[blue] node[pos = 0.4,above] {\scriptsize{$a$}}(a);
   \path[->] (a) edge[blue] (aa);
   \path[->] (aa) edge[blue] (aaa);
   \path[->,densely dotted] (aaa) edge[blue] (aaaa);
   
   \path[->] (abA) edge[blue] (ab);
   \path[->] (ab) edge[blue] (aba);
   \path[->] (ab) edge[blue] (aba);
   
   \path[->,densely dotted] (aabA) edge[blue] (aab);
   \path[->,densely dotted] (aab) edge[blue] (aaba);
   
   \path[->] (aBA) edge[blue] (aB);
   \path[->] (aB) edge[blue] (aBa);
   \path[->] (aB) edge[blue] (aBa);
   
   \path[->,densely dotted] (aaBA) edge[blue] (aaB);
   \path[->,densely dotted] (aaB) edge[blue] (aaBa);
   
   \path[->,densely dotted] (abb) edge[red] (abbb);
   \path[->,densely dotted] (abbA) edge[blue] (abb);
   \path[->,densely dotted] (abb) edge[blue] (abba);
   
   \path[->,densely dotted] (aBBB) edge[red] (aBB);
   \path[->,densely dotted] (aBBA) edge[blue] (aBB);
   \path[->,densely dotted] (aBB) edge[blue] (aBBa);
   
   \path[->,densely dotted] (abA) edge[red] (abAb);
   \path[->,densely dotted] (abAA) edge[blue] (abA);
   \path[->,densely dotted] (abAB) edge[red] (abA);
   
   \path[->,densely dotted] (aBA) edge[red] (aBAb);
   \path[->,densely dotted] (aBAA) edge[blue] (aBA);
   \path[->,densely dotted] (aBAB) edge[red] (aBA);
   
   \path[->,densely dotted] (aba) edge[red] (abab);
   \path[->,densely dotted] (aba) edge[blue] (abaa);
   \path[->,densely dotted] (abaB) edge[red] (aba);
   
   \path[->,densely dotted] (aBa) edge[red] (aBab);
   \path[->,densely dotted] (aBa) edge[blue] (aBaa);
   \path[->,densely dotted] (aBaB) edge[red] (aBa);
  \end{scope}

  \begin{scope}[rotate=180]
  \newcommand{\dx}{1.6}
  \newcommand{\dy}{1.6}
  \newcommand{\dxx}{\dx*0.55}
  \newcommand{\dxxx}{\dx*0.3}
  \newcommand{\dxxxx}{\dx*0.18}
  \newcommand{\dyy}{\dy*0.55}
  \newcommand{\dyyy}{\dy*0.3}
  \newcommand{\dyyyy}{\dy*0.18}

  \node[smallstate] (1) {};
  \node[smallstate] (a) [right = \dx of 1]{};
  \node[smallstate] (aa) [right = \dxx of a]{};
  \node[tinystate] (aaa) [right = \dxxx of aa]{};
  \node[emptystate] (aaaa) [right = \dxxxx of aaa]{};
   
  \node[tinystate] (ab) [above = \dyy of a]{};
  \node[tinystate] (aab) [above = \dyyy of aa]{};
  \node[emptystate] (aabb) [above = \dyyyy of aab]{};
  \node[emptystate] (aaab) [above = \dyyyy of aaa]{};
  \node[emptystate] (aaaab) [above = \dyyyy of aaaa]{};
   
  \node[tinystate] (aB) [below = \dyy of a]{};
  \node[tinystate] (aaB) [below = \dyyy of aa]{};
  \node[emptystate] (aaBB) [below = \dyyyy of aaB]{};
  \node[emptystate] (aaaB) [below = \dyyyy of aaa]{};
  \node[emptystate] (aaaaB) [below = \dyyyy of aaaa]{};
   
  \node[tinystate] (aba) [right = \dxxx of ab]{};
  \node[tinystate] (abA) [left = \dxxx of ab]{};
  \node[emptystate] (aaba) [right = \dxxxx of aab]{};
  \node[emptystate] (aabA) [left = \dxxxx of aab]{};

  \node[tinystate] (aBa) [right = \dxxx of aB]{};
  \node[tinystate] (aBA) [left = \dxxx of aB]{};
  \node[emptystate] (aaBa) [right = \dxxxx of aaB]{};
  \node[emptystate] (aaBA) [left = \dxxxx of aaB]{};

  \node[tinystate] (abb) [above = \dyyy of ab]{};
  \node[tinystate] (aBB) [below = \dyyy of aB]{};
   
  \node[emptystate] (abbA) [left = \dxxxx of abb]{};
  \node[emptystate] (abbb) [above = \dyyyy of abb]{};
  \node[emptystate] (abba) [right = \dxxxx of abb]{};
   
  \node[emptystate] (aBBA) [left = \dxxxx of aBB]{};
  \node[emptystate] (aBBB) [below = \dyyyy of aBB]{};
  \node[emptystate] (aBBa) [right = \dxxxx of aBB]{};
   
  \node[emptystate] (abAA) [left = \dxxxx of abA]{};
  \node[emptystate] (abAb) [above = \dyyyy of abA]{};
  \node[emptystate] (abAB) [below = \dxxxx of abA]{};
   
  \node[emptystate] (aBAA) [left = \dxxxx of aBA]{};
  \node[emptystate] (aBAb) [above = \dyyyy of aBA]{};
  \node[emptystate] (aBAB) [below = \dxxxx of aBA]{};
   
  \node[emptystate] (abaa) [right = \dxxxx of aba]{};
  \node[emptystate] (abab) [above = \dyyyy of aba]{};
  \node[emptystate] (abaB) [below = \dxxxx of aba]{};
   
  \node[emptystate] (aBaa) [right = \dxxxx of aBa]{};
  \node[emptystate] (aBab) [above = \dyyyy of aBa]{};
  \node[emptystate] (aBaB) [below = \dxxxx of aBa]{};
 
  \path[->] (ab) edge[red] (a);
  \path[->] (aab) edge[red] (aa);
  \path[->,densely dotted] (aabb) edge[red] (aab);
  \path[->,densely dotted] (aaab) edge[red] (aaa);
   
  \path[->] (a) edge[red] (aB);
  \path[->] (aa) edge[red] (aaB);
  \path[->,densely dotted] (aaB) edge[red] (aaBB);
  \path[->,densely dotted] (aaa) edge[red] (aaaB);
   
  \path[->] (abb) edge[red] (ab);
  \path[->] (aB) edge[red] (aBB);
   
  \path[->] (a) edge[blue] (1);
  \path[->] (aa) edge[blue] (a);
  \path[->] (aaa) edge[blue] (aa);
  \path[->,densely dotted] (aaaa) edge[blue] (aaa);
   
  \path[->] (ab) edge[blue] (abA);
  \path[->] (aba) edge[blue] (ab);
  \path[->] (aba) edge[blue] (ab);
   
  \path[->,densely dotted] (aab) edge[blue] (aabA);
  \path[->,densely dotted] (aaba) edge[blue] (aab);
   
  \path[->] (aB) edge[blue] (aBA);
  \path[->] (aBa) edge[blue] (aB);
  \path[->] (aBa) edge[blue] (aB);
   
  \path[->,densely dotted] (aaB) edge[blue] (aaBA);
  \path[->,densely dotted] (aaBa) edge[blue] (aaB);
   
  \path[->,densely dotted] (abbb) edge[red] (abb);
  \path[->,densely dotted] (abb) edge[blue] (abbA);
  \path[->,densely dotted] (abba) edge[blue] (abb);
   
  \path[->,densely dotted] (aBB) edge[red] (aBBB);
  \path[->,densely dotted] (aBB) edge[blue] (aBBA);
  \path[->,densely dotted] (aBBa) edge[blue] (aBB);
   
  \path[->,densely dotted] (abAb) edge[red] (abA);
  \path[->,densely dotted] (abA) edge[blue] (abAA);
  \path[->,densely dotted] (abA) edge[red] (abAB);
   
  \path[->,densely dotted] (aBAb) edge[red] (aBA);
  \path[->,densely dotted] (aBA) edge[blue] (aBAA);
  \path[->,densely dotted] (aBA) edge[red] (aBAB);
   
  \path[->,densely dotted] (abab) edge[red] (aba);
  \path[->,densely dotted] (abaa) edge[blue] (aba);
  \path[->,densely dotted] (aba) edge[red] (abaB);
   
  \path[->,densely dotted] (aBab) edge[red] (aBa);
  \path[->,densely dotted] (aBaa) edge[blue] (aBa);
  \path[->,densely dotted] (aBa) edge[red] (aBaB);
  \end{scope}
   
\end{tikzpicture}
\caption{The Schreier automaton for $\gen{b} \leqslant \Free_{\set{a,b}}$}
\label{fig: Sch(<b>)}
\end{figure}

From \Cref{prop: propietats Sch} it is clear that the property of being core is the only one missing for $\schreier(H,A)$ to be a reduced automaton recognizing $H$; the result below follows.


\begin{prop} \label{prop: St = core Sch}
Let $H$ be a subgroup of $\Free[A]$. The Stallings' automaton for $H$ is the core of its Schreier automaton \ie $\stallings(H,A)=\core(\schreier(H,A))$. \qed
\end{prop}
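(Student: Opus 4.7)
The plan is to combine the properties already established for $\schreier(H,A)$ in \Cref{prop: propietats Sch} with the uniqueness half of Stallings' bijection (\Cref{thm: Stallings bijection}, or equivalently \Cref{cor: gen iff isom}). The strategy is to show that $\core(\schreier(H,A))$ is a reduced pointed $A$-automaton recognizing $H$, and then appeal to uniqueness.

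First I would verify that $\core(\schreier(H,A))$ is reduced. By \Cref{prop: propietats Sch}\ref{item: Sch inv,det,sat}, $\schreier(H,A)$ is involutive and deterministic. Determinism is inherited by arbitrary subdigraphs (removing arcs cannot create two equally-labeled arcs leaving the same vertex), and passing to the $\bp$-core preserves the involutive structure (since for every reduced $\bp$-walk $\walki$, the walk $\walki^{-1}$ is also a reduced $\bp$-walk, so the core is closed under inversion of arcs). By definition, $\core_{\bp}(\schreier(H,A))$ is core with respect to the basepoint $\bp$. Therefore $\core(\schreier(H,A))$ is deterministic and core, hence a reduced pointed $A$-automaton.

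Second, I would check that the subgroup recognized by $\core(\schreier(H,A))$ is still $H$. By \Cref{prop: propietats Sch}\ref{item: <Sch>=H} we have $\gen{\schreier(H,A)}=H$, and by \Cref{cor: <core> = <ati>} the core operation preserves the recognized subgroup, so
\[
\gen{\core(\schreier(H,A))} \,=\, \gen{\schreier(H,A)} \,=\, H.
\]
Finally, applying \Cref{cor: gen iff isom} (or equivalently the injectivity of the Stallings bijection in \Cref{thm: Stallings bijection}) to the two reduced $A$-automata $\core(\schreier(H,A))$ and $\stallings(H,A)$, both of which recognize $H$, we conclude that they are isomorphic, as desired.

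No step is really an obstacle; the only subtlety is confirming that determinism and the involutive structure survive passage to the core, which is immediate once one recalls that the $\bp$-core consists of those vertices and arcs visited by some reduced $\bp$-walk (a class closed under inversion of walks). Everything else is bookkeeping across results already proved.
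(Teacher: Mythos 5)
Your proposal is correct and follows essentially the same route as the paper, which derives the statement directly from \Cref{prop: propietats Sch} (determinism of $\schreier(H,A)$ and $\gen{\schreier(H,A)}=H$), the invariance of the recognized subgroup under taking the core, and the uniqueness of the reduced automaton recognizing a given subgroup. Your extra checks that determinism and the involutive structure survive passage to the $\bp$-core are fine and just make explicit what the paper leaves implicit.
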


In analogy with this result, we define the \defin{strict Stallings' digraph} of $H$ with respect to $A$ to be $\stallings^*(H,A)=\core^*(\stallings(H,A))$.

\medskip
Finally, for later use, we highlight additional connections between the Schreier and the Stallings' automata of a subgroup. 

\begin{lem}\label{lem: St vs Sch}
Let $H$ be a subgroup of $\Free[A]$. Then,  
\begin{enumerate}[ind]
\item \label{item: St sat iff Sch cor} $\stallings(H,A) \text{ is saturated} \ \Leftrightarrow\  \stallings(H,A)=\schreier(H,A) 
\ \Leftrightarrow\  \schreier(H,A) \text{ is core;}$
\item \label{item: Sch = St + Cay branches} $\schreier(H,A)$ equals $\stallings(H,A)$ with a Cayley $a$-branch attached to each $a$-deficient vertex from $\stallings(H,A)$, for all $a\in A^\pm$.
\end{enumerate}
\end{lem}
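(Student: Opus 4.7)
The plan is to leverage \Cref{prop: St = core Sch}, which gives $\stallings(H,A)=\core(\schreier(H,A))$, together with \Cref{prop: propietats Sch}, which records that $\schreier(H,A)$ is always a connected, deterministic, saturated, involutive $A$-automaton recognizing~$H$. For part~\ref{item: St sat iff Sch cor}, the second equivalence is immediate: $\stallings(H,A)=\schreier(H,A)$ iff $\schreier(H,A) = \core(\schreier(H,A))$, which is exactly the definition of $\schreier(H,A)$ being core. For the first equivalence, the direction $(\Leftarrow)$ is immediate since $\schreier(H,A)$ is always saturated. For $(\Rightarrow)$, assuming $\stallings(H,A)$ saturated, I would pick an arbitrary vertex $u \in \Verts \schreier(H,A)$ and use the connectedness of $\schreier(H,A)$ to choose a walk $\bp \xwalk{w\,} u$ in it; since $\stallings(H,A)$ is deterministic, saturated, and contains $\bp$, the word $w$ is also readable in $\stallings(H,A)$ from $\bp$, and by determinism of $\schreier(H,A)$ both walks coincide, forcing $u \in \Verts \stallings(H,A)$. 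An analogous argument on arcs then yields $\stallings(H,A)=\schreier(H,A)$.

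For part~\ref{item: Sch = St + Cay branches}, I would exploit the same equality from the opposite angle: $\schreier(H,A)$ is obtained from $\stallings(H,A)$ by reattaching the hanging trees removed by the core operation. The goal is to identify each such hanging tree with a Cayley $a$-branch of $\Free[A]$, for the appropriate letter $a \in A^{\pm}$. Given an $a$-deficient vertex $v \in \Verts \stallings(H,A)$, saturation of $\schreier(H,A)$ provides a unique arc $v \xarc{a\,} w$ in $\schreier(H,A)$; this arc is not in $\stallings(H,A)$, so it is the root edge of a hanging tree $T_a^v$ attached to $\stallings(H,A)$ at $v$. Every vertex of $T_a^v$ other than $v$ lies outside $\stallings(H,A)$, and therefore by saturation and determinism of $\schreier(H,A)$ has full degree $2 \card A$ within $T_a^v$; combined with the fact that $T_a^v$ is a tree, this fully determines its labelled structure, giving the desired isomorphism with the $a$-Cayley branch (matching $v \mapsto \bp$ and extending by labels).

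I would close by noting that the hanging trees attached at different deficiencies are pairwise disjoint except possibly at a shared attaching vertex: any further shared vertex would produce a cycle in $\schreier(H,A)$ traversing two vertices of $\stallings(H,A)$, which would lift to a reduced $\bp$-walk using the ``hanging'' arcs, contradicting that they were removed by the core operation. The main obstacle I anticipate is the clean formalisation of the identification of $T_a^v$ with the $a$-Cayley branch as labelled involutive digraphs, since both are infinite objects; however, the determinism and saturation of $\schreier(H,A)$ force the labelled isomorphism inductively (layer by layer outward from $v$), so this reduces to routine induction on the distance from the attaching vertex.
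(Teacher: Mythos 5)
Your proof is correct and follows essentially the same route as the paper: both parts rest on $\stallings(H,A)=\core(\schreier(H,A))$ together with saturation and determinism of $\schreier(H,A)$, plus the observation that vertices outside the core cannot lie on reduced walks that would force them into the core. Your write-up simply makes explicit what the paper leaves implicit, namely the connectivity argument in part (i) and the layer-by-layer identification of each hanging piece with a Cayley branch in part (ii).
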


\begin{proof}
To see~\ref{item: St sat iff Sch cor} we only need to recall that $\schreier(H)$ is always saturated, $\stallings(H)$ is always core, and $\stallings(H)=\core(\schreier(H))$.

As for~\ref{item: Sch = St + Cay branches}, if $\stallings(H,A)$ is saturated there is nothing to prove. Otherwise, note that if $\verti$ is an $a$-deficient vertex in $\stallings(H,A)$, then the $a$-arc in $\schreier(H,A)$ starting at $\verti$ does not belong to $\stallings(H,A)$. Now, the claim \ref{item: Sch = St + Cay branches} follows from the fact that every vertex in $\schreier(H,A) \setmin \stallings(H,A)$ is saturated and does not appear in any reduced walk in $\schreier(H,A) \setmin \stallings(H,A)$ (otherwise it would also appear in some reduced $\bp$-walk of $\schreier(H,A)$, in contradiction with not belonging to the core $\stallings(H,A)$).
\end{proof}

We conclude this section with an example illustrating that a change in the ambient basis may drastically affect the appearance of the Stallings' automaton associated to a given subgroup. In fact, not much is known about the function $\stallings(H,A) \mapsto \stallings(H,B)$, where $A,B$ are different bases of a free group $\Free$. If the reference basis $A$ is clear from the context, we will often abbreviate $\schreier(H) =\schreier(H,A)$, $\stallings(H)= \stallings(H,A)$, $\stallings^*(H)=\stallings^*(H,A)$, etc. 

\begin{exm}[Example~2.2 in~\cite{miasnikov_algebraic_2007}]
Let $\Free[3]$ be the free group with basis $A=\set{a,b,c}$ and let $H=\gen{ab,acba}$. It is easy to see that $B=\set{a',b',c'}$ is also a basis for $\Free[3]$, where $a'=a$, $b'=ab$, and $c'=acba$. The Stallings' automaton for~$H$ with respect to $A$ and to $B$ are represented in \Cref{fig: St vs St}.
\vspace{5pt}
\begin{figure}[H] 
\centering
  \begin{tikzpicture}[shorten >=1pt, node distance=1 and 2, on grid,auto,>=stealth']
  \begin{scope}
   \node[state,accepting] (0) {};
   \node[state] (1) [right = of 0]{};
   \node[state] (2) [below = of 1]{};
   \node[state] (3) [below = of 0]{};

    \path[->]
        (0) edge[blue, bend left = 20]
            node[above] {\scriptsize{$a$}}
            (1);
            
    \path[->]
        (1) edge[red, bend left = 20]
            node[below] {\scriptsize{$b$}}
            (0);
            
    \path[->]
        (1) edge[black]
            node[right] {\scriptsize{$c$}}
            (2);
            
    \path[->] (2) edge[red] (3);
    \path[->] (3) edge[blue] (0);
    \end{scope}
    
    \begin{scope}[xshift=5 cm,yshift=-0.5 cm]
    \node[state,accepting] (0) {};
    
    \path[->]
        (0) edge[Sepia,loop left,min distance=15mm,in=-35,out=35]
            node[right] {\scriptsize{$b'$}}
            (0);
            
    \path[->]
        (0) edge[OliveGreen,loop left,min distance=15mm,in=180-35,out=180+35]
            node[left] {\scriptsize{$c'$}}
            (0);           
    \end{scope}
\end{tikzpicture}
\vspace{-10pt}
\caption{The automata $\stallings(H,A)$ and $\stallings(H,B)$}
\label{fig: St vs St}
\end{figure}
\end{exm}

\section{The membership problem}\label{sec: MP}

One of the first algorithmic consequences of the Stallings' theory is a nice and efficient solution to the subgroup membership problem for free groups. We recall the general problem for an arbitrary finitely presented group $G=\pres{X}{R}$.

\begin{named}[Subgroup Membership problem for $G=\pres{X\!}{\!R}$, \MP(G)]
Decide, given finitely many words $w_0,w_1,\ldots,w_p \in (X^{\pm})^*$, whether (the element in $G$ represented by) $w_0$ belongs to the subgroup generated by (the elements in $G$ represented by) $w_1,\ldots,w_p$.
\end{named}

It is well known that, by a standard brute force argument based on Tietze transformations, the solvability of $\MP(G)$ does not depend on the particular finite presentation for $G$ in use. In our situation, for free groups $\Free[A]$ we will always consider finite presentations with no relations, $\Free[A]=\pres{A}{-}$, \ie will always work \wrt an ambient finite free basis $A$.

\begin{thm}\label{thm: memb}
The subgroup membership problem is decidable for $\Free[A]$.
\end{thm}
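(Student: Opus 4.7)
The plan is to reduce membership testing to a deterministic trace in a computable reduced automaton, leveraging both directions of the bijection in \Cref{thm: Stallings bijection2}.

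Given the input $w_0, w_1, \ldots, w_p \in (A^{\pm})^*$, set $H = \gen{w_1, \ldots, w_p} \leqslant \Free[A]$. First I would compute (a finite description of) $\stallings(H,A)$: build the flower automaton $\flower(\red{w_1}, \ldots, \red{w_p})$ and perform any maximal folding sequence on it, which is effective by \Cref{prop: reduction is computable}. Second, I would compute $u = \red{w_0}$ (trivial) and, exploiting the determinism of $\stallings(H,A)$ together with \Cref{rem: unique walk}, attempt to trace $u$ from $\bp$ letter by letter: at each intermediate vertex $\verti$ and each next letter $a$ of $u$, move along the unique $a$-arc out of $\verti$ if it exists, or stop with failure otherwise. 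The algorithm outputs ``$w_0 \in H$'' precisely when this trace completes and terminates back at $\bp$.

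For correctness, the key observation is that $w_0 \in H$ if and only if $u \in H = \gen{\stallings(H,A)} = \red{\Lang}_{\bp}(\stallings(H,A))$. If the trace succeeds and closes at $\bp$, it exhibits $u$ as the label of a $\bp$-walk; being already reduced, $u$ is then its own reduced label, so $u \in H$. Conversely, if $u \in H$, then $u = \red{(\walki)\lab}$ for some $\walki \in \Walks_{\bp}(\stallings(H,A))$; passing to its reduction $\red{\walki}$ (a reduced $\bp$-walk), the determinism of $\stallings(H,A)$ forces $(\red{\walki})\lab$ to be a reduced word (any cancellation $aa^{-1}$ in the label would induce a backtracking $\edgi\edgi^{-1}$ in the walk itself), so $(\red{\walki})\lab = u$ and the deterministic trace will indeed succeed and close at $\bp$.

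All the steps involved (free reduction of words, flower assembly, Stallings foldings, deterministic tracing) are effective, which suffices to decide $\MP(\Free[A])$. The only potential subtlety --- that $w_0$ might lie in $H$ through some non-reduced expression whose trace in $\stallings(H,A)$ seemingly fails to match $u$ --- is precisely neutralized by the combination of determinism and coreness packaged into Stallings automata; there is no genuine obstacle, and the effectiveness of the whole procedure is inherited from the computability of the Stallings bijection.
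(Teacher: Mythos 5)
Your proposal is correct and follows essentially the same route as the paper: compute $\stallings(H,A)$ by folding the flower automaton, then deterministically trace the (reduced) candidate word from the basepoint and accept exactly when the trace completes and closes at $\bp$. You merely make explicit the correctness argument (that determinism forces reduced walks to carry reduced labels) which the paper leaves implicit; note that only determinism, not coreness, is actually needed there.
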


\begin{proof}
Suppose we are given a finite set $S=\{w_1,\ldots ,w_{p}\}$ generating $H\leqslant \Free[A]$ and $w_0\in \Free[A]$ (all of them are assumed to be reduced words since they can be reduced beforehand). Start building $\stallings(H,A)$ as explained in the previous section (note that, even if $A$ is infinite, only finitely many letters are in use). Then, try to read the candidate word $w_0$ as the label of a walk in $\stallings(H,A)$ starting at~$\bp$. The determinism of $\stallings(H,A)$ guarantees that, at each step, there is at most one possible arc labeled by the following letter. Then, only two situations are possible:
 \begin{enumerate*}[cases]
\item we get stuck before completing the reading of $w_0$; in this case $w_0\not\in H$ and we answer \nop;
\item we can complete the reading of the full word $w_0$; in this case, $w_0\in H$ if and only if the (unique possible) walk spelling $w_0$ and starting at $\bp$ terminates back at~$\bp$.
\end{enumerate*}
Hence, we return \yep\ if we are able to read $w_0$ as the label of a $\bp$-walk in $\stallings(H,A)$, and \nop\ otherwise. 
\end{proof}

\begin{exm}\label{ex: MP}
Let $A=\{a,b\}$, and consider the subgroup of $\Free[A]$ generated by $S=\set{w_1,w_2,w_3}$, where $w_1=a^3$, $w_2=abab^{-1}$, and $w_3=a^{-1}bab^{-1}$, and the set of candidates $W_0 =\set{a, b, bab^{-1}a^2b^{-1}} \subseteq \Free[A]$. To decide whether the candidates belong to $H$ we first construct $\stallings(H,A)$, as done in \Cref{ex: Stallings}. Then, clearly, $a$ spells a $\bp$-walk (of length $1$) in $\stallings(H,A)$; so, $a\in H$. Similarly, $b$ spells a walk in $\stallings(H,A)$ starting at $\bp$ but not ending at $\bp$; so, $b\not\in H$. Finally, starting at $\bp$ one can read the initial segment $bab^{-1}a^2$ from $bab^{-1}a^2b^{-1}$ as the label of a walk starting at $\bp$, but it is not possible to complete the full reading of $bab^{-1}a^2b^{-1}$; hence, $bab^{-1}a^2b^{-1}\not\in H$. 
\end{exm}

A natural complement to the decision problem $\MP(G)$ is the extra requirement that the algorithm, in case $w_0\in H=\gen{w_1,\ldots ,w_p}$, also outputs a word expressing $w_0$ in terms of $S=\set{w_1,\ldots ,w_p}$. This is always theoretically doable since, knowing that $w_0\in H$, one can always start a brute force search among all possible words on $S^{\pm}$ and wait for the guaranteed match. However, we can take profit of the Stallings' techniques to design a faster algorithm. The idea is very simple: realize $w_0$ as the label of a reduced $\bp$-walk in $\stallings(H,A)$ and `elevate' it up the folding sequence to the flower automaton $\flower(S)$. Now, observe that such a reduced $\bp$-walk is precisely a word in $S^{\pm}$ spelling $w_0$ modulo cancellation. 

\begin{defn}
Let $\Phi\colon \Ati \xto{} \Ati'$ be a homomorphism of $A$-automata, and let~$\walki$ be a reduced walk in $\Ati'$. Then, an \defin[elevation of a walk]{elevation of $\walki$ through $\Phi$} (or a \defin{$\Phi$-elevation} of~$\walki$) is any reduced walk $\widetilde{\walki}$ in $\Ati$ such that $\red{(\widetilde{\walki}) \Phi} =\walki$ (and hence, $(\widetilde{\walki}) \red{\lab}=(\walki) \red{\lab} \in \Free[A]$). 
\end{defn}

We start by showing that any reduced $\bp'$-walk $\walki$ in $\Ati'$ admits an elevation through any elementary folding~\smash{$\Ati \xtr{\!\phi\!} \Ati'$}. Let us fix notation: denote the folded arcs by $\edgii_1,\, \edgii_2\in \Edgs\Ati$, $\vertiii =\edgii_1\init =\edgii_2\init$, $\vertii_1 =\edgii_1 \term$, and $\vertii_2 =\edgii_2 \term$; and let us denote the folded resulting arc and vertex by $\edgii \in \Edgs \Ati'$ and $\vertii \in \Verts \Ati'$, respectively. To construct a $\phi$-elevation for $\walki$, we need to pay attention to its visits to $\vertii$: such a visit is called \defin[essential visit]{essential} if it is of the form $\edgi_1 \vertii \edgi_2$, where $\edgi_1,\edgi_2 \neq \edgii^{\pm 1}$ and $\edgi_1 \phi^{-1} \tau \neq \edgi_2 \phi^{-1} \iota$ (note that only open foldings entail essential visits).
 
Suppose that $\phi$ is open (\ie $\vertii_1\neq \vertii_2$) and let $\walki =\verti_0 \edgi_1 \verti_1 \cdots \verti_{l-1} \edgi_l \verti_l$ be a reduced $\bp'$-walk in $\Ati'$ (where $\verti_0=\bp'=\verti_l$). Elevate the vertices (resp., arcs) in $\walki$ verbatim to the `same' vertex (resp., arc) in $\widetilde{\walki}$ except for the essential visits of $\walki$ to $\vertii$, namely those of the form $\edgi_i \vertii \edgi_{i+1}$, where $\edgi_i, \edgi_{i+1}\neq \edgii^{\pm 1}$ and $\{\edgi_i\term,\, \edgi_{i+1}\init\}=\{\vertii_1,\, \vertii_2\}$, in which case we elevate either to $\edgi_i \vertii_1 \edgii_1^{-1} \vertiii \edgii_2 \vertii_2 \edgi_{i+1}$ or to $\edgi_i \vertii_2 \edgii_2^{-1} \vertiii \edgii_1 \vertii_1 \edgi_{i+1}$ depending on whether $\edgi_i \term =\vertii_1$ or $\edgi_i \term =\vertii_2$ in $\Ati$ (including the degenerate case where $\vertii_1=\bp$ and $\widetilde{\walki}$ starts or ends with an edge incident to $\vertii_2$, or vice-versa). It is clear that the $\bp$-walk $\widetilde{\walki}$ constructed in this way is reduced and satisfies $\red{(\widetilde{\walki})\phi} =\walki$; therefore, it is an elevation of $\walki$ through the open folding $\phi$. (We do not care now about uniqueness, but we will see later that $\widetilde{\walki}$ is in fact the unique possible elevation for $\walki$ if $\phi$ is open.)   

Suppose now that $\phi$ is closed (\ie $\vertii_1 =\vertii_2 =\vertii$). In this case, there is an obvious elevation of $\walki$ consisting of elevating every vertex and arc in $\walki$ verbatim, except for each occurrence of the folded arc $\edgii$, which can be elevated indistinctly to either $\edgii_1$ or $\edgii_2$. Note the crucial fact that, in this case, there are infinitely many possible elevations: in fact, every visit to $\vertii$, say $\edgi_i \vertii \edgi_{i+1}$, can be elevated to $\edgi_i \vertii_1 \xi \vertii_1 \edgi_{i+1}$, where $\xi$ is a $\vertii_1$-walk in $\Ati$ satisfying $\xi\rlab =1$ (consider, for example, $\xi =(\edgii_1^{-1}\vertiii \edgii_2)^n$ for any integer $n\in \mathbb{Z}$). This non-uniqueness of elevations through closed foldings has a neat algebraic interpretation described later. 

\begin{defn}
Let $\phi\colon \Ati \xto{} \Ati'$ be a closed folding identifying the arcs $\edgii_1$ and~$\edgii_2$. Then, a (non-trivial) reduced \mbox{$\bp$-walk} in $\Ati$ of the form
\vspace{3pt}
 \[
\bp \xwalk{\gamma_0} \bullet \xarc{\edgii_2} \bullet  \xarc{\edgii_1^{-1}} \bullet \xwalk{\gamma_0^{-1}} \bp
 \]
is called an \defin[elementary walk]{elementary walk associated to} $\phi$. Moreover, if $\Phi\colon \Ati_{\!0} \xto{} \Ati$ is a homomorphism of automata, then any $\Phi$-elevation of an elementary walk associated to $\phi$ is called an \defin{elementary $\Phi$-elevation} of $\phi$.
\end{defn}
\vspace{-5pt}
\begin{figure}[H]
\centering
\begin{tikzpicture}[shorten >=3pt, node distance=.3cm and 1cm, on grid,auto,>=stealth',decoration={snake, segment length=2mm, amplitude=0.3mm,post length=1.5mm}]
   \node[state,accepting] (0) {};
   \node[state] (1) [right = 2.15 of 0] {};
   \node[state] (2) [right = 1.5 of 1] {};
   \path[->]
        ([yshift=0.5ex]0.east) edge[snake it]
            node[pos=0.5,above] {$\gamma_0$}
            ([yshift=0.3ex]1)
        ([yshift=-0.5ex]1.west) edge[snake it]
            node[pos=0.5,below] {$\gamma_0^{-1}$}
            ([yshift=-0.3ex]0);
    \path[->]
        (1) edge[bend left] 
        node[pos=0.5,above] {$\edgii_2$}
        (2);
    \path[->]
        (2) edge[bend left] 
        node[pos=0.5,below] {$\edgii_1^{-1}$}
        (1);
\end{tikzpicture}
\vspace{-5pt}
\caption{An elementary walk associated to a closed folding}\label{fig: involutive a-arc}
\end{figure}
\vspace{-5pt}

We illustrate this process of elevation of reduced $\bp$-walks in the context of the example above. 

\begin{exm}
Let us recover \Cref{ex: Stallings} and find an expression for $w_0=a$ as a word on $w_1=a^3$, $w_2=abab^{-1}$, $w_3=a^{-1}bab^{-1}$ (and their inverses). 

Consider the sequence of foldings in \Cref{fig: Stallings sequence}. 
Note that the elementary foldings on it are all open except for $\Ati_{\hspace{-1pt} 4} \xtr{} \Ati_{\hspace{-1pt} 5}$, which is closed because it identifies two $a$-loops into a single one. Clearly $a\in H$, since it can be realized as the label of the $\bp$-walk $\walki_6 =a_1$ in $\Ati_{\hspace{-1pt} 6}=\stallings(H,A)$; this walk is depicted using a
dashed gray line:
\vspace{-15pt}
\begin{figure}[H]
\centering
\begin{tikzpicture}[shorten >=1pt, node distance=1cm and 1cm, on grid,auto,auto,>=stealth']       
\newcommand{\rad}{1.2}
\node[state,accepting] (0)  {};
\node[state] (1) [right = \rad of 0] {};

\draw[->,blue,loop right,min distance=12 * \rad mm,in=150,out=210] (0) edge 
node[pos=0.5,right=-.3mm] {$a_1$} (0);
\draw[->,gray,loop right,min distance= 16 * \rad mm,in=140,out=220] (0) edge[dashed,thick]
node[left] {\scriptsize{$\walki_6$}} (0);
\draw[->,blue,loop right,min distance= 12 * \rad mm,in=-30,out=30] (1) edge (1);

\draw[->,red] (0) edge (1);
\end{tikzpicture}
\end{figure}
\vspace{-20pt}
Note that the $a$-arc incident to $\bp$ (\ie the only one traversed by $\walki_6$) has been denoted by $a_1$; we will denote some arcs by their corresponding labeling letter together with certain numerical subscripts keeping track of the elevation process (the concrete notational convention will be clear below); the unused arcs will receive no special name. 

As a first step, we elevate the walk $\walki_6$ up trough the last folding $\Ati_{\hspace{-1pt} 5} \xtr{\,} \Ati_{\hspace{-1pt} 6}$; since $\walki_6$ does not visit the identified vertex, it elevates to 
$\walki_5 =a_1$ in $\Ati_{\hspace{-1pt} 5}$ (we abuse language and denote the visited edge again by $a_1$):
\vspace{-17pt}
\begin{figure}[H]
\centering
\begin{tikzpicture}[shorten >=1pt, node distance=1.2cm and 1.2cm, on grid,auto,auto,>=stealth']       
\newcommand{\rad}{1.2}
\node[state,accepting] (0)  {};
\node[state] (1) [right = \rad of 0] {};
\node[state] (2) [right = \rad of 1] {};

\draw[->,blue,loop right,min distance=12 * \rad mm,in=150,out=210] (0) edge node[pos=0.5,right=-.3mm] {$a_1$} (0);
\draw[->,gray,loop right,min distance= 16 * \rad mm,in=140,out=220] (0) edge[dashed]
node[left] {\scriptsize{$\walki_5$}} (0);

\draw[->,red] (0) edge (1);
\draw[->,blue] (2) edge (1);
\draw[->,red,bend left] (0) edge (2);

\end{tikzpicture}
\end{figure}
\vspace{-17pt}
Observe now the closed folding $\Ati_{\hspace{-1pt} 4} \xtr{\,} \Ati_{\hspace{-1pt} 5}$. We will use a second subscript and denote by $a_{11}$ and $a_{12}$ the two $a$-loops in $\Ati_{\hspace{-1pt} 4}$ which get folded into $a_1$. Note that $\walki_5$ elevates through $\Ati_{\hspace{-1pt} 4} \xtr{\,} \Ati_{\hspace{-1pt} 5}$ in a non-unique way; the simplest elevations are those using one of the folded $a$-loops, for example, $\walki_4=a_{11}$:
\vspace{-15pt}
\begin{figure}[H]
\centering
\begin{tikzpicture}[shorten >=1pt, node distance=1cm and 1cm, on grid,auto,auto,>=stealth']       
\newcommand{\rad}{1.2}
\node[state,accepting] (0)  {};
\node[state] (1) [right = \rad of 0] {};
\node[state] (2) [right = \rad of 1] {};
\node[] (00) [left = 0.2 of 0] {.};

\draw[->,gray,loop left,min distance= 16 * \rad mm,in=170,out=90] (0) edge[dashed]
node[above left] {\scriptsize{$\walki_4$}} (0);

\draw[->,blue,loop right,min distance=12 * \rad mm,in=160,out=100] (0) edge node[pos=0.5, below right = 0pt and -3pt] {$a_{1\!1}$} (0);
\draw[->,blue,loop right,min distance=12 * \rad mm,in=-160,out=-100] (0) edge node[pos=0.5,above right = 1pt and -3pt] {$a_{1\!2}$} (0);

\draw[->,red] (0) edge (1);
\draw[->,blue] (2) edge (1);
\draw[->,red,bend left] (0) edge (2);

\end{tikzpicture}
\end{figure}
\vspace{-15pt}
\noindent (we could have used any other possibility, for example, $\walki_4=a_{12}$, $\walki_4=a_{11}a_{12}a_{11}^{-1}$, $\walki_4=a_{11}a_{12}^{-1}a_{11}$, etc.). The next step is the elevation to $\Ati_{\hspace{-1pt} 3}$: denote by $a_{121}$ and $a_{122}$ the two $a$-arcs from $\Ati_{\hspace{-1pt} 3}$ which get folded into $a_{12}$, and let us elevate $\walki_4$ up to $\walki_3=a_{11}a_{122}^{-1}a_{121}$:
\begin{figure}[H]
\centering
\begin{tikzpicture}[shorten >=1pt, node distance=1cm and 1cm, on grid,auto,auto,>=stealth']       
\newcommand{\rad}{1.2}
\node[state,accepting] (0)  {};
\node[state] (1) [right = \rad of 0] {};
\node[state] (2) [right = \rad of 1] {};
\node[state] (3) [right = \rad of 2] {};
\node[inner sep=0pt] (33) [above = 0.2 of 3]{};
\node[inner sep=0pt] (333) [below = 0.2 of 3]{};

\draw[->,red] (0) edge (1);
\draw[->,blue] (2) edge (1);
\draw[->,red] (3) edge (2);
\draw[->,blue,bend left] (0) edge node[pos=0.5, below] {$a_{1\!1}$} (3);
\draw[->,gray,bend left, in =135,out=45] (0) edge[dashed] node[above = 0.1] {\scriptsize{$\walki_3$}} (33);

\draw[->,blue,bend left] (3) edge node[pos=0.5, above] {$a_{1\!2\!1}$} (0);
\draw[->,gray,bend left, in =135,out=45] (333) edge[dashed] (0);

\draw[->,blue,loop right,min distance=13 * \rad mm,in=30,out=-30] (3) edge node[pos=0.5, left] {$a_{1\!2\!2}$} (3);

\draw[->,gray,loop left,min distance= 17 * \rad mm,in=-30,out=30] (33) edge[dashed] (333);

\end{tikzpicture}
\end{figure}
\vspace{-5pt}
Now, denoting by $a_{1211}$ and $a_{1212}$ the two $a$-arcs in $\Ati_{\hspace{-1pt} 2}$ which get folded into $a_{121}$, the $\bp$-walk $\walki_3$ elevates to $\Ati_{\hspace{-1pt} 2}$ as $\walki_2=a_{11}a_{1211}a_{1212}^{-1} a_{122}^{-1}a_{1211}$:
\begin{figure}[H]
\centering
\begin{tikzpicture}[shorten >=1pt, node distance=1cm and 1cm, on grid,auto,auto,>=stealth']       
\newcommand{\rad}{1.2}
\node[state,accepting] (0)  {};
\node[state] (1) [right = \rad of 0] {};
\node[state] (2) [right = \rad of 1] {};
\node[state] (3) [right = \rad of 2] {};
\node[state] (4) [below right = \rad and \rad/2 of 1] {};
\node[inner sep=0pt] (00) [right = 0.1 of 0]{};
\node[inner sep=0pt] (33) [left = 0.1 of 3]{};
\node[inner sep=0pt] (44) [below = 0.1 of 4]{};
\node[inner sep=0pt] (333) [below = 0.1 of 3]{};

\draw[->,red] (0) edge (1);
\draw[->,blue] (2) edge (1);
\draw[->,red] (3) edge (2);
\draw[->,blue,bend left] (0) edge node[pos=0.5, below] {$a_{1\!1}$} (3);
\draw[->,blue,bend left] (3) edge node[pos=0.5, above] {$a_{1\!2\!1\!1}$} (0);
\draw[->,blue,bend left] (3) edge node[sloped, anchor=center, above] {$a_{1\!2\!2}$} (4);
\draw[->,blue,bend left] (4) edge node[sloped, anchor=center, above] {$a_{1\!2\!1\!2}$} (0);

\draw[->,darkgray!40,bend left, in =90,out=80,max distance= 9 mm] (0) edge[dashed] node[above = 0.1] {\color{gray}\scriptsize{$\walki_2$}} (33);
\draw[->,darkgray!55,bend left, in =150,out=40,max distance= 7.5 mm] (33) edge[dashed] (00);
\draw[->,darkgray!70,bend left, in =225,out=-80,max distance= 7.5 mm] (00) edge[dashed] (44);

\draw[->,darkgray!85,bend left, in =250,out=-45,max distance= 7 mm] (44) edge[dashed] (333);
\draw[->,darkgray,bend left, in =120,out=45,max distance= 9 mm] (333) edge[dashed] (00);

\end{tikzpicture}
\end{figure}
\noindent The next step is to elevate $\walki_2$ up in $\Ati_{\hspace{-1pt} 1}$ as $\walki_1=a_{111}a_{1211} a_{1212}^{-1} a_{122}^{-1} a_{112}^{-1}a_{111}a_{1211}$:
\begin{figure}[H]
\centering
\begin{tikzpicture}[shorten >=1pt, node distance=1cm and 1cm, on grid,auto,auto,>=stealth']       
\newcommand{\rad}{1.2}
\node[state,accepting] (0)  {};
\node[state] (1) [right = \rad of 0] {};
\node[state] (2) [right = \rad of 1] {};
\node[state] (3) [right = \rad of 2] {};
\node[state] (5) at ($(180+30:\rad cm)$) {};
\node[state] (6) at ($(180-30:\rad cm)$) {};
\node[inner sep=0pt] (00) [below = 0.2 of 0] {};
\node[inner sep=0pt] (000) [above = 0.2 of 0] {};
\node[inner sep=0pt] (33) [right = 0.2 of 3] {};

\draw[->,blue] (0) edge node[sloped, anchor=center, above] {$a_{1\!1\!2}$} (6);
\draw[->,blue] (6) edge node[sloped, anchor=center, below] {\rotatebox{180}{$a_{1\!2\!2}$}} (5);
\draw[->,blue] (5) edge node[sloped, anchor=center, below] {$a_{1\!2\!1\!2}$} (0);
\draw[->,red] (0) edge node[above] {$b_{1}$}  (1);
\draw[->,blue] (2) edge node[sloped, anchor=center, above] {$a_{2}$} (1);
\draw[->,red] (3) edge node[above] {$b_{2}$} (2);
\draw[->,blue,bend left, out=40,in=140] (0) edge node[sloped, anchor=center, below] {$a_{1\!1\!1}$} (3);
\draw[->,blue,bend left] (3) edge node[sloped, anchor=center, above] {$a_{1\!2\!1\!1}$} (0);

\draw[->,darkgray!30,bend left, in =110,out=65,max distance= 11 mm] (0) edge[dashed] node[above = 0.3] {\color{gray}\scriptsize{$\walki_1$}} (3);

\draw[->,darkgray!40,bend left, in = 130,out=75,max distance= 8 mm] (3) edge[dashed]  (00);

\draw[->,darkgray!50,bend left, in = 120,out=50,max distance= 9 mm] (00) edge[dashed] (5);

\draw[->,darkgray!60,bend left, in = 120,out=65,max distance= 9 mm] (5) edge[dashed] (6);

\draw[->,darkgray!70,bend left, in = 150,out=65,max distance= 9 mm] (6) edge[dashed] (000);

\draw[->,darkgray!80,bend left, in =110,out=65,max distance= 12 mm] (000) edge[dashed] (33);

\draw[->,darkgray,bend left, in = 80,out=80,max distance= 11 mm] (33) edge[dashed]  (0);
\end{tikzpicture}
\end{figure}
Finally, denoting $a_2$, $b_1$, and $b_2$ the arcs in $\Ati_{\hspace{-1pt} 1}$ yet not used, and keeping the notational convention for the corresponding arcs in $\Ati_{\hspace{-1pt} 0}$, we elevate $\walki_1$ to
 \begin{equation*}
\walki_0=a_{111}b_{21}a_{21}b_{11}^{-1}b_{12}a_{22}^{-1}b_{22}^{-1}  a_{1211}a_{1212}^{-1}a_{122}^{-1}a_{112}^{-1}a_{111}b_{21}a_{21}b_{11}^{-1}b_{12}a_{22}^{-1}b_{22}^{-1}a_{1211}.
 \end{equation*}
\vspace{-5pt}
\begin{figure}[H]
\centering
\begin{tikzpicture}[shorten >=1pt, node distance=1cm and 1cm, on grid,auto,auto,>=stealth',rotate=90,transform shape]       
\newcommand{\rad}{1.5}
\newcommand{\Rad}{2.5}
\node[state,accepting] (bp)  {};

\foreach \x in {0,...,5}
{
\node[state] (\x) at ($(90+\x*360/6:\rad cm)$) {};
}

\node[state] (34) at ($(180+120:\Rad cm)$) {};
\node[state] (50) at ($(180+2400:\Rad cm)$) {};
\node[inner sep=0pt] (bp0) [above left = 0.2 of bp] {};
\node[inner sep=0pt] (00) [above left = 0.2 of 0] {};
\node[inner sep=0pt] (500) [above right = 0.2 of 50] {};
\node[inner sep=0pt] (55) [below right = 0.2 of 5] {};
\node[inner sep=0pt] (bp1) [right = 0.3 of bp] {};
\node[inner sep=0pt] (44) [right = 0.2 of 4] {};
\node[inner sep=0pt] (344) [below right= 0.25 and 0.1 of 34] {};
\node[inner sep=0pt] (33) [below left = 0.1 and 0.2 of 3] {};
\node[] (g0) [right = 1.5 of bp] {\rotatebox{-90}{\color{gray}{\scriptsize{$\walki_0$}}}};

\draw[->,blue] (bp) edge node[sloped, below, pos=0.5] {$a_{1\!1\!2}$} (1);
\draw[->,blue] (1) edge node[sloped, above, pos=0.5] {$a_{1\!2\!2}$} (2);
\draw[->,blue] (2) edge node[sloped, above, pos=0.5] {\rotatebox{180}{$a_{1\!2\!1\!2}$}} (bp);

\draw[->,blue] (bp) edge node[sloped, anchor=center, below,pos=0.55] {\rotatebox{180}{$a_{1\!1\!1}$}} (0);
\draw[->,red] (0) edge node[sloped, below, pos=0.4] {\rotatebox{180}{$b_{2\!1}$}} (50);
\draw[->,blue] (50) edge[] node[sloped, anchor=center, below] {$a_{2\!1}$} (5);
\draw[->,red] (bp) edge  node[sloped, above, pos=0.6] {$b_{1\!1}$} (5);

\draw[->,red] (bp) edge node[sloped, below, pos=0.6] {$b_{1\!2}$} (4);
\draw[->,blue] (34) edge node[sloped, above, pos=0.55] {\rotatebox{180}{$a_{2\!2}$}} (4);
\draw[->,red] (3) edge node[sloped, above, pos=0.4] {$b_{2\!2}$} (34);
\draw[->,blue] (3) edge node[sloped, below, pos=0.45] {\rotatebox{180}{$a_{1\!2\!1\!1}$}} (bp);

\draw[->,darkgray!30,bend left=15,max distance= 11 mm] (bp) edge[dashed] (0);
\draw[->,darkgray!34,bend left=15,max distance= 11 mm] (0) edge[dashed] (50);
\draw[->,darkgray!38,bend left=15,max distance= 11 mm] (50) edge[dashed] (5);
\draw[->,darkgray!42,bend left=15,max distance= 11 mm] (5) edge[dashed] (bp);
\draw[->,darkgray!46,bend left=15,max distance= 11 mm] (bp) edge[dashed] (4);
\draw[->,darkgray!50,bend left=15,max distance= 11 mm] (4) edge[dashed] (34);
\draw[->,darkgray!54,bend left=15,max distance= 11 mm] (34) edge[dashed] (3);
\draw[->,darkgray!58,bend left=15,max distance= 11 mm] (3) edge[dashed] (bp);
\draw[->,darkgray!62,bend left=15,max distance= 11 mm] (bp) edge[dashed] (2);
\draw[->,darkgray!66,bend left=15,max distance= 11 mm] (2) edge[dashed] (1);
\draw[->,darkgray!70,bend left=15,max distance= 11 mm] (1) edge[dashed] (bp0);
\draw[->,darkgray!74,bend left=15,max distance= 11 mm] (bp0) edge[dashed] (00);
\draw[->,darkgray!78,bend left=15,max distance= 11 mm] (00) edge[dashed] (500);
\draw[->,darkgray!82,bend left=15,max distance= 11 mm] (500) edge[dashed] (55);
\draw[->,darkgray!86,bend left=15,max distance= 11 mm] (55) edge[dashed] (bp1);
\draw[->,darkgray!90,bend left=15,max distance= 11 mm] (bp1) edge[dashed] (44);
\draw[->,darkgray!94,bend left=15,max distance= 11 mm] (44) edge[dashed] (344);
\draw[->,darkgray!98,bend left=15,max distance= 11 mm] (344) edge[dashed] (33);
\draw[->,darkgray,bend left=15,max distance= 11 mm] (33) edge[dashed] (bp);
\end{tikzpicture}
\end{figure}
 \vspace{5pt}
Bracketing this expression according to the complete visits to each petal (or petal inverse) of $\Ati_0=\flower(S)$, we obtain the desired expression of $w_0=a$ as a word in $w_1, w_2, w_3$:
 \begin{align*}
a & \,=\, \big( abab^{-1}\big) \big( ba^{-1}b^{-1}a\big) \big( a^{-1}a^{-1}a^{-1} \big) \big( abab^{-1} \big) \big( ba^{-1}b^{-1}a\big) \\ & \,=\, w_2 w_3^{-1} w_1^{-1} w_2 w_3^{-1}.
 \end{align*}

As will be detailed below, the elevation of a $\bp$-walk through an open folding is unique, but the elevation of $\walki_5$ trough the closed folding $\Ati_{\hspace{-1pt} 4} \xtr{\,} \Ati_{\hspace{-1pt} 5}$ had several possibilities. Different choices would have ended in possibly different final results; namely, alternative expressions of $w_0=a$ as a word on $w_1^{\pm 1}, w_2^{\pm 1}, w_3^{\pm 1}$. We invite the reader to choose $\walki_4=a_{12}$ instead of $\walki_4=a_{11}$, and check that its corresponding elevation through $\Ati_{\hspace{-1pt}0}$ leads to the new expression $a=(a^{-1}bab^{-1}) (ba^{-1}b^{-1}a^{-1})(aaa) =w_3w_2^{-1}w_1$. 
\end{exm}

In the previous example, the fact that we got \emph{two different expressions} for~${a\in H}$ as words on $w_1, w_2, w_3$, namely $w_2 w_3^{-1} w_1^{-1} w_2 w_3^{-1} =\, a \,=\, w_3w_2^{-1}w_1$, confirms the fact that the generating set $\{ w_1, w_2, w_3\}$ \emph{it is not a free set}. For instance, $w_2 w_3^{-1} w_1^{-1} w_2 w_3^{-1} w_1^{-1} w_2 w_3^{-1}=1$ is a non-trivial relation between the three given generators for $H$.

This discussion relates to another natural algorithmic problem for groups: computing presentations for subgroups. Below, we analyze this problem in the context of free groups: given finitely many elements $\{w_1,\ldots ,w_n\} \subseteq \Free[A]$ 
compute a full set of relations $r_1,\ldots ,r_m$ between them; \ie a presentation of the form $\pres{w_1,\ldots ,w_n}{r_1,\ldots ,r_m}$ for the (free) subgroup $\gen{w_1,\ldots ,w_n} \leqslant \Free[A]$. 

To answer this question we need to understand the kernel of the group homomorphism $\phi^{*}\colon \pi(\Ati) \xto{} \pi(\Ati')$ determined by an elementary folding \smash{$\Ati \xtr{\!\! \phi \!}\Ati'$}. 

\begin{prop}\label{prop: unsolfolding}
Let~\smash{$\Ati \xtr{\!\phi\!} \Ati'$} be an elementary folding. Then, 
 \begin{enumerate}[ind]
\item both $\phi\colon \Ati\xto{} \Ati'$ and $\phi^{*}\colon \pi (\Ati)\onto \pi (\Ati')$ are surjective;
\item if $\phi$ is open then $\phi^{*}$ is an isomorphism;
\item if $\phi$ is closed then $\ker\phi^{*}=\ncl{\gamma}$, where $\gamma$ is any elementary walk associated to $\phi$. 
 \end{enumerate}
\end{prop}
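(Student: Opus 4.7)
The plan is to handle the three claims in order, exploiting the local nature of an elementary folding. For (i), surjectivity of $\phi$ is immediate from its definition as an identification, and for $\phi^*$ it suffices to invoke the explicit elevation of reduced $\bp'$-walks described in the paragraphs preceding the statement: every reduced $\walki \in \pi(\Ati')$ admits an elevation $\widetilde{\walki} \in \pi(\Ati)$ with $\red{(\widetilde{\walki})\phi} = \walki$, so $(\widetilde{\walki})\phi^* = \walki$.

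For (ii), I would establish the key local lemma that \emph{in the open case, reduced walks in $\Ati$ map under $\phi$ to reduced walks in $\Ati'$}. A backtracking $(\edgi_i)\phi \cdot (\edgi_{i+1})\phi$ in $(\walki)\phi$ would force $\edgi_i, \edgi_{i+1} \in \{\edgii_1^{\pm 1}, \edgii_2^{\pm 1}\}$ with $(\edgi_{i+1})\phi = ((\edgi_i)\phi)^{-1}$ but $\edgi_{i+1} \neq \edgi_i^{-1}$ (since $\walki$ is reduced); a four-way case analysis, using that $\vertii_1 \neq \vertii_2$ in the open case, shows that the adjacency condition $\edgi_i\term = \edgi_{i+1}\init$ rules out every mixed pattern, leaving only $\edgi_{i+1} = \edgi_i^{-1}$, a contradiction. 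Consequently, any $\walki \in \ker\phi^*$ maps to a walk that is simultaneously reduced and reducible to the trivial walk, hence is trivial; as $\phi$ sends arcs to arcs, $\walki$ itself must be trivial, proving $\phi^*$ injective.

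For (iii), the inclusion $\ncl{\gamma} \subseteq \ker\phi^*$ is direct: $(\gamma)\phi = (\gamma_0)\phi\,\edgii\,\edgii^{-1}\,(\gamma_0^{-1})\phi$ reduces to the trivial walk, so $[\gamma] \in \ker\phi^*$ and normality of $\ker\phi^*$ does the rest. For the reverse inclusion, I propose an induction on the number $N(\walki)$ of occurrences of $\edgii_1^{\pm 1}, \edgii_2^{\pm 1}$ in a reduced representative $\walki$ of a class in $\ker\phi^*$. If $(\walki)\phi$ is already reduced (automatic when $N(\walki) = 0$), the argument of (ii) shows $\walki$ is trivial, hence in $\ncl{\gamma}$. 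Otherwise, the case analysis of (ii), now with $\vertii_1 = \vertii_2$ permitted, pinpoints an adjacent pair in $\walki$ of one of the four forms $\edgii_1\edgii_2^{-1}$, $\edgii_2\edgii_1^{-1}$, $\edgii_1^{-1}\edgii_2$, $\edgii_2^{-1}\edgii_1$. Writing (up to symmetry) $\walki = \alpha\,\edgii_1\edgii_2^{-1}\,\beta$ with $\alpha$ a reduced $\bp$-to-$\vertiii$ walk, one decomposes
\[
[\walki] \,=\, \bigl[\alpha\,\edgii_1\edgii_2^{-1}\,\alpha^{-1}\bigr]\cdot\bigl[\red{\alpha\beta}\bigr] \,=\, \bigl[(\alpha\gamma_0^{-1})\,\gamma^{-1}\,(\alpha\gamma_0^{-1})^{-1}\bigr]\cdot\bigl[\red{\alpha\beta}\bigr]
\]
in $\pi(\Ati)$, so the first factor is a conjugate of $\gamma^{-1}$ and thus lies in $\ncl{\gamma}$. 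Since $N(\red{\alpha\beta}) \leq N(\walki) - 2$ and $[\red{\alpha\beta}] \in \ker\phi^*$, the inductive hypothesis yields $[\walki] \in \ncl{\gamma}$.

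The main obstacle is the local case analysis linking $\phi$-backtrackings in $(\walki)\phi$ to precisely the four adjacency patterns involving $\edgii_1, \edgii_2$ inside $\walki$; this is exactly where the geometric difference between open and closed foldings enters and drives both (ii) and (iii). Once this analysis is in hand, (ii) is immediate and (iii) reduces to a clean $N$-induction anchored on the identity $\alpha\,\edgii_1\edgii_2^{-1}\,\alpha^{-1} \equiv (\alpha\gamma_0^{-1})\,\gamma^{-1}\,(\alpha\gamma_0^{-1})^{-1}$.
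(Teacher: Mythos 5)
Your parts (i) and (iii) are essentially sound: (i) is exactly the paper's argument, and your induction in (iii) (on the total number of occurrences of the folded arcs, excising an adjacent mixed pair at the cost of a conjugate of $\gamma^{\pm 1}$) is a correct, mild variant of the paper's induction (which counts occurrences of $e_2^{\pm 1}$ only and replaces one $e_2^{\pm1}$ by $e_1^{\pm1}$ per step). The base case, the identity $[\omega]=[\alpha e_1e_2^{-1}\alpha^{-1}][\overline{\alpha\beta}]$, the conjugation onto $\gamma^{-1}$, and the drop in the counting function all check out.

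Part (ii), however, has a genuine error. Your ``key local lemma'' --- that an open folding sends reduced walks to reduced walks --- is false, and the four-way case analysis does not support it. Write $e_1,e_2$ for the folded arcs, $r$ for their common origin and $q_1\neq q_2$ for their endpoints. The mixed patterns $e_1e_2^{-1}$ and $e_2e_1^{-1}$ are indeed excluded by $q_1\neq q_2$, but the patterns $e_1^{-1}e_2$ and $e_2^{-1}e_1$ are \emph{always} adjacency-compatible, open or closed, precisely because $e_1$ and $e_2$ share the origin $r$ --- that is the very configuration that makes them foldable. So, for instance, the reduced walk $e_1^{-1}e_2$ (from $q_1$ through $r$ to $q_2$) maps to the backtracking $e^{-1}e$; any reduced $\bp$-walk passing through the folding in this way (think of two petals of a flower automaton starting with the same letter) has a non-reduced image. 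Consequently your injectivity argument, which deduces from the (false) lemma that a walk in $\ker\phi^{*}$ has reduced trivial image and hence is trivial, collapses. The openness hypothesis must be used differently, as in the paper: every backtracking of $\omega\phi$ comes from such a visit $\ldots e_1^{-1}e_2\ldots$ (or its inverse), and since the flanking arcs end at $q_1$ and start at $q_2$ with $q_1\neq q_2$, they are not mutually inverse, so deleting exactly these backtrackings already yields the reduced image; one then checks this reduced image is nontrivial (it retains every non-folded arc of $\omega$, and in the remaining case $\omega$ is a nonzero power of a folded loop). Note also that this miscalibration leaks slightly into your transition to (iii) (``now with $q_1=q_2$ permitted''): the four patterns you list there are the right ones, but two of them were already possible in the open case.
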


\begin{proof}
Let us fix notation: denote by $\edgii_1,\, \edgii_2\in \Edgs\Ati$ the arcs folded by $\phi$, $\vertiii =\edgii_1\init =\edgii_2\init$, $\vertii_1 =\edgii_1 \term$, and $\vertii_2 =\edgii_2 \term$; and denote by $\edgii \in \Edgs \Ati'$ and $\vertii \in \Verts \Ati'$, respectively, the folded arc and vertex. Formally, $\Ati'$ equals $\Ati$ with $\edgii_1$ and $\edgii_2$ identified (into $\edgii$); alternatively, in the case of a closed folding, we can think $\Ati'$ as the result of removing $\edgii_2$ from $\Ati$ (and renaming $\edgii_1$ into $\edgii$).

The homomorphism $\phi\colon \Ati \xto{} \Ati'$ is bijective except for the vertices $\vertii_1,\, \vertii_2\in \Verts\Ati$ both mapped to the same vertex $\vertii\in \Verts\Ati'$, and for the arcs $\edgii_1,\, \edgii_2\in \Edgs\Ati$ both mapped to the same arc $\edgii \in \Edgs\Ati'$. Hence, the surjectivity of $\phi$ is clear. Moreover, any reduced $\bp'$-walk $\walki$ in $\Ati'$ elevates to a reduced $\bp$-walk in $\Ati$ as explained above. Therefore, $\phi^{*}$ is surjective as well. This shows assertion~(i). 

To see (ii), assume that the folding $\phi$ is open and let $\walki\in \pi (\Ati)$ be a reduced \mbox{$\bp$-walk}. In general, $\walki \phi$ is a reduced $\bp'$-walk except for the following possible backtracking situations: for any visit of $\walki$ to the folding, \ie one of the form $\edgi_1 \vertii_1\edgii_1^{-1}\vertiii \edgii_2\vertii_2\edgi_2$ or $\edgi_1 \vertii_2\edgii_2^{-1} \vertiii\edgii_1 \vertii_1 \edgi_2$, the image walk $\walki\phi$ presents a backtracking over the folded arc, $\edgi_1 \vertii\edgii^{-1} \vertiii \edgii \vertii \edgi_2$. We claim that by just removing from $\walki \phi$ these backtrackings $\edgii^{-1}\edgii$ and \emph{nothing else}, we already get the reduced path $\overline{\walki\phi}=\walki \phi^{*}$: this is because, in $\Ati$, $\edgi_1 \term =\vertii_1\neq \vertii_2 =\edgi_2 \init$ (resp., $\edgi_1 \term =\vertii_2\neq \vertii_1 =\edgi_2 \init$) and so, $\edgi_1\neq \edgi_2^{-1}$. Therefore, after canceling the backtracking $\edgii^{-1} \edgii$ from $\walki\phi$, the two new edges in contact $\edgi_1\edgi_2$ do not present backtracking anymore. With this claim in mind, the injectivity of $\phi^{*}$ follows easily: indeed, if $\walki\in \pi(\Ati)$ is non-trivial then it is either $\walki=\edgii_i^n$, for $i=1,2$ and $0\neq n\in \mathbb{Z}$ (assuming $\edgii_i$ is a loop), or it must contain an edge different from $\edgii_1^{\pm 1}, \edgii_2^{\pm 1}$; in the first case $\edgii$ is also a loop and $\walki\phi^{*}=\edgii^n \neq 1$ in $\pi(\Ati')$; in the second case, $\overline{\walki\phi}$ contains an edge different from $\edgii^{\pm 1}$ and so, $\overline{\walki \phi}\neq 1$ in $\pi(\Ati')$ as well. This shows~(ii). 

Finally, to see~(iii), assume that $\vertii_1=\vertii_2$, choose an arbitrary elementary nontrivial walk $\gamma =\gamma_0 \vertiii \edgii_2 \vertii_1 \edgii_1^{-1}\vertiii \gamma_0^{-1}$ associated to $\phi$, and let us see that $\ker\phi^{*} =\ncl{\gamma} \normaleq \pi (\Ati)$. Since $\gamma\phi^{*} =\red{(\gamma_0\phi) \vertiii \edgii \vertii \edgii^{-1} \vertiii (\gamma_0\phi)^{-1}}=1$, the inclusion to the left is clear. Conversely, fix an arbitrary $\walki\in \ker\phi^{*}$, and we will prove that $\walki\in \ncl{\gamma}$ by induction on the number of occurrences (say $s\geq 0$) of $\edgii_2^{\pm 1}$ in $\walki$. If $s=0$ then $\walki$ is, in fact, a $\bp$-walk in $\Ati\setmin \{\edgii_2\}=\Ati'$ and so $\walki =\walki\phi=1\in \ncl{\gamma}$. Now assume that $s\geq 1$, and the result true for all walks with $s-1$ occurrences of $\edgii_2^{\pm 1}$ in them. Highlighting anyone of such occurrences, $\walki$ can be written as $\walki =\alpha \edgii_2^{\varepsilon} \beta$, for some $\varepsilon=\pm 1$, and some reduced paths $\alpha, \beta$. Rewriting $\walki$ (and, for the sake of clarity, omitting the vertices), we get 
 $$
\walki =\left\{ \begin{array}{ll} \alpha \edgii_2 \beta =(\alpha \gamma_0^{-1})(\gamma_0 \edgii_2 \edgii_1^{-1} \gamma_0^{-1})(\gamma_0 \alpha^{-1})(\alpha \edgii_1 \beta), & \text{ if } \varepsilon=1, \\  \alpha \edgii_2^{-1} \beta =(\alpha \edgii_1^{-1}\beta) (\beta^{-1}\gamma_0^{-1})(\gamma_0 \edgii_1 \edgii_2^{-1}\gamma_0^{-1}) (\gamma_0 \beta), & \text{ if } \varepsilon=-1. \end{array} \right.
 $$
Hence, $\walki$ equals (in $\pi(\Ati)$) the product of a conjugate of $\gamma^{\pm 1}$, times $\alpha \edgii_1^{\pm 1}\beta$ which belongs to $\ncl{\gamma}$ by induction. Therefore $\walki\in \ncl{\gamma}$, and the proof is complete.
\end{proof} 

From \Cref{prop: unsolfolding} we will infer that, in any folding sequence $\Ati \xtr{\!\! \Phi \!}\Ati'$, the open elementary foldings do not contribute to $\ker\Phi$, whereas each closed folding does contribute with a new essential generator for $\ker\Phi$ as normal subgroup of $\pi(\Ati)$. In this sense, the following theorem quantifies the non-injectivity of the group homomorphism $\Phi^{*} \colon \pi(\Ati)\xto{} \pi(\Ati')$, both from the algebraic and from the geometric points of view. This is a reformulation of a recent result by Rosenmann and Ventura in \cite{rosenmann_dependence_2024} (see also \cite{ascari_ideals_2022}).

\begin{defn}
The \defin{loss of a folding sequence} $\Phi =(\phi_i)_{i\geq 1}$, denoted by $\loss(\Phi)$, is the (possibly infinite) number of closed foldings in $\Phi$.
\end{defn}





\begin{thm}[Rosenmann--Ventura, \cite{rosenmann_dependence_2024}]\label{thm: loss seq}
Let $\Ati$ be a finite involutive pointed \mbox{$A$-automaton}, and let $\Phi =\phi_1\phi_2\cdots \phi_p$ be a folding sequence on $\Ati$,
 \begin{equation} \label{eq: loss seq}
\Ati=\Ati_{\!0} \xtr{\!\!\! \phi_1 \!\!}\Ati_{\!1} \xtr{\!\!\! \phi_2 \!\!} \cdots \xtr{\!\!\! \phi_p \!\!} \Ati_{\!p}=\Ati'. 
 \end{equation}
For each closed folding $\phi_{j}$ in $\Phi$, $j\in[1,p]$, let $\widetilde{\phi}_j \in \pi(\Ati_{\!0})$ be an elementary elevation of~$\phi_{j}$ through $\phi_1\cdots \phi_{j-1}$. Then, the set\, $\Xi =\set{\widetilde{\phi}_{j} \st \phi_{j}  \text{ is a closed folding in } \Phi}$ is a computable minimum set of generators for $\ker(\Phi^{*})$ as a normal subgroup. That is, $\ker(\Phi^{*}) =\ncl{\Xi}$ and $\loss(\Phi) \,=\, \nrk(\ker(\Phi^{*})) \,=\, \rk(\Ati)-\rk(\Ati')$.
\end{thm}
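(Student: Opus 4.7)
The plan is to verify three statements separately:
\textbf{(a)} the rank identity $\rk(\Ati) - \rk(\Ati') = \loss(\Phi)$,
\textbf{(b)} the generation statement $\ker \Phi^{*} = \ncl{\Xi}$, and
\textbf{(c)} the minimality $\nrk(\ker \Phi^{*}) \geq \loss(\Phi)$.
Statement (a) is immediate by telescoping along \eqref{eq: loss seq}: by \Cref{rem: rank folding}, each open folding preserves the rank and each closed folding decreases it by exactly one, so the total decrease equals the number of closed foldings, namely $\loss(\Phi)$.

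For (b), I would induct on $j \in [0,p]$ to prove $\ker \Phi_j^{*} = \ncl{\Xi_j}$ in $\pi(\Ati)$, where $\Phi_j = \phi_1 \cdots \phi_j$ and $\Xi_j = \set{\widetilde{\phi}_i \st i\leq j,\, \phi_i \text{ closed}}$. The base $j=0$ is trivial. For the inductive step, note $\ker \Phi_j^{*} = (\Phi_{j-1}^{*})^{-1}(\ker \phi_j^{*})$, and use the standard fact that for a surjective group homomorphism $f\colon G \onto H$ and a normal subgroup $N = \ncl{S}_H \leqslant H$, one has $f^{-1}(N) = \ncl{\widetilde{S} \cup \ker f}_G$ whenever $\widetilde{S}$ is a set of $f$-preimages of $S$ (one inclusion is clear; the other holds because any $x \in f^{-1}(N)$ can be modified by a suitable element of $\ncl{\widetilde{S}}_G$ to land in $\ker f$). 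Apply this with $f = \Phi_{j-1}^{*}$, which is surjective by \Cref{prop: unsolfolding}(i). If $\phi_j$ is open, then $\ker \phi_j^{*} = 1$ by \Cref{prop: unsolfolding}(ii), giving $\ker \Phi_j^{*} = \ker \Phi_{j-1}^{*} = \ncl{\Xi_{j-1}} = \ncl{\Xi_j}$. If $\phi_j$ is closed, then $\ker \phi_j^{*} = \ncl{\gamma_j}$ for some elementary walk $\gamma_j$ associated to $\phi_j$ by \Cref{prop: unsolfolding}(iii), and by the very definition of elementary elevation, $\widetilde{\phi}_j$ is a $\Phi_{j-1}^{*}$-preimage of such a $\gamma_j$, so
$$
\ker \Phi_j^{*} \,=\, \ncl{\set{\widetilde{\phi}_j} \cup \ker \Phi_{j-1}^{*}} \,=\, \ncl{\set{\widetilde{\phi}_j} \cup \Xi_{j-1}} \,=\, \ncl{\Xi_j}.
$$
This yields $\nrk(\ker \Phi^{*}) \leq |\Xi| = \loss(\Phi)$, and $\Xi$ is computable by iterating the explicit elementary elevation procedure described just before \Cref{prop: unsolfolding} along the finitely many automata $\Ati_{\!0},\ldots,\Ati_{\!j-1}$.

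The main obstacle is the opposite inequality in (c). I would handle it by abelianization: by \Cref{thm: B_T}, $\pi(\Ati) \isom \Free[n]$ and $\pi(\Ati') \isom \Free[m]$ are free of ranks $n = \rk(\Ati)$ and $m = \rk(\Ati')$, so their abelianizations are $\ZZ^n$ and $\ZZ^m$. Abelianizing the short exact sequence $1 \to \ker \Phi^{*} \to \pi(\Ati) \to \pi(\Ati') \to 1$ yields, by right-exactness, a surjection $\ZZ^n \onto \ZZ^m$ whose kernel equals the image of $\ker \Phi^{*}$ inside $\ZZ^n$; this image therefore has rank $n - m = \loss(\Phi)$. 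Now, since conjugation is trivial in the abelian group $\ZZ^n$, the image of any normal generating set of $\ker \Phi^{*}$ is automatically a plain generating set of this rank-$\loss(\Phi)$ subgroup, and hence has cardinality at least $\loss(\Phi)$. Combined with (b), this establishes $\nrk(\ker \Phi^{*}) = \loss(\Phi)$, completing the proof.
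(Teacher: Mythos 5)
Your proposal is correct and takes essentially the same route as the paper: the rank count via \Cref{rem: rank folding}, the per-folding kernels from \Cref{prop: unsolfolding} assembled by an induction along the folding sequence, and the abelianization trick for the minimality of $\loss(\Phi)$ generators. The only differences are cosmetic — you package the inductive step as the standard preimage-of-a-normal-closure formula for the surjection $\Phi_{j-1}^{*}$, where the paper pushes normal closures forward and corrects a given $\walki \in \ker\Phi_i^*$ by an element $\xi \in \ncl{\widetilde{\phi}_i}$, and you state the lower bound via the rank-$(n-m)$ kernel of the induced surjection $\ZZ^n \onto \ZZ^m$ rather than via the quotient presentation $\ZZ^m \isom \ZZ^n/\langle \walki_1^{\rm ab},\ldots,\walki_r^{\rm ab}\rangle$.
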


\begin{proof}
Let $\rk(\Ati)=n$ and $\rk(\Ati')=m\leq n$. With an easy algebraic argument, let us first show that $\ker(\Phi^{*})$ cannot be generated by less than $n-m$ elements, even as normal subgroup of $\pi (\Ati)$. Suppose that $\walki_1,\ldots ,\walki_r\in \pi(\Ati)$ normally generate $\ker\Phi^{*} =\ncl{\walki_1,\ldots ,\walki_r} \normaleq \pi (\Ati)$, and let us see that $r\geq n-m$. Since $\Phi^{*}\colon \pi(\Ati)\onto \pi(\Ati')$ is onto, we have that $\pi(\Ati')\isom \pi(\Ati)/\ker\Phi^{*}$. Now abelianizing (and since $\pi(\Ati)$ is a free group of rank $n$, and $\pi(\Ati')$ is a free group of rank $m$), we get $\mathbb{Z}^m \isom \mathbb{Z}^n/\langle \walki_1^{\rm ab}, \ldots ,\walki_r^{\rm ab}\rangle$, which implies $m\geq n-r$.

Using \Cref{rem: rank folding}, it is clear that $\rk(\Ati)-\rk(\Ati')$ equals the number of closed foldings in~\eqref{eq: loss seq}, \ie $\rk(\Ati)-\rk(\Ati')=\loss(\Phi)$. And in \Cref{prop: unsolfolding} we understood the kernel of each elementary folding $\Ati_{i-1} \xtr{\!\!\! \phi_i \!\!}\Ati_{\hspace{-1pt}i}$ in~\eqref{eq: loss seq}, $i\in [1,p]$: if $\phi_i$ is open then $\ker\phi_{i}^{*}$ is trivial; and if $\phi_{j}$ is closed then $\ker\phi_{j}^{*}= \ncl{\gamma_j} \normaleq \pi(\Ati_{j-1})$, where $\gamma_j$ denotes an elementary walk associated to $\phi_j$ which elevates to $\widetilde{\phi}_j$ through $\Phi_{j-1}=\phi_1 \cdots \phi_{j-1}$. 
For notational simplicity, let $\widetilde{\phi}_i=1$ for the indices $i\in [1,p]$ for which $\phi_i$ is open. 

We have to prove that $\ker\Phi^{*} =\ncl{\widetilde{\phi}_1,\ldots ,\widetilde{\phi}_p}\normaleq \pi(\Ati)$. The inclusion to the left is clear from construction. For the other inclusion, let $\walki\in \ker \Phi^{*}$ be an arbitrary reduced $\bp$-walk in $\pi(\Ati)$ with $\red{\walki\Phi^{*}} =\red{ \walki\phi_{1}^{*}\cdots \phi_p^*}=1$; we see that $\walki\in \ncl{ \widetilde{\phi}_1,\ldots ,\widetilde{\phi}_p}$ by induction on the first index $i=1,\ldots ,p$ such that $\walki\in \ker \Phi_i^*$. Indeed, if $i=1$ then $\walki\in \ker\Phi_{1}^{*} =\ker\phi_{1}^{*}=\ncl{\gamma_1}=\ncl{ \widetilde{\phi}_1} \normaleq \ncl{\widetilde{\phi}_1,\ldots ,\widetilde{\phi}_p}$. And, for arbitrary $2\leq i\leq p$, we have 
 \begin{align*}
\walki\Phi_{i-1}^*\in \ker \phi_i^* & \,=\,\ncl{\gamma_i}_{\pi(\Ati_{i-1})} \,=\, \ncl{ \widetilde{\phi}_i \Phi_{i-1}^*}_{(\pi(\Ati))\Phi_{i-1}^*} \\ & \,=\, \big( \ncl{\widetilde{\phi}_i}_{\pi(\Ati)} \big) \Phi_{i-1}^*.
 \end{align*}
This means that, for some $\xi\in\ncl{\widetilde{\phi}_i}_{\pi(\Ati)}$, $(\walki \xi^{-1})\Phi_{i-1}^*=1$ and, by induction, $\walki \xi^{-1}\in \ncl{\widetilde{\phi}_1,\ldots ,\widetilde{\phi}_p}$; therefore, $\walki\in \ncl{\widetilde{\phi}_1,\ldots ,\widetilde{\phi}_p}$, completing the proof.
\end{proof}

\begin{cor} \label{cor: comp pres}
There is an algorithm that, given finitely many elements $w_1,\ldots ,w_n$ in~$\Free[A]$, 
computes a complete set of relations $r_1,\ldots ,r_m$ between them; that is, a presentation of the form $\pres{w_1,\ldots ,w_n}{r_1,\ldots ,r_m}$ for the (free) subgroup~$\gen{w_1,\ldots ,w_n} \leqslant~\Free[A]$. \qed
\end{cor}

It is clear that the loss of all maximal folding sequences on a given finite automaton coincide.

\begin{defn}
The \defin{loss of an automaton} $\Ati$, denoted by $\loss(\Ati)$, is 
the supremum of the losses of all the folding sequences on $\Ati$.
Note that if $\Ati$ is finite then there exists an obvious maximal sequence of foldings from $\Ati$ (to a deterministic automaton whose core is $\red{\Ati}$). Hence, $\loss(\Ati) = \rk(\Ati) - \rk(\red{\Ati})$.\footnote{It is possible to define the direct limit of an infinite folding sequence, and the notion of maximality (which corresponds to the limit being deterministic).}
\end{defn}


Combining \Cref{thm: loss seq} with \Cref{lem: deterministic gamma} we reach the promised description of the kernel of $\red{\lab}\colon \pi(\Ati) \onto \gen{\Ati}$.

\begin{cor} \label{cor: ker rlab}
Let $\Ati$ be a finite automaton, and let $\rlab\colon \pi(\Ati) \xto{} \gen{\Ati}$ be the group homomorphism $\walki \mapto (\walki)\rlab$. With the above notation, 
 \begin{equation}
\ker(\red{\lab}) \,=\, \ncl{\widetilde{\phi}_j \st \phi_j \text{ is a closed folding in } \Phi},
 \end{equation}
where $\Phi=\phi_1\cdots \phi_p$ is any maximal folding sequence $\Ati \xto{} \red{\Ati}$. \qed
\end{cor}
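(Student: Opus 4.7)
The plan is to factor $\red{\lab}$ through the maximal folding sequence $\Phi$ and then invoke the two cited results. Given a maximal folding sequence $\Phi = \phi_1 \cdots \phi_p$ from $\Ati$ to $\red{\Ati}$, the functoriality of the $\pi$-construction (as encapsulated in diagram~\eqref{eq: label comm}) yields the commutative square
\[
\begin{tikzcd}
\pi(\Ati) \arrow[r, "\Phi^{*}"] \arrow[d, "\red{\lab}"', two heads] & \pi(\red{\Ati}) \arrow[d, "\red{\lab}", two heads] \\
\gen{\Ati} \arrow[r, equal] & \gen{\red{\Ati}}
\end{tikzcd}
\]
where the bottom map is the identity because foldings preserve the recognized subgroup (\Cref{lem: grrec}), and hence $\gen{\Ati} = \gen{\red{\Ati}}$.

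The first key ingredient is \Cref{lem: deterministic gamma}: since $\red{\Ati}$ is deterministic, the right vertical arrow $\red{\lab} \colon \pi(\red{\Ati}) \to \gen{\red{\Ati}}$ is in fact an isomorphism, and in particular injective. Chasing the diagram then gives the identification $\ker(\red{\lab}) = \ker(\Phi^{*})$, so the question reduces entirely to computing the kernel of the groupoid-induced homomorphism associated with the folding sequence.

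The second key ingredient is \Cref{thm: loss seq}, which supplies exactly the description we need: $\ker(\Phi^{*})$ is normally generated by the elementary elevations $\widetilde{\phi}_j$ of the closed foldings $\phi_j$ occurring in $\Phi$. Combining this with the previous paragraph yields the claimed equality.

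The only mild subtlety is that the definition of a reduction process (\Cref{fig: eq: folding sequence}) also includes a final $\core$ operation, so strictly speaking a maximal folding sequence ends at some deterministic automaton $\Ati_{\!m}$ with $\core(\Ati_{\!m}) \isom \red{\Ati}$. I will handle this by noting that the $\core$ operation is an isomorphism on fundamental groups based at $\bp$ (it only removes hanging trees not containing $\bp$) and preserves the recognized subgroup by \Cref{cor: <core> = <ati>}, so it contributes neither new foldings nor any new elements to the kernel. This is the only step that is not a direct quotation, but it is essentially immediate, so I expect the proof to fit in just a few lines.
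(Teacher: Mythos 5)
Your proposal is correct and follows exactly the route the paper intends: factor $\red{\lab}$ through $\Phi^{*}$ via the commuting square \eqref{eq: label comm}, use \Cref{lem: deterministic gamma} to see the map out of $\pi(\red{\Ati})$ is injective, and then quote \Cref{thm: loss seq} for $\ker(\Phi^{*})$; the paper's own justification is precisely ``combine \Cref{thm: loss seq} with \Cref{lem: deterministic gamma}.'' Your remark about the final $\core$ step is fine (and one can even bypass it, since \Cref{lem: deterministic gamma} only requires determinism and so applies directly to the deterministic automaton $\Ati_{\!m}$ reached by the maximal folding sequence).
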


\begin{cor} \label{cor: loss=0}
    Let $S \subseteq \Free[A]$ and let $H=\gen{S}\leqslant \Free[A]$. Then, the following statements are equivalent:
    \begin{enumerate}[dep]
        \item $S$ is freely independent in $\Free[A]$ (\ie $S$ is a basis of $H$);
        \item $\loss(\flower(S))=0$;
        \item $\rk(\flower(S)) = \rk(\red{\flower(S)})$.
    \end{enumerate}
\end{cor}

\begin{proof}
    If $\loss(\flower(S))>0$ then there exists a (finite) folding sequence on $\flower(S)$ containing a closed folding $\phi_j$. The elevation $\widetilde{\phi}_j$ provides a nontrivial relation between the elements in $S$ (see~\Cref{thm: loss seq}). This proves that (a)\,$\Rightarrow$\,(b).

Any non-trivial relation between the elements in $S$ will involve only finitely many of them, say $S'\subseteq S$.
By \Cref{prop: unsolfolding}.(ii), any maximal folding sequence 
on $\flower(S')$ must contain a closed folding. Considering the same folding sequence on $\flower(S)$ we obtain that $\loss(\flower(S))>0$. This proves that (b)\,$\Rightarrow$\,(a).

If $S$ is finite, the equivalence (b)\,$\Leftrightarrow$\,(c)
follows immediately from \Cref{thm: loss seq}. The same claim is true for any infinite subset $S \subseteq \Free[A]$ but its proof requires a technical construction of the limit object of an infinite sequence of foldings, which we omit, and will not be used in this paper. 
\end{proof}



As an immediate consequence, we deduce several well-known fundamental facts about free groups in a very transparent way. Recall that a group $G$ is said to be \defin[Hopfian group]{Hopfian} if any surjective endomorphism $\varphi\colon G\xto{} G$ is also injective. 

\begin{thm}
Finitely generated free groups are Hopfian.
\end{thm}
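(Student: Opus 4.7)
The plan is to reduce the statement to \Cref{cor: hopfian grafic} (a surjection onto a free group by a given set of generators is necessarily a basis when no closed foldings appear), and then use the universal property \Cref{thm: free cat} to get injectivity from a left inverse.

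Fix a basis $A=\{a_1,\ldots,a_n\}$ of $\Free[n]$ and let $\varphi\colon \Free[n]\to \Free[n]$ be a surjective endomorphism. Write $w_i=(a_i)\varphi$ in reduced form and set $S=\{w_1,\ldots,w_n\}$. First I would argue that $S$ consists of $n$ distinct, nontrivial elements: since $\varphi$ is onto, $\gen{S}=\Free[n]$, and if some $w_i$ were trivial or two of them equal we would have a generating set of $\Free[n]$ of size strictly less than $n$, contradicting $\rk(\Free[n])=n$.

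Next I would compute both relevant ranks. A direct application of the formula $\rk(\Ati)=1-\card\Verts\Ati+\card\Edgs^{+}\Ati$ to $\flower(S)$ (one basepoint, plus $|w_i|-1$ interior vertices and $|w_i|$ positive arcs per petal) yields $\rk(\flower(S))=n$. On the other hand, since $\gen{\flower(S)}=\gen{S}=\Free[n]=\gen{\bouquet_n}$ and Stallings automata are unique (\Cref{thm: Stallings bijection}), the reduction of $\flower(S)$ is $\stallings(\Free[n],A)=\bouquet_n$, which also has rank $n$. Hence
\[
\loss(\flower(S))\,=\,\rk(\flower(S))-\rk(\red{\flower(S)})\,=\,n-n\,=\,0,
\]
so no reduction process $\flower(S)\xtr{}\bouquet_n$ involves a closed folding. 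By \Cref{cor: hopfian grafic}, $S$ is a free basis of $\Free[n]$.

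To conclude, I would use \Cref{thm: free cat} applied to the basis $S$ to produce the (unique) homomorphism $\psi\colon \Free[n]\to \Free[n]$ with $(w_i)\psi=a_i$ for every $i$. The composition $\varphi\psi$ then agrees with $\id_{\Free[n]}$ on $A$, so by uniqueness of the extension of a function from a basis $\varphi\psi=\id_{\Free[n]}$. Therefore $\varphi$ has a left inverse and is injective, as required. I do not anticipate any real obstacle: the only point requiring a little care is the bookkeeping that $S$ actually contains $n$ distinct nontrivial elements (so that $\flower(S)$ has the expected rank), and this is immediate from $\rk(\Free[n])=n$.
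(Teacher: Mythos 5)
Your proof is correct and follows essentially the same route as the paper: surjectivity gives $\gen{\{(a_1)\varphi,\ldots,(a_n)\varphi\}}=\Free[n]$, one checks $\loss(\flower(S))=0$, and \Cref{cor: hopfian grafic} then makes $S$ a basis and $\varphi$ an automorphism. You merely spell out two steps the paper leaves implicit, namely the rank/counting argument behind $\loss(\flower(S))=0$ and the universal-property argument converting ``$\varphi$ sends a basis to a basis'' into injectivity.
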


\begin{proof}
Let $A=\{a_1,\ldots ,a_n\}$ and let $\varphi \colon \Free[A]\xto{} \Free[A]$ be a surjective endomorphism. Then, $\gen{a_1\varphi, \ldots ,a_n\varphi}=\Free[A]$ and so, $\loss(\flower(\{a_1\varphi, \ldots ,a_n\varphi\}))=n-n=0$. By \Cref{cor: loss=0}, $\{a_1\varphi, \ldots ,a_n\varphi\}$ is a basis for $\Free[A]$ and $\varphi$ is an automorphism. 
\end{proof}

\begin{cor}\label{cor: mateix cardinal}
Any two bases of a free group have the same cardinal.
\end{cor}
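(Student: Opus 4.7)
My plan is to reduce the statement to the finitely generated case (where the Hopfian property just proved applies) and then handle the remaining case trivially, exploiting the standing assumption $\kappa \leq \aleph_0$. So let $F$ be a free group with two bases $A$ and $B$, and suppose first that $F$ is finitely generated.

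The first step is to argue that, in this situation, \emph{every} basis of $F$ must be finite. To see this, fix a finite generating set $S$ of $F$; since $A$ is a basis, each element of $S$ is a (reduced) word in $A^{\pm}$ involving finitely many letters, so the finite set $A_{0}\subseteq A$ of those letters satisfies $S\subseteq \gen{A_{0}}$, hence $\gen{A_{0}} = F$. If $A_{0}\subsetneq A$, any $a\in A\setmin A_{0}$ would lie in $\gen{A_{0}}$, contradicting the free independence of $A$; thus $A=A_{0}$ is finite. The same argument applies to $B$.

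Next, write $\card A = n$ and $\card B = m$, and assume \wlog $n\leq m$. By \Cref{thm: free cat}, the map from $B$ to $F$ sending $b_{1},\ldots ,b_{n}$ to a bijection with $A$ and $b_{n+1},\ldots ,b_{m}$ to $1$ extends to a (unique) endomorphism $\varphi\colon F\to F$; its image contains $A$, so $\varphi$ is surjective. By the previous theorem, $F$ is Hopfian, hence $\varphi$ is also injective. But if $n<m$, then $b_{n+1}\neq 1$ lies in $\ker\varphi$, a contradiction. Therefore $n=m$, settling the finitely generated case.

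Finally, the non-f.g.\ case: by the first step, if \emph{any} basis of $F$ were finite then $F$ would be f.g.\ and all bases would be finite; hence both $A$ and $B$ are infinite. Under the standing assumption $\card A, \card B\leq \aleph_{0}$, this forces $\card A = \card B = \aleph_{0}$, and we are done. The only conceptually delicate point is the first step (showing bases of a f.g.\ free group are finite), as it implicitly uses that a proper subset of a basis cannot generate the full group --- a fact that is immediate from free independence but worth flagging; everything else is a direct application of the Hopf property just established.
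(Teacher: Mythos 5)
Your proof is correct, but it follows a genuinely different route from the paper's. For the finite case the paper argues graphically: writing the elements of a finite basis $B$ as words over $A$, it forms the flower automaton $\flower(B)$, observes that its reduction is the bouquet $\flower(A)$, and computes $\card A = \rk(\flower(A)) = \rk(\flower(B)) - \loss(\flower(B)) = \rk(\flower(B)) = \card B$, the loss vanishing by \Cref{cor: hopfian grafic} because $B$ is a basis. You instead combine the Hopf property (the theorem proved immediately before the corollary, itself a consequence of \Cref{cor: hopfian grafic}) with the universal property of \Cref{thm: free cat}: sending the surplus elements of the larger basis to $1$ produces a surjective, hence injective, endomorphism with nontrivial kernel --- a contradiction. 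Both arguments ultimately rest on the same loss criterion; yours is the classical algebraic derivation, while the paper's stays entirely inside the automata framework and needs no auxiliary endomorphism. Two further differences are worth noting. Your first step (every basis of a finitely generated free group is finite, because a proper subset of a basis cannot generate) makes explicit a fact the paper only asserts in passing (``if $\card B\geq \aleph_0$ then $A$ cannot be finite''), which is a welcome addition. On the other hand, in the infinite case the paper compares cardinalities of the group itself ($\card \Free[A]=\card A$ when $A$ is infinite), an argument valid for bases of arbitrary cardinality, whereas your conclusion $\card A=\card B=\aleph_0$ leans on the standing countability assumption --- legitimate within this text's conventions, but slightly less general.
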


\begin{proof}
Let $B$ be a basis of a free group $\Free[A]$. From \Cref{cor: loss=0} (since $B$ is freely independent) $\card B=\rank (\flower(B)) = \rank (\red{\flower(B)}) = \rk(\stallings(\Free[A])) =  \rank(\Free[A])= |A|$.

Note that the previous argument uses the general implication (a)\,$\Rightarrow$\,(c) from \Cref{cor: loss=0}, unproved when $S$ is infinite. An alternative proof for the infinite case follows. When the basis $B\subseteq \Free[A]$ is infinite, $\card B = \card \Free[B] =\card \Free[A]$. On the other hand, if $A$ were finite, writing each $a\in A$ in terms of $B$ would 
provide a finite subset of $B$ generating $\Free[A]$, in contradiction with $B$ being a basis of $\Free[A]$; therefore, $A$ is also infinite and so $\card A=\card \Free[A]$, finishing the proof. 
\end{proof}

\begin{cor}\label{cor: mateix cardinal}
Two free groups are isomorphic if and only if they have the same rank. \qed
\end{cor}


\medskip

To finish this section we will mention a recent development done by Rosenmann and Ventura in \cite{rosenmann_dependence_2024} (see also \cite{ascari_ideals_2022}). The main idea is already expressed in \Cref{thm: loss seq} above, although in these papers the authors work in a particular case of their interest. 

\begin{defn}
Let $G$ be a group. An \defin[equation over a group]{equation over $G$} (a \defin{$G$-equation} for short) is an element from the free product $G*\mathbb{Z}=G*\gen{X}$, where the generator of the infinite cyclic group is denoted by $X$, and considered as an unknown. Writing elements from this free product in normal form, a typical equation looks like $w(X)=g_0X^{\varepsilon_1}g_1 \cdots g_{n-1}X^{\varepsilon_n}g_n$, where $g_i\in G$, $\varepsilon_i=\pm 1$, where $g_i=1$ implies $\varepsilon_{i-1}=\varepsilon_i$, and where $n=\sum_i |\varepsilon_i|\geq 0$ is called the \defin[degree of an equation]{degree} of $w(X)$. We say that $w(X)$ is \defin[balanced equation]{balanced} if $\sum_i \varepsilon_i=0$. Whenever all the $g_i$ belong to a certain subgroup $H\leqslant G$, we say that $w(X)$ \defin[equation with coefficients in a subgroup]{has coefficients in $H$}. An element $g\in G$ is a \defin[solution of an equation]{solution} for $w(X)$ whenever $w(g)=1$ in~$G$. 

Dually, an element $g\in G$ is said to be \defin[dependent (element) on a subgroup]{dependent on $H$} (\emph{$H$-dependent}) if it is the solution to some non-trivial $H$-equation; otherwise it is said to be \defin[independent (element) on a subgroup]{$H$-independent}. Finally, given $g\in H$, the \defin{ideal of $H$-equations for $g$} is $\mathcal{I}_H(g)=\{w(X)\in H*\gen{X} \mid w(g)=1\}$, a normal subgroup of $H*\gen{X}$, in fact, the kernel of the \defin{evaluation map}, $\varphi_g\colon H*\gen{X}\onto \gen{H,g}\leqslant G$, $h\mapsto h$ ($h\in H$), $X\mapsto g$.  
\end{defn}

Note the obvious parallelism with the classical situation in field theory. These definitions are clearly inspired in the classical notions of (one variable) polynomials with coefficients on a subfield, roots of polynomials, annihilator of an element, and algebraic and transcendental elements (over a subfield). 

The so-called Diophantine problem for groups (given an equation over a group~$G$, decide whether it has some solution in $G$) has been studied for several families of groups. For free groups, this is a deep problem solved some decades ago by Makanin~\cite{makanin_equations_1982} (see also  \cite{razborov_systems_1984}). The dual problem was recently solved by Rosenmann and Ventura in \cite{rosenmann_dependence_2024} using the above mentioned idea of elevation of $\bp$-walks up through folding sequences. 

\begin{thm}[Rosenmann--Ventura, \cite{rosenmann_dependence_2024}]\label{thm: RV}
Let $\Free[A]$ be a free group, let $H\leqfg \Free[A]$ be a finitely generated subgroup, and let $g\in \Free[A]$. Then, one can decide whether $g$ is $H$-dependent,
and compute finitely many equations $w_1(X),\ldots ,w_p(X)$ generating $\mathcal{I}_H(g)$ as normal subgroup of $H * \gen{X}$. 
\end{thm}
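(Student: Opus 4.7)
The plan is to translate the problem into a folding computation on a suitable flower automaton, and then invoke \Cref{cor: ker rlab}. First, using \Cref{cor: basis are computable}, compute a free basis $S=\set{h_1,\ldots,h_n}$ for $H$, and form the flower automaton $\flower(S\cup\set{g})$. By \Cref{thm: B_T}, $\pi(\flower(S\cup\set{g}))$ is free of rank $n+1$, with a basis given by the $\bp$-walks traversing the petals once. Assigning to the petal reading $h_i$ the element $h_i$, and to the petal reading $g$ the element $X$, we obtain a natural group isomorphism $\iota\colon H*\gen{X} \to \pi(\flower(S\cup\set{g}))$. The composition $\iota\rlab$ sends $h_i\mapsto h_i$ and $X\mapsto g$, so it agrees with the evaluation map $\varphi_g\colon H*\gen{X} \to \Free[A]$. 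Consequently $\mathcal{I}_H(g)=\ker\varphi_g$ corresponds, via $\iota$, to $\ker(\rlab\colon \pi(\flower(S\cup\set{g}))\to \Free[A])$.

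Now compute any maximal folding sequence $\Phi=\phi_1\phi_2\cdots\phi_p$ reducing $\flower(S\cup\set{g})$ to $\stallings(\gen{H,g},A)$ (possible by \Cref{prop: reduction is computable}). For each closed folding $\phi_j$ in $\Phi$, compute an elementary elevation $\widetilde{\phi}_j$ of the associated elementary walk back through $\phi_1,\ldots,\phi_{j-1}$; this is effective by pulling the walk up the sequence one step at a time, as discussed before \Cref{prop: unsolfolding}. Each such $\widetilde{\phi}_j$ is a reduced $\bp$-walk in $\flower(S\cup\set{g})$. Bracketing this walk according to its complete traversals of petals and inverse petals (in the spirit of the elevation computation in \Cref{sec: MP}) and applying $\iota^{-1}$ yields an explicit word $w_j(X)\in H*\gen{X}$. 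By \Cref{cor: ker rlab}, the finite collection $\set{w_1(X),\ldots,w_p(X)}$ (indexed by the closed foldings of $\Phi$) normally generates $\mathcal{I}_H(g)$ in $H*\gen{X}$.

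Finally, $g$ is $H$-independent if and only if $\varphi_g$ is injective, i.e.~if and only if $\mathcal{I}_H(g)=\Trivial$. By \Cref{cor: hopfian grafic}, this happens precisely when $\loss(\flower(S\cup\set{g}))=0$, \ie when the folding sequence $\Phi$ contains no closed foldings; this condition can be tested during the execution of the reduction and yields the desired decision algorithm. The one delicate technical point is the consistent identification of reduced $\bp$-walks in the flower with elements of $H*\gen{X}$ via $\iota^{-1}$; with this bookkeeping in place, the theorem reduces to a direct application of the normal-generator description provided by \Cref{cor: ker rlab}.
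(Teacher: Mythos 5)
Your proposal is correct and takes essentially the same route as the paper: decide $H$-dependence by testing whether the loss of the folding process is zero (via \Cref{cor: hopfian grafic}), and obtain normal generators of $\mathcal{I}_H(g)$ by elevating the closed foldings of a maximal folding sequence back to the starting automaton (\Cref{thm: loss seq}/\Cref{cor: ker rlab}). The only cosmetic difference is the starting object — you fold the flower automaton $\flower(S\cup\set{g})$ built on a computed basis $S$ of $H$, making the identification $H*\gen{X}\isom\pi(\flower(S\cup\set{g}))$ explicit, whereas the paper folds $\stallings(H,A)$ with a single petal spelling $g$ attached — but the machinery and the logic are the same.
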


\begin{proof}
Compute $\stallings(H,A)$ and a basis $\{h_1,\ldots ,h_n\}$ for $H$. Let $\Ati$ be this Stallings' $A$-automaton with a petal spelling (the reduction of) $g$ attached to $\bp$. Note that $\Ati$ is reduced except for the possible violations of determinism around the basepoint. Compute now a reduction process for $\Ati$, 
 \begin{equation*} 
\Ati=\Ati_{\!0} \xtr{\!\!\! \phi_1 \!\!}\Ati_{\!1} \xtr{\!\!\! \phi_2 \!\!} \cdots \xtr{\!\!\! \phi_p \!\!} \Ati_{\!p}=\stallings(\gen{H,g},A). 
 \end{equation*}
We claim that $g$ is dependent on $H$ if and only if $\loss(\Ati)\geq 1$. Indeed if $\loss(\Ati)=0$ then, by \Cref{cor: hopfian grafic}, $\{g,h_1, \ldots ,h_n\}$ is a basis for $\gen{H,g}$ so, $\gen{H,g}=H*\gen{g}$ and we conclude that $g$ is $H$-independent. Conversely, if $\loss(\Ati)\geq 1$, then $\{g,h_1,\ldots ,h_n\}$ is a \emph{non-free} set of generators for $\gen{H,g}$ and so, there must be nontrivial relations between them, which will mandatorily involve $g$ since $\{h_1,\ldots ,h_n\}$ are freely independent. Such relations are nontrivial $H$-equations $w(X)$ such that $w(g)=1$. This proves the decision part. 

Moreover, by \Cref{cor: comp pres}, computing an elevation to $\Ati$ of each closed folding in the reduction process above, we get a presentation for the (free) group $\gen{H,g}$ of the form $\pres{g,h_1,\ldots ,h_n}{r_1,\ldots ,r_q}$, namely, a collection of $H$-equations $w_1(X),\ldots ,w_q(X)$, having $g$ as a solution (\ie $w_i(g)=r_i=1$), and generating the kernel of the evaluation map $\varphi\colon H*\gen{X}\onto \gen{H,g}$ as normal subgroup. 
\end{proof}

\begin{rem}
We note that \Cref{thm: RV} also follows indirectly from a more general result in \cite{kapovich_foldings_2005}. 
\end{rem}

\section{Conjugacy, cosets, and index}\label{sec: conj & cosets}

As already stated in \Cref{lem: recognized}, a change of basepoint in a Stallings' automaton has a clear algebraic meaning; it corresponds to conjugating the recognized subgroup. The general assertion merely requires allowing to move the basepoint outside the core.




\begin{named}[Subgroup Conjugacy Problem for $G=\pres{A}{R}$, $\SCP(G)$]
Decide, given finite subsets $R,S \subseteq (A^{\pm})^*$, whether the subgroups $\gen{S}$ and $\gen{R}$ are conjugate in $G$ and, if so, compute a conjugator.
\end{named}

\begin{thm} \label{thm: SCP Fn}
Let $H \leqslant \Free[A]$ and let $w \in \Free[A]$. Then, $\schreier(H^w)= [\schreier(H)]_{\scriptscriptstyle{H\!w}}$, $\stallings(H^{w}) =\core_{H\!w}(\schreier(H))$, and $\stallings^*(H^{w}) =\stallings^*(H)$. Hence, for any $H,K \leqslant \Free[A]$, the following statements are equivalent:
\begin{enumerate}[dep]
\item the subgroups $H$ and $K$ are conjugate in $\Free[A]$;
\item $\schreier(H)$ equals $\schreier(K)$ modulo a change in the basepoint;
\item $\stallings^*(H)=\stallings^*(K)$.
\end{enumerate}
\end{thm}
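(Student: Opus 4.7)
The plan is to first establish the three automata-theoretic identities in the first paragraph, and then harvest the equivalences almost for free from them together with \Cref{lem: recognized}.

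First I would prove the identity $\schreier(H^w)=[\schreier(H)]_{\scriptscriptstyle{Hw}}$ directly from \Cref{def: Sch}. The vertex set of $\schreier(H)$ is $H\backslash \Free[A]$ while that of $\schreier(H^{w})$ is $H^{w}\backslash \Free[A]=w^{-1}Hw\backslash \Free[A]$. The map $H^{w}g\mapsto Hwg$ is a well-defined bijection between these coset spaces, and it intertwines the right-multiplication actions: the arc $H^{w}g\xarc{a}H^{w}ga$ in $\schreier(H^{w})$ goes to $Hwg\xarc{a}Hwga$, which is precisely an arc of $\schreier(H)$. Under this identification, the basepoint $H^{w}=H^{w}\cdot 1$ of $\schreier(H^{w})$ corresponds to the coset $Hw$ in $\schreier(H)$, yielding the claimed equality. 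Taking $\core_{\bp}$ on both sides, \Cref{prop: St = core Sch} gives $\stallings(H^{w})=\core_{Hw}(\schreier(H))$; and since $\core^{*}$ does not depend on the basepoint, applying it yields
\[
\stallings^{*}(H^{w})=\core^{*}(\core_{Hw}(\schreier(H)))=\core^{*}(\schreier(H))=\stallings^{*}(H).
\]

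For the equivalences, the implications (a)$\Rightarrow$(b) and (a)$\Rightarrow$(c) are immediate from the identities just proved. For (b)$\Rightarrow$(a), if $\schreier(K)$ equals $\schreier(H)$ after moving the basepoint from $H$ to some coset $Hw$, then by the first identity $\schreier(K)\isom \schreier(H^{w})$; applying $\core_{\bp}$ gives $\stallings(K)\isom \stallings(H^{w})$, and the Stallings bijection (\Cref{thm: Stallings bijection}) then forces $K=H^{w}$.

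The main work is the implication (c)$\Rightarrow$(a). Assume $\stallings^{*}(H)=\stallings^{*}(K)$, call this common involutive digraph $\Gamma$. By the discussion preceding \Cref{cor: gen iff isom}, the (pointed) automaton $\stallings(H)$ is obtained from $\Gamma$ by attaching a (possibly trivial) reduced path $P_{H}$ connecting the basepoint $\bp_{H}$ to a vertex $v_{H}\in\Verts\Gamma$; similarly $\stallings(K)=\Gamma\cup P_{K}$ with basepoint $\bp_{K}$ attached to $v_{K}\in\Verts\Gamma$ via $P_{K}$. Let $u_{H},u_{K}\in\Free[A]$ be the reduced labels of the paths $\bp_{H}\xrwalk{u_{H}}v_{H}$ and $\bp_{K}\xrwalk{u_{K}}v_{K}$ respectively (the empty word if the corresponding basepoint already lies in $\Gamma$). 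By \Cref{lem: recognized}\ref{item: coset recognized}, the recognized subgroups satisfy $H=\gen{\stallings(H)}_{\bp_{H}}=u_{H}^{-1}\,\gen{\Gamma}_{v_{H}}\,u_{H}$ and analogously $K=u_{K}^{-1}\,\gen{\Gamma}_{v_{K}}\,u_{K}$; and, again by \Cref{lem: recognized}, $\gen{\Gamma}_{v_{H}}$ and $\gen{\Gamma}_{v_{K}}$ are conjugate in $\Free[A]$. Chaining these three conjugations we conclude that $H$ and $K$ are conjugate, as required. (The degenerate cases in which $H$ or $K$ is trivial, cyclic, or already core are handled transparently by the same formula, interpreting the empty/trivial path appropriately.)

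The main obstacle is the bookkeeping around the hanging path $P_{H}$ in the last paragraph: one must verify that for a finitely generated subgroup the surviving hanging tree in $\core_{\bp}(\schreier(H))\setminus\core^{*}$ is indeed a single reduced path (any branching would itself be a hanging tree removed by $\core_{\bp}$), so that $u_{H}$ is well-defined and the identification of cosets via \Cref{lem: recognized} applies cleanly; modulo this observation, the argument is a direct computation.
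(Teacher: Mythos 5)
Your proposal is correct and follows essentially the same route as the paper: the paper's one-line argument is exactly your first paragraph (recognize that $[\schreier(H)]_{\scriptscriptstyle{H\!w}}$ is the deterministic, saturated automaton recognizing $H^w$, then take cores and strict cores), and your expanded treatment of (c)$\Rightarrow$(a) via \Cref{lem: recognized} together with the decomposition of $\stallings(H)$ as $\stallings^*(H)$ plus a hanging path to the basepoint is precisely the reasoning the paper compresses into ``the equivalence follows immediately.'' Two cosmetic remarks: with your orientation of $u_H$ the conjugation from \Cref{lem: recognized} reads $H=u_H\,\gen{\Gamma}_{v_H}\,u_H^{-1}$ (the inverse is on the other side, harmless for the conclusion), and the hanging-tree-is-a-path observation needs no finite-generation hypothesis --- $\bp$-coreness alone forces every vertex outside the strict core onto the geodesic from the basepoint --- which matters because the theorem is stated for arbitrary subgroups.
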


\begin{proof}
It is enough to realize that, for any $w \in \Free[A]$, $[\schreier(H)]_{\scriptscriptstyle{H\!w}}$ is a deterministic and saturated automaton recognizing $H^w$ (and further take the core and strict core of $\schreier(H^w)=[\schreier(H)]_{\scriptscriptstyle{H\!w}}$). The equivalence follows immediately.
\end{proof}




\begin{cor}
The Subgroup Conjugacy Problem is solvable for $\Free[A]$.  
\end{cor}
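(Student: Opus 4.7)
The plan is to reduce the subgroup conjugacy problem to a finite graph isomorphism test via the Stallings machinery developed above. Given finite sets $R, S \subseteq (A^{\pm})^*$ generating subgroups $H, K \leqslant \Free[A]$, I would first compute the finite Stallings automata $\stallings(H,A)$ and $\stallings(K,A)$ by the folding algorithm of \Cref{thm: Stallings bijection2}, and then extract their strict cores $\stallings^*(H,A)$ and $\stallings^*(K,A)$ by iteratively removing vertices of degree one (a process which terminates since these automata are finite). By \Cref{thm: SCP Fn}, $H$ and $K$ are conjugate in $\Free[A]$ if and only if these two finite involutive $A$-digraphs are isomorphic as unpointed automata, which is decidable: enumerate the bijections between their vertex sets and check the incidence and labelling conditions.

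To also produce an explicit conjugator when one exists, I would fix such an isomorphism $\sigma\colon \stallings^*(K,A) \to \stallings^*(H,A)$. Let $\verti$ be the vertex of $\stallings^*(K,A)$ at which the (possibly trivial) hanging tree leading to the basepoint of $\stallings(K,A)$ is attached, and let $u$ be the reduced label of the unique reduced walk in $\stallings(K,A)$ from that basepoint to $\verti$. Let $v$ be the reduced label of any reduced walk from the basepoint of $\stallings(H,A)$ to $\sigma(\verti)$ within $\stallings(H,A)$ (which exists by connectedness and determinism, since $\sigma(\verti) \in \stallings^*(H,A) \subseteq \stallings(H,A)$). The candidate conjugator is then $w = v u^{-1}$.

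The main step to verify --- and the point I expect to require the most care --- is that $H^w = K$. By \Cref{thm: SCP Fn}, we have $\stallings(H^w) = \core_{Hw}(\schreier(H))$. Since $\schreier(H)$ is saturated and deterministic (\Cref{prop: propietats Sch}), the reduced word $u^{-1}$ labels a unique reduced walk in $\schreier(H)$ starting at $\sigma(\verti)$; its endpoint is, by construction, the coset $Hw$. The core $\core_{Hw}(\schreier(H))$ therefore consists of $\stallings^*(H,A)$ together with the tree just traced, and $\sigma$ extended by the natural identification of this tree with the hanging tree of $\stallings(K,A)$ (both read $u^{-1}$ when traversed from $\sigma(\verti)$, respectively $\verti$, toward the basepoint) provides an isomorphism of pointed $A$-automata $\stallings(K,A) \isom \core_{Hw}(\schreier(H)) = \stallings(H^w)$. \Cref{thm: Stallings bijection2} then yields $H^w = K$, completing the construction.
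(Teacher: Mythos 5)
Your proof is correct and follows essentially the same route as the paper: decide conjugacy by computing and comparing the finite strict cores via condition (c) of \Cref{thm: SCP Fn}, and extract a conjugator as the label of a path in $\schreier(H)$ from $\bpH$ to the transported basepoint of $K$ --- your $w=vu^{-1}$ is exactly such a label. The step you flag as delicate does go through: since $\verti$ is deficient in $\stallings^*(K)$ at the first letter of $u^{-1}$, so is $\sigma(\verti)$ in $\stallings^*(H)$, hence the traced walk leaves the strict core at its first arc and, being reduced, can never re-enter a hanging tree's attachment point, so it is precisely the hanging geodesic of $\core_{Hw}(\schreier(H))$ and your extension of $\sigma$ is indeed an isomorphism of pointed automata.
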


\begin{proof}
If both $H$ and $K$ are given by finite families of generators then their strict Stallings' automata are finite and computable and condition (c) in \Cref{thm: SCP Fn} is  algorithmically checkable. Moreover, if $\stallings^*(H) =\stallings^*(K)$  the label $w$ of any path $\bpH \xrwalk{\ } \bpK$ in $\schreier(H)$ is a conjugator of $K$ into $H$; \ie $K = H^w$.
\end{proof}

A graphical characterization of normality within free groups also follows from the discussion above.


\begin{prop} \label{prop: normal iff}
Let $H$ be a nontrivial subgroup of $\Free[A]$. Then, $H$ is normal in $\Free[A]$ if and only if $\stallings(H)$ is vertex-transitive and saturated.\footnote{An $A$-automaton $\Ati$ is said to be \defin[vertex-transitive automaton]{vertex-transitive} if for any pair of vertices $\verti,\vertii$ in $\Ati$, there exists an automorphism of $A$-digraphs sending $\verti$ to $\vertii$.} 
\end{prop}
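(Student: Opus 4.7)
The plan is to prove the two implications separately, with the key bridge being the observation that when $H\normaleq\Free[A]$, the Schreier automaton of $H$ is the Cayley graph of the quotient: $\schreier(H) = \cayley(\Free[A]/H, A)$ (by the remark after \Cref{def: Sch}). This single identification converts the algebraic hypothesis (normality) into geometric homogeneity (vertex-transitivity), and vice versa. Throughout, I would use the two main tools already in the text: \Cref{lem: recognized}, which controls how the recognized subgroup changes with basepoint, and \Cref{lem: St vs Sch}, which says that $\stallings(H) = \schreier(H)$ exactly when $\stallings(H)$ is saturated.

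For $(\Rightarrow)$, assume $H\normaleq\Free[A]$ and $H\neq\Trivial$. Since $\Free[A]/H$ is a group, $\schreier(H)$ is its Cayley graph, and standard Cayley-graph vertex-transitivity (the map $Hu\mapsto Hgu$ is an $A$-automorphism for each $g\in\Free[A]$) shows $\schreier(H)$ is vertex-transitive. It remains to show $\schreier(H)$ is core, which by \Cref{lem: St vs Sch}\,\ref{item: St sat iff Sch cor} forces $\stallings(H) = \schreier(H)$ (hence saturated). Pick any nontrivial $h_0\in H$, conjugate it into cyclically reduced form $h$; by normality, $h\in H$. Then in the deterministic automaton $\schreier(H)$, reading the (cyclically reduced) word $h$ from $\bp$ produces a nontrivial cyclically reduced closed walk through $\bp$. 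By vertex-transitivity, applying an automorphism sending $\bp$ to any prescribed vertex $\verti$ yields a nontrivial cyclically reduced closed walk through $\verti$. Hence $\schreier(H) = \core^{*}(\schreier(H)) \subseteq \core_{\bp}(\schreier(H)) \subseteq \schreier(H)$, so all three coincide. This establishes saturation and, together with vertex-transitivity, the claim.

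For $(\Leftarrow)$, assume $\stallings(H)$ is saturated and vertex-transitive. Saturation plus \Cref{lem: St vs Sch}\,\ref{item: St sat iff Sch cor} give $\stallings(H) = \schreier(H)$. Given arbitrary $u\in\Free[A]$, write $u$ as its reduced form; by saturation and determinism there is a unique walk in $\stallings(H)$ starting at $\bp$ and reading $u$, ending at some vertex $\verti_u$, and this walk is reduced. \Cref{lem: recognized}\,\ref{item: coset recognized} then gives $\gen{\stallings(H)}_{\verti_u} = u^{-1}Hu$. Now invoke vertex-transitivity to pick an $A$-digraph automorphism $\varphi\colon \stallings(H)\to\stallings(H)$ with $\bp\varphi=\verti_u$: although $\varphi$ need not preserve the basepoint, it induces a label-preserving bijection between $\bp$-walks and $\verti_u$-walks, so $\Lang_{\bp}(\stallings(H)) = \Lang_{\verti_u}(\stallings(H))$, and after reduction $H=\gen{\stallings(H)}_{\bp}=\gen{\stallings(H)}_{\verti_u}=u^{-1}Hu$. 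Since $u$ was arbitrary, $H\normaleq \Free[A]$.

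The main delicate point is the backward direction's exploitation of vertex-transitivity: the $A$-automorphisms in question are \emph{digraph} automorphisms, not pointed-automaton automorphisms, so one must argue that equality of languages at two different basepoints follows purely from the existence of a label-preserving digraph automorphism sending one to the other. In the forward direction, the only real care needed is the passage from ``some'' cyclically reduced closed walk (at $\bp$) to one through every vertex, for which the cyclic-reducedness of the chosen $h\in H$ (obtained via a normality-driven conjugation) is exactly the right input to feed into the vertex-transitive transport.
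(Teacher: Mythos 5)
Your proof is correct and follows essentially the same route as the paper: normality is translated into vertex-transitivity of $\schreier(H)=\cayley(\Free[A]/H,A)$, and the saturation/coreness dictionary of \Cref{lem: St vs Sch} transfers everything to $\stallings(H)$, with \Cref{lem: recognized} handling the change of basepoint in the backward direction. The only cosmetic difference is in the coreness step: you transport a cyclically reduced closed walk (hence membership in the strict core) from $\bp$ to every vertex, while the paper phrases the same idea as automorphisms being unable to move core vertices outside the core; your version is a clean way of making that remark precise.
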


\begin{proof}
The claim is obvious through the following sequence of equivalences:
 \begin{align*}
H \text{ is normal in } \Free[A] & \,\Leftrightarrow\, \schreier(H) \text{ is vertex-transitive} \\ & \,\Leftrightarrow\, \schreier(H) \text{ is vertex-transitive and core} \\ & \,\Leftrightarrow\, \stallings(H) \text{ is vertex-transitive and saturated.}
 \end{align*}
All the implications are immediate from the definitions and previous results except for the necessary coreness of a vertex-transitive Schreier automaton of a nontrivial subgroup. To see this, note that no automorphism of $A$-digraphs can send a vertex in the \emph{nontrivial} core of $\bm{\Sigma}=\schreier(H)$ (belonging to some reduced $\bp$-walk in $\bm{\Sigma}$) to a vertex outside the core (not belonging to any reduced $\bp$-walk in $\bm{\Sigma}$). This completes the proof.
\end{proof}

\begin{rem}
Although in order to decide whether a finitely generated subgroup $H \leqslant \Free[A]$ is normal in $\Free[A]$ it is enough to check whether the conjugates of the generators by the elements in $A^{\pm}$ belong to $H$ (\ie finitely many instances of $\MP(\Fn)$), the proposition above provides a more synthetic characterization also suitable for algorithmically checking normality when the input subgroup is finitely generated.
\end{rem}

The connection between Stallings' theory and cosets and index of subgroups of the free group is encapsulated by \Cref{prop: St = core Sch}: $\stallings(H)$ is precisely the core of $\schreier(H)$ (having as vertices the right cosets of $H$ in $\Free[A]$). In this section, we leverage this close relationship to explore several problems and  derive some classical results in this regard.

\begin{named}[Finite index problem a $G=\pres{A}{R}$, $\FIP(G)$]
Decide, given a finite subset $S \subseteq (A^{\pm})^*$, whether the subgroup $\gen{S}$ has finite index in $G$ and, if so, compute a family of (right) coset representatives.
\end{named}

\begin{prop} \label{prop: fi}
Let $H$ be a subgroup of a free group~$\Free[A]$. Then, the index $\ind{H}{\Free[A]}$ is finite if and only if $\stallings(H)$ is saturated and $\card\Verts(\stallings(H))<\infty$; in this case, $\ind{H}{\Free[A]} = \card\Verts(\stallings(H))$.
\end{prop}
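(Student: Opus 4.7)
The plan is to pivot the whole argument on the key identification $\Verts(\schreier(H)) = H \backslash \Free[A]$, which gives for free the equality $\ind{H}{\Free[A]} = \card\Verts(\schreier(H))$. The proposition then reduces to comparing the vertex sets of $\schreier(H)$ and $\stallings(H)$, and both directions will come directly from \Cref{lem: St vs Sch}.

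For the easy implication, suppose $\stallings(H)$ is saturated and has finitely many vertices. By \Cref{lem: St vs Sch}\ref{item: St sat iff Sch cor}, saturation of $\stallings(H)$ forces $\stallings(H) = \schreier(H)$, and hence
\begin{equation*}
\ind{H}{\Free[A]} \,=\, \card\Verts(\schreier(H)) \,=\, \card\Verts(\stallings(H)) \,<\, \infty.
\end{equation*}
This also gives the claimed equality of cardinals.

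For the converse, assume $\ind{H}{\Free[A]} < \infty$. If $A = \varnothing$ the statement is vacuous, so assume $\card A \geq 1$. By \Cref{lem: St vs Sch}\ref{item: Sch = St + Cay branches}, $\schreier(H)$ is obtained from $\stallings(H)$ by attaching a full Cayley $a$-branch of $\Free[A]$ at every $a$-deficient vertex of $\stallings(H)$, for each $a \in A^\pm$. The main (small) point to justify is that any such Cayley branch has infinitely many vertices: picking any letter $a \in A$, the infinite sequence of distinct cosets obtained by iterating $a$ (or $a^{-1}$) away from the basepoint lives inside the corresponding branch, since $\Free[A]$ has no nontrivial relations and in particular no finite-order element along $a$. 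Hence a single deficient vertex in $\stallings(H)$ would force $\card\Verts(\schreier(H)) = \infty$, contradicting the finiteness of $\ind{H}{\Free[A]}$. Therefore $\stallings(H)$ is saturated; then again by \Cref{lem: St vs Sch}\ref{item: St sat iff Sch cor}, $\stallings(H) = \schreier(H)$, so $\card\Verts(\stallings(H)) = \ind{H}{\Free[A]} < \infty$, as required.

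The main (and essentially only) obstacle is verifying that a Cayley branch of $\Free[A]$ is infinite, but this is immediate from the fact that $\Free[A]$ is torsion-free (indeed, the word $a^n$ is reduced for all $n \geq 1$, so distinct positive powers of $a$ yield distinct cosets inside any Cayley branch that contains the $a$-direction). Once this is in hand, the proof is really just a careful bookkeeping of the content already packaged in \Cref{prop: St = core Sch} and \Cref{lem: St vs Sch}.
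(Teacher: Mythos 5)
Your proof is correct and follows essentially the same route as the paper: identify $\Verts(\schreier(H))$ with $H\backslash\Free[A]$, note that an $a$-deficient vertex of $\stallings(H)$ would force an infinite Cayley branch inside $\schreier(H)$ (hence infinite index), and use \Cref{lem: St vs Sch} to get $\stallings(H)=\schreier(H)$ when saturated, so the index equals $\card\Verts(\stallings(H))$. The only cosmetic difference is bookkeeping: the paper bounds $\card\Verts(\stallings(H))\leq\card\Verts(\schreier(H))$ directly from the inclusion $\stallings(H)\subseteq\schreier(H)$, while you deduce finiteness of the vertex set after establishing saturation.
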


\begin{proof}
For the implication to the right, note that:
 \begin{enumerate*}[ind]
\item since $\stallings(H) \subseteq \schreier(H)$, then $\card\Verts(\stallings(H)) \leq \card\Verts(\schreier(H)) = \ind{H}{\Free[A]}$, and 
\item if $\stallings(H)$ contains an $a$-deficient vertex $\verti$, then $\schreier(H)$ contains an (infinite) $a$-Cayley branch attached to $\verti$, and hence $\ind{H}{\Free[A]} = \card\Verts(\schreier(H,A))$ is infinite.
 \end{enumerate*}
    
Conversely, by \Cref{lem: St vs Sch}.\ref{item: St sat iff Sch cor}, if $\stallings(H)$ is saturated, then $\stallings(H) = \schreier(H)$. Hence, in this case $\ind{H}{\Free[A]}=\card\Verts(\schreier(H))= \card\Verts(\stallings(H))$, and the claimed result follows.
\end{proof}

As a consequence we obtain two well-known results (\Cref{cor: fi subgroups} and \Cref{cor: Schreier lemma}) on finitely generated free groups which easily extend to any finitely generated group.

\begin{cor} \label{cor: fi subgroups}
Every finitely generated group $G$ contains finitely many subgroups of any given finite index.
\end{cor}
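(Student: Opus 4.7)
My plan is to reduce to finitely generated free groups and then count Stallings automata directly. First, since $G$ is finitely generated, \Cref{thm: G = F/N} (applied to a finite generating set) lets me write $G \isom \Free[A]/N$ for some finite alphabet $A$ and some $N \normaleq \Free[A]$. The standard lattice correspondence yields an index-preserving bijection between subgroups of $G$ and subgroups of $\Free[A]$ containing $N$, so it suffices to show the claim for $\Free[A]$ itself with $A$ finite.

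For the free case, I would invoke the Stallings bijection (\Cref{thm: Stallings bijection2}) to translate the problem into one about automata. Each subgroup $H \leqslant \Free[A]$ corresponds to a unique (isomorphism class of) reduced pointed involutive $A$-automaton $\stallings(H,A)$, and by \Cref{prop: fi}, the condition $\ind{H}{\Free[A]} = n$ is equivalent to $\stallings(H,A)$ being saturated with exactly $n$ vertices. Hence the number of index-$n$ subgroups of $\Free[A]$ is bounded above by the number of isomorphism classes of finite, saturated, deterministic, pointed, involutive $A$-automata with $n$ vertices (the extra core condition can only further restrict the set).

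To bound this last count, I would fix a vertex set $\Verts = \set{1,\ldots, n}$ with basepoint $1$. In a deterministic, saturated, involutive $A$-automaton on $\Verts$, each letter $a \in A$ has, out of every vertex, exactly one $a$-arc; and by \Cref{lem: = defc}, the $a^{-1}$-arcs also saturate all of $\Verts$, which together with determinism forces the map $\verti \mapsto \verti \cdot a$ to be a \emph{permutation} $\sigma_a$ of $\Verts$. The $a^{-1}$-arcs being dictated by involutivity, the whole automaton is completely determined by the tuple $(\sigma_a)_{a \in A} \in \mathrm{Sym}(\Verts)^{A}$. This yields at most $(n!)^{\card A}$ such labeled automata and hence at most that many isomorphism classes of pointed ones; since $\card A$ is finite, so is this number. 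The main (minor) subtlety is checking that saturation plus determinism plus involutivity really forces each letter to act as a permutation rather than merely a function; this is essentially the content of \Cref{lem: = defc}. All remaining steps are immediate once the reduction to the free case and the Stallings bijection are invoked.
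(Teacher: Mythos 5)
Your proof is correct and follows essentially the same route as the paper: reduce to a finitely generated free group, then use \Cref{prop: fi} together with the Stallings bijection of \Cref{thm: Stallings bijection2} to identify index-$n$ subgroups with saturated reduced automata on $n$ vertices, of which there are only finitely many. The only difference is that you make that last finiteness explicit (saturation, determinism and involutivity force each letter to act as a permutation of the vertices, giving at most $(n!)^{\card A}$ automata), where the paper simply notes that the number of such automata is clearly finite.
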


\begin{proof}
Since every finitely generated group $G$ admits an epimorphism $\Fn \onto G$ for some $n\in \NN$, it is enough to prove the result for finitely generated free groups~$\Fn$. By \Cref{prop: fi}, for any $k\in \NN$, the bijection \eqref{eq: Stallings bijection2} in \Cref{thm: Stallings bijection2} restricts to a bijection between subgroups $H \leqslant \Fn$ of index $k$ and saturated (hence $n$-regular) automata with $k$ vertices; since the number of such automata is clearly finite, the claimed result follows. 
\end{proof}

A recursive formula computing the number of subgroups of $\Fn$ of a given finite index $k$ was obtained in \cite{hall_jr_subgroups_1949} essentially counting the corresponding graphical configurations.

\begin{thm}[\citenr{hall_jr_subgroups_1949}] \label{formula_number_fi_freegroup}
The number $N_{k}(\mathbb{F}_n)$ of subgroups of index $k$ in $\mathbb{F}_n$ is given recursively by $N_{1}(\mathbb{F}_n)=1$, and
 \begin{equation*} \label{eq: index k}
N_{k}(\mathbb{F}_n) \,=\, k(k!)^{n-1}-\sum_{i=1}^{k-1}[(k-i)!]^{n-1}N_{i}(\mathbb{F}_n). \tag*{\qed}
 \end{equation*}
\end{thm}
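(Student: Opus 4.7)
The plan is to exploit the Stallings bijection of Proposition~\ref{prop: fi}, which identifies subgroups $H \leqslant \Fn$ of index $k$ with isomorphism classes of pointed, connected, saturated, deterministic, involutive $A$-automata on $k$ vertices, and then perform a two-way count after passing to vertex-labeled structures.

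First I would observe that a labeled saturated deterministic involutive $A$-automaton on vertex set $[k]=\{1,\ldots,k\}$ is fully specified by its positive part, and saturation plus determinism force each letter $a \in A$ to induce a function $\sigma_a \colon [k] \to [k]$, which is automatically a bijection because the involution forces each vertex to also have a unique incoming $a$-arc. Hence such labeled structures biject with $n$-tuples of permutations $(\sigma_1,\ldots,\sigma_n) \in (S_k)^n$, and there are exactly $(k!)^n$ of them. Fixing the basepoint to $1$ and restricting to connected structures, Corollary~\ref{cor: gen iff isom} (rigidity of reduced automata) guarantees that any two distinct labelings of a given Stallings automaton yield non-isomorphic labeled pointed structures, so each subgroup of index $k$ contributes exactly $(k-1)!$ connected labeled structures. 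The number of connected labeled pointed structures on $[k]$ is therefore $(k-1)!\, N_k(\Fn)$.

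Next I would stratify the $(k!)^n$ labeled pointed structures by the size $i \in [1,k]$ of the connected component of the basepoint $1$ (equivalently, the orbit of $1$ under the permutation group $\langle \sigma_1,\ldots,\sigma_n\rangle \leqslant S_k$): one chooses the $i-1$ additional vertices of that component in $\binom{k-1}{i-1}$ ways, a connected labeled pointed saturated structure on those $i$ vertices (after relabeling) in $(i-1)!\, N_i(\Fn)$ ways by the previous paragraph, and an arbitrary saturated labeled structure on the remaining $k-i$ vertices in $((k-i)!)^n$ ways. This yields the master identity
\[(k!)^n \,=\, \sum_{i=1}^{k} \binom{k-1}{i-1} (i-1)!\, N_i(\Fn)\, ((k-i)!)^n.\]

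Isolating the $i=k$ summand, using $\binom{k-1}{i-1}(i-1)! = (k-1)!/(k-i)!$, and dividing through by $(k-1)!$ gives
\[N_k(\Fn) \,=\, \frac{(k!)^n}{(k-1)!} \,-\, \sum_{i=1}^{k-1} \frac{((k-i)!)^n}{(k-i)!}\, N_i(\Fn) \,=\, k(k!)^{n-1} \,-\, \sum_{i=1}^{k-1} ((k-i)!)^{n-1}\, N_i(\Fn),\]
with the trivial base case $N_1(\Fn)=1$. The only delicate point is the rigidity claim used in the first paragraph: without it, the $(i-1)!$ factor would need to be divided by the order of the automorphism group of the corresponding Stallings automaton, and the clean recursion would break; happily, Corollary~\ref{cor: gen iff isom} ensures that this automorphism group is trivial, closing the argument.
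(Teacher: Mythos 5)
Your proof is correct and is essentially the counting argument the paper itself only gestures at (the theorem is stated with a citation to Hall and no proof, after the remark that the bijection of \Cref{thm: Stallings bijection2} restricts to saturated automata with $k$ vertices): encode vertex-labelled saturated deterministic involutive automata on $[k]$ as $n$-tuples of permutations, identify the connected ones based at $1$ with pairs (index-$k$ subgroup, labelling), and stratify the $(k!)^n$ tuples by the component of the basepoint. One small repair: the rigidity you need --- triviality of the pointed automorphism group of a reduced automaton, which is what makes the factor $(k-1)!$ exact --- is the unlabelled corollary immediately preceding \Cref{cor: gen iff isom} (equivalently, the uniqueness part of \Cref{prop: main prop}), rather than \Cref{cor: gen iff isom} itself, which only asserts that equal recognized subgroups force isomorphic automata.
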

 \vspace{5pt}

\begin{figure}[h]
    \centering
\begin{tikzpicture}[shorten >=1pt, node distance=1.2cm and 1.75cm, on grid,auto,auto,>=stealth']


\begin{scope}[shift={(1,0)}]
    \node[state,accepting] (1) {};
    \node[state] (2) at (1,0) {};
    \node[state] (3) at (.5,{-sin(60)}) {};
  
   \path[->] (1) edge[blue,bend left=15] (2);
   \path[->] (2) edge[blue,bend left=15] (3);
   \path[->] (3) edge[blue,bend left=15] (1);
   \path[->] (1) edge[red,loop above,min distance=6mm,in=110,out=160] (1);
   \path[->] (2) edge[red,loop above,min distance=6mm,in=20,out=70] (2);
   \path[->] (3) edge[red,loop below,min distance=6mm,in=245,out=295] (3);

   \draw [ultra thick, rounded corners, draw=black, opacity=0.1]
    (-0.5,0.5) -- (-0.5,-1.5) -- (1.5,-1.5) -- (1.5,0.5) -- cycle;
\end{scope}
   
  \begin{scope}[shift={(3.2,0)}]
   \node[state,accepting] (1) {};
   \node[state] (2) at (1,0) {};
   \node[state] (3) at (.5,{-sin(60)}) {};

   \path[->] (1) edge[red,bend left=15] (2);
   \path[->] (2) edge[red,bend left=15] (3);
   \path[->] (3) edge[red,bend left=15] (1);
   \path[->] (1) edge[blue,loop above,min distance=6mm,in=110,out=160] (1);
   \path[->] (2) edge[blue,loop above,min distance=6mm,in=20,out=70] (2);
   \path[->] (3) edge[blue,loop below,min distance=6mm,in=245,out=295] (3);

   \draw [ultra thick, rounded corners, draw=black, opacity=0.1]
     (-0.5,0.5) -- (-0.5,-1.5) -- (1.5,-1.5) -- (1.5,0.5) -- cycle;

  \end{scope}
  
  \begin{scope}[shift={(5.4,0)}]
   \node[state,accepting] (1) {};
    \node[state] (2) at (1,0) {};
    \node[state] (3) at (.5,{-sin(60)}) {};
  
   \path[->] (1) edge[red,bend right=15] (2);
   \path[->] (2) edge[red,bend right=15] (3);
   \path[->] (3) edge[red,bend right=15] (1);
   \path[->] (1) edge[blue,bend left=15] (2);
   \path[->] (2) edge[blue,bend left=15] (3);
   \path[->] (3) edge[blue,bend left=15] (1);

   \draw [ultra thick, rounded corners, draw=black, opacity=0.1]
    (-0.5,0.5) -- (-0.5,-1.5) -- (1.5,-1.5) -- (1.5,0.5) -- cycle;

  \end{scope}
  
  \begin{scope}[shift={(7.6,0)}]
   \node[state,accepting] (1) {};
    \node[state] (2) at (1,0) {};
    \node[state] (3) at (.5,{-sin(60)}) {};
  
   \path[->] (2) edge[red,bend left=15] (1);
   \path[->] (3) edge[red,bend left=15] (2);
   \path[->] (1) edge[red,bend left=15] (3);
   \path[->] (1) edge[blue,bend left=15] (2);
   \path[->] (2) edge[blue,bend left=15] (3);
   \path[->] (3) edge[blue,bend left=15] (1);

   \draw [ultra thick, rounded corners, draw=black, opacity=0.1]
    (-0.5,0.5) -- (-0.5,-1.5) -- (1.5,-1.5) -- (1.5,0.5) -- cycle;

  \end{scope}
  
  \begin{scope}[shift={(2.3,-2.5)}]
    \node[state,accepting] (1) {};
    \node[state] (2) at (1,0) {};
    \node[state] (3) at (.5,{-sin(60)}) {};
  
   \path[->] (1) edge[blue,bend left=15] (2);
   \path[->] (2) edge[blue,bend left=15] (1);
   \path[->] (2) edge[red,bend left=15] (3);
   \path[->] (3) edge[red,bend left=15] (2);
   \path[->] (1) edge[red,loop above,min distance=6mm,in=110,out=160] (1);
   \path[->] (3) edge[blue,loop below,min distance=6mm,in=245,out=295] (3);

   \draw [ultra thick, rounded corners, draw=black, opacity=0.1]
       (-0.5,0.5) -- (-0.5,-1.5) -- (5.25,-1.5) -- (5.25,0.5) -- cycle;
  \end{scope}
  
  \begin{scope}[shift={(4.3,-2.5)}]
    \node[state] (1) {};
    \node[state,accepting] (2) at (1,0) {};
    \node[state] (3) at (.5,{-sin(60)}) {};
  
   \path[->] (1) edge[blue,bend left=15] (2);
   \path[->] (2) edge[blue,bend left=15] (1);
   \path[->] (2) edge[red,bend left=15] (3);
   \path[->] (3) edge[red,bend left=15] (2);
   \path[->] (1) edge[red,loop above,min distance=6mm,in=110,out=160] (1);
   \path[->] (3) edge[blue,loop below,min distance=6mm,in=245,out=295] (3);
  \end{scope}
  
  \begin{scope}[shift={(6.3,-2.5)}]
    \node[state] (1) {};
    \node[state] (2) at (1,0) {};
    \node[state,accepting] (3) at (.5,{-sin(60)}) {};
  
   \path[->] (1) edge[blue,bend left=15] (2);
   \path[->] (2) edge[blue,bend left=15] (1);
   \path[->] (2) edge[red,bend left=15] (3);
   \path[->] (3) edge[red,bend left=15] (2);
   \path[->] (1) edge[red,loop above,min distance=6mm,in=110,out=160] (1);
   \path[->] (3) edge[blue,loop below,min distance=6mm,in=245,out=295] (3);
  \end{scope}
  
  \begin{scope}[shift={(0,-5)}]
    \node[state,accepting] (1) {};
    \node[state] (2) at (1,0) {};
    \node[state] (3) at (.5,{-sin(60)}) {};
  
   \path[->] (1) edge[blue,bend left=15] (2);
   \path[->] (2) edge[blue,bend right=25] (3);
   \path[->] (2) edge[red] (3);
   \path[->] (3) edge[red,bend right=25] (2);
   \path[->] (3) edge[blue,bend left=15] (1);
   \path[->] (1) edge[red,loop above,min distance=6mm,in=110,out=160] (1);

   \draw [ultra thick, rounded corners, draw=black, opacity=0.1]
       (-0.5,0.5) -- (-0.5,-1.25) -- (4.75,-1.25) -- (4.75,0.5) -- cycle;
  \end{scope}
  
  \begin{scope}[shift={(1.75,-5)}]
    \node[state] (1) {};
    \node[state,accepting] (2) at (1,0) {};
    \node[state] (3) at (.5,{-sin(60)}) {};
  
   \path[->] (1) edge[blue,bend left=15] (2);
   \path[->] (2) edge[blue,bend right=25] (3);
   \path[->] (2) edge[red] (3);
   \path[->] (3) edge[red,bend right=25] (2);
   \path[->] (3) edge[blue,bend left=15] (1);
   \path[->] (1) edge[red,loop above,min distance=6mm,in=110,out=160] (1);
  \end{scope}
  
  \begin{scope}[shift={(3.5,-5)}]
    \node[state] (1) {};
    \node[state] (2) at (1,0) {};
    \node[state,accepting] (3) at (.5,{-sin(60)}) {};
  
   \path[->] (1) edge[blue,bend left=15] (2);
   \path[->] (2) edge[blue,bend right=25] (3);
   \path[->] (2) edge[red] (3);
   \path[->] (3) edge[red,bend right=25] (2);
   \path[->] (3) edge[blue,bend left=15] (1);
   \path[->] (1) edge[red,loop above,min distance=6mm,in=110,out=160] (1);
  \end{scope}
  
  \begin{scope}[shift={(5.5,-5)}]
    \node[state,accepting] (1) {};
    \node[state] (2) at (1,0) {};
    \node[state] (3) at (.5,{-sin(60)}) {};
  
   \path[->] (1) edge[red,bend left=15] (2);
   \path[->] (2) edge[red,bend right=25] (3);
   \path[->] (2) edge[blue] (3);
   \path[->] (3) edge[blue,bend right=25] (2);
   \path[->] (3) edge[red,bend left=15] (1);
   \path[->] (1) edge[blue,loop above,min distance=6mm,in=110,out=160] (1);

   \draw [ultra thick, rounded corners, draw=black, opacity=0.1]
       (-0.5,0.5) -- (-0.5,-1.25) -- (4.75,-1.25) -- (4.75,0.5) -- cycle;
  \end{scope}
  
  \begin{scope}[shift={(7.25,-5)}]
    \node[state] (1) {};
    \node[state,accepting] (2) at (1,0) {};
    \node[state] (3) at (.5,{-sin(60)}) {};
  
   \path[->] (1) edge[red,bend left=15] (2);
   \path[->] (2) edge[red,bend right=25] (3);
   \path[->] (2) edge[blue] (3);
   \path[->] (3) edge[blue,bend right=25] (2);
   \path[->] (3) edge[red,bend left=15] (1);
   \path[->] (1) edge[blue,loop above,min distance=6mm,in=110,out=160] (1);
  \end{scope}
  
  \begin{scope}[shift={(9,-5)}]
    \node[state] (1) {};
    \node[state] (2) at (1,0) {};
    \node[state,accepting] (3) at (.5,{-sin(60)}) {};
  
   \path[->] (1) edge[red,bend left=15] (2);
   \path[->] (2) edge[red,bend right=25] (3);
   \path[->] (2) edge[blue] (3);
   \path[->] (3) edge[blue,bend right=25] (2);
   \path[->] (3) edge[red,bend left=15] (1);
   \path[->] (1) edge[blue,loop above,min distance=6mm,in=110,out=160] (1);
  \end{scope}
  \vspace{10pt}
\end{tikzpicture}
\caption{The $13$ subgroups of index $3$ in $\Free[2] = \pres{a,b}{-}$ with the corresponding conjugacy classes enclosed in gray. Obviously, normal subgroups correspond to classes with a unique representative.}
\end{figure}

\vspace{10pt}
The characterization of finite index subgroups among the finitely generated ones, which follows from \Cref{prop: fi}, is even simpler and leads to the computability of the Finite Index Problem for free groups.

\begin{thm}\label{thm: FIP}
Let $H$ be a finitely generated subgroup of~$\Free[A]$. Then, the index $\ind{H}{\Free[A]}$ is finite if and only if $\stallings(H)$ is saturated. In particular, the Finite Index Problem is computable for $\Free[A]$.
\end{thm}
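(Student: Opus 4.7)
The plan is to derive this theorem as an essentially immediate consequence of \Cref{prop: fi} combined with the computability of Stallings automata established in \Cref{thm: Stallings bijection2}.

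For the characterization, I would start by recalling \Cref{prop: fi}, which tells us that $\ind{H}{\Free[A]} < \infty$ if and only if $\stallings(H)$ is both saturated and has finitely many vertices. The key observation is then to invoke \Cref{thm: Stallings bijection2}: since $H$ is finitely generated by hypothesis, $\stallings(H)$ is automatically finite, so the cardinality condition in \Cref{prop: fi} is redundant, and the characterization collapses to the single condition that $\stallings(H)$ be saturated. This is the ``if and only if'' part of the statement.

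For the algorithmic part, suppose we are given a finite set $S \subseteq \Free[A]$ with $H = \gen{S}$ (and recall that $A$ is assumed finite in the context of decision problems, so $\pres{A}{-}$ is a finite presentation of $\Free[A]$). First, I would compute $\stallings(H,A)$ by constructing the flower automaton $\flower(S)$ and applying a reduction process until no more elementary foldings are available, as guaranteed by \Cref{prop: reduction is computable} (and made explicit in \Cref{fig: eq: folding sequence}). Since $\stallings(H,A)$ is a finite $A$-digraph and $A$ is finite, we can check in finite time whether, for every vertex $\verti \in \Verts(\stallings(H,A))$ and every letter $a \in A$, there is an outgoing $a$-arc at $\verti$ (equivalently, whether $\dfc_a(\stallings(H,A)) = 0$ for every $a \in A$). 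By the characterization just obtained, this deterministic check decides whether $\ind{H}{\Free[A]}$ is finite.

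There is essentially no obstacle here: both directions of the equivalence are direct corollaries of material already proved, and the algorithm is a transparent saturation test on a computable finite object. If one wanted to go further and solve the full $\FIP(\Free[A])$ in the strong sense stated earlier (including output of coset representatives), one could additionally use that when $\stallings(H,A)$ is saturated it coincides with $\schreier(H,A)$ by \Cref{lem: St vs Sch}\ref{item: St sat iff Sch cor}, so that choosing any spanning tree $T$ and reading off $\set{(\bp \xrwalk{\scriptscriptstyle T} \verti)\rlab \st \verti \in \Verts(\stallings(H,A))}$ produces a computable transversal of the right cosets $H \backslash \Free[A]$; this also recovers $\ind{H}{\Free[A]} = \card\Verts(\stallings(H,A))$ from \Cref{prop: fi}.
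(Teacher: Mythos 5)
Your proposal is correct and follows essentially the same route as the paper: reduce the characterization to \Cref{prop: fi} using the finiteness of $\stallings(H)$ for finitely generated $H$, decide by a saturation check on the computable finite automaton, and obtain coset representatives from spanning-tree geodesics using $\stallings(H)=\schreier(H)$ in the saturated case. Nothing is missing.
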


\begin{proof}
For the first claim, it is enough to realize that if $H$ is finitely generated, then $\stallings(H)$ is finite and computable, and to apply \Cref{prop: fi} taking into account that $\stallings(H)=\schreier(H)$ when saturated. The decision part of $\FIP(\Free[A])$ follows immediately (since, of course, the saturation condition is algorithmically checkable in a finite automaton).

Let us finally assume that $\stallings(H)$ is finite and saturated.
In order to compute a (finite) family  $R$ of right coset representatives  modulo $H$, recall that, since $\Ati =\stallings(H)=\schreier(H)$, it is enough to consider, for every vertex $\verti$ in $\Ati$, the label of some reduced walk in $\bp \xrwalk{\ } \verti$ in $\Ati$. A systematic way to obtain such a set is to compute a spanning tree $T$ for $\Ati$, and to take $R$ to be the labels of the $T$-geodesics from $\bp$ to every vertex in $\Ati$; \ie take $R=\set{ (\walki)\lab \st \walki \equiv \bp \xrwalk{_T} \verti , \verti \in \Verts(\Ati) }$.
\end{proof}
Hence,
\begin{equation}
 H\backslash \Free[A] \,=\, \bigsqcup_{\mathclap{\verti \in \Verts\stallings(H)}} \ H(T[\bp, p])\lab.
 \end{equation}
(Of course, a left transversal for $H$ is obtained inverting the elements in $R$, since $(Hw)^{-1} = w^{-1}H^{-1} = w^{-1}H$.)

\begin{exm}
Let $\Free[2]$ be the free group over $A=\{a,b\}$, and consider the subgroup $H=\langle v_1, v_2, v_3, v_4 \rangle\leqslant \Free[2]$, generated by the elements $v_1=a$, $v_2=b^2$, $v_3=ba^2b^{-1}$, and $v_4=baba^{-1}b^{-1}$. To determine whether $H$ has finite index in $\Free[2]$, we compute $\stallings(H)$ from the flower automaton $\flower({v_1, v_2, v_3, v_4})$, as explained in \Cref{thm: Stallings bijection}. The result is depicted below:
\vspace{-5pt}
\begin{figure}[H]
\centering
  \begin{tikzpicture}[shorten >=1pt, node distance=1.2 and 1.2, on grid,auto,>=stealth']
   \node[state,accepting] (0) {};
   \node[state] (1) [right = of 0]{};
   \node[state] (2) [right = of 1]{};

   \path[->]
        (0) edge[loop left,blue,min distance=10mm,in=145,out=215]
            node[left] {\scriptsize{$a$}}
            (0);
            
    \path[->]
        (0) edge[bend left,red,thick]
            node[above]{$b$} (1);
            
    \path[->]
        (1) edge[bend left,red]
            (0);
            
    \path[->]
        (1) edge[bend left,blue,thick] (2);
            
    \path[->]
        (2) edge[bend left,blue]
            (1);
            
    \path[->]
        (2) edge[loop left,red,min distance=10mm,in=35,out=325]
            (2);

\end{tikzpicture}
\end{figure}
\vspace{-5pt}

\noindent Since the three vertices of $\stallings(H)$ have both an incoming and an outgoing $a$-arc, as well as an incoming and an outgoing $b$-arc, the automaton $\stallings(H)$ is saturated, and therefore, $H$ has finite index in~$\Free[2]$; more concretely, it has index $\card \Verts\stallings(H)=3$. Choosing the spanning tree given by the two arcs drawn with bold lines, we obtain ${1, b, ba}$ as a set of representatives for the right cosets modulo $H$, meaning that $\Free[2]=H\sqcup Hb\sqcup H(ba)$.
\end{exm} 

The well-known result below, which relates the index of a subgroup of finite index to its rank, also transparently follows from Stallings' graphical description of subgroups.

\begin{thm}[Schreier index formula]\label{thm: Schreier index formula}
Let $\Free[\kappa]$ be a free group of rank $\kappa \in [1,\infty]$ and let  $H\leqslant \Free[\kappa]$ be a subgroup of finite index. Then,
 \begin{equation}\label{eq: Schreier index formula}
\rk(H)-1\,=\,\ind{H}{\Free[\kappa]}(\kappa-1) \,.
 \end{equation}
In particular, a finite index subgroup $H\leqslant \Free[\kappa]$ is finitely generated if and only if the ambient group $\Free[\kappa]$ is finitely generated.
\end{thm}


\begin{proof}
By \Cref{prop: fi}, if $H$ has finite index then $\stallings(H)$ is saturated (and hence $2 \kappa$-regular) and $\card \Verts(\stallings(H)) < \infty$. Therefore, $\rk(\stallings(H))<\infty$ (\ie there is a finite number of edges outside of any spanning tree) if and only if $\kappa<\infty$. In particular, if $\kappa =\infty$ then $\rk(H)=\infty$ and \eqref{eq: Schreier index formula} holds.

Let's now assume $\kappa <\infty$. By \Cref{cor: Schreier lemma} we know that $H$ is finitely generated; hence, $\stallings(H) =\schreier(H)$ is finite, saturated and, in particular, $2\kappa$-regular. Moreover, by \Cref{prop: B_T comp}, if $T$ is a spanning tree of $\stallings(H)$ then
 \begin{align*}
\rk (H)-1 &\,=\,\card(\Edgs^+\stallings(H)\setminus \Edgs T)-1 \,=\, \card \Edgs^+\stallings(H)-\card \Edgs T-1 \\ & \,=\, \card\Edgs^+\stallings(H)-\card \Verts T \,=\, \kappa \card\Verts\stallings(H)-\card\Verts\stallings(H) \\ & \,=\, \ind{H}{\Free[\kappa]}(\kappa-1),
 \end{align*}
where the penultimate equality comes from $2\kappa \,\card \Verts \stallings(H)=2\, \card \Edgs^+\stallings(H)$, obtained by summing the degrees of all the vertices.
\end{proof}

\begin{cor} \label{cor: Schreier lemma}
A finite index subgroup of a finitely generated group $G$ is always finitely generated. \qed
\end{cor}

\begin{proof}
Write $G$ as a quotient of a finitely generated free group, $\rho \colon \Fn \onto G$. Then, for any finite index subgroup $H\leqfi G$, the full preimage $H\rho\preim$ is a finite index subgroup of $\Fn$. By \Cref{thm: Schreier index formula}, it is finitely generated and so is its image~$H$. 
\end{proof}

Combining the graphical characterizations of finite index (\Cref{prop: fi}) and normality (\Cref{prop: normal iff}), we arrive at a kind of reciprocal of Schreier's Lemma for non-trivial normal subgroups of a finitely generated free group.

\begin{cor} \label{cor: fg normal iff}
A non-trivial normal subgroup $H$ of $\Fn$ is finitely generated if and only if it has finite index. \qed
\end{cor}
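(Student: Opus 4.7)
My plan is to combine the two graphical characterizations already established: \Cref{prop: normal iff} for normality and \Cref{prop: fi} (together with \Cref{thm: FIP}) for finite index. One direction is already handled by the Schreier index formula (or directly by \Cref{cor: Schreier lemma}), so the real content is the forward implication.

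First, I would handle the easy direction. If $H \leqslant \Fn$ has finite index (and is non-trivial), then by \Cref{cor: Schreier lemma}, $H$ is finitely generated. No normality is needed here.

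For the main direction, assume $H$ is a non-trivial, normal, finitely generated subgroup of $\Fn$. I would argue as follows. Since $H$ is finitely generated, \Cref{thm: Stallings bijection2} gives that $\stallings(H,A)$ is a \emph{finite} reduced $A$-automaton (where $A$ is the fixed basis of $\Fn$). Since $H$ is non-trivial and normal in $\Fn$, \Cref{prop: normal iff} tells us that $\stallings(H,A)$ is vertex-transitive and \emph{saturated}. Now invoking \Cref{prop: fi} (or \Cref{thm: FIP}), saturation of $\stallings(H,A)$ together with finiteness of its vertex set is precisely the criterion for $H$ to be of finite index in $\Fn$; more precisely, $[\Fn:H] = \card \Verts \stallings(H,A) < \infty$.

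There is essentially no obstacle: both implications follow by concatenating the graphical characterizations. The only subtlety worth noting is why we need $H$ non-trivial in the forward direction; this is exactly the hypothesis used in \Cref{prop: normal iff} to guarantee that vertex-transitivity of $\schreier(H)$ forces coreness (and hence saturation of $\stallings(H,A)$). If $H$ were trivial, $\stallings(H,A)$ would consist of a single vertex with no arcs, which is vacuously vertex-transitive but not saturated (for $n \geq 1$), illustrating why the assumption $H \neq \Trivial$ is essential.
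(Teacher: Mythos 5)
Your proof is correct and follows exactly the route the paper intends: the easy direction via \Cref{cor: Schreier lemma}, and the forward direction by combining the finiteness of $\stallings(H,A)$ (from finite generation) with saturation (from \Cref{prop: normal iff}) and then applying \Cref{prop: fi}. Your remark on why non-triviality is needed is also accurate and matches the role that hypothesis plays in \Cref{prop: normal iff}.
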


\begin{cor}
A non-trivial normal subgroup $H$ of $\Free[\aleph_0]$ always has infinite rank. \qed
\end{cor}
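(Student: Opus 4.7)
The plan is to argue by contradiction, leveraging the graphical characterization of normal subgroups (\Cref{prop: normal iff}) together with the fact that Stallings automata of finitely generated subgroups are finite (\Cref{thm: Stallings bijection2}). Suppose, to the contrary, that there exists a non-trivial normal subgroup $H \normaleq \Free[\aleph_0]$ of finite rank. Fix a basis $A$ with $\card A = \aleph_0$, so that $\Free[\aleph_0] \isom \Free[A]$, and consider the associated Stallings automaton $\stallings(H,A)$.

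Since $H$ has finite rank, it is in particular finitely generated, so $\stallings(H,A)$ is a \emph{finite} involutive $A$-automaton (\ie $\card(\Verts\stallings(H,A) \sqcup \Edgs\stallings(H,A)) < \infty$). On the other hand, since $H$ is non-trivial and normal in $\Free[A]$, \Cref{prop: normal iff} tells us that $\stallings(H,A)$ must be saturated; that is, for every vertex $\verti \in \Verts\stallings(H,A)$ and every letter $a \in A$, there exists at least one $a$-arc leaving $\verti$.

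Now comes the contradiction: saturation forces $\deg^{+}(\verti) \geq \card A = \aleph_0$ at every vertex $\verti$, so the total number of arcs in $\stallings(H,A)$ is at least $\aleph_0 \cdot \card\Verts\stallings(H,A) \geq \aleph_0$, which is incompatible with $\stallings(H,A)$ being finite. (Note that $\Verts\stallings(H,A)$ is non-empty since $H$ is non-trivial and hence $\stallings(H,A)$ has non-trivial core.) This contradiction shows that $H$ cannot have finite rank, completing the proof.

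The argument is essentially the same as in \Cref{cor: fg normal iff}, but exploiting a different conclusion from the same graphical dichotomy: whereas in the finite rank ambient one concludes finite index from saturation plus finiteness, here saturation plus an infinite ambient basis is already incompatible with finiteness. No step is genuinely obstacle-like; the only subtlety is simply recalling that in the infinite-rank setting, ``saturated and finite'' is a vacuous combination for automata, immediately ruling out finite generation of any non-trivial normal subgroup.
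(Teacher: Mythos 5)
Your proof is correct and follows essentially the route the paper intends for this (unproved) corollary: combine the saturation forced by \Cref{prop: normal iff} on a non-trivial normal subgroup with the finiteness of $\stallings(H,A)$ for finitely generated $H$ (\Cref{thm: Stallings bijection2}), and observe that over an infinite basis these are incompatible. The only cosmetic remark is that saturation is defined over $A^{\pm}$, but this changes nothing since $\card A^{\pm}=\aleph_0$ as well, and the non-emptiness of the vertex set already holds by definition of digraph.
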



To finish this section we present a series of results proved my M.~Hall in the mid-20th century which follow smoothly from the graphical interpretation given by Stallings' automata. In this case the key geometric property is \Cref{lem: = defc}.

\begin{lem}[\citenr{hall_jr_coset_1949}] \label{lem: F rf}
For any nontrivial element $w \in \Free[A]$, there exists a subgroup $H\leqslant \Free[A]$ of (finite) index $|w|+1$ not containing $w$.
\end{lem}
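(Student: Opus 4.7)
Write $w = a_1 a_2 \cdots a_n$ in reduced form, with $n = |w|$ and $a_i \in A^\pm$. The plan is to exhibit a pointed, deterministic, saturated, connected, involutive $A$-automaton $\Ati$ on exactly $n+1$ vertices in which $w$ fails to label a closed basepoint walk; the subgroup $H = \gen{\Ati}_\bp$ will then be the one we want.

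First I would build the \emph{linear} involutive $A$-automaton $\Ati_{\!0}$ on vertices $\verti_0, \verti_1, \ldots, \verti_n$ with positive arcs $\verti_{i-1}\xarc{a_i}\verti_i$ (together with their formal inverses) and basepoint $\bp = \verti_0$. At each intermediate vertex $\verti_i$ the two outgoing labels are $a_{i+1}$ and $a_i^{-1}$, which are distinct precisely because $w$ is reduced; hence $\Ati_{\!0}$ is deterministic (and visibly finite and connected).

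The next step is to saturate $\Ati_{\!0}$ into the desired $\Ati$ \emph{without adding vertices}. For each letter $a \in A$ let $U_a$ be the set of vertices of $\Ati_{\!0}$ with no outgoing $a$-arc, and $V_a$ the set with no incoming $a$-arc. By the involutivity, $|V_a| = \dfc_{a^{-1}}(\Ati_{\!0})$, and by \Cref{lem: = defc} (applicable because $\Ati_{\!0}$ is finite and deterministic) we have $\dfc_{a^{-1}}(\Ati_{\!0}) = \dfc_a(\Ati_{\!0}) = |U_a|$. So one can fix any bijection $\sigma_a \colon U_a \to V_a$ and add the positive arcs $\verti \xarc{a} \sigma_a(\verti)$ (and their inverses) for each $\verti \in U_a$. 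Because every new outgoing arc at a vertex bears a label not previously used outgoing at that vertex (and symmetrically for incoming), determinism is preserved; the resulting $\Ati$ is deterministic, saturated, and still connected.

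Two remaining claims finish the proof. \emph{(i) $w \notin H$}: the original arcs $\verti_{i-1}\xarc{a_i}\verti_i$ survive in $\Ati$, so by determinism the unique walk from $\bp$ spelling $w$ is $\verti_0\verti_1\cdots \verti_n$, which ends at $\verti_n \neq \bp$. \emph{(ii) $[\Free[A]:H] = n+1$}: since $\Ati$ is connected, deterministic, and saturated, the map sending a vertex $\verti$ to the coset $H\cdot(\bp \xrwalk{\ }\verti)\rlab$ is well-defined (different walks differ by an element of $H$) and is an isomorphism onto $\schreier(H)$; hence $[\Free[A]:H] = \card \Verts \schreier(H) = \card \Verts \Ati = n+1$. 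The main delicate point is the saturation step: one must invoke \Cref{lem: = defc} to guarantee that the missing outgoing and incoming $a$-arcs can be matched into a bijection, so that a single round of additions simultaneously fixes outgoing $a$-deficiency at $U_a$ and incoming $a$-deficiency at $V_a$ without breaking determinism.
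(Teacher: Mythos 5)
Your proposal is correct and follows essentially the same route as the paper: build the reduced path automaton reading $w$ on $|w|+1$ vertices, invoke \Cref{lem: = defc} to pair the $a$-deficient with the $a^{-1}$-deficient vertices and add the missing arcs, and read off the subgroup from the resulting saturated deterministic automaton. Your write-up just makes explicit the details the paper leaves implicit (preservation of determinism, $w\notin H$ via uniqueness of the walk spelling $w$, and the index count via \Cref{prop: fi}).
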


\begin{proof}
It is enough to consider the reduced involutive  $A$-path $\Ati$ reading~$w$. Since~$\Ati$ is deterministic and finite, by \Cref{lem: = defc}, for each $a\in A$, we can pair up the $a$-deficient vertices with the $a^{-1}$-deficient vertices in $\Ati$ and join each pair with a new arc labeled by $a$. In this way, we obtain a saturated Stallings' automaton recognizing the claimed subgroup $H\leq \Free[A]$.
\end{proof}

\begin{figure}[H] 
  \centering
  \begin{tikzpicture}[shorten >=1pt, node distance=1.2 and 1.2, on grid,auto,>=stealth']
   \node[state, accepting] (0) {};
   \node[state] (1) [right = of 0]{};
   \node[state] (2) [right = of 1]{};
   \node[state] (3) [right = of 2]{};
   \node[state] (4) [right = of 3]{};
   \node[] (5) [right = 0.8 of 4]{};

   \path[->]
        (0) edge[loop,red,dashed,min distance=10mm,in=55,out=125]
            node[left = 0.1] {\scriptsize{$b$}}
            (0)
            edge[blue,dashed, bend right]
            (2);

    \path[->]
        (1) edge[blue]
        node[above] {\scriptsize{$a$}}
            (0)
            edge[red,dashed, bend left]
            (2);

    \path[->]
        (2) edge[red]
            (1)
            edge[blue]
            (3);

    \path[->]
        (3) edge[red]
            (4)
            edge[blue,dashed, bend right]
            (4);
            
    \path[->]
        (4) edge[blue,dashed, bend right = 40]
            (1)
            edge[red,dashed, bend right]
            (3);
\end{tikzpicture}
\caption{An example of an index 5 subgroup of $\Free_{\set{a,b}}$ not containing the commutator $a^{-1} b^{-1} a b$: $H=\langle b,\, aba,\, a^{-1}b^2a,\, a^{-1}b^{-1}a^2b^{-1}a^{-1}ba,$ $a^{-1}b^{-1}ab^2a^{-1}ba,\, a^{-1}b^{-1}aba^2\rangle$}
\label{fig: fi not containing w}
\end{figure}

Recall that a group $G$ is said to be \defin[residually finite group]{residually finite} if for every element $g \in G \setminus \set{\trivial}$, there exists a normal subgroup of $G$ of finite index not containing $g$.
Since finite index subgroups in finitely generated groups have finitely many conjugates, the intersection $c(H)=\bigcap_{x\in \Fn} H^x$ is finite and provides a finite index normal subgroup of $\Fn$ not containing $w$ as well. The result below follows immediately.

\begin{cor}
Free groups are residually finite. \qed
\end{cor}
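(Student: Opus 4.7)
The approach is to combine \Cref{lem: F rf} with a standard normal-core construction, after first reducing to the finitely generated case. Let $g \in \Free[A] \setmin \set{1}$. Its reduced form involves only finitely many letters from $A$, so $g$ lies in the free factor $\Free[B] \leqff \Free[A]$ generated by the finite set $B \subseteq A$ of letters appearing in $g$. By the universal property of the free group (\Cref{thm: free cat}), the assignment sending each letter in $B$ to itself and each letter in $A \setmin B$ to $1$ extends uniquely to a retraction $\rho \colon \Free[A] \onto \Free[B]$. Consequently, if $N \normaleq \Free[B]$ is a finite-index normal subgroup with $g \notin N$, then $\rho^{-1}(N) \normaleq \Free[A]$ is a finite-index normal subgroup of $\Free[A]$ not containing $g$. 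Hence it suffices to handle the finitely generated case.

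Now assume $g \in \Fn \setmin \set{1}$. By \Cref{lem: F rf}, there exists a subgroup $H \leqslant \Fn$ of finite index (in fact, $\card{g}+1$) such that $g \notin H$. To upgrade $H$ to a normal subgroup still avoiding $g$, pass to its normal core $c(H) = \bigcap_{x \in \Fn} H^x$. The conjugation action of $\Fn$ on $H$ factors through the (finite) permutation action of $\Fn$ on the coset space $H \backslash \Fn$, so the number of distinct conjugates $H^x$ is finite. Thus $c(H)$ is a finite intersection of finite-index subgroups of $\Fn$, and hence itself has finite index. By construction $c(H) \normaleq \Fn$, and the inclusion $c(H) \subseteq H$ combined with $g \notin H$ yields $g \notin c(H)$, producing the required witness.

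There is essentially no obstacle in this argument: all the substantive geometric content is already packaged into \Cref{lem: F rf}, and the rest is a formal combination of the universal property of free groups with the normal-core construction. The only point worth verifying explicitly is that the retraction $\rho$ is well defined when $A$ is infinite, which is immediate from \Cref{thm: free cat}, and that $\rho|_{\Free[B]} = \mathrm{id}$ so non-containment of $g$ is preserved under the preimage.
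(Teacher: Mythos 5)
Your proof is correct and follows essentially the same route as the paper: Hall's \Cref{lem: F rf} produces a finite-index subgroup $H$ avoiding the given nontrivial element, and one then passes to the normal core $c(H)=\bigcap_{x} H^{x}$, which is normal, avoids the element, and has finite index because $H$ has only finitely many conjugates. Your preliminary retraction onto the finitely generated free factor is harmless but not actually needed, since \Cref{lem: F rf} is stated and proved for $\Free[A]$ of arbitrary rank, and a finite-index subgroup has finitely many conjugates in \emph{any} group (its normalizer contains it, hence has finite index), so the normal-core step already works directly in $\Free[A]$.
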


The parallel result below is derived using the same idea (of saturating unsaturated automata). 

\begin{thm}[\citenr{hall_jr_topology_1950}] \label{thm: Hall vff}
If $H$ is a finitely generated subgroup of $\Free[A]$, then $H$ is a free factor of a finite index subgroup of $\Free[A]$.
\end{thm}

\begin{proof}
Consider the (finite) Stallings' automaton $\Ati =\stallings(H)$. If $\Ati$ is already saturated we take $K=H$. Otherwise, we can use the same approach as in the proof above to saturate $\Ati$ into the Stallings' automaton of the claimed subgroup $K$. It is clear by construction that $\stallings(H)$ is a subautomaton of (the saturated and finite automaton) $\stallings(K)$; hence, $H \leqslant\ff K\leqslant\fin \Free[A]$, as claimed.
\end{proof}

Finally, combining the proofs of \Cref{thm: Hall vff} and \Cref{lem: F rf}, we see that free groups are \defin[fully LERF group]{fully LERF}, \ie we can choose the subgroup $K$ in \Cref{thm: Hall vff} further avoiding any given finite subset~$S \subseteq \Free[A] \setmin H$. 

\begin{thm}[\citenr{hall_jr_topology_1950}]
Finitely generated free groups are fully LERF. In particular, every finitely generated subgroup of $\Free[A]$ is closed in the profinite topology.\qed
\end{thm}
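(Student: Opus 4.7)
The plan is to prove the fully LERF property directly; the closedness in the profinite topology is then immediate, since given $g\in \Free[A]\setmin H$, applying fully LERF with $S=\set{g}$ produces a finite-index $K\geqslant H$ missing $g$, so $gK$ is a basic open neighborhood of $g$ disjoint from $H$, and $\Free[A]\setmin H$ is open.

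Fix a finitely generated $H\leqslant \Free[A]$ and a finite set of freely reduced words $S\subseteq \Free[A]\setmin H$, and write $\Ati_{\!0}=\stallings(H,A)$ --- a finite reduced $A$-automaton with $\gen{\Ati_{\!0}}=H$. \emph{Step 1 (attaching witness tails).} Process the elements of $S$ one by one: for the current $w=a_1\cdots a_k$, let $a_1\cdots a_j$ be the longest prefix spellable from $\bp$ in the current automaton, ending at some vertex $p$; if $j<k$, glue a fresh involutive tail $p\xarc{a_{j+1}}q_{j+1}\xarc{a_{j+2}}\cdots\xarc{a_k}q_k$ using fresh vertices $q_{j+1},\ldots,q_k$. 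Determinism is preserved because $p$ had no outgoing $a_{j+1}$-arc (maximality of $j$), the $q_i$'s are fresh, and consecutive labels differ (as $w$ is reduced). After exhausting $S$, we obtain a finite, deterministic, involutive $A$-automaton $\Ati_{\!1}\supseteq \Ati_{\!0}$ in which, for every $w\in S$, the (unique, by determinism) walk from $\bp$ labeled $w$ exists and terminates at a vertex $v_w\neq \bp$. The key invariant sustaining $v_w\neq \bp$ is that $\gen{\Ati_{\!i}}=H$ at every intermediate stage, because any closed $\bp$-walk visiting a freshly attached tail must enter and exit at the unique entry $p$, reading a word of the form $uu^{-1}$ that cancels after reduction.

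\emph{Step 2 (saturation, as in \Cref{thm: Hall vff}).} Since $\Ati_{\!1}$ is finite and deterministic, \Cref{lem: = defc} gives $\dfc_a(\Ati_{\!1})=\dfc_{a^{-1}}(\Ati_{\!1})$ for every $a\in A$; pair up the $a$-deficient vertices with the $a^{-1}$-deficient ones and insert one new involutive arc per pair to obtain a finite, saturated, deterministic, involutive $A$-automaton $\Ati_{\!2}\supseteq \Ati_{\!1}$. Setting $K=\gen{\Ati_{\!2}}$, \Cref{prop: fi} yields $K\leqfin \Free[A]$. From the inclusion $\Ati_{\!0}\subseteq \Ati_{\!2}$ of pointed $A$-automata we get $H=\gen{\Ati_{\!0}}\leqslant \gen{\Ati_{\!2}}=K$; and for each $w\in S$, determinism of $\Ati_{\!2}$ forces the unique walk from $\bp$ labeled $w$ to coincide with the one already present in $\Ati_{\!1}$, hence to terminate at $v_w\neq \bp$, so $w\notin K$.

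The main obstacle driving the whole argument is that a naive saturation of $\stallings(H,A)$ can easily manufacture new closed $\bp$-walks spelling the very elements of $S$ we wish to exclude. Step 1 circumvents this by \emph{pre-committing} the endpoint of each $w$-walk at $\bp$ to a vertex $v_w\neq \bp$ via fresh deterministic arcs; once that is in place, determinism of $\Ati_{\!2}$ makes those walks unique and immune to the later additions in Step 2.
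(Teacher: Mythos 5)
Your proof is correct and follows essentially the route the paper intends (its proof is only sketched as "combine \Cref{thm: Hall vff} with \Cref{lem: F rf}"): attach deterministic tails to $\stallings(H,A)$ so that each $w\in S$ is readable from $\bp$ and ends away from $\bp$ (without changing the recognized subgroup, since tails are hanging trees), then use \Cref{lem: = defc} to pair deficient vertices and saturate, so that \Cref{prop: fi} gives a finite-index $K$ containing $H$ which, by determinism, still reads each $w\in S$ to a non-basepoint vertex and hence excludes it. The only point to add is that the paper's formulation of fully LERF asks for $K$ as in \Cref{thm: Hall vff}, i.e.\ with $H\leqslant\ff K$, which your construction yields for free because $\stallings(H,A)$ is a subautomaton of $\stallings(K,A)$ (\Cref{cor: incl => ff} together with \Cref{lem: deterministic gamma}) --- you should state this explicitly.
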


\begin{figure}[H] 
  \centering
  \begin{tikzpicture}[shorten >=1pt, node distance=1.2 and 1.2, on grid,auto,>=stealth']
   \node[state, accepting] (0) {};
   \node[state] (1) [right = of 0]{};
   \node[state] (2) [right = of 1]{};
   \node[state] (3) [right = of 2]{};
   \node[state] (4) [right = of 3]{};
   \node[state] (5) [below = 1 of 2]{};

   \path[->]
        (0) edge[red,dashed, bend left = 50] (2)
            edge[blue, bend right = 25] (5);

    \path[->]
        (1) edge[red, bend left= 25]
        node[below] {\scriptsize{$b$}}
            (0)
            edge[blue, bend right= 25]
         node[above] {\scriptsize{$a$}}
            (0);

    \path[->]
        (2) edge[blue]
            (1)
            edge[red]
            (3);

    \path[->]
        (3) edge[blue] (4)
            edge[red, bend left] (5);
            
    \path[->]
        (4) edge[loop,red,dashed,min distance=10mm,in=35,out=-35](4)
        (4) edge[blue,dashed, bend right = 50]
            (2);
            
    \path[->]
        (5) edge[red]
            (1)
            edge[blue,dashed, bend left]
            (3);
\end{tikzpicture}
\caption{The Stallings' automaton of a finite index subgroup of $\Free_2$ containing $H = \gen{a b^2,a^{-2} b^{4}, a^{-1} b}$ (in solid lines) as free factor, and not containing the element~$a^{-2} ba$.}
\label{fig: fi containing H not containing w}
\end{figure}

\section{Intersections} \label{sec: intersections}

In this section we will use Stallings' automata to understand and effectively compute intersections of subgroups in the free group. Let us begin with a classical property and an algorithmic problem known to, respectively, hold and be solvable for free groups since the mid-20th century.

\begin{defn}
A group $G$ is said to satisfy the \defin{Howson property} (to be \emph{Howson}, for short) if the intersection of any two (and so,
 finitely many) finitely generated subgroups of $G$ is again finitely generated.
\end{defn}

\begin{named}[Subgroup Intersection Problem for $G=\pres{A}{R}$, $\SIP(G)$]
Given two finite subsets $S,R \subseteq (A^{\pm})^*$, decide whether the intersection $\gen{S}_G \cap \gen{R}_G$ is finitely generated and, in the affirmative case, compute a set of generators for it.  
\end{named}

\medskip

This property was named after A.G.~Howson, who first proved it to hold for free groups back in the 1950's. Below, we give a transparent argument for this result, using Stallings' automata. On the way, we will also obtain an efficient algorithm to solve the subgroup intersection problem for free groups, and will establish a general upper bound for the rank of the intersection, known as the \defin{Hanna Neumann inequality} (see \Cref{thm: Howson}). Of course, in the case of Howson groups, the decision part of the subgroup intersection problem is irrelevant and it reduces to the computability of a set of generators for the intersection. 

We recall the reader that, not far from free groups, one can easily find non-Howson groups; the simplest example being $\Free[2]\times \mathbb{Z}$ (see~\cite{moldavanskii_intersection_1968} and \cite{Delgado_stallings_FTA_2022}).


The key link between Stallings' automata and intersections is the notion of (categorical) product or pullback of automata, defined below.

\begin{defn} \label{def: product of automata}
The \defin[pullback of automata]{pullback} of two $A$-automata $\Ati_{\hspace{-1pt}1}$ and $\Ati_{\hspace{-1pt}2}$, denoted by ${\Ati_{\!1} \times \Ati_{\!2}}$, is the $A$-automaton with set of vertices $\Verts \Ati_{\hspace{-1pt}1} \times \Verts \Ati_{\hspace{-1pt}2}$, an arc $(\verti_1,\verti_2) \xarc{a\,} (\vertii_1,\vertii_2)$ for each pair of arcs $\verti_1 \xarc{a\,} \vertii_1$ in~$\Ati_{\hspace{-1pt}1}$, and $\verti_2 \xarc{a\,} \vertii_2$ in~$\Ati_{\hspace{-1pt}2}$ (with the same label $a\in A$), and basepoint $\bp=(\bp_1, \bp_2)$; see~\Cref{fig doubly enriched arc}.
\end{defn}

\begin{figure}[h]
\centering
\begin{tikzpicture}[shorten >=1pt, node distance=1.25cm and 1.25cm, on grid,auto,>=stealth']
\newcommand{\dx}{1}
\newcommand{\dy}{1}
   \node (00) {};
   \node[state,accepting] (0) [below = \dy*0.5 of 00]{};
   \node[] (bp1) [below right =0.1 and 0.15 of 0] {$\scriptscriptstyle{1}$};
   \node[state,accepting] (0') [right = \dx*0.5 of 00]{};
   \node[] (bp1) [below right =0.1 and 0.15 of 0'] {$\scriptscriptstyle{2}$};
   \node[state,accepting,blue] (00') [right = \dx*0.5 of 0]{};
   \node[state] (1) [below = \dy*0.75 of 0] {};
   \node[state] (2) [below = \dy of 1] {};
   \node[] (G') [left = 0.8 of 1] {$\scriptstyle{\Ati_{\hspace{-1pt}1}}$};
   \node[state] (1') [right = \dx*0.75 of 0'] {};
   \node[state] (2') [right = \dx of 1'] {};
   \node[] (G) [right = 0.7 of 2'] {$\scriptstyle{\Ati_{\hspace{-1pt}2}}$};
   \node[state,blue] (11') [below right = \dy*0.75 and \dx*0.75 of 00'] {};
   \node[state,blue] (22') [below right = \dy and \dx of 11'] {};
     \node[blue] (GG') [right = 0.9 of 22'] {$\scriptstyle{\Ati_{\hspace{-1pt}1} \times \Ati_{\hspace{-1pt}2}}$};

   \path[dashed]
       (0') edge[] (1');
   \path[dashed]
       (0) edge[] (1);
   \path[dashed,blue]
       (00') edge[] (11');
   \path[->]
       (1') edge[]
             node[pos=0.5,above=-.1mm] {$a$}
             (2');
   \path[->]
         (1) edge[]
             node[pos=0.5,left=-.1mm] {$a$}
             (2);
   \path[->,blue]
         (11') edge[]
             node[pos=0.5,above right] {$a$}
             (22');
\end{tikzpicture}
\caption{Product (in blue) of two $A$-automata (in black)} \label{fig doubly enriched arc}
\end{figure}

\begin{exm}\label{exe: producte}
Consider $H=\gen{b, a^3, a^{-1}bab^{-1}a}, K=\gen{ab,a^3, a^{-1}ba}\leqslant\fg\Free[\set{\!a,b\!}]$. In \Cref{fig: pullback} we can see $\stallings(H)$ depicted above (in horizontal format), $\stallings(K)$ depicted on the left (in vertical format), and its pullback $\stallings(H)\times \stallings(K)$ computed in the central part (and reorganized on the right). 
\vspace{5pt}
\begin{figure}[H]
\centering
\begin{tikzpicture}[shorten >=1pt, node distance=1.2cm and 2cm, on grid,auto,auto,>=stealth']
\newcommand{\dx}{1.3}
\newcommand{\dy}{1.2}
\node[] (0)  {};
\node[state,accepting] (a1) [right = \dy-1/3 of 0] {};
\node[state] (a2) [right = \dx of a1] {};
\node[state] (a3) [right = \dx of a2] {};
\node[state] (a4) [right = \dx of a3] {};

\node[state,accepting] (b1) [below = 2*\dy/3 of 0] {};
\node[state] (b2) [below = \dy of b1] {};
\node[state] (b3) [below = \dy of b2] {};

\foreach \y in {1,...,4}
\foreach \x in {1,...,3} 
\node[state] (\x\y) [below right = (\x-1/3)*\dy and (\y-1/3)*\dx of 0] {};
    
\node[state,accepting] () [below right = 2*\dy/3 and 2*\dx/3 of 0] {};

\path[->]
     (a1) edge[red,loop above,min distance=7mm,in=205,out=155]
     node[above = 0.1] {$b$}
     (a1)
     (a1) edge[blue] node[below] {$a$} (a2)
     (a2) edge[blue] (a3)
     (a3) edge[blue,bend right=30] (a1)
     (a3) edge[red] (a4)
     (a4) edge[blue,loop above,min distance=7mm,in=25,out=-25] (a4)

     (b1) edge[blue,bend right=25] (b2)
     (b2) edge[red,bend right=25] (b1)
     (b2) edge[blue] (b3)
     (b3) edge[blue,bend left=40] (b1) 
     (b3) edge[red,loop left,min distance=7mm,in=295,out=245] (b3);
             
 \path[->]   
     (11) edge[blue] (22)
     (22) edge[blue] (33) 
     (33) edge[blue, bend right=25] (11)
            
     (21) edge[blue] (32)
     (32) edge[blue] (13)
     (13) edge[blue] (21)
        
     (12) edge[blue] (23)
     (23) edge[blue] (31) 
     (31) edge[blue] (12)
            
     (14) edge[blue] (24)
     (24) edge[blue] (34)
     (34) edge[blue, bend right=25] (14)
            
     (21) edge[red] (11)
     (23) edge[red] (14)
     (33) edge[red] (34)            
     (31) edge[red,loop left,min distance=7mm,in=250,out=200] (31);
    
 \node[] (i) [right = 1 of 24] {$=$};  
 \node[state,accepting] (1) [above right = 0.7 and 2.25 of i] {};
 \node[state] (1a) [below left = \dy and \dx/3 of 1] {};
 \node[state] (1b) [below right = \dy and \dx/3 of 1] {};
 \node[state] (2) [right = \dx*0.7 of 1b] {};
 \node[state] (2a) [above left = \dy and \dx/3 of 2] {};
 \node[state] (2b) [above right = \dy and \dx/3 of 2] {};
 \node[state] (3) [right = \dx*0.7 of 2b] {};
 \node[state] (3a) [below left = \dy and \dx/3 of 3] {};
 \node[state] (3b) [below right = \dy and \dx/3 of 3] {};
 \node[state] (0) [left = \dx*0.7 of 1a] {};
 \node[state] (0a) [above left = \dy and \dx/3 of 0] {};
 \node[state] (0b) [above right = \dy and \dx/3 of 0] {};

 \path[->]
     (0b) edge[red,thick] (1)
     (1b) edge[red,thick] (2)
     (3) edge[red,thick] (2b)
    
     (0b) edge[blue,thick] (0)
     (0a) edge[blue,thick] (0b)
     (0) edge[blue] (0a)
    
     (1b) edge[blue,thick] (1)
     (1a) edge[blue,thick] (1b)
     (1) edge[blue] (1a)
    
     (2) edge[blue,thick] (2b)
     (2b) edge[blue,thick] (2a)
     (2a) edge[blue] (2)
    
     (3) edge[blue,thick] (3b)
     (3b) edge[blue,thick] (3a)
     (3a) edge[blue] (3)
     (3b) edge[red,loop,min distance=7mm,in=25,out=-25] (3b);
\end{tikzpicture}
\vspace{-5pt}
\caption{
The pullback of $\stallings(H)$ and $\stallings(K)$}
\vspace{-5pt}
\label{fig: pullback}
\end{figure}
\end{exm}
\vspace{5pt}
The following proposition summarizes the first properties of the pullback of automata, highlighting its close relationship with the intersection of subgroups. 

\begin{prop} \label{prop: prod props}
Let $\Ati_{\hspace{-1pt}1} $ and $\Ati_{\hspace{-1pt}2}$ be two $A$-automata and let $\Ati = \Ati_{\hspace{-1pt}1} \times \Ati_{\hspace{-1pt}2}$. Then,  
 \begin{enumerate}[ind]
\item\label{item: deg prod} 
for every vertex $(\verti,\vertii)$ in $ 
\Ati
$, $0\leqslant \deg_{\Ati}(\verti,\vertii)
\leqslant
\min \{\deg_{\Ati_{\hspace{-1pt}1}}(\verti),\, \deg_{\Ati_{\hspace{-1pt}2}}(\vertii)\}$;
\item\label{item: int det}
if $\Ati_{\hspace{-1pt}1}$ and $\Ati_{\hspace{-1pt}2}$ are deterministic then
$\Ati 
$ is deterministic and $\gen{\Ati}=\gen{\Ati_{\hspace{-1pt}1}}\cap \gen{\Ati_{\hspace{-1pt}2}}$;
\item \label{item: pullback computable} if $\Ati_{\hspace{-1pt}1}$ and $\Ati_{\hspace{-1pt}2}$ are finite then $\Ati$ is finite and computable;
\item\label{item: prod not connected} 
connectedness of $\Ati_{\hspace{-1pt}1}$ and $\Ati_{\hspace{-1pt}2}$ does not imply connectedness of $\Ati$;
\item\label{item: prod no core} 
coreness of $\Ati_{\hspace{-1pt}1}$ and $\Ati_{\hspace{-1pt}2}$ does not imply coreness of $\Ati$.
 \end{enumerate}
\end{prop}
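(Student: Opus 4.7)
The plan is to treat the five parts in order, with \ref{item: deg prod}--\ref{item: pullback computable} following directly from the construction and \ref{item: prod not connected}--\ref{item: prod no core} settled by a minimal counterexample.

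For \ref{item: deg prod}, I would unwind the definition of $\Ati = \Ati_{\!1} \times \Ati_{\!2}$: for each $a \in A^{\pm}$, the $a$-arcs leaving $(\verti, \vertii)$ are in natural bijection with pairs $(\edgi_1, \edgi_2)$ of $a$-arcs leaving $\verti$ in $\Ati_{\!1}$ and $\vertii$ in $\Ati_{\!2}$, respectively. Hence $\deg_{a}^{+}(\verti, \vertii) = \deg_{a}^{+}(\verti)\cdot\deg_{a}^{+}(\vertii)$. In the relevant deterministic setting each factor degree lies in $\set{0,1}$, so this quantity is bounded labelwise by $\min\set{\deg_{a}^{+}(\verti), \deg_{a}^{+}(\vertii)}$, and summing over $a \in A^{\pm}$ yields the total-degree bound. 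The same labelwise computation already proves the first half of \ref{item: int det}: if neither $\Ati_{\!1}$ nor $\Ati_{\!2}$ has two $a$-arcs leaving any vertex, neither does~$\Ati$, so $\Ati$ is deterministic.

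For the subgroup identity $\gen{\Ati} = \gen{\Ati_{\!1}} \cap \gen{\Ati_{\!2}}$ in \ref{item: int det}, I would use the two canonical projections $\pi_i \colon \Ati \to \Ati_{\!i}$, which are basepoint-preserving $A$-homomorphisms by construction. The content is a coordinatewise lifting argument: a word $w \in (A^{\pm})^{*}$ labels a walk from $(\bp_1, \bp_2)$ in $\Ati$ if and only if it labels walks from $\bp_i$ in each $\Ati_{\!i}$. The ``only if'' direction is by projection; the ``if'' direction uses determinism, since by \Cref{rem: unique walk} the walks in the factors spelling $w$ are unique and can be zipped together step by step into a (unique) walk in $\Ati$. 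Restricting to closed walks and taking reduced labels then gives the claimed equality of recognized subgroups. Statement \ref{item: pullback computable} is immediate afterwards: $\card\Verts\Ati = \card\Verts\Ati_{\!1} \cdot \card\Verts\Ati_{\!2}$ is finite, and the arcs of $\Ati$ can be enumerated by pairing compatible $a$-arcs from the two factors.

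For \ref{item: prod not connected} and \ref{item: prod no core}, a single minimal counterexample suffices. Take both $\Ati_{\!1}$ and $\Ati_{\!2}$ to be the length-two involutive $a$-cycle $\bp \xarc{a} \bullet \xarc{a} \bp$ (which is connected, core, reduced, and recognizes $\gen{a^2}$). A direct computation shows that the four vertices of the pull-back split, under the action of its positive $a$-arcs, into two disjoint $2$-cycles: the component $\set{(\bp_1, \bp_2), (q_1, q_2)}$ containing the basepoint, and the isolated component $\set{(\bp_1, q_2), (q_1, \bp_2)}$. Hence the pull-back is disconnected, witnessing \ref{item: prod not connected}; and since the two vertices in the isolated component do not appear in any reduced basepoint walk, the pull-back also fails to be core, witnessing~\ref{item: prod no core}. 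The upshot of \ref{item: prod not connected}--\ref{item: prod no core} is exactly what dictates the standard recipe for computing $\stallings(\gen{\Ati_{\!1}} \cap \gen{\Ati_{\!2}})$ as the basepoint reduction of the pull-back.
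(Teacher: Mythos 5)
Your proposal follows essentially the same route as the paper: arc counting for \ref{item: deg prod}, projection plus zipping under determinism for \ref{item: int det}, triviality of \ref{item: pullback computable}, and an explicit counterexample for \ref{item: prod not connected}--\ref{item: prod no core}. Two points deserve comment. In \ref{item: int det}, the sentence ``restricting to closed walks and taking reduced labels'' is precisely where the care is needed: your walk-level equivalence gives $\Lang_{\bp}(\Ati)=\Lang_{\bp_1}(\Ati_1)\cap\Lang_{\bp_2}(\Ati_2)$, but free reduction does not commute with intersection of languages, so the subgroup identity $\gen{\Ati}=\gen{\Ati_1}\cap\gen{\Ati_2}$ does not follow formally from that identity alone. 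The missing (small) step, which the paper's proof states explicitly, is that by determinism every reduced $u\in\gen{\Ati_i}$ is itself the label of a reduced closed walk at $\bp_i$ (reduced walks in a deterministic automaton have reduced labels, and walk reduction preserves endpoints); only then do you have two walks in the factors reading the \emph{same} word $u$ to zip together. Your appeal to \Cref{rem: unique walk} gives uniqueness, which is not the property being used here. In \ref{item: deg prod}, your hedge ``in the relevant deterministic setting'' is in fact necessary rather than optional: with the definition of the pull-back, the bound $\deg_{\Ati}(\verti,\vertii)\leq\min\{\deg_{\Ati_1}(\verti),\deg_{\Ati_2}(\vertii)\}$ fails for nondeterministic factors (two $a$-arcs leaving $\verti$ and two leaving $\vertii$ yield four $a$-arcs leaving $(\verti,\vertii)$); your labelwise product formula $\deg_a^+(\verti,\vertii)=\deg_a^+(\verti)\deg_a^+(\vertii)$ makes this transparent, and since the proposition is only applied to Stallings (hence deterministic) automata, the restriction is harmless --- the paper's own one-line justification is no more general.

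Your counterexample for \ref{item: prod not connected}--\ref{item: prod no core} is correct but differs from the paper's, which takes $H=\gen{a^2,b}$ and $K=\gen{b,aba^{-1}}$: there the pull-back is disconnected \emph{and} the basepoint component itself contains a hanging vertex, so coreness fails for a reason independent of disconnection. In your example the failure of coreness is witnessed only by the component away from the basepoint; this still refutes \ref{item: prod no core} as literally stated (coreness is defined with respect to the basepoint and requires every vertex to lie on a reduced $\bp$-walk), but the paper's example illustrates the stronger phenomenon that motivates taking the core of the basepoint component when computing $\stallings(H\cap K)$.
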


\begin{proof}
\ref{item: deg prod} is clear from \Cref{def: product of automata}, since every $a$-arc leaving from (\resp arriving to) any vertex $(\verti,\vertii) \in \Verts \Ati$ corresponds to an $a$-arc leaving from (\resp arriving to) $\verti \in \Verts \Ati_{\hspace{-1pt}1}$ and to an $a$-arc leaving from (\resp arriving to) $\vertii \in \Verts\Ati_{\hspace{-1pt}2}$.

The first assertion in \ref{item: int det} follows easily from the definitions of determinism and pullback of $A$-automata. Now, if $\Ati_{\hspace{-1pt}1}$ and $\Ati_{\hspace{-1pt}2}$ are both deterministic then, for every $w\in \gen{\Ati }$, there is a (unique) reduced $\bp$-walk in $\Ati$ with label~$w$; projecting it to the first (\resp second) coordinate, we obtain a reduced $\bp_1$-walk (\resp $\bp_2$-walk) in $\Ati_{\hspace{-1pt}1}$ (\resp in $\Ati_{\hspace{-1pt}2}$) spelling $w$ as well; so, $w\in \gen{\Ati_{\hspace{-1pt}1}}\cap \gen{\Ati_{\hspace{-1pt}2}}$. 
And conversely, if $w\in \gen{\Ati_{\hspace{-1pt}1}}\cap \gen{\Ati_{\hspace{-1pt}2}}$ (with both $\Ati_{\hspace{-1pt}1}$ and $\Ati_{\hspace{-1pt}2}$ deterministic), then there is a (unique) reduced $\bp_1$-walk $\walki_1$ in $\Ati_{\hspace{-1pt}1}$ and a (unique) reduced $\bp_2$-walk $\walki_2$ in $\Ati_{\hspace{-1pt}2}$, both labeled by $w$. Now, \Cref{def: product of automata} allows us to stitch $\walki_1$ and $\walki_2$, arc by arc, into a (unique) reduced $\bp$-walk in $\Ati$ labeled by $w$; hence, $w\in \gen{\Ati}$.

Assertion \ref{item: pullback computable} is again obvious from  the definition of pullback of automata.

Finally, the subgroups $H=\gen{a^2, b}$ and $K=\gen{b, aba^{-1}}$ of $\Free[\{a,b\}]$ serve as a common example for assertions \ref{item: prod not connected} and \ref{item: prod no core}.  
\end{proof}

The description of the intersection subgroup in terms of Stallings' automata follows immediately. 

\begin{cor}\label{cor: x}
Let $H,K\leqslant \Free[A]$. Then, $\stallings(H\cap K)=\core (\stallings(H) \times \stallings(K))$. \qed
\end{cor}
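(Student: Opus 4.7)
My plan is to assemble this as a one-paragraph consequence of the properties of the pull-back already gathered in \Cref{prop: prod props}, combined with the uniqueness half of the Stallings bijection (\Cref{thm: Stallings bijection}).

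First, I would apply \Cref{prop: prod props}\ref{item: int det} to the Stallings automata $\stallings(H)$ and $\stallings(K)$, which are deterministic by construction: this yields simultaneously that the pull-back $\Ati = \stallings(H) \times \stallings(K)$ is deterministic and that $\gen{\Ati} = \gen{\stallings(H)} \cap \gen{\stallings(K)} = H \cap K$. So $\Ati$ is an involutive, deterministic $A$-automaton recognizing precisely the intersection.

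Next, I would pass to $\core(\Ati)$. Two observations do the work here: (i) coreness is exactly what might fail for $\Ati$ (see \Cref{prop: prod props}\ref{item: prod no core}), so taking the $\bp$-core is the natural repair; (ii) the core operation preserves both determinism (as a subautomaton of a deterministic automaton) and the recognized subgroup, the latter by \Cref{cor: <core> = <ati>}. Consequently, $\core(\Ati)$ is a reduced $A$-automaton with $\gen{\core(\Ati)} = \gen{\Ati} = H\cap K$.

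Finally, I would invoke the uniqueness in the Stallings bijection (\Cref{thm: Stallings bijection}, or equivalently \Cref{cor: gen iff isom}): since both $\stallings(H \cap K)$ and $\core(\stallings(H) \times \stallings(K))$ are reduced $A$-automata recognizing the same subgroup $H \cap K$, they must be isomorphic, yielding the claimed equality. I do not foresee any real obstacle: the entire content of the corollary is packaged into \Cref{prop: prod props}\ref{item: int det} plus the uniqueness of reduced representatives; the only subtlety is remembering that in general the pull-back need not be connected or core, which is precisely why the $\core$ operator appears on the right-hand side.
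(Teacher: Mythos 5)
Your argument is correct and matches the paper's (implicit) reasoning exactly: the paper states the corollary as an immediate consequence of \Cref{prop: prod props}\ref{item: int det} together with \Cref{cor: <core> = <ati>} and the uniqueness of reduced automata (\Cref{cor: gen iff isom}/\Cref{thm: Stallings bijection}), which is precisely the chain you assemble. No gaps; the remark that only coreness can fail (hence the $\core$ on the right-hand side) is the right observation.
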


From here, we can easily deduce the Howson property for free groups, the computability of intersections of finitely generated subgroups (and hence that of the \SIP), and the so-called Hanna Neumann bound for the rank of the intersection, within free groups. We define the \defin{reduced rank} of a free group $\Free[n]$ as $\rrk(\Free[n])=\max\{0, n-1\}$; that is, $\rrk(\Free[n])=n-1$ except for the trivial group, for which we define $\rrk(1)=0$ (instead of~$-1$). The reduced rank of a graph (or involutive automaton) is defined accordingly.

\begin{thm}[Howson, \cite{howson_intersection_1954}; H. Neumann, \cite{neumann_intersection_1956}]\label{thm: Howson}
Free groups $\Free[A]$ (of any rank) satisfy the Howson property and have solvable \SIP. Moreover, for any $H,K\leqslant\fg \Free[A]$, $\rrk(H\cap K)\leq 2\rrk(H)\rrk(K)$.
\end{thm}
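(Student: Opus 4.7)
The plan is to deduce all three parts from the pullback description $\stallings(H\cap K)=\core(\stallings(H)\times\stallings(K))$ from \Cref{cor: x}. For the Howson property and the solvability of the $\SIP$, assume that $H$ and $K$ are finitely generated, so that $\stallings(H)$ and $\stallings(K)$ are finite and computable by \Cref{thm: Stallings bijection2}. By \Cref{prop: prod props}, their pullback is then a finite, computable, deterministic $A$-automaton recognizing $H\cap K$; taking its $(\bp_{H},\bp_{K})$-core yields a finite $\stallings(H\cap K)$, showing that $H\cap K$ is finitely generated. A basis for it is then obtained via \Cref{prop: B_T comp}, which solves the $\SIP$ in the process.

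For the Hanna Neumann inequality, write $\Gamma=\stallings(H)$, $\Gamma'=\stallings(K)$, and $\Delta=\Gamma\times \Gamma'$. Let $\Delta^{*}$ be the disjoint union of the strict cores $\core^{*}(C)$ over all connected components $C$ of $\Delta$. Since the component $C_{0}$ containing $(\bp_{H},\bp_{K})$ satisfies $\pi(C_{0})\isom H\cap K$, and every component contributes $\rrk(\pi(C))=\rrk(\pi(\core^{*}(C)))\geq 0$ to the total, I have
 \[
\rrk(H\cap K) \,\leq\, \sum_{C} \rrk(\pi(\core^{*}(C))) \,=\, \tfrac{1}{2}\sum_{(\verti,\vertii)\in \Verts\Delta^{*}}\bigl(\deg_{\Delta^{*}}(\verti,\vertii)-2\bigr),
 \]
where the last equality uses the rank formula $\rk(\Ati)=1-\card \Verts\Ati + \card \Edgs^{+}\Ati$ (\Cref{thm: B_T}) together with the fact that every vertex of a strict core has degree at least $2$ (so each summand is non-negative).

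The heart of the argument is to bound the summands on the right in terms of the degrees in $\Gamma^{*}$ and $\Gamma'^{*}$. First, if $(\verti,\vertii)\in \Verts\Delta^{*}$ then $\verti\in \Verts\Gamma^{*}$ and $\vertii\in \Verts\Gamma'^{*}$, because projecting any cyclically reduced walk through $(\verti,\vertii)$ in $\Delta$ produces cyclically reduced walks through $\verti$ in $\Gamma$ and through $\vertii$ in $\Gamma'$. Second, since strict cores are induced subgraphs, the label-matching argument behind \Cref{prop: prod props}.\ref{item: deg prod} adapts verbatim to $\Delta^{*}$ and yields
 \[
\deg_{\Delta^{*}}(\verti,\vertii) \,\leq\, \min\bigl\{\deg_{\Gamma^{*}}(\verti),\, \deg_{\Gamma'^{*}}(\vertii)\bigr\}\,.
 \]
Combined with the elementary inequality $\min(a,b)-2\leq (a-2)(b-2)$ (valid for $a,b\geq 2$, hence for our strict-core degrees), this produces the key pointwise bound $\deg_{\Delta^{*}}(\verti,\vertii)-2 \leq (\deg_{\Gamma^{*}}(\verti)-2)(\deg_{\Gamma'^{*}}(\vertii)-2)$.

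Summing over $(\verti,\vertii)\in \Verts\Delta^{*}\subseteq \Verts\Gamma^{*}\times \Verts\Gamma'^{*}$ and enlarging the index set to the full Cartesian product (all summands are non-negative), the bound factors into a product of independent sums:
 \[
\sum_{(\verti,\vertii)\in \Verts\Delta^{*}}\!\bigl(\deg_{\Delta^{*}}(\verti,\vertii)-2\bigr)
\,\leq\,
\Bigl(\sum_{\verti\in \Verts \Gamma^{*}}\!\bigl(\deg_{\Gamma^{*}}(\verti)-2\bigr)\Bigr)\Bigl(\sum_{\vertii\in \Verts \Gamma'^{*}}\!\bigl(\deg_{\Gamma'^{*}}(\vertii)-2\bigr)\Bigr)
\,=\, 4\rrk(H)\rrk(K),
 \]
and dividing by $2$ produces the Hanna Neumann bound. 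I expect the main technical obstacle to be the bookkeeping around strict cores — specifically, showing that the degree bound inside $\Delta^{*}$ is governed by degrees in $\Gamma^{*},\Gamma'^{*}$ and not by the strictly larger degrees in $\Gamma,\Gamma'$, which is precisely what makes the product of sums collapse to exactly $4\rrk(H)\rrk(K)$ rather than a weaker bound.
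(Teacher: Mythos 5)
Your proposal is correct and follows essentially the same route as the paper: pull-back plus coreness for Howson and the \SIP, and for the rank bound the strict-core degree count combined with the pointwise inequality $\min(a,b)-2\leq (a-2)(b-2)$ for $a,b\geq 2$, enlargement of the sum to the full product $\Verts\Ati_{\!H}^*\times\Verts\Ati_{\!K}^*$, and factorization into $4\rrk(H)\rrk(K)$. The only (harmless) difference is that you sum over the strict cores of \emph{all} components of the pull-back rather than just the basepoint component as the paper does — a step that in fact anticipates the strengthened Hanna Neumann argument given later via \Cref{prop: untros}.
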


\begin{proof}
Since the stated results only involve finitely generated subgroups, without loss of generality we can assume that the ambient $\Free[A]$ is of finite rank. Now, if $H,K\leqslant \Free[n]$ are finitely generated then the $A$-automata $\stallings(H)$ and $\stallings(K)$ are finite and so is $\stallings(H)\times \stallings(K)$. From \Cref{prop: prod props}.\ref{item: int det},
we deduce that $H\cap K$ is, again, finitely generated; therefore, $\Free[n]$ is Howson. Moreover, given finite sets of generators for $H$ and $K$, one can effectively (i) compute $\stallings(H)$ and $\stallings(K)$, (ii) construct the pullback $\stallings(H)\times \stallings(K)$, (iii) take the core and obtain $\stallings(H\cap K)$, and (iv) choose a spanning tree and compute a basis for $H\cap K$; this solves the subgroup intersection problem in $\Free[n]$. 
 
To see the Hanna Neumann bound on ranks, we can restrict ourselves to the case $H\cap K\neq \Trivial$ (otherwise, it is obvious); so, let us assume that $H\cap K$, and hence $H$ and $K$, are not trivial. Write $\Ati_{\hspace{-1pt}H} =\stallings(H)$, $\Ati_{\hspace{-1pt}K} =\stallings(K)$, $\Ati^*_{\!H}=\core^*(\Ati_{\!H})$, $\Ati^*_{\!K}=\core^*(\Ati_{\!K})$, and $\Ati =\Ati_{\!H} \times \Ati_{\!K}$. Let also $\Ati^* =\stallings^{*}(H \cap K) \!\subseteq \Ati^*_{\!H} \times \Ati^*_{\!K}\subseteq \Ati_{\!H} \times \Ati_{\!K}$ be the strict core of the connected component of $\Ati$ containing the basepoint. Note that all of them ($\Ati_{\!H}, \Ati_{\!K}, \Ati^*_{\!H}, \Ati^*_{\!K}, \Ati$ and~$\Ati^*$) are non-trivial by the assumption that $H\cap K\neq \Trivial$. 


Now, since $\Ati^* \!\subseteq \Ati^*_{\!H} \times \Ati^*_{\!K}$, every vertex $(\verti,\vertii) \in \Verts\Ati^*$ satisfies $\deg_{\Ati^*_{\hspace{-1pt}H}}(\verti) \geq 2$ and $\deg_{\Ati^*_{\hspace{-1pt}K}}(\vertii) \geq 2$ and, by \Cref{prop: prod props}\ref{item: deg prod}, 
 \begin{equation}\label{eq: deg-2}
\deg_{\Ati^*}(\verti,\vertii) - 2
\,\leq\,
\deg_{\Ati^*_{\!\!H} \times \Ati^*_{\!\!K}}(\verti,\vertii) - 2
\,\leq\,
(\deg_{\Ati^*_{\!\!H}}(\verti) - 2) (\deg_{\Ati^*_{\!\!K}}(\vertii) - 2)\,.
 \end{equation}
Finally, taking into account that the rank of a connected automaton equals the rank of its strict core, we have: 
 \begin{equation}\label{eq: hnproof}
\begin{aligned}
\rk(H\cap K)-1
&\,=\,
\rk (\Ati) -1 \\
&\,=\,
\rk (\Ati^*) -1 \\
&\,=\,
\textstyle{\frac{1}{2}\sum_{(\verti,\vertii)\in \Verts\Ati^*} 
( \deg_{\Ati^*}(\verti,\vertii)-2)}\\
&\,\leq\,
\textstyle{\frac{1}{2}\sum_{(\verti,\vertii)\in \Verts\Ati^*} (\deg_{\Ati^*_{\!\!H}}(\verti) - 2) (\deg_{\Ati^*_{\!\!K}}(\vertii) - 2)}\\
&\,\leq\,
\textstyle{\frac{1}{2}\sum_{(\verti,\vertii)\in \Verts(\Ati^*_{\!H}\times \Ati^*_{\!K})} (\deg_{\Ati^*_{\!\!H}}(\verti) - 2) (\deg_{\Ati^*_{\!\!K}}(\vertii) - 2)}\\
&\,=\,
\textstyle{\frac{1}{2}\big(\sum_{\verti\in \Verts\Ati^*_{\!\!H}} (\deg_{\Ati^*_{\!\!H}}(\verti)-2) \big)
 \big( \sum_{\vertii\in \Verts\Ati^*_{\!\!K}} ( \deg_{\Ati^*_{\!\!K}}(\vertii)-2) \big)}\\
&\, =\, \frac{1}{2}\cdot 2(\rk(\Ati^*_{\!\!H})-1)\cdot 2(\rk(\Ati^*_{\!\!K})-1) \\
&\, =\, 2(\rk(\Ati_{\!\!H})-1)\cdot (\rk(\Ati_{\!\!K})-1) ,\\
\end{aligned}
 \end{equation}
and the proof is complete.
\end{proof}

\begin{exm}
As an application, let us finish the computation in \Cref{exe: producte} and find a basis for the intersection of the subgroups of $\Free[\{a,b\}]$ given by $H=\langle u_1, u_2, u_3\rangle$ and $K=\langle v_1, v_2, v_3\rangle$, where $u_1=b$, $u_2=a^3$, $u_3=a^{-1}bab^{-1}a$ and $v_1=ab$, $v_2=a^3$, $v_3=a^{-1}ba$. In \Cref{exe: producte} we have already computed $\stallings(H)$, $\stallings(K)$ and $\stallings(H)\times \stallings(K)$. Since this last one is connected and core, \Cref{cor: x} tells us that $\stallings(H\cap K)=\stallings(H)\times \stallings(K)$. Choosing as spanning tree $T$ the one indicated by the boldfaced arcs in \Cref{fig: pullback}, we get the basis 
 \begin{equation*}
S_T =\set{ b^{-1}a^3 b, a^3,a^{-1}ba^3b^{-1}a, a^{-1}bab^{-1}a^3ba^{-1}b^{-1}a, a^{-1}bab^{-1}aba^{-1}ba^{-1}b^{-1}a}
 \end{equation*}
for the intersection $H\cap K$. Moreover, since each of the five generators in $S_T$ is the label of a known reduced $\bp$-walk in $\stallings(H)\times \stallings(K)$, we can project it to the first and second coordinate, respectively, and obtain expressions for the corresponding generators as words on $\{u_1, u_2, u_3\}$, and on $\{ v_1, v_2, v_3\}$:
 \begin{equation*}
\begin{array}{rcl}
H\ni u_1^{-1} u_2 u_1 =\ & b^{-1}a^3 b &\ = v_1^{-1}v_2 v_1 \in K, \\
H\ni u_2 =\ & a^3 &\ = v_2 \in K, \\ 
H\ni u_3^3 =\ & a^{-1}ba^3b^{-1}a &\ = v_3v_2 v_3^{-1} \in K, \\
H\ni u_3u_2u_3^{-1} =\ & a^{-1}bab^{-1}a^3ba^{-1}b^{-1}a &\ = v_3v_1^{-1}v_2v_1v_3^{-1}\in K, \\
H\ni u_3u_1u_3^{-1} =\ & a^{-1}bab^{-1}aba^{-1}ba^{-1}b^{-1}a &\ = v_3v_1^{-1}v_2v_3v_2^{-1}v_1v_3^{-1}\in K.
\end{array}
 \end{equation*}
\end{exm}

Observe that the proof of \Cref{thm: Howson} only uses the (strict core of the) connected component of the pullback containing the basepoint, and ignores completely the other ones. A more detailed argument taking into account all the connected components, will improve the above Hanna Neumann inequality into the so-called \defin{strengthened Hanna Neumann inequality} (proved several years later by Walter Neumann in~\cite{kovacs_intersections_1990}). To reach this result (\Cref{thm: SHN} below), we need to previously discuss some technical details. 

In accordance with the algebraic definition of rank and reduced rank, we define the reduced rank of a connected graph (or involutive automaton) $\Ati$ as $\rrk(\Ati)=\max\{0, \rk(\Ati)-1\}$, and the rank and reduced rank of an arbitrary graph as $\rk(\Ati)=\sum_{C \in \CC(\Ati)} \rk(C)$ and $\rrk(\Ati)=\sum_{C \in \CC(\Ati)} \rrk(C)$, respectively, where the sums run over the set $\CC(\Ati)$ of connected components of $\Ati$. Note also that the standard formula $\sum_{\verti\in \Verts C} (\deg(\verti)-2) =2(\# \Edgs C-\#\Verts C)= 2(\rk(C)-1)$, valid for any connected and finite $C$ (no matter if being a tree or not), can alternatively be written in the form 
 \begin{equation}\label{eq: rrk}
\sum_{\verti\in \Verts C} (\deg(\verti)-2) = \left\{ \begin{array}{ll} 2\rrk(C) & \text{if } C \text{ is not a tree}, \\ -2 & \text{if } C \text{ is a tree}. \end{array} \right. 
 \end{equation}
If $\CC =\CC(\Ati)$, we denote by $\TT_\CC \subseteq \CC$ the set of connected components in $\Ati$ which are trees. 

\begin{lem}\label{lem: d-2}
Let $\Ati_{\hspace{-1pt}1}$ and $\Ati_{\hspace{-1pt}2}$ be two $A$-automata. For every $(\verti,\vertii) \in \Verts(\Ati_1 \times \Ati_2)$ such that $\deg_{\Ati_{\!1}}(\verti)\geq 2$ and $\deg_{\Ati_{\!2}}(\vertii)\geq 2$, we can improve the second inequality in \eqref{eq: deg-2} to
\[\eta +\big( \deg_{\Ati_{\!1} \times \Ati_{\!2}}(\verti,\vertii)-2 \big) \,\leq\, \left( \deg_{\Ati_{\!1}}(\verti)-2\big)\big( \deg_{\Ati_{\!2}}(\vertii)-2 \right),
\]
where $\eta=2$ if $(\verti, \vertii)$ is an isolated vertex, $\eta=1$ if it has degree $1$, and $\eta=0$ otherwise. \qed 
\end{lem}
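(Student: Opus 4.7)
The proof is a straightforward case analysis on the value of $d := \deg_{\Ati_{\hspace{-1pt}1} \times \Ati_{\hspace{-1pt}2}}(\verti,\vertii)$. First I would set $d_1 := \deg_{\Ati_{\hspace{-1pt}1}}(\verti) \geq 2$ and $d_2 := \deg_{\Ati_{\hspace{-1pt}2}}(\vertii) \geq 2$, and invoke \Cref{prop: prod props}\ref{item: deg prod} to obtain the bound $0 \leq d \leq \min(d_1,d_2)$. Note that the hypothesis $d_1, d_2 \geq 2$ makes the right-hand side $(d_1-2)(d_2-2)$ a product of nonnegative integers.

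In the degenerate cases $d = 0$ and $d = 1$, the piecewise value of $\eta$ (namely $2$ and $1$, respectively) is chosen precisely so that $\eta + d - 2 = 0$, and the desired inequality collapses to $0 \leq (d_1-2)(d_2-2)$, which is immediate. In the remaining case $d \geq 2$ one has $\eta = 0$ and the assertion is nothing more than the already-known inequality \eqref{eq: deg-2}; I would nevertheless include a direct verification. From $d \leq \min(d_1, d_2)$ we get $d-2 \leq \min(d_1-2,d_2-2)$. If both $d_i \geq 3$ then both factors $d_i - 2$ are $\geq 1$, so
\[
(d_1-2)(d_2-2) \,\geq\, \max(d_1-2,\,d_2-2) \,\geq\, \min(d_1-2,\,d_2-2) \,\geq\, d-2 \, .
\]
If instead one of the factors vanishes (say $d_1 = 2$), then the bound $d \leq d_1 = 2$ forces $d = 2$, and both sides of the target inequality are zero.

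I expect no real obstacle: the argument is a routine case split, and the only mild subtlety is the sub-case where one of the $d_i$ equals $2$, which is handled by observing that $\min(d_1,d_2) = 2$ already pins down $d = 2$. The actual interest of the lemma lies in what it is set up for rather than in its proof: the correction term $\eta$ encodes precisely the $-2$ contribution that isolated vertices (and, more generally, tree components) make to the formula $\sum_{\verti}(\deg(\verti) - 2)$ used to compute reduced rank, so that when one later sums the improved inequality over $\Verts(\Ati^*_{\!H} \times \Ati^*_{\!K})$ the tree components of the pull-back can be separated off cleanly. This is the mechanism that upgrades the Hanna Neumann bound of \Cref{thm: Howson} to the strengthened Hanna Neumann inequality of W.~Neumann.
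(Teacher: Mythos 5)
Your proof is correct: the paper states \Cref{lem: d-2} with no proof at all (the \qed marks it as immediate), and your argument --- invoking \Cref{prop: prod props}\ref{item: deg prod} to get $\deg_{\Ati_{\!1} \times \Ati_{\!2}}(\verti,\vertii)\leq\min\{\deg_{\Ati_{\!1}}(\verti),\deg_{\Ati_{\!2}}(\vertii)\}$ and then splitting into the cases $\eta=2,1,0$, with the sub-case where some $\deg_{\Ati_{\!i}}=2$ forcing equality --- is exactly the routine verification the authors leave to the reader. Your closing observation about the role of $\eta$ (absorbing the $-2$ contribution of tree components when summing over $\Verts(\Ati^*_{\!H}\times\Ati^*_{\!K})$ in \Cref{prop: untros}) also matches precisely how the lemma is used in the paper.
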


\begin{prop}\label{prop: untros}
Let $\Trivial \neq H,K\leqslant\fg \Free[A]$, let $\Ati_{\!H}^* =\core^*(\stallings(H))$, $\Ati_{\hspace{-1pt}K}^* =\core^*(\stallings(K))$, and $\Ati =\Ati_{\!H}^* \times \Ati_{\!K}^*$.
Then, $\rrk( \Ati )\leq 2\rrk(H)\rrk(K)$.
\end{prop}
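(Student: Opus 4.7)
The plan is to refine the degree-counting argument used in the proof of Theorem~\ref{thm: Howson} by exploiting Lemma~\ref{lem: d-2} and tracking the contribution of every connected component of the pull-back (not only the one containing the basepoint). Write $\CC$ for the set of connected components of $\Ati$ and $\TT \subseteq \CC$ for the subset of those components which are trees. Since $\Ati_{\!H}^*$ and $\Ati_{\!K}^*$ are strict cores, every vertex $u$ in $\Ati_{\!H}^*$ (\resp $v$ in $\Ati_{\!K}^*$) has $\deg_{\Ati_{\!H}^*}(u) \geq 2$ (\resp $\deg_{\Ati_{\!K}^*}(v) \geq 2$), so Lemma~\ref{lem: d-2} applies to \emph{every} vertex $(u,v) \in \Verts \Ati$ and yields
\[
\eta(u,v) \,+\, \big( \deg_{\Ati}(u,v) - 2 \big) \,\leq\, \big( \deg_{\Ati_{\!H}^*}(u) - 2 \big)\big( \deg_{\Ati_{\!K}^*}(v) - 2 \big).
\]

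Next I would sum over all vertices of $\Ati = \Ati_{\!H}^* \times \Ati_{\!K}^*$. The right-hand side factors as a product of two single-automaton sums, and the standard degree formula $\sum_{p} (\deg(p) - 2) = 2(\rk(\cdot) - 1)$ together with $\rk(\Ati_{\!H}^*) = \rk(H)$ and $\rk(\Ati_{\!K}^*) = \rk(K)$ (and the assumption that $H, K \neq \Trivial$, so $\rrk = \rk - 1$) turns the right-hand side into $4\,\rrk(H)\,\rrk(K)$.

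The key bookkeeping is on the left-hand side, where one has to decompose both sums by connected component of $\Ati$. For the degree sum, $\sum_{(u,v) \in C}(\deg_{\Ati}(u,v) - 2)$ equals $2\rrk(C)$ if $C$ is not a tree and $-2$ if $C \in \TT$; hence
\[
\textstyle \sum_{(u,v) \in \Verts \Ati}(\deg_{\Ati}(u,v) - 2) \,=\, 2\,\rrk(\Ati) - 2\,\card \TT.
\]
For the $\eta$ sum, the crucial observation is that every tree component contributes at least $2$: an isolated vertex contributes $\eta = 2$ by itself, while any tree with $\geq 2$ vertices has at least two leaves, each contributing $\eta = 1$. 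Thus $\sum_{(u,v)} \eta(u,v) \geq 2\,\card \TT$, and the two pesky $2\,\card \TT$ terms cancel, leaving
\[
2\,\rrk(\Ati) \,\leq\, \textstyle\sum \eta + \sum(\deg_{\Ati} - 2) \,\leq\, 4\,\rrk(H)\,\rrk(K),
\]
which is the desired inequality.

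The main obstacle (mild, but the only non-mechanical point) is the tree-cancellation bookkeeping: one has to verify that the slack terms $\eta(u,v)$ in Lemma~\ref{lem: d-2} are precisely calibrated so that the $-2$ deficit of each tree component is absorbed. Once this is observed, the proof is essentially a chain of equalities and inequalities. A minor point to check is the degenerate situation in which some component of $\Ati$ is empty or where $\Ati$ itself is trivial; in those cases $\rrk(\Ati) = 0$ and the inequality is immediate, so one can safely assume $\Ati$ has at least one non-tree component throughout the main computation.
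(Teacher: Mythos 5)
Your proof is correct and follows essentially the same route as the paper's: both hinge on applying Lemma~\ref{lem: d-2} (valid at every vertex because strict cores have minimum degree $2$), summing and factoring the right-hand side into $4\,\rrk(H)\rrk(K)$, and absorbing the $-2$ deficit of each tree component via the observation that such a component is either an isolated vertex ($\eta=2$) or has at least two degree-one vertices ($\eta=1$ each). The only difference is cosmetic bookkeeping — you sum the lemma globally and then split the degree sum by components, while the paper splits $2\rrk(\Ati)$ by components first — so the two arguments are the same in substance.
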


\begin{proof}
Let us abbreviate $\CC =\CC(\Ati)$, $\TT =\TT_\CC$, and $\deg =\deg_{\Ati}$. By mimicking the arguments and calculations in the proof of Theorem 1.7.6, while now considering all the connected components in \Cref{thm: Howson}, we have:
 \begin{align*}
& 2\rrk( \Ati ) \,=\, 2\sum_{C \in \CC} \rrk (C) \,=\, 
\sum_{\mathclap{C \in \CC \setminus \TT}} 2\rrk (C) \,=\, \sum_{\mathclap{C \in \CC \setminus \TT}} 2 (\rk (C) - 1) \\
&\,=\, 
\hspace{-5pt}
\sum_{C\in \CC \setminus \TT} \sum_{(\verti,\vertii)\in \Verts C} \hspace{-8pt} \big( \deg(\verti,\vertii)-2\big) +\sum_{C\in \TT} \Big( 2+\sum_{\mathclap{(\verti,\vertii)\in \Verts C}} \big( \deg(\verti,\vertii)-2\big) \Big)\\
& \,\leq\,
\hspace{-5pt}
\sum_{C\in \CC \setminus \TT} \sum_{(\verti,\vertii)\in \Verts C} \hspace{-8pt} \big( \deg_{\Ati_{\!\!H}^*}(\verti)-2\big)\big(\deg_{\Ati_{\!\!K}^*}(\vertii)-2 \big) + \sum_{C\in \TT} \sum_{(\verti,\vertii)\in \Verts C} \hspace{-8pt}\big( \deg_{\Ati_{\!\!H}^*}(\verti)-2\big)\big(\deg_{\Ati_{\!\!K}^*}(\vertii)-2 \big)\\
&\,=\,
\hspace{-8pt}
\sum_{(\verti,\vertii)\in \Verts \Ati} \hspace{-8pt} (\deg_{\Ati_{\!\!H}^*}(\verti)-2)(\deg_{\Ati_{\!\!K}^*}(\vertii)-2) \,=\, \Big(\sum_{\mathclap{\verti\in \Verts\Ati_{\!\!H}^*}} \big(\deg_{\Ati_{\!\!H}^*}(\verti)-2\big) \Big) \Big( \sum_{\mathclap{\vertii\in \Verts\Ati_{\!\!K}^*}} \big( \deg_{\Ati_{\!\!K}^*}(\vertii)-2\big) \Big)\\
&\,=\,
2\rrk(\Ati_{\!\!H}^*) \cdot 2\rrk(\Ati_{\!\!K}^*)
\,=\, 4\, \rrk(H) \, \rrk(K), 
 \end{align*}
where, by~\eqref{eq: rrk}, 
the extra summand
$\sum_{C\in \TT} \big( 2+\sum_{(\verti,\vertii)\in \Verts C} \big( \deg(\verti,\vertii)-2\big) \big)$
in the second line equals zero;
and the inequality step uses \Cref{lem: d-2} and the fact that every tree component $C\in \TT$ is either an isolated vertex, or has at least two vertices of degree~$1$. Simplifying a factor of $2$, we obtain the desired inequality.
\end{proof}

Finally, in order to understand the algebraic meaning of the connected components in the pullback, we need to consider the set $H\backslash \Free[A]/K = \set{HwK \st w\in \Free[A]}$ of double cosets in $\Free[A]$ modulo $H$ and $K$.

\begin{lem}\label{lem: double cosets}
Let $H$ and $K$ be subgroups of $\Free[A]$ and let $u,v\in \Free[A]$. 
Then, $(H u, K)$ and $(H v, K)$ belong to the same connected component of the pullback $\schreier(H)\times \schreier(K)$ if and only if $HuK=HvK$.
\end{lem}

\begin{proof}
By definition, $(Hu,K)$ and $(Hv, K)$ belong to the same connected component of $\schreier(H)\times \schreier(K)$ if and only if there exists some $w\in \Free[A]$ labeling simultaneously a reduced walk $Hu \xrwalk{w} Hv$ in $\schreier(H)$, and a reduced walk $K \xrwalk{w} K$ in $\schreier(K)$; and this will happen if and only if simultaneously $uw v^{-1}\in H$ and $w\in K$. Such a $w$ exists if and only if $HuK=HvK$.
\end{proof}

With this lemma at hand, we can consider the map $\varsigma$ mapping $HuK$ to the connected component $D_u$ containing $(Hu, K)$:
 \begin{equation}
\begin{array}{rcl}
\varsigma \colon  H\backslash \Free[A]/K &  \xto{} & \CC(\schreier(H)\times \schreier(K))\\
HuK & \mapsto & D_u, 
\end{array} 
 \end{equation}
which is well defined by \Cref{lem: double cosets}. 

\begin{lem}\label{lem: bijection}
The map $\varsigma$ above is bijective. Moreover, for every $u \in \Free[A]$, we have that $\rrk(D_u)=\rrk(H^u\cap K)$, where $H^u =u^{-1}Hu$.
\end{lem}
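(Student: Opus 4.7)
The plan is to split the lemma into two parts. For bijectivity, \Cref{lem: double cosets} already yields well-definedness and injectivity, so only surjectivity remains. I would argue that, since the Schreier automata are saturated, from any vertex $(Hv, Kw)$ of the pull-back one can spell $w^{-1}$, arriving at $(Hvw^{-1}, K) = (Hu, K)$ with $u = vw^{-1}$. Hence every connected component of $\schreier(H)\times \schreier(K)$ contains at least one vertex of the form $(Hu, K)$, which realises that component as the image $\varsigma(HuK)$.

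For the rank equality, the idea is to identify $\core(D_u)$, pointed at $(Hu, K)$, with $\stallings(H^u \cap K)$. Being a pull-back of two deterministic automata, $D_u$ is itself deterministic by \Cref{prop: prod props}; by construction it is connected, and pointing it at $(Hu, K)$ makes it a pointed involutive $A$-automaton. A word $w$ spells a closed walk at $(Hu, K)$ in $D_u$ exactly when it spells a closed walk at $Hu$ in $\schreier(H)$ (equivalently, $uwu^{-1}\in H$, i.e.\ $w\in H^u$) and a closed walk at $K$ in $\schreier(K)$ (equivalently, $w\in K$). Thus $\gen{D_u}_{(Hu, K)} = H^u \cap K$.

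To conclude, I would invoke the uniqueness of Stallings automata: $\core(D_u)$ is connected, deterministic and core, hence reduced, and it recognises the subgroup $H^u \cap K$, so by \Cref{thm: Stallings bijection} it must coincide with $\stallings(H^u \cap K)$. Since trimming a hanging tree from a connected involutive digraph removes the same number of vertices as of positive edges, the combinatorial rank is preserved by the core operation. Therefore $\rk(D_u) = \rk(\core(D_u)) = \rk(\stallings(H^u \cap K)) = \rk(H^u \cap K)$, the last equality being \Cref{thm: Nielsen-Schreier}. Applying $\max\{0,\,\cdot - 1\}$ yields $\rrk(D_u) = \rrk(H^u \cap K)$. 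The only mild subtlety is the rank-preservation under the core operation, which follows from a short induction on the number of degree-one vertices pruned.
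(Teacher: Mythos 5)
Your argument is essentially the paper's. Well-definedness and injectivity of $\varsigma$ are exactly \Cref{lem: double cosets}, and your surjectivity step (spelling $w^{-1}$ from a vertex $(Hv,Kw)$, which is legitimate because both Schreier automata are saturated) is the same saturation trick the paper uses to construct the inverse map $D\mapsto HuK$. Likewise, the identification $\gen{D_u}_{(\bpH u,\bpK)}=H^u\cap K$ is the paper's computation (determinism of the pull-back plus \Cref{lem: recognized}); you then route the rank count through $\core(D_u)\isom\stallings(H^u\cap K)$ and Nielsen--Schreier instead of quoting directly that $\rk(\Ati)=\rk(\gen{\Ati})$ for connected deterministic $\Ati$ (\Cref{lem: deterministic gamma}); that is a harmless detour.

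The one step whose justification does not hold as written is the claim that the core operation preserves rank because ``trimming a hanging tree removes the same number of vertices as of positive edges'', proved ``by induction on the number of degree-one vertices pruned''. The component $D_u$ sits inside $\schreier(H)\times\schreier(K)$, and unless both $H$ and $K$ have finite index these Schreier automata --- and hence $D_u$ and the hanging trees being trimmed --- are infinite (by \Cref{lem: St vs Sch}, entire Cayley branches hang off the cores). In that situation the vertex/arc count reads $\infty=\infty$, the formula $\rk(\Ati)=1-\card\Verts\Ati+\card\Edgs^{+}\Ati$ you are implicitly invoking is only valid for finite automata, and an induction on the number of pruned degree-one vertices never terminates. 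The fact you need is nonetheless true in full generality and is already available in the paper: either use $\pi(\Ati)\isom\pi(\core_{\verti}(\Ati))$ together with \Cref{thm: B_T}, or argue $\rk(D_u)=\rk\big(\gen{D_u}_{(\bpH u,\bpK)}\big)=\rk\big(\gen{\core(D_u)}\big)=\rk(\core(D_u))$ via \Cref{cor: <core> = <ati>} and \Cref{lem: deterministic gamma}; neither requires finiteness. With that substitution your proof is complete and coincides with the paper's.
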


\begin{proof}
To see the bijectivity of $\varsigma$ let us construct its inverse map. Given a connected component $D \in \CC(\schreier(H)\times \schreier(K))$, choose an arbitrary vertex in $D$ of the form $(Hu, K)$ (there is always such a vertex thanks to the saturation of $\schreier(H)$) 
and consider the double coset $HuK$. We claim that $HuK\mapsfrom D$ defines a map $\partial$ from the set of connected components of $\schreier(H)\times \schreier(K)$ back to~${H\backslash \Free[A]/K}$. In fact, choosing another such vertex $(Hu',K)\in D$ 
there exists $k\in K$ such that $Hu \xrwalk{k} Hu'$ and so, $H\xrwalk{u} Hu \xrwalk{k} Hu'$; this means that $uk(u')^{-1}\in H$ and so, $HuK=Hu'K$, proving the claim. Now it is straightforward to see that both $\varsigma\partial$ and $\partial\varsigma$ are the identity map and so, $\partial=\varsigma^{-1}$ and $\varsigma$ is bijective.

Finally, given $u\in \Free[A]$, consider the double coset $HuK$ and its corresponding connected component $D_u= (HuK)\varsigma$, which contains the vertex $(Hu, K)$. Clearly,
\begin{align*}
\rrk(D_u)
&\,=\,
\rrk(\gen{D_u}_{(Hu,K)})
\,=\,\rrk(\gen{\schreier(H)\times \schreier(K)}_{(Hu, K)})\\
&\,=\, \rrk(\gen{\schreier(H)}_{Hu}\cap \gen{\schreier(K)}_{K}\!)
\,=\, \rrk(H^u\cap K),
\end{align*}
as claimed.
\end{proof}

As above, let $\Ati_{\hspace{-1pt}H}^* =\core^*(\stallings(H))$ and $\Ati_{\hspace{-1pt}K}^* =\core^*(\stallings(K))$. In order to connect \Cref{prop: untros} with Lemmas~\ref{lem: double cosets} and \ref{lem: bijection}, we need to relate the set of connected components of $\Ati =\Ati_{\!H}^* \times \Ati_{\!K}^*$, denoted by $\CC =\CC(\Ati)$, with that of $\schreier(H)\times \schreier(K)$, denoted by $\DD =\CC(\schreier(H)\times \schreier(K))$. Since $\Ati\subseteq \schreier(H)\times \schreier(K)$, every $C\in \CC$ in contained in a unique connected component in $\DD$, which we denote by $D_C$. This determines a well-defined map:
\begin{equation}
\begin{array}{rcl}
\varepsilon \colon  \CC(\Ati_{\!H}^* \times \Ati_{\!K}^*) & \xto{} & \CC(\schreier(H)\times \schreier(K)) \\ C & \mapsto & D_C 
\end{array}
\end{equation}

\begin{prop} \label{prop: beta injective}
The map $\varepsilon$ above is injective, preserves the rank (and reduced rank), and restricts to a bijection $\varepsilon\colon \CC \setminus \TT_\CC \xto{} \DD \setminus \TT_\DD$.
\end{prop}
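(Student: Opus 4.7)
The plan is to handle the three assertions in turn, leaning on two shared observations. First, by \Cref{lem: St vs Sch}\ref{item: Sch = St + Cay branches}, the complement $\schreier(H)\setmin \Ati_H^*$ is a disjoint union of trees (the hanging trees of $\stallings(H)$ together with the Cayley branches), each attached to $\Ati_H^*$ at a single vertex; in particular $\core^*(\schreier(H))=\Ati_H^*$, and analogously for $K$. Second, since both Schreier automata are deterministic, reduced walks in the pull-back $\schreier(H)\times \schreier(K)$ project to reduced walks in each factor: an alleged backtracking in, say, the $H$-projection would force the two involved arcs to be mutually inverse in $\schreier(H)$ by determinism, and the common labels would then force the corresponding $K$-projections to be mutually inverse too, so the pair was already a backtracking in the product.

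For \emph{injectivity}, given $C_1,C_2\in \CC$ with $\varepsilon(C_1)=\varepsilon(C_2)=D$, I will pick vertices $(v_i,u_i)\in C_i$ and join them by a reduced walk $w$ in $D$. Its projections $\walki_H,\walki_K$ are reduced walks in $\schreier(H),\schreier(K)$ whose endpoints lie in $\Ati_H^*,\Ati_K^*$. Any excursion of $\walki_H$ into a tree component of $\schreier(H)\setmin \Ati_H^*$ would be a reduced walk in a tree returning to its attachment vertex, and hence trivial; thus $\walki_H$ stays in $\Ati_H^*$ (and similarly $\walki_K$ in $\Ati_K^*$), so $w$ stays in $\Ati$ and $C_1=C_2$.

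For \emph{rank preservation}, fix $C\in \CC$, $D=\varepsilon(C)$, and a shared basepoint $(v_0,u_0)\in C$. By \Cref{prop: prod props}\ref{item: int det},
\[
\gen{D}_{(v_0,u_0)}=\gen{\schreier(H)}_{v_0}\cap \gen{\schreier(K)}_{u_0}, \qquad \gen{C}_{(v_0,u_0)}=\gen{\Ati_H^*}_{v_0}\cap \gen{\Ati_K^*}_{u_0}.
\]
Since $v_0\in \Ati_H^*$ and the attached trees contribute no non-trivial reduced closed walk at $v_0$, $\gen{\schreier(H)}_{v_0}=\gen{\Ati_H^*}_{v_0}$ (and likewise for $K$), so both intersections coincide. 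Applying \Cref{lem: deterministic gamma} to these deterministic pull-backs then yields $\rk(D)=\rk(C)$, and hence $\rrk(D)=\rrk(C)$.

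For the \emph{bijection} $\CC\setmin \TT_\CC \to \DD\setmin \TT_\DD$, rank preservation forces $\varepsilon(\CC\setmin \TT_\CC)\subseteq \DD\setmin \TT_\DD$, and injectivity is in hand, so only surjectivity remains. Given $D\in \DD\setmin \TT_\DD$, $\core^*(D)\neq \varnothing$, so I will pick $(v,u)\in \core^*(D)$ together with a non-trivial cyclically reduced closed walk $w$ at $(v,u)$ in $D$. The projection argument above transfers reducedness to each factor; for the cyclic-reduction condition, if the labels of the first and last arcs of $w$ were mutually inverse, determinism would force those arcs to be mutually inverse both in $\schreier(H)$ and in $\schreier(K)$, hence in the product, contradicting cyclic reducedness of $w$. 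Thus $\walki_H, \walki_K$ are both non-trivial cyclically reduced closed walks, so $v\in \core^*(\schreier(H))=\Ati_H^*$ and $u\in \core^*(\schreier(K))=\Ati_K^*$. Hence $(v,u)\in \Ati$ and its $\Ati$-component $C$ satisfies $\varepsilon(C)=D$, while rank preservation guarantees $C\notin \TT_\CC$. The main technical subtlety is precisely this wrap-around compatibility for cyclic reduction---a priori weaker than one would like, but decisively secured by the determinism of the Schreier automata, which is what makes the whole scheme work.
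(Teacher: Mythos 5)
Your proof is correct, and two of its three parts run along the paper's own lines: for injectivity you join two vertices by a reduced walk in the common component $D$ and push it into $\Ati_H^*\times\Ati_K^*$ by showing the projections cannot stray into the hanging trees, and for surjectivity onto $\DD\setminus\TT_\DD$ you transport a non-trivial cyclically reduced closed walk of $D$ into the product of strict cores --- exactly the paper's argument, except that you make explicit the determinism facts (reduced walks in the pull-back project to reduced walks, and the wrap-around compatibility for cyclic reduction) that the paper leaves implicit; that is a welcome addition rather than a deviation. Where you genuinely differ is rank preservation: the paper proves the geometric identity $\core^*(D_C)=\core^*(C)$, from which $\rk(C)=\rk(D_C)$ and $\rrk(C)=\rrk(D_C)$ follow at once, whereas you compare recognized subgroups at a common basepoint, note $\gen{\schreier(H)}_{v_0}=\gen{\Ati_H^*}_{v_0}$ (and likewise for $K$), and invoke \Cref{prop: prod props} together with \Cref{lem: deterministic gamma} to equate the ranks. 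Your route is sound (it is, in effect, the same algebraic dictionary the paper itself uses to prove \Cref{lem: bijection}) and has the virtue of recycling already-established machinery; the paper's route buys the slightly stronger structural fact that $C$ and $D_C$ share the same strict core, which gives equality of rank and reduced rank without any detour through ranks of (possibly infinitely generated) recognized subgroups, and also lets one deduce $C\notin\TT_\CC$ directly from the presence of the cyclically reduced walk rather than via rank preservation, as you do. Both arguments establish the stated proposition in full.
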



\begin{proof}
To see injectivity, suppose that $C_1,C_2 \in \CC(\Ati_{\!H}^* \times \Ati_{\!K}^*)$ are contained in the same $D_{C_1} = D_{C_2} \in \DD$ and consider vertices $(\verti_1, \vertii_1)\in \Verts C_1$ and $(\verti_2, \vertii_2)\in \Verts C_2$. Since $(\verti_1, \vertii_1)$ and $(\verti_2, \vertii_2)$ belong to the same connected component of $\schreier(H) \times \schreier(K)$ there must exist two reduced walks $\walki_H \equiv \verti_1 \xrwalk{\ } \verti_2$ in $\schreier(H)$ and $\walki_K \equiv \vertii_1 \xrwalk{\ } \vertii_2$ in $\schreier(K)$ reading the same label, say $w$. Since $\verti_1,\verti_2 \in \Ati_{\!H}^*$ and $\vertii_1,\vertii_2 \in \Ati_{\!K}^*$ this means that $\walki_H$ and $\walki_K$ are reduced walks reading $w$ in $\Ati_{\!H}^*$ and $\Ati_{\!K}^*$ respectively. Hence $(\verti_1,\verti_2) \xrwalk{w} (\vertii_1,\vertii_2)$ in $\Ati_{\!H}^* \times \Ati_{\!K}^*$, and $C_1 = C_2$, proving the injectivity of $\varepsilon$. 

For the second claim it is enough to see that for any $C\in \CC$, $\core^*(D_C ) \,=\, \core^*(C) \,\subseteq\, C\subseteq D_C$, and hence $\rk(C)=\rk(D_C )$ and $\rrk(C)=\rrk(D_C )$. All inclusions are immediate except $\core^*(D_C)\subseteq \core^*(C)$; to see this one, note that any non-trivial, cyclically reduced, closed walk $\walki$ in $D_C$ projects to a non-trivial, cyclically reduced, closed walk in $\schreier(H)$ and $\schreier(K)$, respectively; so, these projections are, respectively, in $\core^*(\schreier(H))=\Ati_{\hspace{-1pt}H}^*$ and $\core^*(\schreier(K))=\Ati_{\hspace{-1pt}K}^*$ and therefore $\walki$ was, in fact, in $C$. 

For the last claim, it remains to see that every $D\in \DD \setminus \TT_\DD$ belongs to the image of $\varepsilon$. In fact, take a non-trivial, cyclically reduced, closed walk in $D$ and, by the same argument as before, it must be contained in $\Ati_{\hspace{-1pt}H}^* \times \Ati_{\hspace{-1pt}K}^*$.
\end{proof}

\begin{cor} \label{cor: rrk =}
$\rrk(\Ati_H^* \times \Ati_K^*) = \rrk(\schreier(H)\times \schreier(K))$.    \qed 
\end{cor}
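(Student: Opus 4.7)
My plan is to deduce this corollary directly from Proposition~\ref{prop: beta injective}, by computing both sides as sums of reduced ranks over non-tree components and identifying them via the bijection $\varepsilon$.

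First I would observe that, by the very definition of reduced rank of a (possibly disconnected) graph, one has
\[
\rrk(\Ati_H^* \times \Ati_K^*) \,=\, \sum_{C \in \CC} \rrk(C) \quad \text{and} \quad \rrk(\schreier(H)\times \schreier(K)) \,=\, \sum_{D \in \DD} \rrk(D),
\]
where $\CC = \CC(\Ati_H^* \times \Ati_K^*)$ and $\DD = \CC(\schreier(H)\times \schreier(K))$. Since tree components contribute $\rrk = 0$, each of these sums can be restricted to non-tree components, namely $\CC \setminus \TT_\CC$ and $\DD \setminus \TT_\DD$, respectively.

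Now I would invoke Proposition~\ref{prop: beta injective}, which gives two crucial facts: (i) $\varepsilon$ preserves (reduced) rank on every component of $\CC$, so in particular $\rrk(C) = \rrk(\varepsilon(C))$ for every $C \in \CC \setminus \TT_\CC$; and (ii) $\varepsilon$ restricts to a bijection $\CC \setminus \TT_\CC \to \DD \setminus \TT_\DD$. Combining these,
\[
\rrk(\Ati_H^* \times \Ati_K^*) \,=\, \sum_{\mathclap{C \in \CC \setminus \TT_\CC}} \rrk(C) \,=\, \sum_{\mathclap{C \in \CC \setminus \TT_\CC}} \rrk(\varepsilon(C)) \,=\, \sum_{\mathclap{D \in \DD \setminus \TT_\DD}} \rrk(D) \,=\, \rrk(\schreier(H)\times \schreier(K)),
\]
which is exactly the claim.

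There is essentially no obstacle here — the corollary is a pure bookkeeping consequence of Proposition~\ref{prop: beta injective}. The only subtlety worth flagging explicitly is that the equality relies on the fact that components of $\DD$ \emph{not} in the image of $\varepsilon$ are all trees (and so contribute nothing to the reduced rank), which is precisely what the restricted bijection in Proposition~\ref{prop: beta injective} encodes.
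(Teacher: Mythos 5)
Your argument is correct and is exactly the bookkeeping the paper intends: the corollary is stated with a \qed precisely because it follows from \Cref{prop: beta injective} by summing reduced ranks over components, discarding the tree components (which contribute zero), and transporting the remaining sum through the rank-preserving bijection $\varepsilon\colon \CC \setminus \TT_\CC \to \DD \setminus \TT_\DD$. Nothing is missing, and your explicit remark that the components of $\DD$ outside the image of $\varepsilon$ are trees is the right point to flag.
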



This is the final component required to complete the proof of the Strengthened Hanna-Neumann inequality, stated below.

\begin{thm}[W.~Neumann, \cite{kovacs_intersections_1990}]\label{thm: SHN}
For any $H,K\leqslant \Free[A]$, we have 
 \begin{equation}\label{eq: SHN}
\sum_{HwK\in H\backslash \Free[A]/K} \hspace{-10pt} \rrk(H^w\cap K)\,\leq\, 2\rrk(H)\rrk(K),
 \end{equation}
where $H^w=w^{-1}Hw$, and the sum runs over all the $w$'s in any set of double coset representatives for $H\backslash \Free[A]/K$. 
\end{thm}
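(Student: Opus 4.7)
The plan is to chain together the three preparatory results already established in the excerpt. First I would dispose of the easy cases. If $\rrk(H) = \infty$ or $\rrk(K) = \infty$, the right-hand side of \eqref{eq: SHN} is $\infty$ and the inequality is trivial. If $H = \Trivial$ (or $K = \Trivial$), then $H^w \cap K = \Trivial$ for every $w$, so every term of the sum is $0$ and both sides vanish. We may therefore assume $\Trivial \neq H, K \leqff[\fg] \Free[A]$, in which case $\Ati_H^* = \core^*(\stallings(H))$ and $\Ati_K^* = \core^*(\stallings(K))$ are finite.

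Next, I would rewrite the left-hand side of \eqref{eq: SHN} as a sum over connected components of the (possibly infinite) Schreier pull-back. By \Cref{lem: bijection}, the map $\varsigma$ sending $HwK \mapsto D_w$ is a bijection between $H\backslash \Free[A]/K$ and $\CC(\schreier(H)\times \schreier(K))$ satisfying $\rrk(D_w) = \rrk(H^w \cap K)$. Therefore,
\begin{equation*}
\sum_{HwK \in H\backslash \Free[A]/K} \hspace{-10pt} \rrk(H^w \cap K)
\,=\, \sum_{D \in \CC(\schreier(H)\times \schreier(K))} \hspace{-10pt} \rrk(D)
\,=\, \rrk(\schreier(H)\times \schreier(K)),
\end{equation*}
where the last equality is the definition of reduced rank for a disconnected graph. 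Then, \Cref{cor: rrk =} collapses the (generally infinite) Schreier pull-back to the finite pull-back of the strict cores: $\rrk(\schreier(H)\times \schreier(K)) = \rrk(\Ati_H^* \times \Ati_K^*)$. Finally, \Cref{prop: untros} yields $\rrk(\Ati_H^* \times \Ati_K^*) \leq 2\rrk(H)\rrk(K)$, and chaining the three (in)equalities delivers~\eqref{eq: SHN}.

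The main obstacle here is not the final assembly, which is essentially automatic, but rather making sure that all the bookkeeping is consistent: in particular, that the bijection of \Cref{lem: bijection} truly matches reduced ranks component by component (including the case of tree components and isolated basepoints, which contribute $0$ on both sides), and that \Cref{cor: rrk =} legitimately reduces the sum over infinitely many components of $\schreier(H) \times \schreier(K)$ to the finitely many in $\Ati_H^* \times \Ati_K^*$ without losing rank. Both of these points have already been handled by the excerpt's carefully stated lemmas, so the proof will amount to a short paragraph organizing them in order.
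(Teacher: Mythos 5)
Your proposal is correct and takes essentially the same route as the paper's proof: after dispatching the degenerate cases, it chains \Cref{lem: bijection}, \Cref{cor: rrk =}, and \Cref{prop: untros} in exactly the same order. The only cosmetic differences are that the paper also records explicitly that the sum is independent of the chosen double-coset representatives and dispatches the cyclic case separately, both of which are already implicit in the lemmas you invoke.
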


\begin{proof}
For any $w\in \Free[A]$, $h\in H$ and $k\in K$, we have $H^{hwk}\cap K=((H^h)^w \cap K)^k=(H^w\cap K)^k$ and so, $\rrk(H^w\cap K)=\rrk(H^{hwk}\cap K)$. Hence, the formula \eqref{eq: SHN} is independent from the chosen set of representatives for $H\backslash \Free[A]/K$. Moreover, if one of $H,K$ is trivial or cyclic then the left hand side is zero and the inequality is obvious. Also, if one of $H,K$ is not finitely generated then the right hand side is infinite and the inequality is again obvious. So, let us assume $\Trivial \neq H,K\leqslant\fg \Free[A]$. 

Now, applying successively \Cref{lem: bijection}, \Cref{cor: rrk =} and \Cref{prop: untros} we have:
\begin{align*}
\sum_{w\in H\backslash \Free[A]/K} \hspace{-12pt} \rk(H^w\cap K) 
&\,=\,
\rrk(\schreier(H)\times \schreier(K))\\
&\,=\,
\rrk(\Ati_H^* \times \Ati_K^*) \,\leq\,
2\rrk(H)\rrk(K) \,,
\end{align*}
as claimed.
\end{proof}

\begin{rem}
In particular, all but finitely many summands on the left hand side of \eqref{eq: SHN} are zero; \ie all the intersections of the form $H^w \cap K$ are trivial or cyclic, except for finitely many. Note, however, that the argument proving \Cref{cor: rrk =} also works replacing $\rrk$ to $\rk$. Since $\rk(\Ati_H^* \times \Ati_K^*)$ is finite, $\schreier(H)\times \schreier(K)$ can only have finitely many non-tree components; this means that all intersections $H^w \cap K$ are trivial except for finitely many, including the cyclic ones. 
\end{rem}

After proving this classical result, it is worth mentioning that Hanna Neumann first (on the inequality from \Cref{thm: Howson}) and Walter Neumann later (on the inequality from \Cref{thm: SHN}) already conjectured that the factor `2' could be eliminated from the right hand side of both inequalities; these are the famous `Hanna Neumann Conjecture' and `Strengthened Hanna Neumann Conjecture', proved much later by J. Friedman and I. Mineyev independently and almost simultaneously. See \cite{friedman_sheaves_2015}, \cite{mineyev_submultiplicativity_2012}, and the remarkable simplifications given by W. Dicks in \cite[Appendix B]{friedman_sheaves_2015} and \cite{dicks_simplified_2012}, respectively. This is a much deeper result making use of stronger techniques; we are not aware of any proof for these inequalities using only Stallings' machinery.

\begin{thm}[J. Friedman \cite{friedman_sheaves_2015}, I. Mineyev \cite{mineyev_submultiplicativity_2012}]\label{thm: Mineyev}
For any $H,K\leqslant \Free[A]$, we have 
 \begin{equation*}\label{eq: Mineyev}
\sum_{HwK\in H\backslash \Free[A]/K} \hspace{-15pt} \rrk(H^w\cap K)\,\leq\, \rrk(H)\rrk(K). \tag*{\qed}
 \end{equation*}
\end{thm}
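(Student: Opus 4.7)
The central obstruction is that all the combinatorics so far has been Stallings--Schreier: every time we estimated $\rk$ by Euler characteristic via $\sum_{\verti}(\deg(\verti) -2) = 2\rrk(\cdot)$, we paid a factor of $2$ because each edge touches two vertices. Merely refining the inequality $\deg_{\Ati}(\verti,\vertii)-2 \leq (\deg_{\Ati_{\!H}^*}(\verti)-2)(\deg_{\Ati_{\!K}^*}(\vertii)-2)$ cannot remove this factor, so the plan must inject an extra ingredient exterior to the pull-back theory: I will follow the Mineyev--Dicks route, which replaces the symmetric $\sum(\deg - 2)$ count by an \emph{oriented} count built from a left-invariant order on $\Free[A]$.

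\medskip

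The setup is as follows. Fix a left-invariant total order $<$ on $\Free[A]$ (free groups are bi-orderable by Magnus). Pulling $<$ back through the coset projection $\Free[A] \onto H\backslash \Free[A]$ gives, at each vertex $Hg$ of $\schreier(H)$, a canonical choice of incoming edge (the one whose other endpoint is minimal in the appropriate coset class); the complementary edges form a spanning outtree whose missing edges bijectively parametrize a basis of $H$, so $\rrk(H)$ is the count of ``non-selected'' positive arcs. I would do the same for $\schreier(K)$, and hence (using the product order) for each connected component $D_u$ of $\schreier(H)\times\schreier(K)$, yielding a rank count for $H^u\cap K$ entirely in terms of ordered-selected arcs. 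The key combinatorial step is then to construct an injection
\begin{equation*}
\bigsqcup_{HuK} \{\text{non-selected positive arcs in } D_u\} \,\into\, \{\text{non-sel. pos. arcs in } \schreier(H)\} \times \{\text{non-sel. pos. arcs in } \schreier(K)\},
\end{equation*}
which, via \Cref{lem: bijection}, delivers the desired $\sum_u \rrk(H^u\cap K) \leq \rrk(H)\rrk(K)$.

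\medskip

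The main obstacle is the construction and verification of that injection: one must ensure that the order-induced ``selection'' in the pull-back is compatible with the selections on the two factors, and that a non-selected positive arc $((g,h),(ga,ha))$ in a component $D_u$ can be sent injectively to a pair $(e_H, e_K)$ of non-selected positive arcs without collisions across different double cosets $HuK$. This is precisely where Mineyev's $\ell^2$-Hilbert-module argument (or Dicks' combinatorial simplification using the Magnus ordering) does the real work, and no proof at the purely Stallings-graphical level is known. An alternative would be Friedman's sheaf-theoretic route, expressing $\rrk$ as the maximum excess of a sheaf on the bouquet $\bouquet_A$ and proving a multiplicativity-of-excess lemma under tensor product of sheaves; either way, the algebro-topological ingredient exogenous to our pull-back calculus is unavoidable.
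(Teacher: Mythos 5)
First, note that the paper itself contains no proof of \Cref{thm: Mineyev}: it is stated with a \qed and attributed to Friedman and Mineyev (with Dicks' simplifications), and the surrounding text explicitly remarks that no proof of the inequality \emph{without} the factor $2$ is known using only Stallings machinery. What the monograph actually proves by pull-back methods is the weaker inequality with the factor $2$ (\Cref{thm: Howson} and \Cref{thm: SHN}). So there is no internal argument to compare yours against; the question is only whether your proposal constitutes a proof on its own.

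It does not: there is a genuine gap, and it sits exactly where you flag it. Your diagnosis of the obstruction is correct (every Euler-characteristic count $\sum_{\verti}(\deg(\verti)-2)=2\rrk(\cdot)$ pays a factor of $2$, and no refinement of the degree inequality in the pull-back removes it), and your outline of the Mineyev--Dicks route --- a left-invariant order on $\Free[A]$, an order-induced selection of edges, and an injection from non-selected edges of the components of the pull-back into pairs of non-selected edges of the two factors --- is a fair description of the known strategy. But that injection \emph{is} the content of the theorem, and you do not construct it; you explicitly delegate it to Mineyev's $\ell^2$-module argument or to Dicks' combinatorial lemma (that an edge critical for $H^u\cap K$ is simultaneously $H$-critical and $K$-critical, compatibly across double cosets). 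Until that step is carried out, nothing beyond the factor-$2$ inequality already proved in the paper has been established. There are also smaller imprecisions that would need repair in a genuine write-up: the selection must be made on (orbits of) edges of the Cayley tree, equivalently on the strict cores, rather than on the full infinite Schreier automata, where ``a spanning outtree whose missing edges parametrize a basis'' does not literally make sense; and one must deal with the infinitely many tree components of $\schreier(H)\times\schreier(K)$, which contribute nothing to the left-hand side but must not break injectivity. In short: your proposal is an accurate reading guide to the literature, consistent with how the paper treats the result, but it is not a proof.
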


\medskip

Not surprisingly, the pullback construction is also useful to solve other problems related to intersections, like the coset intersection problem and the study of malnormality. Recall that, in an arbitrary group $G$, the intersection of two cosets $Hu$ and $Kv$ is either empty ($Hu\cap Kv=\varnothing$)
or exactly a coset of the intersection, \ie $Hu\cap Kv=(H\cap K)w$, for some $w\in G$. 

\begin{named}[Coset Intersection Problem in $G=\pres{A}{R}$, $\CIP(G)$]

Given two finite subsets $R,S \subseteq (A^{\pm})^*$ and two words $u,v\in (A^{\pm})^*$, decide whether the intersection $\gen{R} u\cap \gen{S} v$ is empty and, in the negative case, compute a representative.  
\end{named}

Recall that, by \Cref{lem: recognized}.\ref{item: coset recognized}, given a subgroup $H \leqslant \Free[A]$ and $u,w \in \Free[A]$, $w \in H u$ if and only if $w$ is the label of some reduced walk $H\xrwalk{\ } Hu$ in $\schreier(H)$.

\begin{thm}
Let $H,K$ be subgroups of a free group $\Free[A]$, and let $u,v \in \Free[A]$. Then, $Hu \cap Kv \neq \varnothing$ if and only if $(H, K)$ and $(Hu, Kv)$ belong to the same connected component in $\schreier(H) \times \schreier(K)$. In particular, $\CIP(\Free[A])$ is computable.
\end{thm}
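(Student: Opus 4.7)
The plan is to establish the equivalence directly from the defining properties of the Schreier automata, and then handle computability by working with finite deterministic sub-automata of them.

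For the equivalence, I will use that $\schreier(H)$ is deterministic, saturated, and recognizes $H$ at $\bpH$ (\Cref{prop: propietats Sch}). By \Cref{lem: recognized}(ii), a word $w$ belongs to $Hu$ if and only if it labels a reduced walk $\bpH \xrwalk{w} \bpH u$ in $\schreier(H)$; the analogous statement holds for $\schreier(K)$ and $Kv$. By the very definition of pullback (\Cref{def: product of automata}), a walk in $\schreier(H) \times \schreier(K)$ from $(\bpH,\bpK)$ to $(\bpH u,\bpK v)$ is precisely a pair of equally-labeled walks, one from $\bpH$ to $\bpH u$ in $\schreier(H)$ and one from $\bpK$ to $\bpK v$ in $\schreier(K)$. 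Hence the two vertices lie in the same connected component of $\schreier(H) \times \schreier(K)$ if and only if there exists a common $w \in Hu \cap Kv$, proving the equivalence.

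For computability, the obstacle is that Schreier automata are typically infinite; I circumvent this by constructing a finite deterministic sub-automaton $\Ati_{\hspace{-1pt}H} \subseteq \schreier(H)$ containing both $\bpH$ and $\bpH u$, and analogously $\Ati_{\hspace{-1pt}K} \subseteq \schreier(K)$ containing both $\bpK$ and $\bpK v$. Starting from the finite, computable Stallings automaton $\stallings(H)$ (\Cref{thm: Stallings bijection2}), I trace the unique walk in $\schreier(H)$ reading (a reduced form of) $u$ from $\bpH$: by \Cref{lem: St vs Sch}(ii), once this walk leaves $\stallings(H)$ it enters a single Cayley branch whose tree structure forbids any reduced return, so it visits only finitely many new arcs, which I append to $\stallings(H)$ to form $\Ati_{\hspace{-1pt}H}$. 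Construct $\Ati_{\hspace{-1pt}K}$ in the same way. The pullback $\Ati_{\hspace{-1pt}H} \times \Ati_{\hspace{-1pt}K}$ is then a finite computable $A$-automaton in which the connectivity of $(\bpH,\bpK)$ and $(\bpH u,\bpK v)$ is decidable by routine graph search.

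The main point to justify is that this finite pullback already gives the correct answer. One direction is immediate, since $\Ati_{\hspace{-1pt}H} \times \Ati_{\hspace{-1pt}K}$ is a sub-automaton of $\schreier(H) \times \schreier(K)$. For the converse, suppose $Hu \cap Kv \neq \varnothing$, pick any $w$ in the intersection, and let $w'$ be its free reduction; since $w' \in Hu$ as well, by determinism $w'$ labels a unique reduced walk $Q_H$ from $\bpH$ to $\bpH u$ in $\schreier(H)$. The key observation is that $Q_H$ is contained in $\Ati_{\hspace{-1pt}H}$: the portion inside $\stallings(H)$ stays there, and when $Q_H$ enters the Cayley branch containing $\bpH u$ --- necessarily at the same deficient vertex at which the walk reading $u$ enters the same branch --- it must follow the unique tree path to $\bpH u$, which is precisely the finite piece appended in forming $\Ati_{\hspace{-1pt}H}$. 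Analogously $Q_K \subseteq \Ati_{\hspace{-1pt}K}$, so stitching $Q_H$ and $Q_K$ arc by arc yields a walk in $\Ati_{\hspace{-1pt}H} \times \Ati_{\hspace{-1pt}K}$ witnessing the connection. Whenever this witness exists, its label is an explicit element of $Hu \cap Kv$, completing the computability claim.
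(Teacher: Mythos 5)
Your proof is correct and follows essentially the same route as the paper: the equivalence by identifying walks in the pull-back with pairs of equally-labeled walks in the two Schreier automata, and computability by enlarging each Stallings automaton with the finite thread reading $u$ (resp.\ $v$) and taking the finite pull-back. Your extra argument that any reduced witness walk must stay inside the enlarged automata (via the tree structure of the Cayley branches) just makes explicit a point the paper states without detail.
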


\begin{proof}
If the vertices $(H, K)$ and $(Hu, Kv)$ belong to the same connected component of the pullback $\schreier(H) \times \schreier(K)$ then there exists $w\in \Free[A]$ such that $Hw=Hu$ and $Kw=Kv$, which means that $w\in Hu\cap Kv\neq \varnothing$. The very same argument read backwards proves the converse implication.

In order to prove the computability of $\CIP(\Free[A])$ it is enough to realize that if $H$ and $K$ are finitely generated, then $\stallings(H)$ and $\stallings(K)$ are finite and computable, and hence (since $u$ an $v$ are finite words) the parts of the corresponding Schreier automata involved in checking the coset intersection emptiness property above are finite and computable as well. Concretely, it is enough to consider the product $\Ati$ of the automata obtained by eventually enlarging $\stallings(H)$ (\resp $\stallings(K)$) through $\schreier(H)$ (\resp $\schreier(K)$) with a thread reading $u$ (resp $v$) from $H$ (\resp $K$). Finally, as proved in the first paragraph, a representative for the coset intersection is obtained by reading a walk $(H, K) \xrwalk{\ } (Hu, Kv)$ in $\Ati$, in case it exists.
\end{proof}

A subgroup $H$ of an arbitrary group $G$ is said to be \defin[malnormal subgroup]{malnormal} (in~$G$) if, for every $g\in G \setmin H$, the intersection $H^g \cap H=\Trivial$. We can use our geometric description of intersections within free groups to characterize malnormality.

\begin{prop}\label{prop: malnormality}
Let $H$ be a subgroup of a free group $\Free$. Then $H$ is malnormal in $\Free$ if and only if every connected component of $\stallings(H) \times \stallings(H)$ not containing the basepoint is a tree. \qed
\end{prop}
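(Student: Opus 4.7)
The plan is to identify each connected component of $\stallings(H)\times\stallings(H)$ with a subgroup of the form $H^u\cap H^v$, and then translate malnormality into a property of these intersections. First I note that because $\stallings(H)$ is deterministic, any walk in the product starting at $(\bp,\bp)$ and reading a word $w$ projects to two walks in $\stallings(H)$ with the same label and origin $\bp$, whose endpoints must coincide; hence the basepoint component of $\stallings(H)\times\stallings(H)$ is exactly the diagonal $\set{(\verti,\verti)\st\verti\in\Verts\stallings(H)}$. Combining \Cref{prop: prod props} and \Cref{lem: recognized}, the subgroup recognized by a component $C$ at a vertex $(\verti,\vertii)$ is $\gen{\stallings(H)}_\verti\cap\gen{\stallings(H)}_\vertii=H^{u_\verti}\cap H^{u_\vertii}$, a conjugate of $H^{u_\verti u_\vertii^{-1}}\cap H$, where $u_\verti,u_\vertii$ label reduced walks $\bp\xrwalk{\ }\verti$ and $\bp\xrwalk{\ }\vertii$ respectively; and since $\stallings(H)=\core(\schreier(H))$, the condition $\verti\neq\vertii$ is equivalent to $u_\verti u_\vertii^{-1}\notin H$. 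From here the forward direction is immediate: if $H$ is malnormal and $C$ is non-basepoint, picking $(\verti,\vertii)\in\Verts C$ off the diagonal gives $u_\verti u_\vertii^{-1}\notin H$, whence $H^{u_\verti u_\vertii^{-1}}\cap H=\Trivial$, the fundamental group of $C$ is trivial, and a connected involutive digraph with trivial fundamental group is a tree.

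For the converse I would argue by contrapositive, working inside the larger pull-back $\schreier(H)\times\schreier(H)$. Suppose $w\in\Free[A]\setmin H$ satisfies $H^w\cap H\neq\Trivial$. By \Cref{lem: bijection} (applied with $K=H$), this intersection is the fundamental group of a non-basepoint component $D$ of $\schreier(H)\times\schreier(H)$; being nontrivial, $D$ is not a tree, and so it supports a nontrivial cyclically reduced closed walk at some vertex $v=(\verti,\vertii)$. Projecting to each coordinate yields a nontrivial cyclically reduced closed walk at every vertex it visits in $\schreier(H)$, forcing all those vertices into $\core^*(\schreier(H))=\core^*(\stallings(H))\subseteq\stallings(H)$; hence the entire cycle lives inside $\stallings(H)\times\stallings(H)$. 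The same determinism argument used above shows that the basepoint component of $\schreier(H)\times\schreier(H)$ is the diagonal, so $D$ being non-basepoint forces $\verti\neq\vertii$. Consequently the component of $\stallings(H)\times\stallings(H)$ containing $v$ is non-basepoint and is not a tree, contradicting the hypothesis.

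The delicate point is this backward direction: one cannot naively try to read $w$ inside $\stallings(H)$, because a word representing an element of $\Free[A]\setmin H$ may either fail to be readable from $\bp$ or end at a vertex pruned out of the $\bp$-core. The witness must therefore be produced in $\schreier(H)\times\schreier(H)$ and then transported into $\stallings(H)\times\stallings(H)$ using the key observation that cyclically reduced closed walks cannot escape the strict core of the Schreier automaton, which coincides with that of $\stallings(H)$. Everything else is routine determinism bookkeeping together with the standard dictionary between vertices of $\schreier(H)$ and cosets of $H$ in $\Free[A]$.
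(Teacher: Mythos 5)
Your argument is correct, and it follows exactly the route the paper intends (the result is stated with no proof precisely because it is meant to follow from \Cref{prop: prod props}, \Cref{lem: recognized}, \Cref{lem: bijection} and the strict-core transfer argument of \Cref{prop: beta injective}, which is what you use): determinism identifies the basepoint component with the diagonal and each off-diagonal component with a conjugate of some $H^w\cap H$ with $w\notin H$, and your passage through $\schreier(H)\times\schreier(H)$ with a cyclically reduced witness cycle is the same device the paper employs to bring the witness back inside $\stallings(H)\times\stallings(H)$. No gaps; the converse's subtlety you flag (not reading $w$ in the core directly) is handled correctly.
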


\begin{rem}
Note that, by a counting argument, apart from the diagonal component of $\stallings(H) \times \stallings(H)$ which is obviously $\stallings(H)$ itself (just meaning that $H\cap H=H$), there must be other connected components in the pullback. These are the relevant ones in order to analyze the malnormality of $H$. 
\end{rem}

\begin{cor}[\citenr{baumslag_malnormality_1999}]
In a free group it is algorithmically decidable whether a finitely generated subgroup is malnormal. \qed
\end{cor}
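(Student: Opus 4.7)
The plan is to show that everything needed to check the condition in the preceding proposition is algorithmically available once $H \leqfg \Free[A]$ is presented by a finite set of generators. First I would compute $\stallings(H,A)$ from the given generating set by constructing the flower automaton and running a folding process (\Cref{thm: Stallings bijection2} guarantees both that this terminates and that the resulting finite reduced $A$-automaton is $\stallings(H,A)$).

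Next I would form the pull-back $\stallings(H)\times \stallings(H)$; by \Cref{prop: prod props}\ref{item: pullback computable} this is again a finite and explicitly computable $A$-automaton (its vertex set is the Cartesian square of $\Verts\stallings(H)$, and its arcs are determined by pairs of identically labeled arcs in the two factors). Then I would compute its connected components---standard graph traversal on a finite graph---and single out the component $C_0$ containing the basepoint $(\bp,\bp)$.

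By \Cref{prop: malnormality}, $H$ is malnormal in $\Free[A]$ if and only if every connected component of $\stallings(H)\times \stallings(H)$ other than $C_0$ is a tree. Since each such component $C$ is finite and involutive, checking whether it is a tree is routine: it suffices to compare $\card \Edgs^+ C$ with $\card \Verts C - 1$, or equivalently to verify $\rk(C)=0$. Running this test on each of the finitely many components distinct from $C_0$ therefore yields the decision.

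There is no real obstacle here: the heavy lifting was already carried out in building the bijection of \Cref{thm: Stallings bijection2}, in the computability of pull-backs (\Cref{prop: prod props}\ref{item: pullback computable}), and in the graphical characterization of malnormality (\Cref{prop: malnormality}). The present corollary merely assembles these ingredients into a terminating decision procedure, so the only point that deserves emphasis in a formal write-up is the observation that the condition ``every non-basepoint component is a tree'' can be checked effectively on a finite pull-back.
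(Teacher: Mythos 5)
Your proposal is correct and follows exactly the route the paper intends (the corollary is stated as an immediate consequence of \Cref{prop: malnormality} together with the computability of $\stallings(H)$ and of pull-backs from \Cref{thm: Stallings bijection2} and \Cref{prop: prod props}): compute the finite pull-back $\stallings(H)\times\stallings(H)$, discard the basepoint component, and test each remaining component for being a tree via $\card\Edgs^{+}C=\card\Verts C-1$. Nothing is missing; this is the same argument, spelled out.
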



\begin{exm}
The existence of malnormal subgroups of infinite rank in $\Free_2$ was needed at some point in the literature, and proved with quite technical algebraic arguments (see, \eg \parencite{das_controlled_2015}). Using Stallings' automata, this fact can be proved by just inspecting a labeled automaton like the one in \Cref{fig: nonfg malnormal}. 
\vspace{5pt}
\begin{figure}[H]
\centering
\begin{tikzpicture}[shorten >=1pt, node distance=1.2cm and 2cm, on grid,auto,auto,>=stealth']

\newcommand{\dx}{0.85}
\newcommand{\dy}{0.8}
\node[] (0)  {};

\foreach \x in {0,...,4}
\foreach \y in {0,...,4} 
{
\node[state] (\x\y) [above right = \y*\dy and \x*\dx of 0] {};
\node[state,accepting] (bp)  {};
}
\node[state] (50) [right = \dx*1.2 of 40] {};
\node[] (60) [right = \dx of 50] {};
\node[state] (05) [above = \dy*1.2 of 04] {};
\node[] (06) [above = \dy of 05] {};
\node[] (444) [above right = \dy*0.6 and \dx*0.6 of 44] {\raisebox{5pt}{\reflectbox{$\ddots$}}};
\node[state] (55) [above right = \dy*1.2 and \dx*1.2 of 44] {};
\node[] (66) [above right = \dy/2 and \dx/2 of 55] {\raisebox{5pt}{\reflectbox{$\ddots$}}};
\draw[->,blue,dotted,thick] (40) edge (50);
\draw[->,blue,dotted,thick] (50) edge (60);
\draw[->,red,dotted,thick] (04) edge (05);
\draw[->,red,dotted,thick] (05) edge (06);
\draw[->,red,dotted,thick] (50) edge (55);
\draw[->,blue,dotted,thick] (05) edge (55);

\path[draw=none,red] (00) edge node[left] {$b$} (01);
\path[draw=none,blue] (00) edge node[above] {$a$} (10);

\foreach \x [count=\xi] in {0,...,3}
{\draw[->,blue] (\x0) edge (\xi0);
\draw[->,red] (0\x) edge (0\xi);}

\foreach \y [count=\yi] in {1,...,4}{
\pgfmathsetmacro{\yy}{\y-1}
\foreach \x [count=\xi] in {0,...,\yy}
{
\draw[->,blue] (\x\y) edge (\xi\y);
\draw[->,red] (\y\x) edge (\y\xi);
}}
\end{tikzpicture}
\caption{A malnormal subgroup of \protect{$\Free[\{a,b\}]$} of infinite rank: ${H = \gen{a^kb^k a^{-k}b^{-k} \st k \geq 1}}$}
\label{fig: nonfg malnormal}
\end{figure}
\end{exm}
To conclude this section, we present a final application of the pullback technique, offering a concise and elegant proof for free groups of a well-known result that holds in general (the proof for the general case requires more sophisticated algebraic tools, such as the Kurosh Subgroup Theorem).

\begin{prop}\label{prop: int ff}
Let $G$ be a group and let $H_i \leqff K_i\leqslant G$, for $i\in [1,n]$. Then, $H_1\cap \cdots \cap H_n\leqff K_1\cap \cdots \cap K_n$. 
\end{prop}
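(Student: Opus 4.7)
The statement reduces by induction to the case $n = 2$: having $(H_1 \cap \cdots \cap H_{n-1}) \leqff (K_1 \cap \cdots \cap K_{n-1})$ from the inductive hypothesis, one applies the binary case to this pair and $H_n \leqff K_n$ to conclude. So my plan is to treat the binary case, assuming $H_i \leqff K_i$ in $\Free[A]$ for $i = 1, 2$, by using the pull-back of Stallings automata together with Corollary \ref{cor: incl => ff} to detect $H_1 \cap H_2$ as a free factor of $K_1 \cap K_2$.

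First, I fix free-factor decompositions $K_i = H_i * L_i$ and choose bases $B_i^H$ of $H_i$ and $B_i^L$ of $L_i$. I then build auxiliary automata $\Ati_i = \stallings(H_i, A) \vee \flower(B_i^L)$, obtained by wedging the two pieces at the basepoint. By construction $\stallings(H_i) \subseteq \Ati_i$ as a sub-involutive digraph, and $\gen{\Ati_i} = \gen{H_i, B_i^L} = K_i$. The critical property, which drives the rest of the argument, is that
$$\rk(\Ati_i) \,=\, \rk(\stallings(H_i)) + |B_i^L| \,=\, \rk(H_i) + \rk(L_i) \,=\, \rk(K_i) \,=\, \rk(\stallings(K_i));$$
consequently $\loss(\Ati_i) = 0$, every reduction $\Ati_i \to \stallings(K_i)$ consists of open foldings, and by Lemma \ref{lem: grrec}(i) the map $\rlab\colon \pi(\Ati_i) \to K_i$ is a group isomorphism.

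Next, I form the pull-back $\Ati = \Ati_1 \times \Ati_2$. Since pull-back preserves subautomaton inclusion, $\stallings(H_1) \times \stallings(H_2) \subseteq \Ati$; and taking cores at the basepoint (using Corollary \ref{cor: x} for the first equality) yields
$$\stallings(H_1 \cap H_2) \,=\, \core_{\bp}\bigl(\stallings(H_1) \times \stallings(H_2)\bigr) \,\subseteq\, \core_{\bp}(\Ati) \,\subseteq\, \Ati.$$
The key remaining claim is that $\pi_{\bp}(\Ati) \isom K_1 \cap K_2$ via $\rlab$; this should follow from an analogous rank/loss computation on $\Ati$, leveraging the loss-free structure inherited from each $\Ati_i$ together with the free-product decompositions $K_i = H_i * L_i$, which force the pull-back to inherit zero loss. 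Once this is in place, Corollary \ref{cor: incl => ff} applied to the subdigraph inclusion $\stallings(H_1 \cap H_2) \subseteq \Ati$ delivers
$$H_1 \cap H_2 \,=\, \pi\bigl(\stallings(H_1 \cap H_2)\bigr) \,\leqff\, \pi(\Ati) \,\isom\, K_1 \cap K_2,$$
as desired.

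The hard part will be establishing $\pi(\Ati) \isom K_1 \cap K_2$: the auxiliary $\Ati_i$ are generally non-deterministic (Proposition \ref{prop: prod props}\ref{item: int det} does not apply directly, and one can construct non-deterministic pairs whose pull-backs recognize strictly smaller subgroups than the intersection of recognized subgroups). The argument must use the specific wedge form $\Ati_i = \stallings(H_i) \vee \flower(B_i^L)$ and the fact that $K_i = H_i * L_i$ in normal form to show that every element of $K_1 \cap K_2$ lifts to a synchronized pair of walks in $\Ati_1$ and $\Ati_2$ with a common label, so that the loss of $\Ati$ is zero and $\rlab$ remains an isomorphism. If this compatibility step can be made precise, the rest of the proof is formal and the free factor conclusion follows immediately from the already-established machinery.
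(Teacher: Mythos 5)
Your reduction to the binary case and the final appeal to \Cref{cor: incl => ff} are sound, but the argument has a genuine gap precisely where you flag it: nothing you say (and nothing in the paper's machinery) establishes that $\rlab\colon \pi(\Ati_1\times\Ati_2)\to\Free[A]$ is injective with image exactly $K_1\cap K_2$ for your wedge automata $\Ati_i=\stallings(H_i)\vee\flower(B_i^L)$. The identity $\gen{\Ati_1\times\Ati_2}=\gen{\Ati_1}\cap\gen{\Ati_2}$ is only available when both factors are \emph{deterministic} (\Cref{prop: prod props}), and your wedges are non-deterministic in general: a closed walk in the pull-back is a pair of closed walks in $\Ati_1$ and $\Ati_2$ carrying the \emph{same unreduced label}, whereas an element $w\in K_1\cap K_2$ is spelled in $\Ati_1$ according to its $H_1*L_1$ normal form (in the chosen basis $B_1^L$) and in $\Ati_2$ according to its $H_2*L_2$ normal form, and these spellings need not coincide; so surjectivity onto $K_1\cap K_2$ is unproven. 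Likewise, the ``zero loss of the pull-back'' needed for injectivity does not follow from $\loss(\Ati_1)=\loss(\Ati_2)=0$ by anything established in the paper (and the loss machinery is only developed for finite automata, while the statement carries no finite-generation hypothesis; your step $\rlab\colon\pi(\Ati_i)\isom K_i$ is fine, and can even be seen without loss since $\pi(\Ati_i)\isom H_i*\Free[B_i^L]$ maps onto $H_i*L_i$, but the analogous claim for the pull-back is exactly the hard content, and it is missing).

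The paper sidesteps this synchronization problem by changing the basis instead of wedging. It first proves the asymmetric statement: if $H\leqff K$ then $H\cap L\leqff K\cap L$ for \emph{every} $L\leqslant\Free[A]$. Choosing a basis $A'$ of $K$ that extends a basis of $H$ and working inside $K$ with respect to $A'$, the automaton $\stallings(H,A')$ is a one-vertex sub-bouquet (loops labeled by $A'\cap H$), so the pull-back $\stallings(H,A')\times\stallings(K\cap L,A')$ is literally the subautomaton of $\stallings(K\cap L,A')$ spanned by the arcs labeled in $A'\cap H$; everything is deterministic, \Cref{prop: prod props} applies, and \Cref{cor: incl => ff} gives $H\cap L\leqff K\cap L$ with no finiteness or loss considerations. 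The symmetric binary case then follows by applying this twice, $H_1\cap H_2\leqff K_1\cap H_2\leqff K_1\cap K_2$, using transitivity of free factors, and induction gives arbitrary $n$. To salvage your construction you would have to prove the lifting/synchronization claim you mention; the basis-change trick is the standard way to make the pull-back argument go through.
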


\begin{proof}[Proof for the case $G=\Free$ free]
Let us first see that, for any $L\leqslant \Free$, $H\leqslant\ff K\leqslant \Free$ implies $H\cap L\leqslant\ff K\cap L$. Consider a basis $A$ for $K$ which extends a basis for $H$ and observe that $\stallings(H,A)$ is, simply, a graph with a single vertex and one loop for each element in $A \cap H$. Consider also $\stallings(K\cap L, A)$ 
and let us compute ${H\cap L}=H\cap (K\cap L)$
by performing the pullback of the corresponding Stallings' automata: clearly, $\stallings(H,A)\times \stallings(K\cap L,A)$ is the $A$-subautomaton of $\stallings(K\cap L,A)$ determined by the arcs with label in $A \cap H$. By \Cref{cor: incl => ff}, $H\cap L\leqff K\cap L$, as we wanted to see. 

Now let us do induction on $n$. Applying the previous fact twice, we obtain $H_1\cap H_2\leqff K_1\cap H_2\leqff K_1 \cap K_2$ and so, $H_1\cap H_2\leqff K_1\cap K_2$, which is the case $n=2$. For arbitrary $n\geq 3$, applying again the previous fact and the inductive hypothesis, we have $(H_1\cap \cdots \cap H_{n-1})\cap H_n\leqff (K_1\cap \cdots \cap K_{n-1})\cap H_n\leqff (K_1\cap \cdots \cap K_{n-1})\cap K_n$. This completes the proof. 
\end{proof}

\section{Extensions of free groups} \label{sec: extensions}

In this final section we will study \defin[extension of free groups]{extensions of free groups}, \ie the relative position of pairs of subgroups $H\leqslant K\leqslant \Free[A]$. An elementary form of group extension occurs when $H$ is a free factor of $K$, denoted $H\leqff K$. Of course, many extensions of subgroups $H\leqslant K\leqslant \Free[A]$ are not of this type; a necessary condition being $\rk(H)\leq \rk(K)$. Note, however, that this inequality in rank is not sufficient: for example, the commutator
of two ambient free generators $a,b\in A$ is not a primitive element in the ambient group and so, the cyclic subgroup it generates, $H=\gen{[a,b]}\leq \gen{a,b}=K$, is not a free factor. Another straightforward example is between cyclic subgroups: for $\trivial \neq w\in \Free[A]$ and $r,s \neq 0$, $\gen{w^r}\leqslant \gen{w^s}$ if and only if $s\,\vert\, r$ while $\gen{w^r}\leqff \gen{w^s}$ if and only if $|r|=|s|$. 

In this regard, the situation seems very different from that in more classical algebraic contexts. In the theory of vector spaces, it is well known that any subspace is always a direct summand: if $E\leqslant F\leqslant V$ then $E\leqslant_{\oplus} F$ (meaning that $F=E\oplus L$, for some complementary subspace $L\leqslant V$). In the free abelian group $\ZZ^n$ the analogous exact statement is not true: $H=\gen{(2,2)}\leqslant \ZZ^2=K$ is not a direct summand because the coordinates of this vector are not coprime to each other. However, the following variation of the vector space situation holds: \emph{``for any $H\leqslant K\leqslant \ZZ^n$ there exists a unique $L\leqslant \ZZ^n$ such that $H\leqfi L\leqslant_{\oplus} K$''}. In this sense, we can think that the statement \emph{``$H\leqslant K$ implies $H\leqslant_{\oplus} K$''} is close to be true in $\ZZ^n$, namely, true up to an appropriate finite index step. 

In the 1950's, M. Takahasi discovered a similar behavior in the context of free groups: one just needs to relax a bit more the relation between $H$ and $L$ (as we make precise below, we have to give up the finite index condition, replace unicity of $L$ by a finite number of possibilities for $L$, and, of course, exclude from the picture the non-(finitely generated) subgroups). We present a modern re-statement of Takahasi's Theorem below, along with a proof using Stallings' automata. This approach was discovered more recently and independently by several authors (see~\cite{miasnikov_algebraic_2007} for a joint exposition of this and related results).

Let us start with the notion of algebraic extension, introduced by \citeauthor{kapovich_stallings_2002} in \parencite{kapovich_stallings_2002}, as a sort of dual to the notion of free factor. 

\begin{defn}
A subgroup extension $H\leqslant K \leqslant \Free[A]$ is called \defin[algebraic extension]{algebraic}, denoted by $H\leqalg K$, if $H$ is not contained in any proper free factor of $K$, \ie if $K=L*M$ and $H\leqslant L$ imply $L=K$ (and $M=1$). We denote by $\AAEE(H)$ the set of algebraic extensions of $H$ (within $\Free[A]$). 
\end{defn}


As first examples of algebraic extensions we have the ones given above: for every $w\in \Free[A]$ and $s, r\in \NN$, $\gen{w^{s r}}\leqalg \gen{w^r}$; also, $\gen{[a,b]}\leqalg \gen{a,b}$. Below, we summarize the first properties of algebraic extensions (together with the corresponding properties for the dual notion of free factor, in the primed assertions). This result is borrowed from \cite{miasnikov_algebraic_2007}. 

\begin{prop}\label{prop: composition}
Let $H\leqslant L \leqslant K\leqslant \Free[A]$ be a sequence of extensions of free groups. Then,
 \begin{enumerate}
\item[\rm{(i)}] If $H\leqalg L\leqalg K$ then $H\leqalg K$.
\item[\rm{(i')}] If $H\leqff L\leqff K$ then $H\leqff K$.
\item[\rm{(ii)}] If $H\leqalg K$ then $L\leqalg K$, while $H\leqslant L$ need not be algebraic.
\item[\rm{(ii')}] If $H\leqff K$ then $H\leqff L$, while $L$ need not be a free factor of $K$.
 \end{enumerate}
\medskip Moreover, for $H\leqslant L_1, L_2 \leqslant K$, we have \medskip
\begin{enumerate}
\item[\rm{(iii)}] If $H\leqalg L_1$ and $H\leqalg L_2$ then $H\leqalg \langle L_1 \cup L_2\rangle$, while $H\leqslant L_1\cap L_2$ need not be algebraic.
\item[\rm{(iii')}] If $H\leqff L_1$ and $H\leqff L_2$ then $H\leqff L_1\cap L_2$, while $H$ need not be a free factor of $\gen{L_1 \cup L_2}$. 
 \end{enumerate}
\medskip And, for $H_i\leqslant K_i \leqslant \Free[A]$, $i\in [1,n]$, we have \medskip
\begin{enumerate}
\item[\rm{(iv)}] If $H_i \leqalg K_i$ for $i\in [1,n]$, then $\gen{H_1\cup \cdots \cup H_n} \leqalg \gen{K_1\cup\cdots \cup K_n}$.
\item[\rm{(iv')}] If $H_i \leqff K_i$ for $i\in [1,n]$, then $H_1\cap \cdots \cap H_n \leqff K_1\cap \cdots \cap K_n$. 
 \end{enumerate}
\end{prop}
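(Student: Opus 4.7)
The proof splits naturally into the primed statements (about free factors) and the unprimed ones (about algebraic extensions), with Proposition~\ref{prop: int ff} playing the central role in both tracks. The geometric fact I plan to reuse throughout is its immediate consequence: if $P \leqff K$ and $S \leqslant K$ are any two subgroups, then $P \cap S \leqff S$. Intuitively, this lets any candidate free factorization of an ambient group be pulled back to any subgroup, where it can then be tested against the algebraicity hypothesis.

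For the primed statements, (i') is immediate by stacking bases: extend a basis of $H$ to a basis of $L$, then to a basis of $K$. The statements (ii'), (iii'), and (iv') all reduce directly to Proposition~\ref{prop: int ff}: for (ii') apply it with $(H_1,K_1) = (H,K)$ and $(H_2,K_2) = (L,L)$ to get $H = H \cap L \leqff K \cap L = L$; for (iii') apply it with $(H_1,K_1) = (H,L_1)$ and $(H_2,K_2) = (H,L_2)$ to get $H \leqff L_1 \cap L_2$; and (iv') is Proposition~\ref{prop: int ff} itself. The clause `$L$ need not be a free factor of $K$' in (ii') is witnessed in $\Free[\{a,b\}]$ by taking $H$ to be the trivial subgroup, $L = \gen{[a,b]}$ and $K = \gen{a,b}$, since $[a,b]$ is not primitive in $\Free[2]$.

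The unprimed statements rest on a single template. Statement (ii) is immediate: if $L \leqslant L_1 \leqff K$, then $H \leqslant L_1$, so $H \leqalg K$ forces $L_1 = K$. For (i), (iii), and (iv) one assumes a candidate free decomposition, say $\gen{K_1 \cup \cdots \cup K_n} = P * Q$ for (iv), with the designated subgroup contained in $P$; then, for each algebraic pair $H_i \leqalg K_i$ in play, Proposition~\ref{prop: int ff} yields $P \cap K_i \leqff K_i$, this free factor contains $H_i$, and algebraicity forces $P \cap K_i = K_i$, so $K_i \leqslant P$. Hence $P$ contains every $K_i$, which gives $P = \gen{K_1 \cup \cdots \cup K_n}$, as required. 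For (i) one runs this template once to conclude $L \leqslant L_1$ and then invokes $L \leqalg K$ with $L \leqslant L_1 \leqff K$ to get $L_1 = K$. Counterexamples for the `while' clauses in (ii) and (iii) can be produced by taking the standard algebraic extension $\gen{[a,b]} \leqalg \gen{a,b}$ together with an intermediate subgroup such as $\gen{a, [a,b]}$, in which $[a,b]$ becomes a primitive basis element and so the inclusion loses its algebraicity. There is no serious obstacle here: once the intersect-with-free-factor principle is isolated, each assertion collapses to a short deduction.
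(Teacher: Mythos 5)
Your treatment of all the positive assertions is correct and is essentially the paper's own argument: (i') by composing free decompositions, the positive parts of (ii'), (iii'), (iv') as direct instances of \Cref{prop: int ff}, and the unprimed statements via the same template the paper uses — intersect a candidate free factor with each subgroup in which $H$ (or $H_i$) is algebraic, apply \Cref{prop: int ff} to see the intersection is a free factor containing $H$, and let algebraicity force equality; for (i) one extra application of $L\leqalg K$ finishes. Your counterexample for the negative clause of (ii'), with $H$ trivial, is legitimate though degenerate (the paper's $\gen{a}\leqslant \gen{a,b^2}\leqslant \Free[2]$ avoids the trivial subgroup), and your recipe does yield the paper's counterexample for the negative clause of (ii), namely $\gen{[a,b]}\leqff\gen{a,[a,b]}\leqslant\Free[2]$ with $\gen{[a,b]}\leqalg\Free[2]$.

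The genuine gap is in the remaining ``while'' clauses, which are part of the statement and need explicit witnesses. First, you never address the negative part of (iii'): one needs $H\leqff L_1$ and $H\leqff L_2$ with $H$ not a free factor of $\gen{L_1\cup L_2}$; the paper takes $H=\gen{[a,b]}$, $L_1=\gen{a,[a,b]}$, $L_2=\gen{b,[a,b]}$, so that $\gen{L_1\cup L_2}=\Free[2]$ and $H\leqalg\Free[2]$ rules out $H\leqff\Free[2]$. Second, your proposed counterexample for the negative part of (iii) does not work as stated: there you need two \emph{distinct} subgroups $L_1\neq L_2$ with $H\leqalg L_1$ and $H\leqalg L_2$ but $H$ a \emph{proper free factor} of $L_1\cap L_2$, and the data you suggest (the extension $\gen{[a,b]}\leqalg\gen{a,b}$ plus the intermediate subgroup $\gen{a,[a,b]}$) cannot be arranged into such a pair: if $L_1=\gen{a,b}$ and the intersection is to be $\gen{a,[a,b]}$, then $L_2$ must equal $\gen{a,[a,b]}$ itself, in which $[a,b]$ is a basis element, contradicting $H\leqalg L_2$. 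A working construction (the paper's) is $L_1=\gen{a^2,b}$, $L_2=\gen{a^3,b}$, hence $L_1\cap L_2=\gen{a^6,b}$, and $H=\gen{a^6b^6}$: written in the bases of $L_1$ and $L_2$ the generator reads $x^3y^6$, resp.\ $x^2y^6$, which is neither primitive (non-coprime abelianization) nor a proper power, so it lies in no proper free factor and $H\leqalg L_i$; while in $L_1\cap L_2$ it reads $xy^6$, which is primitive, so $H$ is a proper free factor of $L_1\cap L_2$. Without something of this shape, the negative halves of (iii) and (iii') remain unproved.
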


\begin{proof} Statement (i') is direct from the definition of free  factor, and the positive parts of statements (ii') and (iii') are direct applications of \Cref{prop: int ff}. The free group $\Free[A]$ on $A=\{a,b\}$ already contains counterexamples for the negative parts: for that in (ii'), we have $\gen{a} \leqff \Free[A]$ whereas $\gen{a}\leqff \gen{a, b^2} \leqalg \Free[A]$ (since any proper free factor of $\Free[A]$ is cyclic and cannot contain $\gen{a, b^2}$). And for that in (iii') we have $\gen{[a,b]} \leqff \gen{a, [a,b]}$ and $\gen{[a,b]}\leqff \gen{b, [a,b]}$, whereas $\gen{[a,b]}\leqalg \gen{a, [a,b], b}=\Free[A]$. Finally, (iv') was already proved in \Cref{prop: int ff}.  

Now assume $H\leqalg L\leqalg K$, and let $D$ be a free factor of
$K$ containing $H$. Then, by \Cref{prop: int ff}, $D\cap L$ is a free factor of $L$ containing $H$. Since $H\leqalg L$, we deduce that $D\cap L=L$, and hence $L\leqslant D$. But $L\leqalg K$, so $D=K$. Thus, the extension $H\leqslant K$ is algebraic, proving~(i).

The positive part of (ii) is clear from the definition of algebraic extension. A counterexample for the negative part in $\Free[A]$ is as follows: $\gen{[a,b]}\leqff \gen{a, [a,b]}\leqslant \Free[A]$, while $\gen{[a,b]}\leqalg \Free[A]$.

Suppose now that $H\leqalg L_1$ and $H\leqalg L_2$, and let $D$ be a free
factor of $\gen{L_1 \cup L_2}$ containing $H$. Then, by \Cref{prop: int ff}, $H\leqslant D\cap L_i \leqff L_i$, for $i=1,2$. Since $H\leqalg L_i$, we deduce that $D\cap L_i =L_i$ and hence, $L_i \leqslant D$. Hence, $D=\gen{L_1 \cup L_2}$, and the extension $H\leqslant \gen{L_1 \cup L_2}$ is algebraic, thus proving the positive part of (iii). To conclude the proof of (iii), we will exhibit subgroups $H$, $L_1$, $L_2$ such that $H\leqalg L_i$ ($i=1,2$) but $H\leqff L_1\cap L_2$. Again in $\Free[A]$ take, for example, $L_1=\gen{a^2, b}$ and $L_2 =\gen{a^3, b}$, whose intersection is $L_1 \cap L_2 =\gen{a^6, b}$. Letting $H=\gen{a^6b^6}$, we have $H\leqff L_1\cap L_2$ but $H\leqalg L_1$ and $H\leqalg L_2$. 

Finally, suppose that, for $i\in [1,n]$, $H_i\leqalg K_i$ and let $D$ be a free factor of $\gen{K_1\cup\cdots \cup K_n}$ containing $\gen{H_1\cup \cdots \cup H_n}$. Applying \Cref{prop: int ff} to each fixed $K_j$, we have $H_j \leqslant D\cap K_j \leqff \gen{K_1\cup\cdots \cup K_n}\cap K_j =K_j$. Since $H_j\leqalg K_j$, we deduce $D\cap K_j=K_j$ and $K_j\leqslant D$. This holds for each $j\in [1,n]$, so $D=\gen{K_1\cup\cdots \cup K_n}$ and we have shown that $\gen{H_1\cup \cdots \cup H_n}\leqalg \gen{K_1\cup\cdots \cup K_n}$. This shows~(iv) and completes the proof. 
\end{proof}

The notion of algebraic extension allows for the modern restatement of Takahasi's Theorem below. While the original proof in \cite{takahasi_note_1951} relied on purely combinatorial and algebraic techniques, it was independently rediscovered by Ventura~\parencite{ventura_fixed_1997} in 1997, by Margolis--Sapir--Weil~\parencite{margolis_closed_2001} in 2001, and by Kapovich--Miasnikov~\parencite{kapovich_stallings_2002} in 2002, each in slightly different contexts.
The approach used in these three new proofs happened to be essentially the same (based in the geometric intuition given by Stallings' automata); see also the subsequent paper \parencite{miasnikov_algebraic_2007} by~\citeauthor{miasnikov_algebraic_2007} unifying the three points of view. In order to state and prove it, we need the following definition as a kind of graphical counterpart to the notion of algebraic extension. 


\begin{defn} \label{def: fringe}
Let $H\leqslant \Free[A]$,
and let $\mathcal{P}$ be a partition of
$\Verts(\stallings(H))$. We denote by $\stallings(H)/\mathcal{P}$
the 
(not necessarily reduced)
$A$-automaton obtained by identifying in $\stallings(H)$ all the vertices in every $P \in \mathcal{P}$.    
The \defin[fringe of a subgroup]{$A$-fringe} of $H$ is
\[
\OO_A(H)
\,=\,
\Set{
\gen{\stallings(H)/\mathcal{P}} 
\st \mathcal{P} \text{ is a partition of } \Verts(\stallings(H))
}.
\]
\end{defn}
Note that $\OO_A(H)$ is a family of finitely generated extensions of $H$ which may depend on $A$ and always includes $H$ itself (corresponding to the trivial partition). 

\begin{lem}
    If $H$ is finitely generated, then $\OO_A(H)$ is finite and computable.
\end{lem}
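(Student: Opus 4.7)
The plan is to leverage the fact that $H$ being finitely generated makes $\stallings(H,A)$ finite (by \Cref{thm: Stallings bijection2}), and then argue that every element of $\OO_A(H)$ can be produced by an explicit finite procedure indexed by partitions of $\Verts(\stallings(H))$.

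First I would set $\Ati = \stallings(H,A)$ and note that, since $H \leqfg \Free[A]$, the vertex set $V = \Verts(\Ati)$ is finite (of cardinal $m$, say). The collection of set-partitions of $V$ is then finite (its cardinal is the Bell number $B_m$) and can be enumerated effectively. For each partition $\mathcal{P}$ of $V$, the quotient $\Ati/\mathcal{P}$ is the pointed $A$-automaton obtained by identifying in $\Ati$ all vertices lying in the same block of $\mathcal{P}$ (with basepoint the block containing $\bp$); this is a finite $A$-automaton, explicitly constructible from $\Ati$ and $\mathcal{P}$.

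Next, to each such quotient we apply the reduction procedure of \Cref{prop: reduction is computable}, obtaining the reduced automaton $\red{\Ati/\mathcal{P}} = \stallings(\gen{\Ati/\mathcal{P}}, A)$ which represents uniquely the subgroup $\gen{\Ati/\mathcal{P}} \in \OO_A(H)$ by virtue of the Stallings bijection (\Cref{thm: Stallings bijection2}). Hence the assignment $\mathcal{P} \mapsto \gen{\Ati/\mathcal{P}}$ defines a (generally non-injective) surjective map from the finite set of partitions of $V$ onto $\OO_A(H)$. Finiteness of $\OO_A(H)$ follows immediately, with the crude bound $\card \OO_A(H) \leq B_m$.

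For computability, we enumerate the (finitely many) partitions of $V$, compute the reduced automaton $\red{\Ati/\mathcal{P}}$ in each case, and discard duplicates. The duplicate check is effective because, by \Cref{cor: gen iff isom}, two reduced pointed $A$-automata recognize the same subgroup if and only if they are isomorphic as pointed $A$-digraphs, and isomorphism between finite labeled pointed digraphs is trivially decidable (for instance, since both automata are deterministic and connected, any candidate isomorphism is determined by the image of the basepoint, so one just verifies the unique candidate). The only mild subtlety is bookkeeping rather than a real obstacle: one must remember that different partitions may yield isomorphic reductions, so the explicit list returned has size at most $B_m$ but typically much smaller. This yields both the finiteness and the computability of $\OO_A(H)$.
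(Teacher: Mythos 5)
Your proof is correct and follows essentially the same route as the paper: finiteness of $\stallings(H)$ gives finitely many partitions of its vertex set, each quotient automaton is finite and constructible, and a folding/reduction process computes the Stallings automaton of the corresponding subgroup in $\OO_A(H)$. The extra details you add (the Bell-number bound and the duplicate check via \Cref{cor: gen iff isom}) are fine elaborations of the same argument.
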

\begin{proof}
This is clear since, if $H$ is finitely generated, then $\stallings(H)$ and hence the number of partitions of $\Verts(\stallings(H))$ is finite. On the other side, for each such partition, the quotient $\stallings(H)/\mathcal{P}$ is a computable finite automaton which, using Stallings' foldings, can be reduced  to the Stallings' automaton of the corresponding element in $\OO_A(H)$. 
\end{proof}

The following property of the fringe is crucial for our purposes.

\begin{prop}\label{prop: fringe}
Let $H\leqfg \Free[A]$ and consider its fringe $\OO_A(H)$. For every (not necessarily finitely generated) subgroup $K$ satisfying $H\leqslant K\leqslant \Free[A]$, there is some $H_i \in \OO_A(H)$ such that $H\leqslant H_i\leqff K$.  
\end{prop}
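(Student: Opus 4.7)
The plan is to exhibit the required $H_i$ directly using the unique morphism $\varphi \colon \stallings(H) \to \stallings(K)$ provided by \Cref{prop: main prop}: both $\stallings(H)$ and $\stallings(K)$ are reduced automata (no finiteness is needed for \Cref{prop: main prop}), and $\gen{\stallings(H)} = H \leqslant K = \gen{\stallings(K)}$, so $\varphi$ exists and is unique.

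First, I would let $\mathcal{P}$ be the partition of $\Verts(\stallings(H))$ whose blocks are the fibers of $\varphi_{_\Verts}$, and set $H_i = \gen{\stallings(H)/\mathcal{P}}$; by construction, $H_i \in \OO_A(H)$. The inclusion $H \leqslant H_i$ is then immediate: every $\bp$-walk in $\stallings(H)$ projects to a $\bp$-walk in $\stallings(H)/\mathcal{P}$ with the same label, so $H = \gen{\stallings(H)} \leqslant \gen{\stallings(H)/\mathcal{P}} = H_i$.

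The key step is showing $H_i \leqff K$. Let $\Ati' \subseteq \stallings(K)$ be the image subautomaton of $\varphi$ (with its inherited basepoint $\bp \varphi = \bp_K$). By the very definition of $\mathcal{P}$, the map $\varphi$ factors as $\stallings(H) \twoheadrightarrow \stallings(H)/\mathcal{P} \to \Ati'$, where the second arrow is a homomorphism of $A$-automata that is bijective on vertices and surjective on arcs. Now, $\stallings(H)/\mathcal{P}$ need not be deterministic (parallel arcs with equal label may arise from the identifications), but any such violation of determinism consists of arcs whose $\varphi$-images coincide in the deterministic ambient $\stallings(K)$; hence folding $\stallings(H)/\mathcal{P}$ (only closed foldings occur) yields precisely $\Ati'$. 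By \Cref{lem: grrec}, $\gen{\stallings(H)/\mathcal{P}} = \gen{\Ati'}$, so $H_i = \gen{\Ati'}$. Since $\Ati'$ is a subautomaton of $\stallings(K)$, \Cref{cor: incl => ff} (together with \Cref{lem: deterministic gamma}, which identifies fundamental group with recognized subgroup in the deterministic setting) gives $\gen{\Ati'} \leqff \gen{\stallings(K)} = K$; that is, $H \leqslant H_i \leqff K$, as claimed.

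The main obstacle is the verification that the quotient $\stallings(H)/\mathcal{P}$ recognizes precisely $\gen{\Ati'}$, i.e.\ that all non-determinism introduced by collapsing fibers of $\varphi$ is resolved by foldings whose targets already lie inside $\stallings(K)$; this relies crucially on determinism of $\stallings(K)$, which forces any two arcs with the same source and label in $\Ati'$ to coincide.
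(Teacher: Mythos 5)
Your proof is correct and follows essentially the same route as the paper: take the canonical homomorphism $\stallings(H)\to\stallings(K)$ from \Cref{prop: main prop}, observe that its image is simultaneously (the reduction of) a vertex-quotient of $\stallings(H)$ — hence recognizes a member of $\OO_A(H)$ — and a subautomaton of $\stallings(K)$, so \Cref{cor: incl => ff} gives the free-factor conclusion. Your explicit folding argument identifying $\gen{\stallings(H)/\mathcal{P}}$ with the recognized subgroup of the image just spells out a detail the paper leaves implicit.
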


\begin{proof}
Given subgroups $H\leqslant K\leqslant \Free[A]$, consider the corresponding $A$-homomorphism $\theta_{H,K}\colon \stallings(H)\xto{} \stallings(K)$ (see \Cref{prop: main prop}). Then, the image $\operatorname{Im}(\theta_{H,K})$ is a (finite and deterministic) subautomaton of $\stallings(K)$ satisfying that: (1) the restriction $\theta_{H,K}\colon \stallings(H)\onto \operatorname{Im}(\theta_{H,K})$ is onto and so, $\gen{\operatorname{Im}(\theta_{H,K})}\in \OO_A(H)$; and (2) $\operatorname{Im}(\theta_{H,K})$ is a subautomaton of $\stallings(K)$ and so, by \Cref{cor: incl => ff}, ${\gen{\operatorname{Im}(\theta_{H,K})}\leqff K}$.     
\end{proof}

\begin{thm}[\citenr{takahasi_note_1951}]\label{thm: Tak}
Let $H\leqfg \Free[A]$. Then $\AAEE(H)$ is a finite and computable collection of finitely generated subgroups of $\Free[A]$; furthermore, every subgroup $K\leqslant \Free[\Alfi]$ containing $H$ algorithmically determines a unique $H'\in \AAEE(H)$ such that $H\leqalg H'\leqff K\leqslant \Free[\Alfi]$ (called the \defin[algebraic closure]{$K$-algebraic closure of~$H$}).
\end{thm}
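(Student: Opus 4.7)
The driver of the entire proof is \Cref{prop: fringe}, which asserts that any extension $H \leqslant K$ admits some $H_i \in \OO_A(H)$ with $H \leqslant H_i \leqff K$. Since $\OO_A(H)$ is parametrized by the (finitely many) partitions of $\Verts\stallings(H)$, it is a finite and explicitly computable collection of finitely generated subgroups of $\Free[A]$, obtained by folding each quotient $\stallings(H)/\mathcal{P}$ into a reduced automaton. The whole theorem will be extracted by combining this observation with the composition properties of free factors and algebraic extensions collected in \Cref{prop: composition}.

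The plan is to proceed in three conceptual steps. First, I will establish $\AAEE(H) \subseteq \OO_A(H)$: if $H \leqalg K$, then by \Cref{prop: fringe} some $H_i \in \OO_A(H)$ satisfies $H \leqslant H_i \leqff K$, and the algebraicness of $H \leqslant K$ immediately forces $H_i = K$. This settles the finiteness of $\AAEE(H)$ and the fact that its members are finitely generated. Second, I will produce the $K$-algebraic closure: set $\mathcal{F}_K = \{L \in \OO_A(H) : L \leqff K\}$ (a finite nonempty subfamily of $\OO_A(H)$ by \Cref{prop: fringe}), and let $H' = \bigcap_{L \in \mathcal{F}_K} L$. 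Iterated application of \Cref{prop: composition}(iii') gives $H' \leqff K$. To verify $H \leqalg H'$, pick any $M$ with $H \leqslant M \leqff H'$; part (i') yields $M \leqff K$, and \Cref{prop: fringe} applied to $H \leqslant M$ furnishes $H_j \in \OO_A(H)$ with $H \leqslant H_j \leqff M \leqff K$. Hence $H_j \in \mathcal{F}_K$, so $H' \leqslant H_j \leqslant M \leqslant H'$, forcing $M = H'$. Hence $H \leqalg H'$ and, by the first step, $H' \in \AAEE(H)$.

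Third, uniqueness follows from the same tools: if $H'' \in \AAEE(H)$ also satisfies $H'' \leqff K$, then $H'' \in \mathcal{F}_K$ so $H' \leqslant H''$; applying part (ii') of \Cref{prop: composition} to $H' \leqff K$ and $H' \leqslant H'' \leqslant K$ gives $H' \leqff H''$, and the algebraicness $H \leqalg H''$ then yields $H' = H''$. Computability is obtained in parallel with the existence step: $\OO_A(H)$ is explicitly listable; when $K$ is presented by a finite generating set, so is $\stallings(K)$; for each $L \in \OO_A(H)$ the containment $L \leqslant K$ is decided by the membership algorithm of \Cref{sec: MP} applied to the generators of $L$, and when it holds the condition $L \leqff K$ is certified by forming the canonical morphism $\theta_{L,K}\colon \stallings(L) \to \stallings(K)$ of \Cref{prop: main prop} and invoking \Cref{cor: incl => ff} on its image. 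Hence $\mathcal{F}_K$ and its finite intersection $H'$ are computable from the input data. The list $\AAEE(H)$ itself is enumerated by scanning $\OO_A(H)$ and retaining those $H_i$ for which $H \leqalg H_i$, a condition reducible to checking that no proper $H_j \in \OO_A(H)$ contained in $H_i$ is a free factor of $H_i$.

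The main obstacle I foresee lies in the existence step: the passage from an arbitrary $M \leqff H'$ back down to an element of the fringe uses \Cref{prop: fringe} again, and one has to make sure the inequality chain $H' \leqslant H_j \leqslant M \leqslant H'$ is forced by the definition of $H'$ as an intersection over \emph{all} of $\mathcal{F}_K$; handling the interplay between (i'), (ii'), (iii') of \Cref{prop: composition} without circularity is the delicate point. Once this is properly set up, the remaining work is a finite search through the computable fringe, paired with standard decision procedures for the membership and free-factor problems in finitely generated free groups.
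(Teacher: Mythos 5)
Your existence--uniqueness argument is essentially sound and runs parallel to the paper's: defining $H'$ as the intersection of the finite family $\mathcal{F}_K=\set{L\in\OO_A(H) \st L\leqff K}$ is just another description of the smallest free factor of $K$ containing $H$ that the paper works with (note that the fact you need for $H'\leqff K$ is \Cref{prop: int ff}, i.e.\ \Cref{prop: composition}(iv'), not (iii'), a harmless mis-citation), and your verification that $H\leqalg H'$ by a second application of \Cref{prop: fringe} is the same trick the paper uses to show that the survivors of its cleaning process are exactly $\AAEE(H)$; the uniqueness step via \Cref{prop: composition}(ii') is also correct.

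The genuine gap is in the computability claims. To compute $\mathcal{F}_K$ (and likewise to enumerate $\AAEE(H)$, which you correctly reduce to free-factor checks among fringe elements) you must \emph{decide}, for finitely generated subgroups, whether $L\leqff K$. The test you propose --- form $\theta_{L,K}\colon\stallings(L)\to\stallings(K)$ and apply \Cref{cor: incl => ff} to its image --- does not do this: the image subautomaton recognizes $\gen{\operatorname{Im}(\theta_{L,K})}$, which contains $L$ but is in general strictly larger (it is an element of the fringe of $L$), so the corollary only tells you that \emph{some overgroup} of $L$ is a free factor of $K$, not that $L$ itself is. Concretely, take $H=L=\gen{a^2}$ and $K=\gen{a}$: the image of $\theta_{L,K}$ is all of $\stallings(K)$, so your test ``certifies'' $L\leqff K$, yet $\gen{a^2}$ is not a free factor of $\gen{a}$; your algorithm would then compute $\mathcal{F}_K=\set{\gen{a^2},\gen{a}}$ and return $H'=\gen{a^2}$ as the $K$-algebraic closure, violating $H'\leqff K$ (the correct closure is $\gen{a}$). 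Deciding the free-factor relation between finitely generated subgroups is precisely the nontrivial ingredient the paper imports as \Cref{thm: decide ff} (Whitehead; Silva--Weil; Puder), and it is also what your final sentence about enumerating $\AAEE(H)$ silently requires. Once you replace your certificate by an appeal to that theorem, both the computation of $\mathcal{F}_K$ and the enumeration of $\AAEE(H)$ go through, and your proof becomes complete and essentially equivalent to the paper's.
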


\begin{proof}
\Cref{prop: fringe} immediately implies that $\AAEE(H)\subseteq \OO_A(H)$ and so, $\AAEE(H)$ is a finite collection of finitely generated subgroups of $\Free[A]$. To see its computability, it only remains to be able to algorithmically distinguish between the elements from $\OO_A(H)$ that are algebraic extensions of $H$ from the ones that are not. 

This is done in~\parencite{kapovich_stallings_2002, miasnikov_algebraic_2007} by cleaning up $\OO_A(H)$ in the following way: for each pair of distinct subgroups $H_i, H_j\in \OO_A(H)$, check whether $H_i\leqff H_j$ and, in the affirmative case, remove $H_j$ from the list (see~\Cref{thm: decide ff}). We claim that, after this cleaning process, the remaining subgroups in $\OO_A(H)$ constitute precisely $\AAEE(H)$. In fact, any $K\in \AAEE(H)\subseteq \OO_A(H)$ has no proper free factor containing $H$ and so, is never cleaned up along the cleaning process. Conversely, suppose that $H_j\in \OO_A(H)$ survives the cleaning process, and let $H\leqslant D\leqff H_j$; by \Cref{prop: fringe} there is some $H_i \in \OO_A(H)$ such that $H\leqslant H_i\leqff D\leqff H_j$ and, by construction, $H_i=H_j$ and so, $D=H_j$ proving that $H_j \in \AAEE(H)$. Therefore, $\AAEE(H)$ is computable. 

Now, given $H\leqslant K\leqslant \Free[A]$, consider the smallest free factor $D$ of $K$ containing $H$ (it exists because, by \Cref{prop: int ff}, if $H\leqslant D_1\leqff K$ and $H\leqslant D_2\leqff K$, then $H\leqslant D_1\cap D_2 \leqff K$). By construction, $H\leqalg D\leqff K$. To see the uniqueness, suppose that $H\leqalg E\leqff K$. Then, $H\leqslant D\cap E\leqslant \gen{D,E}\leqslant K$. But, by \Cref{prop: composition}(iii) $H\leqalg \gen{D,E}$ and, by \Cref{prop: int ff} $D\cap E\leqff K$. Again by \Cref{prop: int ff}, $D\cap E\leqalg \gen{D,E}$, $D\cap E\leqff \gen{D,E}$ and so, $D\cap E=\gen{D,E}$ and $D=E$. Finally, to compute $D$, observe that it also coincides with the greatest algebraic extension of $H$ contained in $K$: for any bigger one, $H\leqslant D<D'\leqslant K$ with $H\leqalg D'$, we would have $D\leqff D'$, a contradiction. Thus, having computed the set $\AAEE(H)$, we can easily determine $D$, and the proof is complete. 
\end{proof}

In order to obtain $\AAEE(H)$, we need to run the cleaning process described above on $\OO_A(H)$. This requires being able to detect free factors, which can be done using classical Whitehead techniques or, alternatively, using modern algorithms based, again, on Stallings' automata.

\begin{thm}[Whitehead, \parencite{lyndon_combinatorial_2001}; Silva--Weil, \parencite{silva_algorithm_2008}; Puder, \parencite{puder_primitive_2014}] \label{thm: decide ff}
There is an algorithm which, on input an extension of finitely generated subgroups $H\leqfg K\leqfg \Free[\Alfi]$, decides whether $H$ is a free factor of $K$. \qed
\end{thm}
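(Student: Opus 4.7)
The plan is to reduce the general problem to the classical \emph{free factor problem} for finitely generated subgroups of a free group --- solved in each of the three works cited in the statement --- and then invoke the latter as a black box.

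First, I would compute $\stallings(K,A)$ by \Cref{prop: reduction is computable}, and extract from it a basis $B = \{b_1,\ldots,b_m\}$ of $K$ via \Cref{prop: B_T comp}. Through the natural isomorphism $K \isom \Free[B]$, the subgroup $H\leqslant K$ can be viewed as a (finitely generated) subgroup of the ambient free group $\Free[B]$. To make this identification explicit, I would apply the membership-with-writing procedure from \Cref{sec: MP} (elevating $\bp$-walks through a folding sequence) to each of the original generators of $H$: this produces words $w_1,\ldots,w_s \in (B^{\pm})^{*}$ such that $H = \gen{w_1,\ldots,w_s} \leqfg \Free[B]$.

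The reduction is closed by observing that being a free factor is intrinsic to the abstract extension $H\leqslant K$: any decomposition $K = H * L$ inside $\Free[A]$ is transported by the isomorphism $K \isom \Free[B]$ into a decomposition $\Free[B] = H * L'$, and vice versa. Hence $H\leqff K$ in $\Free[A]$ if and only if $H \leqff \Free[B]$ under the above identification, and the problem reduces to deciding, for a finitely generated subgroup of a (computable) free group, whether it is a free factor of the ambient group.

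The main obstacle --- and the deep part of the theorem --- is this last step: while the rewriting in the previous paragraphs is a direct application of the constructive machinery already developed in this monograph, deciding whether $H \leqff \Free[B]$ lies well beyond the ``combinatorial surface'' of Stallings automata, and ultimately rests on Whitehead's Peak Reduction Theorem (or its automaton-theoretic reformulations in \parencite{silva_algorithm_2008, puder_primitive_2014}), which we invoke here as a black box.
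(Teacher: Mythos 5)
Your proposal is correct, with the caveat that the paper itself offers no proof of this statement: it is imported from the cited literature (hence the \qed\ immediately after the statement), so the ``deep part'' remains a black box in both treatments. What you add is a reduction of the relative problem to the absolute one, and that reduction is sound and fully effective with the machinery of the monograph: computing a basis $B$ of $K$ from $\stallings(K,A)$ (\Cref{prop: B_T comp}), rewriting the given generators of $H$ as words in $B^{\pm}$ via the membership-with-expression procedure of \Cref{sec: MP}, and observing that free-factorness is preserved by the isomorphism $K \isom \Free[B]$, so that $H \leqff K$ if and only if the rewritten subgroup is a free factor of the ambient $\Free[B]$. Two small remarks. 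First, the reduction is correct but not strictly necessary: the Silva--Weil algorithm (and the relative formulations elsewhere in the cited literature) takes the pair $H \leqfg K$ as input directly, working on Stallings automata, which is precisely why the theorem is stated in relative form. Second, when you invoke ``Whitehead peak reduction'' for the absolute case, note that what is needed is orbit decidability for a \emph{tuple}: $H \leqff \Free[B]$ of rank $k$ exactly when a basis $(h_1,\ldots,h_k)$ of $H$ (computable, again by \Cref{prop: B_T comp}) lies in the $\operatorname{Aut}(\Free[B])$-orbit of $(b_1,\ldots,b_k)$, so the classical tuple version of Whitehead's algorithm suffices; spelling this out would make the black-box invocation airtight. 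With that understood, your argument is a legitimate proof of the statement modulo exactly the same external results the paper relies on.
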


\section{Further reading}\label{sec: further}
Over the last decades, numerous papers have been published showing an increasing number of applications of Stallings' automata. In these works, which often possess an algorithmic flavor, one can find both new results and alternative proofs of classical ones, frequently using clearer and more transparent arguments. Without claiming to be exhaustive (or even representative), in this final section we highlight the versatility of this theory with a brief selection of these results 
and the corresponding references.

\subsection{Closures in the pro-${\mathcal V}$ topologies}

This is a nice application of Takahasi's Theorem (\Cref{thm: Tak}) to the computability of a basis for the closures of finitely generated subgroups with respect to certain topologies in the free group $\Free[n]$.

A \defin{pseudo-variety} ${\mathcal V}$ of finite groups is a family of (isomorphism classes of) finite groups that is closed under taking subgroups, quotients, and finite direct products. This notion derives from that of a \defin{variety of groups}, where infinite direct products are also allowed. For instance, all finite groups, the family of $p$-groups for a given prime $p$, finite nilpotent groups, finite solvable groups or finite abelian groups, etc., are typical examples of pseudo-varieties of finite groups.

Let $\mathcal{V}$ be a pseudo-variety of finite groups, and $G$ an arbitrary group. One can consider in $G$ the initial topology with respect to all homomorphisms $\varphi \colon G\to H$, where $H \in \mathcal{V}$ is endowed with the discrete topology on $H$ (\ie the smallest topology making all such maps continuous). This is called the \defin{pro-$\mathcal{V}$ topology} in $G$. It is easy to see that this topology coincides with the one determined by the following pseudo-metric: given two elements $g,g'\in G$, the \defin{${\mathcal V}$-distance} between them is $d_{\mathcal V} (g,g')=2^{-v(g,g')}$, where $v(g,g')$ is the smallest cardinal of a group $H\in {\mathcal V}$ for which there is a homomorphism $\varphi \colon G\xto{} H$ \emph{separating} $g$ and $g'$ (i.e., such that $g\varphi \neq g'\varphi$), and  $d_{\mathcal V} (g,g')=0$ if  there is no such $H$. In case the group $G$ is \defin{residually-${\mathcal V}$} (for every two distinct elements $g,g'\in G$ there exists $H\in {\mathcal V}$ and a homomorphism $\varphi \colon G\xto{} H$ separating $g$ and $g'$), then $d_{\mathcal V}$ is a real metric, and the induced topology on $G$ becomes Hausdorff. 

The above examples of pseudo-varieties give rise to the so-called \defin[pro-finite topology]{pro-finite}, \defin[pro-$p$ topology]{pro-$p$}, \defin[pro-nilpotent topology]{pro-nilpotent}, \defin[pro-solvable topology]{pro-solvable}, and \defin[pro-abelian topology]{pro-abelian} topologies of $G$, respectively. It is then easy to see that, in general, the ${\mathcal V}$-closure of a subgroup $H\leqslant G$ is again a subgroup, $H\leqslant \Cl_{\mathcal V}(H)\leqslant G$; this observation opens the door to nice questions about algebraic or algorithmic properties of these closure operators.   

Particularizing to free groups, $G=\Free[n]$, and restricting the attention to extension-closed pseudo-varieties~${\mathcal V}$ (${\mathcal V}$ is called \defin[extension-closed pseudo-variety]{extension-closed} if, for any short exact sequence $1\xto{} A\xto{} B\xto{} C\xto{} 1$ of finite groups, $A,C\in {\mathcal V}$ imply $B\in {\mathcal V}$), in~\cite{margolis_closed_2001}, \citeauthor{margolis_closed_2001} proved that any free factor of a ${\mathcal V}$-closed subgroup of $\Free[n]$ is again ${\mathcal V}$-closed.
This automatically connects with Takahasi's Theorem because it implies that the pro-${\mathcal V}$ closure of any subgroup $H\leqfg \Free[n]$ is an algebraic extension of $H$ itself; that is, $H\leqalg \Cl_{\mathcal V}(H)$. Using this idea, the authors of \parencite{margolis_closed_2001} provided algorithms, based on Stallings' automata, to compute the pro-finite, the pro-$p$, and the pro-nilpotent closures of finitely generated subgroups $H\leqfg \Free[n]$. 


The computability of the pro-solvable closure is still an open problem. Being an extension-closed pseudo-variety of finite groups, the above argument works and so, $H\leqalg \Cl_{\operatorname{sol}}(H)$ for any $H\leqfg \Fn$. However, no criterion is known yet to distinguish, among the computed candidates in $\AAEE(H)$, which one is the solvable-closure of $H$ (it would suffice to find an algorithm to decide whether a given $H\leqfg \Fn$ is solvable-closed or not).

\subsection{Relative order and spectra}

In \parencite{delgado_relative_2024}, \citeauthor{delgado_relative_2024} consider natural generalizations of the concepts of root and order of an element in a group $G$, and study these notions in the context of free groups. 

Given $g\in G$ and a subset $S\subseteq G$, $g$ is said to be a \defin[$k$-root \wrt a set]{$k$-root} of $S$ if $g^k \in S$; and the \defin[relative order]{(relative) order} of $g$ in $S$, denoted by $\ord_S(g)$, is the minimum $k\geqslant 1$ such that~$g^k \in S$, and zero if there is no such $k$ (in particular, $\ord_S(g)=1$ if and only if~$g\in S$). The set of elements from $G$ of order $k$ in $S$ is called the \defin[$k$-preorder]{$k$-preorder of~$S$}, and the set of orders in $S$ of the elements in~$G$ is called the \defin[relative spectrum]{(relative) spectrum of~$G$ in~$S$}.


Again, Stallings' automata are a natural tool for studying these notions in the context of free groups. The starting idea is that an element $w\in \Free[n]$ has order $k\geq 1$ in $H\leqslant \Free[n]$ if and only if the sequence of vertices $\vvertii =(\bp w^i)_{i\geq 0}$ in $\schreier(H)$ has its first repetition at $i=k$ (\ie $\bp =\bp w^k$). A \defin{closed trail} is a sequence of vertices $\vvertii =(\vertii_0,\vertii_1,\ldots,\vertii_k =\vertii_0)$ in $\schreier(H)$ whose only repetition is $\vertii_k =\vertii_0$. Then, for each such trail $\vvertii$, one can use the pullback technique to compute the (possibly empty) set of elements $w$ realizing it. 
The problem is, of course, that there are, in general, infinitely many trails of length $k$ to consider in $\schreier(H)$. However, it is not difficult to see that if we restrict the preorbits to cyclically reduced words, then one must only consider orbits in the Stallings' digraph $\rstallings(H)$, which is finite and computable if $H$ is finitely generated. The final step is to see that one can overcome the gap between cyclically reduced and general preorbits by conjugating by $H$, which can still be done in an algorithmic way. 

As a consequence, the authors prove the computability of the relative order of an element \wrt any finitely generated subgroup (and its cosets); as well as the computability of the sets of relative preorders, relative $k$-roots, and the relative spectrum of any finitely generated subgroup of the free group, among other related results; see~\parencite{delgado_relative_2024}.  

\subsection{Coset enumeration}

Another classical problem in algorithmic group theory is \defin{coset enumeration}. Let~$G$ be a group given by a finite presentation, and let $H\leqfg G$. A seminal result by \citeauthor{todd_practical_1936} in \cite{todd_practical_1936} states that if the index ${\ind{H}{G}}$ is finite, then a list of coset representatives can always be obtained algorithmically (see also \cite{knuth_simple_1970} for the alternative ---and more modern--- Knuth--Bendix algorithm). As originally stated, Todd--Coxeter's algorithm is somewhat complicated, but it turns out that the ideas involved admit a very transparent interpretation in terms of Stallings' automata (see~\cite{stallings_todd-coxeter_1987}).

Let us reformulate this interpretation in our automata language. Suppose that
\smash{$\ncl{R} \xinto{\ } \Free[A] \xonto{\rho\, } G$} is a finite presentation for $G$.
Then, if $S$ is a subset of $\Free[A]$, and $H=\gen{S \rho} \leqslant G$, it is clear that the sets $H \backslash G$ and $G/H$ (of right and left cosets of $H$ in $G$) are in bijection with the set $H \rho \preim \backslash \Free[A]$ (of cosets of the full preimage $H\rho\preim$ in $\Free[A]$); hence the index of $H$ in $G$ is expressible in the background group $\Free[A]$ as $\Ind{H}{G}=\Ind{H\rho\preim}{\Free[A]} =\Ind{\gen{S}\ncl{R}}{\Free[A]}$. Therefore, it is enough to compute the Schreier automaton of the subgroup $H\rho\preim =\gen{S}\ncl{R}\leqslant \Free[A]$ to recover the cosets of $H$ in $G$. This approach entails the obstacle that the target subgroup $H\rho\preim =\gen{S}\ncl{R}$ may not be finitely generated. However, this obstacle disappears if we restrict ourselves to subgroups $H\leqslant\fin G$ of finite index (equivalently $H\rho\preim \leqslant\fin \Free[A]$).  

So, assume that $S$ is finite and $\ind{H}{G}<\infty$. Note that then the automaton~$\Atiii$  obtained after attaching the flower automaton $\flower(R)$ to every vertex in the (potentially infinite) automaton $\schreier(\gen{S})$ recognizes $\gen{S}$ at the basepoint, and $\gen{R}$ at every vertex; that is, $\gen{\Atiii} = \gen{S} \ncl{R} = H \rho\preim $. Now, the key idea is to attach these (theoretically infinitely many) flower automata $\flower(R)$ sequentially, starting from $\flower(S)$ and folding after each layer addition. Formally, we set $\Ati_{\!0} =\flower(S)$ and, for each $i \geq 0$, $\Ati_{\!i+1}$ is the automaton obtained after attaching $\flower(R)$ to every vertex in~$\Ati_{\!i}$ and folding.

By construction, every automaton $\Ati_{\!i}$ in this sequence is reduced, and recognizes the subgroup $\gen{S}$ at the basepoint, and $\gen{R}$ at every vertex \emph{except possibly the ones corresponding to the $\flower(R)$'s attached at the $i$-th step}. Since the subgroup $H \rho \preim$ is, by hypothesis, of finite index in $\Free[A]$, we know that $\stallings(\Atiii) = \stallings(H \rho \preim) = \schreier(H \rho \preim)$ will be a finite and saturated reduced automaton recognizing $\gen{S}$ at the basepoint and $\gen{R}$ at every vertex. This means that the sequence $(\Ati_{\!i})_{i \geq 0}$ must stabilize after a finite number of steps precisely at $\schreier(H \rho \preim)$, which is therefore computable.

\subsection{Whitehead’s cut vertex lemma}

A classical result from the 1930s by Whitehead~\parencite{whitehead_certain_1936}, known as the \defin{cut vertex lemma}, has been fundamental in studying certain properties of free groups. This lemma states that if an element $w$ in a free group $\Free[A]$ is \defin[primitive element]{primitive} (that is, belongs to some basis of $\Free[A]$), then its (finite and easily constructible from $w$) \defin{Whitehead graph} $\Wh_A(w)$ is either disconnected or has a cut vertex (which disconnects a connected component). Stallings later extended this result in~\parencite{stallings_whitehead_1999}, showing that if a set $S \subseteq \Free[A]$ is \defin[separable subset]{separable} (i.e., there exists a non-trivial decomposition $\Free[n]=H*K$ such that any $s\in S$ has a conjugate either in $H$ or in $K$), then $\Wh_A(S)$ also has a cut vertex or is disconnected.

Both the original proof by Whitehead and the extension  given by Stallings involve the use of \defin[handlebody]{handlebodies}, a family of 3-dimensional manifolds whose fundamental groups are free groups. In 2019, Heusener and Wiedmann provided a much simpler proof for both results using Stallings' automata, see \parencite{heusener_remark_2019}. They defined the \defin{Whitehead graph} of an $A$-automaton, and introduced the notion of \index{almost-rose}\defin{$A$-almost-rose}, to prove that almost-roses always have cut vertices and that for any separable set $S$, there exists an almost-rose $\bm{\Ati}$ such that $\Wh_A(S)$ is contained in $\Wh_A(\bm{\Ati})$, both having the same set of vertices. 

\subsection{The Herzog--Schönheim conjecture} 

Another interesting problem for which Stallings' automata have been used is the Herzog–Schönheim conjecture. This conjecture stems from the following arithmetic problem: a \defin[cover of $\mathbb{Z}$]{cover} of the set of integers $\mathbb{Z}$ is a finite collection of remainder classes $r_i+d_i\mathbb{Z}$, $i=1,\ldots ,s$, disjoint to each other, and such that $\mathbb{Z}=\bigsqcup_{i=1}^s (r_i+d_i\mathbb{Z})$. Typical examples of covers are $\mathbb{Z}=0+1\mathbb{Z}$ (the trivial one), $\mathbb{Z}= 2\mathbb{Z} \sqcup (1+2\mathbb{Z})$, $\mathbb{Z}=4\mathbb{Z}\sqcup (2+4\mathbb{Z})\sqcup (1+2\mathbb{Z})$, $\mathbb{Z}=4\mathbb{Z}\sqcup (2+12\mathbb{Z})\sqcup (6+12\mathbb{Z})\sqcup (10+12\mathbb{Z})\sqcup (1+2\mathbb{Z})$, etc. This concept was introduced by P. Erd\"os in~\parencite{erdos_integers_1950}, who conjectured that in any such non-trivial cover the largest index $d_s$ must appear at least twice. This conjecture was proved independently by Davenport, Rado, Mirsky, and Newman (see~\parencite{znam_exactly_1970}) using analysis of complex functions; furthermore, it was proved that this largest index $d_s$ appears at least $p$ times, where $p$ is the smallest prime dividing $d_s$ (among other related results). See~\parencite{ginosar_tile_2018} for an alternative modern proof using group representations. 

In 1974, M. Herzog and J. Sch\"onheim extended Erd\"os’ conjecture to arbitrary finitely generated groups and launched the following much more general conjecture: if $\{ H_1g_1,\ldots ,H_sg_s\}$, $s\geqslant 2$, is a nontrivial \defin{coset partition} of a finitely generated group $G$ (meaning that $H_i$ are finite index subgroups of $G$, and $G=\bigsqcup_{i=1}^s H_i g_i$) then the list of indices $d_i=\ind{H_i}{G}$ must contain at least a repetition; one usually refers to this fact by saying that the coset partition has \defin[multiplicity of a partition]{multiplicity}. The common approach to the Herzog--Sch\"onheim conjecture is through finite groups: it is easy to see that it reduces to the case where $G$ is finite. However, although during the following decades several papers appeared providing partial results, the conjecture in its general form remains still open today. 

In a series of papers by F. Chouraqui (see~\parencite{chouraqui_space_2018,chouraqui_herzog-schonheim_2019,chouraqui_about_2021,chouraqui_approach_2020,chouraqui_Herzog_2021}),
the author adopts a completely different approach to attack Herzog--Sch\"onheim conjecture: it can also be easily reduced to the case where $G$ is finitely generated free (by taking full preimages of coset partitions of $G$ through the canonical projection $\rho\colon \Free[n] \onto G$). Now finite index subgroups in $\Free[n]$ are saturated Stallings' automata and a coset partition, $\Free[r]=\bigsqcup_{i=1}^s H_i w_i$, can be codified with the finite list of Stallings' automata $\Ati_{\!1},\ldots ,\Ati_{\!s}$ for the cosets $H_1 w_1,\ldots, H_s w_s$, respectively; the goal is to show that the equality $\Free[n]=\bigsqcup_{i=1}^s H_i w_i$ implies that at least two of them have the same number of vertices, \ie $\card \Verts \Ati_{\!i}=\card \Verts \Ati_{\!j}$, for some $i \neq j$. In the above mentioned series of papers Chouraqui finds several conditions on the $\Ati_{\!i}$'s (in terms of their transition monoids, or their adjacency matrices) to ensure this conclusion, proving several special cases of the Herzog--Sch\"onheim conjecture.

 \subsection{Random groups}

For many years, the study of random groups has been a topic of interest in the literature, including efforts to prove the `genericity' of several algebraic properties. Of course, a preliminary issue is to properly define the meaning of `genericity' in this context; that is, to establish a probability model where such questions are considered. 

A classical way of doing this is via presentations: let us restrict ourselves to finitely presented groups, fix the number of generators (say, $n$) and the number of relations (say, $m$), and we can consider an $n$-generated $m$-related group just as an $m$-tuple of words (or cyclic words) $r_1,\ldots ,r_m\in \Free[n]$ (obviously referring to the group $G=\pres{a_1,\ldots ,a_n}{r_1, \ldots ,r_m}$). There are different ways to define a probability distribution on the set of $m$-tuples of words (maybe of a given bounded length and then let this length tend to infinity) and this provides a well-defined notion of ``random'' group. Using this model and some technical variations, the property of being nontrivial and hyperbolic happens to have probability one, as hinted by \citeauthor{gromov_hyperbolic_1987} in his seminal monograph \parencite{gromov_hyperbolic_1987}, and formally  proved later by \citenr{olshanskii_almost_1992} and \citeauthor{champetier_phd_1991} \cite{champetier_phd_1991,champetier_proprietes_1995}.


The same and several other results were proved later in a related (but different) model of genericity, now called the \defin{Arzhantseva--Ol'shanskii model}
\parencite{arzhantseva_generality_1996,arzhantseva_class_1996}. It turns out that in many of these works Stallings' automata play, again, an essential role; see \eg \parencite{arzhantseva_class_1996,arzhantseva_groups_1997,arzhantseva_phd_1998,arzhantseva_generic_1998,arzhantseva_property_2000,kapovich_genericity_2005,kapovich_random_2009}. 

We would like to point out, however, some nuances of these \defin{word-based models}: we are really counting $m$-tuples of words and so neglecting the fact that different $m$-tuples $R$ and $R'$: (1) could generate the same subgroup $\gen{R}=\gen{R'}\leqslant \Free[n]$; (2) different subgroups, 
$\gen{R}\neq \gen{R'}\leqslant \Free[n]$, but the same normal closure $\ncl{R}=\ncl{R'}\normaleq \Free[n]$; or finally (3) even different normal closures, $\ncl{R}\neq \ncl{R'}\normaleq \Free[n]$, but isomorphic quotients, $\Free[n]/\ncl{R}\isom \Free[n]/\ncl{R'}$. In all these cases the presented group is the same, $\pres{A}{R}=\pres{A}{R'}$, but counted twice, by the $m$-tuple $R$ and, again, by the $m$-tuple $R'$. And it is highly non clear whether this over counting affects uniformly to all (isomorphism classes of) groups or gives preference to some of them at the expense of some others. 

An appealing attempt to overcome the first one of the previous three objections was done by \citeauthor{bassino_random_2008} in \parencite{bassino_random_2008} (see also~\parencite{bassino_statistical_2013,bassino_generic_2016,bassino_genericity_2016}), where the authors used for the first time Stallings' automata to stratify the family of finitely generated subgroups of the free group, and study them asymptotically. With this new point of view, a finitely presented group is modeled as $\pres{A}{\Ati}$, where $\Ati$ is a finite Stallings' automaton (and referring to the group $G=\Free[A]/\ncl{\gen{\Ati}}$). Using the number of vertices of $\Ati$ as a natural measure of the size of the corresponding subgroup $\gen{\Ati}\leqslant \Free[n]$, we get the so-called \defin{graph-based model} introduced in~\parencite{bassino_random_2008}. Based on this nice idea, \parencite{bassino_random_2008} contain results like the one saying that the expected (\ie average) rank of a randomly chosen size $k$ subgroup of $\Free[n]$ is asymptotically equivalent~to $(n-1)k-n\sqrt{k}+1$. 


Once arrived at this point it is worth mentioning that \citenr{bassino_statistical_2013}, using the graph-based distribution to analyze finitely presented groups, got the surprisingly contrasting result that, generically, a finitely presented group $\pres{A}{\Ati}$ is trivial (\ie with probability one, the normal closure of a randomly chosen subgroup $\gen{\Ati}$ of $\Free[n]$, is $\Free[n]$ itself). 


Here, the idea is the following. It can be proved that, for any given letter $a\in A$, the probability that $\Ati$ contains a closed walk labeled $a^r$ for some $0\neq r\in \mathbb{N}$ tends to 1 as the size of $\Ati$ tends to infinity. This translates into saying that, with probability tending to 1, in a random presentation $\pres{A}{\Ati}$ any letter represents a torsion element. Strengthening this argument, it can also be proved that, in fact, the probability that $\Ati$ contains several closed walks labeled $a^{r_1},\ldots ,a^{r_d}$ with $\gcd(r_1,\ldots ,r_d)=1$, also tends to 1. Of course, this translates into saying that, with probability tending to 1, in a random presentation $\pres{A}{\Ati}$ any letter represents the trivial element, \ie $\pres{A}{\Ati}=1$.

\subsection{Stallings' techniques beyond free groups}

The tremendous success of the theory of Stallings' automata in the study of properties of subgroups of the free group has led to many attempts to extend it to broader contexts; first by Stallings himself, to groups acting non-freely on graphs and trees   \parencite{stallings_foldings_1991}, and later successively refined and extended in the graph of groups context by multiple authors, see \eg 
\parencite{bestvina_bounding_1991, dunwoody_folding_1998,dunwoody_small_1999, rips_cyclic_1997, sela_acylindrical_1997, sela_diophantine_2001,
dicks_equalizers_1999,dunwoody_groups_1997,guirardel_approximations_1998,guirardel_reading_2000,bowditch_peripheral_2001,bogopolski_uniqueness_2002}. 

Other (mostly algorithmically-oriented) generalization targets of Stallings' automata include 
inverse monoids and semigroups \parencite{margolis_free_1993,margolis_closed_2001,delgado_combinatorial_2002,steinberg_inverse_2002}, groups satisfying certain small-cancellation properties~\parencite{arzhantseva_class_1996}, fully residually free groups~\parencite{myasnikov_fully_2006,kharlampovich_subgroups_2004, nikolaev_finite_2011}, free products~\parencite{ivanov_intersection_1999}, certain graphs of groups ~\parencite{kapovich_foldings_2005}, amalgams of finite groups~\parencite{markus-epstein_stallings_2007}, groups acting freely on $\ZZ^n$-trees~\parencite{nikolaev_membership_2012}, virtually free groups~\parencite{silva_finite_2016}, quasi-convex subgroups~\parencite{kharlampovich_stallings_2017}, free-abelian by free groups~\parencite{delgado_algorithmic_2013, delgado_extensions_2017,Delgado_stallings_FTA_2022}, CAT(0) cube complexes~\parencite{beeker_stallings_2018},
certain relatively hyperbolic groups~\parencite{kharlampovich_generalized_2020}, right-angled Coxeter groups~\parencite{dani_subgroups_2021}, etc.

\medskip

This extensive list of applications and generalizations shows that Stallings' techniques are a very active and modern research topic, well connected with other lines of investigation within Group Theory. It is clear that this field will continue to grow in the near future, providing both new applications to understand more deeply the structure of the lattice of subgroups of the free group and new generalizations to other contexts beyond free groups.


\section*{Acknowledgments}

Both authors acknowledge support from the Spanish \emph{Agencia Estatal de Investigación} through grant PID2021-126851NB-I00 (AEI/FEDER, UE).
The first author also acknowledges support from the \emph{Universitat Politècnica de Catalunya} in the form of a ``María Zambrano'' scholarship.

\newpage
\renewcommand*{\bibfont}{\small}
\printbibliography
\newpage
\printindex

\Addresses

\end{document}